\theoremstyle{plain}
\renewcommand{\theequation}{\arabic{section}.\arabic{equation}}
\newtheorem{theorem}{Theorem}[section]
\newtheorem{lemma}[theorem]{Lemma}
\newtheorem{proposition}[theorem]{Proposition}
\newtheorem{corollary}[theorem]{Corollary}
\newtheorem{definition}[theorem]{Definition} \theoremstyle{definition}
\newtheorem{example}[theorem]{Example}
\newtheorem{remark}[theorem]{Remark}
\newtheorem*{theorem*}{Theorem}
\newcommand{\target}   {{\mathsf{t}}}
\newcommand{\bN}{{\bar{N}}}
\newcommand{\bp}{{\bar{\partial}}} 
\DeclareMathOperator{\CE}{CE}
\newcommand{\R}{\mathbb{R}} 
\newcommand{\N}{\mathbb{N}} 
\newcommand{\GG}{{\mathbb{G}}}
\newcommand{\aaar}{\substack{\longrightarrow\\[-0.85em] \longrightarrow \\[-0.85em] \longrightarrow}}
 \DeclareMathOperator{\pr}{pr}
\DeclareMathOperator{\tr}{\mathbb{T}}
\DeclareMathOperator{\Hom}{Hom}
\DeclareMathOperator{\End}{End}
\DeclareMathOperator{\Id}{Id}
\DeclareMathOperator{\id}{id}
\DeclareMathOperator{\im}{im}
\newcommand*{\emptycomment}[1]{}
\DeclareMathOperator{\coker}{coker}
\DeclareMathOperator{\algd}{algd} 
\newcommand{\huaB}{\mathcal{B}}
\newcommand{\huaA}{\mathcal{A}}
\newcommand{\huaL}{\mathcal{L}}
\newcommand{\huaF}{\mathcal{F}}
\newcommand{\huaG}{\mathcal{G}}
\newcommand{\huaO}{\mathcal{O}}
\newcommand{\huaT}{\mathcal{T}}
\newcommand{\OO}{\alpha}
\DeclareMathOperator{\codim}{codim}
\newcommand{\Z}{\ensuremath{\mathbb Z}}
\newcommand{\G}{{{\mathcal Arrow}_1(G)}} 
\newcommand{\Sh}{\mathsf{Shuff}}
\newcommand{\String}{\textup{String}}
\newcommand{\g}{\ensuremath{\mathfrak{g}}}
\newcommand{\h}{\ensuremath{\mathfrak{h}}}
\let\@wraptoccontribs\wraptoccontribs
\title{Shifted symplectic higher Lie groupoids and  classifying spaces}
\author{Miquel Cueca}
\address{Mathematics Institute\\Georg-August-University of G\"ottingen\\Bunsenstra{\ss}e 3-5\\G\"ottingen 37073\\Germany}
\author{Chenchang Zhu}
\email{miquel.cuecaten@mathematik.uni-goettingen.de}
\email{chenchang.zhu@mathematik.uni-goettingen.de}
\email{florian.dorsch@stud.uni-goettingen.de}
\subjclass{53D17, 18N65, , 22E67}
\begin{document}

\begin{abstract}
We introduce the concept of $m$-shifted symplectic Lie $n$-groupoids and symplectic Morita equivalences between them. We then build various models for the 2-shifted symplectic structure on the classifying stack in this setting and construct explicit symplectic Morita equivalences between them.  
\end{abstract}

\keywords{Lie $n$-groupoids, Shifted symplectic structures, Classifying stacks, Quadratic Lie algebras.}

\maketitle
\vspace{-15mm}

\tableofcontents

\section{Introduction}

In recent years, the development of TQFTs has encouraged the study of higher versions of symplectic manifolds \cite{rog:cat, kont:for, van:dou}. Among them, on one hand, we have very concrete models of $\N$-graded symplectic $Q$-manifolds 
\cite{aksz, saf:shi, royt, sch:geo, Severa05}. This is familiar to Poisson geometers and mathematical physicists. On the other hand, there is also the beautiful universal and algebraic viewpoint of shifted symplectic stacks  \cite{ cala:der, cal:shi, PaToVaVe13, prid:shift}, which play a prominent role in a framework familiar to algebraic geometers. There is a series of works \cite{cala:der, cal:shi, saf:shi, safro, saf:poi} using the more abstract language of the latter to develop further theories in Poisson geometry and mathematical physics. At the same time,  there is also a series of works \cite{ping:shift, Xu:ShiftP} using language more familiar to differential geometers to study such shifted Poisson or symplectic structures on manifolds and Lie groupoids. As algebraic stacks are conceptually related to differentiable stacks, we may understand that Lie $n$-groupoids play the role of linking  in all these works for the objects carrying the (shifted) symplectic structures. 
Hence it is natural to study what ``symplectic" Lie $n$-groupoids are,  as posed by Getzler in \cite{Lesdiablerets}.
\begin{equation}\label{diag1}
\xymatrix{\text{Degree n } Q\text{-manifold }\ar@{-->}@<1ex>[rr]^{\quad\text{Integration}}&& \text{ Lie n-groupoid }\ar^{\quad \ \text{Differentiation}}@{-->}@<1ex>[ll]\ar@<1ex>[rr]^{\text{Morita class}\quad \ }&&\ar^{\text{Atlas}\quad}@<1ex>[ll]\text{ Differentiable n-stack}}
\end{equation}

Continuing the ideas therein, in this article, we give a careful and rigorous definition of  {\bf $m$-shifted symplectic Lie $n$-groupoids} (see Definition \ref{def:mSSLnG}). We then establish the theory of {\bf symplectic Morita equivalence} (see Section \ref{sec:me}). This is fundamental because only then can the concept of  $m$-shifted symplectic forms  descend to the stack behind the groupoid presentations, thus the concept of $m$-shifted symplectic structures becomes intrinsic.
One of the most important examples of shifted symplectic stacks is the classifying stack $\huaB G$ for a certain Lie group $G$. Over the years, people have created a zoo of forms related to a Lie group $G$, coming from Lie's integration  theory \cite{Getzler04, henriques}, loop group theory \cite{Segal:loop}, Mackenzie's double theory \cite{Mackenzie99}, and dynamical systems \cite{Lu-we1, lu-weinstein}. In this article, we construct three (higher) groups with explicit differential forms coming from the above source  (see Table \ref{tabex}). Our second main achievement is to show that all three of these  are $2$-shifted symplectic structures, and to discover the hidden differential forms which provide the explicit symplectic Morita equivalences among them (see Theorems \ref{thm-morita} and \ref{Thm-manin}).  
In the end, we sketch the outline of shifted symplectic structures in the bi-simplicial world. We construct two models of $\huaB G$ using bi-simplicial manifolds and prove how the shifted symplectic forms that they carry are equivalent. 
We hope that such a concrete language and its further development may assist more people to access the concept of higher symplectic structure more easily, and thus enrich this domain itself from several roots. 



\subsection{Theory}%
In this work we model differentiable $n$-stacks by Lie $n$-groupoids, which are simplicial manifolds satisfying suitable Kan conditions (see Definition \ref{def:lie-n-gpd}). The differentiable forms on a differentiable $n$-stack can be explicitly modeled by the normalised double complex $(\hat{\Omega}^\bullet(X_\bullet), d, \delta)$ (forms vanishing under degeneracies) on a Lie $n$-groupoid $X_\bullet$ presenting this stack (see Section \ref{sec:tan}). An {\bf $m$-shifted $k$-form  } $\alpha_\bullet$ is 
\begin{equation} \label{eq:cut-0}
    \alpha_\bullet=\sum_{i=0}^m \alpha_i\ \text{ with } \ \alpha_i\in\widehat{\Omega}^{k+m-i}(X_i).
\end{equation}
The form $\alpha_\bullet$ is {\bf closed} if $D\alpha_\bullet=0$. The cut on the range of the form in \eqref{eq:cut-0} above, that is the fact that $\alpha_\bullet$ does not contain $0$, $1$, $\dots$, $(k-1)$-forms,  was initially introduced in  \cite{Lesdiablerets} using a filtration and a fine resolution for forms to understand the closedness condition in \cite{PaToVaVe13}.

However, to define a {\bf $m$-shifted symplectic $2$-form}, we need to address the {\em non-degeneracy} in a homotopic manner. That is, this form needs to induce a quasi-isomorphism from the tangent complex to the cotangent complex.  In contrast to algebraic geometry where things are taken as limit of local models, a shifted $2$-form $\alpha_\bullet$ on a Lie $n$-groupoid
does not directly induce a map between the tangent and cotangent complex. It is only the associated IM (infinitesimal multiplicative) form $\lambda^{\alpha_\bullet}$ which builds a morphism  between the tangent and cotangent complex. In differential geometry, this link between global and infinitesimal must be created. For example, in the classical theory of forms on Lie groups and their Lie algebras, this is addressed by the Van Est theory. In other words, ideally we would have needed a higher Van Est theory to pass between $\alpha_\bullet$ and $\lambda^{\alpha_\bullet}$. However thanks to the work of Getzler \cite{Lesdiablerets}, his formula \eqref{nondeg-pairing} provides a short-cut. We prove that this $\lambda^{\alpha_\bullet}$ satisfies various properties (see Appendix \ref{Ap:IM}), which are exactly what one expects for IM forms. Therefore we believe that $\lambda^{\alpha_\bullet}$ provides the first leading term for the IM form of an $m$-shifted 2-form, which is enough for us to define the {\bf non-degeneracy} of the 2-form (see Definition \ref{def:mSSLnG}).

It turns out that when $m, n$ takes small values, e.g. $(0, 0)$, $(0, 1)$, $(1, 1)$, $(2,1)$, $(1,2)$, such $m$-shifted symplectic Lie $n$-groupoids have appeared naturally and been already studied thoroughly in Poisson geometry (see Section \ref{sec:example-m-n}). These include symplectic manifolds, 0-symplectic groupoids \cite{hoffman-sjamaar},  Weinstein symplectic groupoids \cite{cdw}, and quasi-symplectic \cite{Xu04} (aka twisted preseymplectic \cite{BCWZ}) groupoids.

Furthermore, as the diagram \eqref{diag1} suggests, another important ingredient is the relation given by {\bf symplectic Morita equivalences} between $m$-shifted symplectic Lie $n$-groupoids  (see Definition \ref{def:sympmorita}).  This is one of the major differences with the algebraic geometric approach. For the purpose of explicit calculation and interesting examples in the context of differential geometry, one cares about different higher groupoid presentations of the higher stack instead of just the higher stack itself. Therefore, Morita equivalence between these different models becomes an important concept to make sure that the internal geometric meaning is not lost in the apparently very different calculations in different models. Indeed, it is only after quotienting by symplectic Morita equivalences, $m$-shifted symplectic Lie $n$-groupoids will give us the notion of $m$-shifted symplectic differentiable $n$-stacks. 

It is well known that Morita equivalence of Lie groupoids has three equivalent definitions: it may be defined as a principal bi-bundle, or a zig-zag of weak equivalences, or a zig-zag of hypercovers. The latter two are extended to  higher Lie groupoids \cite{Rogers-Zhu:2016, z:tgpd-2}. In this work, we use a zig-zag of hypercovers to give a definition of symplectic Morita equivalence.  Let $(K_\bullet, \alpha_\bullet)$ and $(J_\bullet, \beta_\bullet)$ be two $m$-shifted symplectic Lie $n$-groupoids. They are symplectic Morita equivalent, if there is another Lie $n$-groupoid $Z_\bullet$ with an $(m-1)$-shifted 2-form $\phi_\bullet$ and hypercovers $f_\bullet$, $g_\bullet$ satisfying
\begin{equation*}\label{eq:sympme-0}
    K_\bullet\xleftarrow{f_\bullet}Z_\bullet\xrightarrow{g_\bullet}J_\bullet \quad \text{and} \quad f^*_\bullet\alpha_\bullet-g^*_\bullet\beta_\bullet=D\phi_\bullet.
\end{equation*}
The first condition tells that $K_\bullet$ and $J_\bullet$ are Morita equivalent as Lie $n$-groupoids, hence they present the same $n$-stack. The second condition tells that the forms represent the same shifted symplectic structure on the $n$-stack. The geometric meaning of symplectic Morita equivalence is that $$(Z_\bullet, \phi_\bullet) \xrightarrow{f_\bullet\times g_\bullet} (K_\bullet \times J_\bullet, (\alpha_\bullet, -\beta_\bullet))$$ presents an $m$-shifted Lagrangian $n$-stack in the sense of \cite{PaToVaVe13}. This is made more precise in the upcoming article \cite{ABC:lag}.    

For consistency, we prove (see Theorem \ref{thm:morita-1-1}) that when $m=n=1$,  our notion of symplectic Morita equivalence coincide with that of Xu given by Hamiltonian principal bimodules in \cite{Xu04}. People have discovered many examples of Hamiltonian principal bimodules carrying natural physics meanings in Poisson geometry (see \cite{Xu04} and a newly appeared work \cite{AM:22}). Therefore Theorem \ref{thm:morita-1-1} implies that each such a bimodule gives rise to a shifted Lagrangian stack.

All this theoretical framework is in Section \ref{Sec:sslg} and is the first main accomplishment of this article.

\subsection{Shifted symplectic structure on classifying stack $\huaB G$}
Recall that a quadratic Lie algebra $(\g,\langle\cdot,\cdot\rangle)$ is a Lie algebra $\g$ endowed with a symmetric pairing $\langle\cdot,\cdot\rangle:\g\times\g\to\mathbb{R}$ that is adjoint-invariant and non-degenerate. The classifying stack $\huaB G$ of the Lie group $G$ integrating such $\g$ carries a $2$-shifted symplectic structure $\omega_{\huaB G}$ induced by the pairing, see \cite{PaToVaVe13}. This is an important example of shifted symplectic structure because, as the main result of \cite{PaToVaVe13} demonstrates, $(\huaB G, \omega_{\huaB G})$ further induces other shifted symplectic structures on various derived moduli stacks, which explains many existing theories in algebraic geometry.


On the other hand, in graded geometry,  it is established \cite{royt} that $(\g,\langle\cdot,\cdot\rangle)$ is a Courant algeboid over a point, so it gives rise to the degree $2$ symplectic $NQ$-manifold $(\g[1], Q=d_{CE}, \{\cdot,\cdot\}=\langle\cdot,\cdot\rangle)$.   Thus, for any quadratic Lie algebra we shall expect a $2$-shifted symplectic Lie $n$-groupoid\footnote{Since $\g[1]$ is a degree $1$ manifold and $\huaB G$ is a $1$-stack, it is reasonable to expect $n=1$. However, a $1$-stack may also allow a presentation by a Lie 2-groupoid. In our case, such a Lie 2-groupoid model for $\huaB G$ turns out to be interesting and related to Segal's loop group theory. } $(K_\bullet,\Omega_\bullet)$  making the following diagram commute

\begin{equation}\label{diag2}
\xymatrix{(\g[1], Q, \{\cdot,\cdot\})\ \ar@{-->}@<1ex>[rr]^{\quad\text{Integration}}&&\ (K_\bullet,\Omega_\bullet)\ \ar^{\qquad \text{Differentiation}}@{-->}@<1ex>[ll]\ar@<1ex>[rr]^{\text{Morita class}\quad \ }&&\ar^{\text{Atlas}\quad}@<1ex>[ll]\ (\huaB G, \omega_{\huaB G})}.
\end{equation}
The second main result of this article is to provide three different $(K_\bullet,\Omega_\bullet)$ that fits in \eqref{diag2} and to build symplectic Morita equivalences between them.  All three are explicit and summarized in the Table \ref{tabex}. In such a way, each of this $(K_\bullet,\Omega_\bullet)$ provides an integration of the Courant algebroid $(\g,[\cdot,\cdot],\langle\cdot,\cdot\rangle)$. Therefore our work produces a complete solution for the problem of integrating Courant algebroids whose base is a point.

Perhaps the most exciting motivation to investigate this example is that $3d$-Chern-Simons field theory can be described as a $\sigma$-model with target the shifted symplectic stack $(\huaB G,\omega_{\huaB G})$ (or $(\g[1],Q,\{\cdot,\cdot\})$ for the infinitesimal theory), see \cite{cat:cs, dw, dom:cs}. From the point of view of extended TQFT, it is conjectured that 
what $3d$-Chern-Simons theory assigns to a point is a certain class of projective unitary representations of the based loop group, see \cite{hen:what}. 
We expect that those can be obtained from the geometric quantization of the 2-shifted symplectic structures constructed here. In fact, in the second model \ref{sec:2nd-model}, we already see the shadow of based loop group and Segal's symplectic form. This may be a sign that we are on the correct track. \emptycomment{Moreover, all these natural 2-shifted symplectic forms that we find on various models are symplectic Morita equivalent. This is again a good news because we may have many choices of symplectic structures to start the quantization, however the fact that  all the natural forms we find are equivalent is a strong hint that all of them are correct choices.  }

\emptycomment{As we know, $(\g[1], Q, \{\cdot, \cdot \})$ is a Courant algeboid over a point. Thus all these models in Table \ref{tabex} provide equivalent integration of Courant algebroid in this special case.}

\begin{table}[h]
\[
\begin{array}{|c|c|c|}
\hline
  \text{Model}& 2\text{-shifted symplectic form}\\
\hline
 NG_\bullet=\cdots\ G\times G\aaar G\rightrightarrows pt & 
 \def\arraystretch{3.6}
 \begin{array}{c}
 \begin{split}
\Omega_\bullet=&\Omega-\Theta,\\
\Omega=&\langle d_2^*\theta^l,d_0^*\theta^r\rangle\in\Omega^2(G\times G),\\ \Theta=&\frac{1}{6}\langle \theta^l,[\theta^l,\theta^l]\rangle\in\Omega^3(G).
\end{split}
 \end{array}\\
\hline
 \GG_\bullet=\cdots\ \Omega G\aaar P_eG\rightrightarrows pt &
 \def\arraystretch{2.8}
 \begin{array}{c}
     \begin{split}
\omega_\bullet=&\omega\in\Omega^2(\Omega G),\\
\omega_\tau(v,w)=&\int_{S^1}\langle \big(L_{\tau(t)^{-1}}v(t)\big)',L_{\tau(t)^{-1}}w(t)\rangle\ dt.\end{split}
 \end{array} \\
\hline
 \bar{\Gamma}_\bullet^{\mathfrak{h}}=\cdots H_-\times\Gamma^\mathfrak{h}\times H_+\aaar H_-\times H_+\rightrightarrows pt & 
 \def\arraystretch{2.5}
 \begin{array}{c}
\begin{split}
\bar{\omega}^{\mathfrak{h}}_\bullet=&\omega^\mathfrak{h}\in\Omega^2(\Gamma^\mathfrak{h}),\\
\omega^\mathfrak{h}=&\langle\theta^l_{H_+}, \theta^r_{H_-}\rangle-\langle\theta^l_{H_-}, \theta^r_{H_+}\rangle.\end{split}
 \end{array}
 \\
\hline
\end{array}\]
\caption{The $2$-shifted symplectic (local) Lie $2$-group models of $\huaB G$.\label{tabex}}
\end{table}

 Now, we describe the three models, outline their main properties and indicate related works. 
 
 \subsubsection{First model} \label{sec:1st-model}  The first model $(NG_\bullet, \Omega_\bullet)$ (see Section \ref{sec-findim}) is given by the nerve of $G$, the Lie group integrating $\g$. The 2-shifted  $2$-form $\Omega_\bullet$ has two pieces, $\Theta$ known as the Cartan $3$-form and $\Omega$ that we named as Brylinski-Weinstein $2$-form. We prove that it is furthermore non-degenerate as in Definition \ref{def:mSSLnG} and thus $2$-shifted symplectic (see Theorem \ref{Thm-fin-sym}). This model appeared first in \cite{brylinski} and in \cite{Alan:moduli} where it was used to compute the symplectic structure on the moduli space of flat connections on a closed surface. A beautiful calculation in \cite{safro} shows that the transgression of the 2-shifted symplectic form $\Omega_\bullet$ on   $NG_\bullet$ gives the 1-shifted symplectic form  on $G\times G\rightrightarrows G$ described in \cite{BCWZ, Xu04}.  We further show in Proposition \ref{VE} that the Van Est map defined in \cite{ArCr11} satisfies $VE(\Omega_\bullet)=-\langle\cdot,\cdot\rangle$.  There are therefore  well defined integration and differentiation processes that shows the left part of \eqref{diag2}. As $NG_\bullet$ gives an atlas for $\huaB G$ and $\omega_{\huaB G}$ in \cite{PaToVaVe13} is constructed by $\langle\cdot,\cdot\rangle$, $(NG_\bullet, \Omega_\bullet)$ gives a model for $(\huaB G, \omega_{\huaB G})$.


\subsubsection{Second model} \label{sec:2nd-model} The second model $(\GG_\bullet, \omega_\bullet)$ (see Section \ref{Sec:infdim}) is given at level $1$ by paths on $G$ starting at the identity, and at level $2$ by based loops $\Omega G$ equipped with  Segal's symplectic $2$-form $\omega$ \cite{Segal:loop}. It is  very nice to see that $\omega$ is, in addition, multiplicative and furthermore provides a $2$-shifted symplectic structure (see Theorem \ref{thm-inf-sym}). In fact, as \cite{Mein:flat} indicates, the Chern-Simon $\sigma$-model on a disk provides, via a symplectic reduction, Segal's symplectic form on $\Omega G$. This reduction shows up also in the second truncation\footnote{The truncation of simplicial manifolds is similar to the classical operation of truncation for simplicial sets in algebraic topology. See Definition \ref{def:truncation}. } of the universal integration of $\g$:  It turns out that $\GG_\bullet$ is exactly this  truncation. Moreover, when Segal's form $\omega$ is integral, $(\GG_\bullet, \omega_\bullet)$ has a $\huaB^2U(1)$ central extension (see \cite{Baez:from, henriques, waldorf:string-CS}) known as $\String(G)_\bullet$, equipped with a sort of connection. Therefore $\String(G)_\bullet$ can be regarded as a prequantization of $\huaB G$.  

\subsubsection{Third model} \label{sec:3rd-model}  In order to define the third model $(\bar{\Gamma}_\bullet^\h , \bar{\omega}^\h_\bullet )$ (see Section \ref{sec:manin-model}), we need the additional structure of a Manin triple $(\g, \h_+, \h_-)$.  It was shown in \cite{Lu-we1} that a Manin triple integrates to a double symplectic Lie group. Moreover, in \cite{MeTa11} the authors applyn the Artin-Mazur codiagonal to this double Lie group and produce a local Lie $2$-group  $\bar{\Gamma}_\bullet^\h$ with $2$-form $\bar{\omega}^\h_\bullet$. We prove that these give rise to a 2-shifted symplectic local Lie 2-group (see Proposition \ref{prop-manin-sym}). There is some obstruction to extend the local structure which comes from the fact that the integration of hamiltonian systems is often local \cite{semenov79} (see also Remark \ref{rmk:obstruction}). 

\subsubsection{Equivalence Theorems} \label{sec:equ}
The $2$-shifted symplectic group $(NG_\bullet, \Omega_\bullet)$ gives a presentation for the $2$-shifted symplectic stack  $(\huaB G, \omega_{\huaB G})$ as in \ref{sec:1st-model}. We prove two equivalence theorems (Theorems \ref{thm-morita} and \ref{Thm-manin}) summarized in the following to demonstrate that the other two models also present 
 $(\huaB G, \omega_{\huaB G})$. 

\begin{theorem}\label{thm:equi-0}
Let $(\g,[\cdot,\cdot],\langle\cdot,\cdot\rangle)$ be a quadratic Lie algebra and $G$ the connected and simply connected Lie group integrating it.
\begin{enumerate}
    \item The hypercovers $\id_\bullet$ and $ev_\bullet$ (defined in Lemma \ref{evaluation}) give the following symplectic Morita equivalence  
$$(\GG_\bullet,\omega_\bullet)\xleftarrow{\id_\bullet}(\GG_\bullet,\omega^P_\bullet)\xrightarrow{ev_\bullet}(NG_\bullet, \frac{1}{2}\Omega_\bullet), $$
with $\omega_\bullet^P=\omega^P+0$ the $1$-shifted $2$-form on $\GG_\bullet$ defined by 
$$\omega^{P}_\gamma(u, v) = \frac{1}{2} \int_{0}^1 \langle \widehat{u}'(t) , \widehat{v}(t) \rangle -  \langle \widehat{u}(t) , \widehat{v}'(t) \rangle dt\in\Omega^2(\GG_1),$$
for $\gamma\in P_eG,\ u,v\in T_\gamma P_eG,$ where  $ \widehat{u}(t)=L_{\gamma(t)^{-1}}u(t), \ \widehat{v}(t)=L_{\gamma(t)^{-1}}v(t).$
\item Let $(\g,\h_+,\h_-)$ be a Manin triple of $\g$ and denote by $H_\pm$ the connected and simply connected integrations of $\h_\pm$. The hypercovers $\id_\bullet$ and $\Phi_\bullet$ (as in Proposition \ref{prop:phi}) give the following symplectic Morita equivalence $$(\bar{\Gamma}^\h_\bullet, \bar{\omega}_\bullet^\h)\xleftarrow{\id_\bullet}(\bar{\Gamma}^\h_\bullet, \beta_\bullet)\xrightarrow{\Phi_\bullet}(NG_\bullet, \Omega_\bullet),$$
with $\beta_\bullet=\beta+0.$  The $1$-shifted $2$-form on  $\bar{\Gamma}^\h_\bullet$ defined by
$$\beta=\langle \theta^l_{H_-},\theta^r_{H_+}\rangle\in\Omega^2(\bar{\Gamma}^\h_1=H_-\times H_+).$$
\end{enumerate}
\end{theorem}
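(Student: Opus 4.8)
The plan is to verify, in each of the two cases, the two conditions defining a symplectic Morita equivalence (Definition~\ref{def:sympmorita}). One leg is in both cases the identity map, which is tautologically a hypercover, while the right legs $ev_\bullet$ and $\Phi_\bullet$ are shown to be hypercovers in Lemma~\ref{evaluation} and Proposition~\ref{prop:phi} respectively; so the only substantive points are that the proposed $1$-shifted $2$-form on the middle groupoid (namely $\omega^P_\bullet=\omega^P+0$, resp.\ $\beta_\bullet=\beta+0$) is a genuine element of the normalised double complex, and that the cocycle identity
\[
\id^*_\bullet\omega_\bullet-ev^*_\bullet\bigl(\tfrac12\Omega_\bullet\bigr)=D\omega^P_\bullet,\qquad\text{resp.}\qquad \id^*_\bullet\bar{\omega}^{\h}_\bullet-\Phi^*_\bullet\Omega_\bullet=D\beta_\bullet,
\]
holds. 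The first point is immediate: in both models $\GG_0=\bar{\Gamma}^{\h}_0=\mathrm{pt}$, so the degree-$0$ component of any $1$-shifted $2$-form vanishes (this is the ``$+0$''), and a $2$-form on $\GG_1=P_eG$ or on $\bar{\Gamma}^{\h}_1=H_-\times H_+$ pulls back to $0$ along the unique degeneracy $s_0$ into level $1$ since $s_0$ factors through a point.

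For the cocycle identity, since the level-$0$ manifolds are points there is nothing to check in simplicial degree $0$, and extracting the $\widehat{\Omega}^3(X_1)$- and $\widehat{\Omega}^2(X_2)$-components (a $2$-shifted $2$-form has components only in simplicial degrees $0,1,2$) leaves exactly two equations in each case. Writing $D=d\pm\delta$ and using that the degree-$0$ components of $\omega^P_\bullet,\beta_\bullet$ vanish, for part~(1) these are
\[
d\omega^P=\pm\tfrac12\,ev_1^*\Theta,\qquad \delta\omega^P=\pm(\omega-\tfrac12\,ev_2^*\Omega),
\]
and for part~(2)
\[
d\beta=\pm\Phi_1^*\Theta,\qquad \delta\beta=\pm(\bar{\omega}^{\h}_2-\Phi_2^*\Omega),
\]
where $\delta$ is the alternating sum of the pullbacks along the three face maps of the relevant level-$2$ manifold and $\bar{\omega}^{\h}_2$ denotes the pullback of $\omega^{\h}$ to $\bar{\Gamma}^{\h}_2$.

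The degree-$1$ equations are transgression/multiplicativity statements for the Cartan $3$-form $\Theta$. For part~(2), $\Phi_1\colon H_-\times H_+\to G$ is the factorisation diffeomorphism of the Manin triple; the standard expansion of $m^*\Theta$ for the product of two subgroups, namely $\pr_-^*\Theta_{H_-}+\pr_+^*\Theta_{H_+}\pm d\langle\theta^l_{H_-},\theta^r_{H_+}\rangle$, together with the vanishing of the Cartan forms $\Theta_{H_\pm}$ (the induced pairing on $\h_\pm$ is zero, $\h_\pm$ being isotropic in $\g$), gives at once $\Phi_1^*\Theta=\pm d\beta$. For part~(1), $ev_1\colon P_eG\to G$ is endpoint evaluation; pulling $\Theta$ back along the total evaluation $[0,1]\times P_eG\to G$ and integrating over $[0,1]$ yields, by fibrewise Stokes, $d\Theta=0$, and the vanishing of the $t=0$ boundary contribution (paths are based at $e$), a primitive of $ev_1^*\Theta$, which a left-trivialised computation and integration by parts identify with twice $\omega^P$; hence $d\omega^P=\tfrac12\,ev_1^*\Theta$. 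It is precisely this factor $\tfrac12$ that forces the rescaling $\tfrac12\Omega_\bullet$ on the $NG_\bullet$ side.

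The degree-$2$ equations carry the real content. For part~(2) the computation is purely algebraic: substitute the explicit face maps of the codiagonal groupoid $\bar{\Gamma}^{\h}_\bullet$, the factorisation formula for $\Phi_2$, and the Brylinski--Weinstein form $\Omega=\langle d_2^*\theta^l,d_0^*\theta^r\rangle$ expressed through Maurer--Cartan data, then simplify using the isotropy of $\h_\pm$ and the Maurer--Cartan equation. For part~(1) one substitutes the face maps of $\GG_\bullet$, the evaluation $ev_2$ of Lemma~\ref{evaluation}, Segal's form $\omega$ on $\Omega G$ and $\Omega$ on $G\times G$, and reduces the desired equality to an identity of integrals over $S^1$ by integration by parts, again using the Maurer--Cartan equation to collect the cubic term. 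I expect the hard part to be exactly this last integral identity, together with propagating the signs in $D$ and the factors $\tfrac12$ consistently through all the components; the $\bar{\Gamma}^{\h}_\bullet$ side is more mechanical but demands care with the non-obvious Artin--Mazur face maps. Nothing further is needed: non-degeneracy is not among the data to be verified here, being already part of the statements that $\omega_\bullet$, $\bar{\omega}^{\h}_\bullet$ and $\Omega_\bullet$ are $2$-shifted symplectic (Theorems~\ref{Thm-fin-sym} and~\ref{thm-inf-sym}, Proposition~\ref{prop-manin-sym}).
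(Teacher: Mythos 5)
Your skeleton coincides with the paper's: Theorem \ref{thm:equi-0} is proved there as Theorems \ref{thm-morita} and \ref{Thm-manin}, and in each case the proof consists exactly of the two component identities you isolate, namely $\tfrac12 ev_1^*\Theta=-d\omega^P$ and $\omega-\tfrac12 ev_2^*\Omega=\delta\omega^P$ for part (1), and $\Phi_1^*\Theta=-d\beta$ and $\bar{\omega}^\h-\Phi_2^*\Omega=\delta\beta$ for part (2); the hypercover and normalisation points are as routine as you say (note, though, that the hypercover property of $ev_\bullet$ is Proposition \ref{Thm-ev-hyper}, Lemma \ref{evaluation} only provides the simplicial map). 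Your handling of the degree-$1$ identities is essentially right: for (1) it is precisely Lemma \ref{lemma-brylinski} ($d\tr(\Theta)=ev_1^*\Theta$ together with $\tr(\Theta)=d\alpha^P-2\omega^P$), and for (2) your route via the quasi-multiplicativity $\delta\Theta=d\Omega$ (already established inside Theorem \ref{Thm-fin-sym}) plus isotropy of $\h_\pm$ is a slightly cleaner alternative to the paper's direct expansion $\Phi_1^*\theta^l=\theta^l_{H_+}+Ad_{h^{-1}}(\theta^l_{H_-})$; both give $\Phi_1^*\Theta=-d\beta$.

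The genuine gap is that the level-$2$ identities, which are the substantive content of both theorems, are only asserted, and your sketch of how they would go is not accurate. In the paper, $\omega-\tfrac12 ev_2^*\Omega=\delta\omega^P$ is obtained by computing $d_0^*\omega^P$, $d_1^*\omega^P$, $d_2^*\omega^P$ with the explicit concatenation face maps of $\GG_\bullet$ (in particular the right-translation correction appearing in $d_2$ and $Td_2$), applying Stokes on $[\tfrac13,\tfrac23]$, and comparing with $ev_2^*\Omega$ computed from $Tev_2$; no Maurer--Cartan equation or cubic term enters here --- that step belongs to the degree-$1$ identity inside Lemma \ref{lemma-brylinski}, where $d\alpha^P$ produces the $\langle\widehat{\gamma},[\widehat{u},\widehat{v}]\rangle$ term matching $\tr(\Theta)$ (so your claim that the transgression ``is'' twice $\omega^P$ is only true modulo the exact term $d\alpha^P$). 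Similarly $\bar{\omega}^\h-\Phi_2^*\Omega=\delta\beta$ is a pointwise tangent-vector computation using the constraint $\bar{h}_3\bar{a}_1=\bar{a}_2\bar{h}_2$ defining $\Gamma^\h$, adjoint invariance and isotropy, again with no Maurer--Cartan input. Until these computations are actually carried out the theorem is not proved. Finally, the one sign you commit to is wrong under the paper's convention $D=\delta+(-1)^p d$: the required identity is $\tfrac12 ev_1^*\Theta=-d\omega^P$, consistent with $\tr(\Theta)=d\alpha^P-2\omega^P$, not $d\omega^P=\tfrac12 ev_1^*\Theta$; your remark that this factor $\tfrac12$ is what forces the rescaling $\tfrac12\Omega_\bullet$ is nevertheless correct.
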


As we mentioned before, one of the most significant motivations to study the symplectic structure on $\huaB G$  is its quantization via 3d-Chern-Simons theory. Thus the fact that all these natural 2-shifted symplectic forms that we find on various models are symplectic Morita equivalent provides a solid foundation for the quantization process ahead. We may use any of them for our convenience of calculation. 

Actually, Morita theory is itself an important concept beyond the theory of stacks.  In Poisson geometry, Morita theory was first introduced  for symplectic groupoids  \cite{x1} and then extended to $1$-shifted symplectic groupoids in \cite{Xu04} motivated by that for  $C^*$-algebras. Today Morita equivalence is deeply embedded in many important theories in Poisson geometry, such as Picard group for Poisson manifolds \cite{bur-rui18, bu-we:poisson}, moment map theory \cite{ AM:22, Xu04}, deformation quantization \cite{bur:mor12, bur:mor20}. We expect the concrete Morita equivalences constructed in Theorem \ref{thm:equi-0} can assist the explicit calculations in the above context.  

\emptycomment{
As the first model is already shown to present  $(\huaB G, \omega_{\huaB G})$ in \ref{sec:1st-model}, we give two equivalence theorems (Theorems \ref{thm-morita} and \ref{Thm-manin}) summarised as follows: 
\begin{theorem}\label{thm-morita-0}
Let $G$ be a connected and simply connected Lie group with quadratic Lie algebra $(\g,\langle\cdot,\cdot\rangle)$. The $2$-shifted symplectic Lie $2$-groups $(\GG_\bullet,\omega_\bullet)$ in the 2nd model \ref{sec:2nd-model} and $(NG_\bullet,\frac{1}{2}\Omega_\bullet)$  in the first model \ref{sec:1st-model} are symplectic Morita equivalent via a certain evaluation map $ev_\bullet$,
$$(\GG_\bullet,\omega_\bullet)\xleftarrow{\Id_\bullet}(\GG_\bullet,\omega^P_\bullet)\xrightarrow{ev_\bullet}(NG_\bullet, \frac{1}{2}\Omega_\bullet), $$
with $\omega_\bullet^P=\omega^P+0$ the $1$-shifted $2$-form on $\GG_\bullet$ defined by 
$$\omega^{P}_\gamma(u, v) = \frac{1}{2} \int_{0}^1 \langle \widehat{u}'(t) , \widehat{v}(t) \rangle -  \langle \widehat{u}(t) , \widehat{v}'(t) \rangle dt\in\Omega^2(\GG_1),$$
for $\gamma\in P_eG,\ u,v\in T_\gamma P_eG$ and $ \widehat{u}(t)=L_{\gamma(t)^{-1}}u(t), \ \widehat{v}(t)=L_{\gamma(t)^{-1}}v(t).$
\end{theorem}
\begin{theorem}\label{Thm-manin-0}
Let $(\g,\h_+,\h_-)$ be a Manin triple and $G$ the connected and simply connected Lie group integrating $\g$. Then the $2$-shifted symplectic local Lie $2$-groups $(\bar{\Gamma}^\h_\bullet, \bar{\omega}^\h_\bullet)$ in the 3rd model \ref{sec:3rd-model} and $(NG_\bullet, \Omega_\bullet)$ in the 1st model \ref{sec:1st-model} are symplectic Morita equivalent via a certain hypercover $\Phi_\bullet$,
$$(\bar{\Gamma}^\h_\bullet, \bar{\omega}_\bullet^\h)\xleftarrow{\id_\bullet}(\bar{\Gamma}^\h_\bullet, \beta_\bullet)\xrightarrow{\Phi_\bullet}(NG_\bullet, \Omega_\bullet),$$
with $\beta_\bullet=\beta+0$  the $1$-shifted $2$-form on  $\bar{\Gamma}^\h_\bullet$ defined by
$$\beta=\langle \theta^l_{H_-},\theta^r_{H_+}\rangle\in\Omega^2(\bar{\Gamma}^\h_1=H_-\times H_+).$$
\end{theorem}
These demonstrate that all the three are models for $(\huaB G, \omega_{\huaB G})$. 
}

\subsection{Shifted double theory, a concise outline and an invitation} There is a big community in differential geometry and mathematical physics who prefer to work with bi-simplicial objects. In Section \ref{Sec:double}, we explore shifted symplectic structures in the bi-simplicial context through explicit models for $\huaB G$. 

Strict Lie $2$-groups can be expressed in terms of internal categories or double Lie groups \cite{baez:2gp, Mackenzie99} which give rise to bi-simplicial manifolds. 
We define double shifted forms in the de Rham triple complex of a bi-simplicial manifold.   It turns out that our first two models of $\huaB G$ are both equivalent to strict Lie 2-groups,   so they transfer into double Lie groups. We further transfer the two shifted symplectic forms into the double setting. In the end, we prove (see Theorem \ref{Thm-dou-ev}) that these two double shifted forms are equivalent in a similar way as in Section \ref{sec:equ}. This equivalence theorem in double theory is closely related to the descent equations studied in \cite{ANXZ}.

\subsection{Future Outlook} 

The present work motivates to further explore the applications of 2-shifted lagrangians on $\huaB G$ in an upcoming work-in-progress \cite{ABC:lag}.  Quasi-Poisson $(\mathfrak{d}, \g)$-manifolds \cite{libland-severa1},  Poisson homogeneous spaces \cite{Henrique-lu}, and dynamical $r$-matrices \cite{et-var} all provide examples of 2-shifted lagrangians on $\huaB G$. It is proven in \cite{PaToVaVe13} that intersections of $m$-shifted Lagrangians are $(m-1)$-shifted symplectic. Using this principle,  the new interpretation of these objects allows us to provide integration for many Dirac structures. 

One more meaningful work is to figure out the explicit appearance of a cotangent bundle for a Lie $n$-groupoid. Just as the cotangent bundle of a manifold has a natural symplectic structure, \cite{cal:cot, PaToVaVe13} shows that shifted cotangent bundles of a derived higher stack carries a shifted symplectic structure. In contrast to algebraic geometry, where a cotangent bundle can be simply defined as spectrum of symmetric algebra of tangent sheaves \cite[Def 1.20]{PaToVaVe13}, the concept of cotangent bundle of a higher Lie groupoid as higher VB (vector bundle) groupoid can not be directly obtained. There are many Lie $n$-groupoids presenting the same $n$-stack. But we believe there is a canonical VB Lie $n$-groupoid presenting the cotangent bundle as soon as we choose a fixed Lie $n$-groupoid $X_\bullet$ presenting the $n$-stack, just as the tangent bundle is canonically given by the tangent VB $n$-groupoid $TX_\bullet$ (notice that the naive choice $T^*X_\bullet$ does not make sense at all \cite{cotang-bd-mathoverflow}). To figure out all this and the canonical shifted symplectic structure on the cotangent bundle is the aim of the future work \cite{stefano-thesis}. 

Another immediate application of this work is in the famous open problem  of integration of Courant algebroids, which is partially solved in special situations \cite{Sev:exin, MeTa11, MeTa18a, Severa05, sev:int, ShZh17}. In fact, before our work it was not even clear what the integration object should be. 
Now we understand the nature of a Courant algebroid is  a 2-shifted symplectic Lie 2-algebroid \cite{saf:shi, royt}. Thus  we expect a {\em 2-shifted symplectic Lie 2-groupoid} as the integration result of a Courant algebroid.  It was very mysterious in the previous works that people arrived at different integrating symplectic forms even for the same Courant algebroid. After establishing the concept of symplectic Morita equivalence, we might have a chance to explain this non-uniqueness. As Courant algebroid over a point is a quadratic Lie algebra,  this work gives three Morita equivalent solutions to the integration of this case (see Table \ref{tabex}).  We will study the integration of a general Courant algebroids and the relation between the different integrated symplectic forms in a future work. 

A relatively long term aim for us is to use the 2-shifted symplectic form to do a geometric quantization for $\huaB G$. As described in \ref{sec:2nd-model}, $\String(G)_\bullet$ gives the prequantization $S^1$-gerbe of $\huaB G$. The next step will be to equip $\huaB G$ with an associated 2-line bundle via a 2-representation theory of $\String(G)_\bullet$. There are some rough ideas to link this with a certain class of projective unitary representations of the based loop group (also see \cite{huan:22, murray-robert-wockel}). An intermediate step is to construct a TFT, that is to build a 2-functor from the cobodism 2-category to that of 2-shifted symplectic groupoids, 2-shifted Lagrangians, and 1-shifted Lagragians as in \cite{cal-schaumbauer:21, moore-tachikawa}.  The next step is to build a quantizing functor $Q$ to the 2-category of 2-vector spaces via a 2-representation theory of $\String(G)_\bullet$.

Finally, another direction to explore is the shifted Poisson structures on Lie $n$-groupoids. The 1-shifted Poisson Lie groupoids and their Morita equivalences are fully explored in \cite{Xu:ShiftP}.  It is proven in \cite{BuIgSe09} that this concept contains that of 1-shifted symplectic groupoid. In other context, $m$-shifted Poisson NQ-manifolds are studied in \cite{prid:shift-nq}, and $m$-shifted Poisson derived higher stacks are studied in \cite{cal:shi, prid:shift}.  However,  what are $m$-shifted Poisson $n$-groupoids and their Morita equivalences? How do they include $m$-shifted symplectic $n$-groupoids? How do they give rise to  higher shifted Poisson brackets? These are other immediate questions that the Poisson community cares much. Explicit higher shifted Poisson brackets will certainly help for the purpose of quantization.


\subsection*{Acknowledgement}
We thank very much Daniel \'{A}lvarez, Henrique Bursztyn, Alejandro Cabrera,  Zheng Hua,  Jiang-Hua Lu, Eckhard Meinrenken and Ralf Meyer for stimulating discussions.  We are in debt with all the participants of the higher structure seminar at G\"{o}ttigen for providing useful comments in early stages of this work. We also give our warm thank Jonathan Taylor for his help with the English language of this article.   We are supported by DFG ZH 274/3-1,

\section{$m$-Shifted symplectic Lie $n$-groupoids}\label{Sec:sslg}
A {\bf simplicial manifold} $X_\bullet$ is a contravariant
functor from $\Delta$, the category of finite ordinals 
\[
[0]=\{0\}, \qquad [1]=\{0, 1\},\quad \dotsc,\quad
[l]=\{0, 1,\dotsc, l\},\quad\dotsc,
\]
with order-preserving maps, to the category of manifolds\footnote{Here, by manifolds we mean Banach manifolds as in \cite{lang}.}. More precisely, $X_\bullet$ consists of a tower of manifolds $X_l$, face maps $d^l_k: X_l \to X_{l-1}$ for $k=0, \dots, l$, and degeneracy maps $s^l_k: X_l \to X_{l+1}$. These maps satisfy the following simplicial identities
\begin{equation}\label{eq:face-degen}
    \begin{array}{lll}
        d^{l-1}_i d^{l}_j = & d^{l-1}_{j-1} d^l_i &\text{if}\; i<j,  \\
       s^{l}_i s^{l-1}_j =& s^{l}_{j+1} s^{l-1}_i & \text{if}\; i\leq j,
    \end{array}\qquad  d^l_i s^{l-1}_j =\left\{\begin{array}{ll}
    s^{l-2}_{j-1} d^{l-1}_i  & \text{if}\; i<j, \\
    \id  & \text{if}\; i=j, j+1,\\
    s^{l-2}_j d^{l-1}_{i-1} & \text{if}\; i> j+1.
 \end{array}\right.
\end{equation}
We may drop the upper indices and just write $d_i$ and $s_i$ for simplicity when the context is clear later in our article. 

Similarly, we may define other simplicial objects in other categories, such as simplicial sets and simplcial vector spaces, which we will meet in this article. 
The following simplicial sets, which may be viewed as simplicial manifolds with discrete topology (e.g. in Def. \ref{def:lie-n-gpd}), play an important role for us. They are the simplicial {\bf $l$-simplex} $\Delta[l]$ and
the {\bf horn} $\Lambda[l,j]$:
\begin{equation}\label{eq:simplex-horn}
\begin{split}
(\Delta[l])_k & = \{ f: [k] \to [l] \mid f(i)\leq
f(j),
\forall i \leq j\}, \\
(\Lambda[l,j])_k & = \{ f\in (\Delta[l])_k\mid \{0,\dots,j-1,j+1,\dots,l\}
\nsubseteq \{ f(0),\dots, f(k)\} \}.
\end{split}
\end{equation}
In fact the horn $\Lambda[l,j]$ is a simplicial set obtained from the
simplicial $l$-simplex $\Delta[l]$ by taking away its unique
non-degenerate $l$-simplex as well as the $j$-th of its $l+1$
non-degenerate $(l-1)$-simplices, as in the following picture (in
this paper all the arrows are oriented from larger numbers to
smaller numbers): 

\begin{figure}[h]
   \centering
\includegraphics[page=1,width=.75\textwidth]{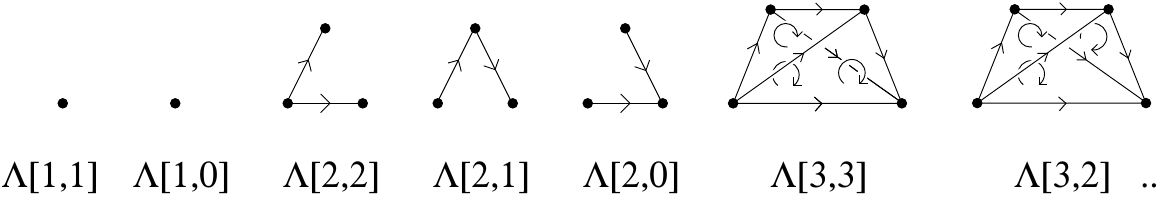}
\end{figure}

\begin{definition} \label{def:lie-n-gpd}
 A \textbf{Lie $n$-groupoid} \cite{Getzler04, henriques, z:tgpd-2} is a simplicial manifold $K_\bullet$ where the natural projections
\begin{equation}\label{eq:horn-proj}
 p_{l,j}:K_l=\Hom(\Delta[l],K_\bullet)\to \Hom(\Lambda[l,j], K_\bullet)
\end{equation}
 are surjective submersions\footnote{Submersion for Banach manifold is also introduced in \cite{lang}. It is sometimes called a split submersion in other literature, namely we need local splitting in charts.} for all $1\le l\ge j\ge 0$ and diffeomorphisms for all $0\le j\le l>n$. It is a \textbf{Lie $n$-group} if $K_0=pt$.  
 
If we replace surjective submersions by submersions, and diffeomorphisms by injective \'etale maps, in the requirement for the natural projections $p_{l,j}$ we say that $K_\bullet$ is a \textbf{local Lie $n$-groupoid}, see \cite[Defi.1.1]{zhu:kan}. Similarly, it is further a \textbf{local Lie $n$-group} if $K_0=pt$. 
\end{definition}

\begin{remark}\label{remark:1-2}
This definition is inspired by \cite{duskin, duskin2} where the author uses Kan complex to understand and define higher (set theoretic) groupoids. The fact that $\Hom(\Lambda[l,j], K_\bullet)$ is again a manifold is not obvious but can be proven inductively. 
We refer to \cite{Getzler04, henriques} or \cite[Sect.1]{z:tgpd-2} for more details on this definition.
\end{remark}

\begin{remark} \label{remark:gpd-stack-morita}
Lie $n$-groupoids may be understood as concrete chart systems for differentiable $n$-stacks \cite{Lesdiablerets, Pridham:higher-stack, prid:shift}. Two Lie $n$-groupoids present the same differentiable $n$-stack if and only if they are Morita equivalent. See Section \ref{sec:me} for details of Morita equivalence for Lie $n$-groupoids and Morita equivalence preserving shifted symplectic structures, which is in turn called symplectic Morita equivalence.  
\end{remark}

\begin{example}[Lie $1$-groupoid]\label{ex:1group}
It is well known that any Lie $1$-groupoid $K_\bullet$ is always the simplicial nerve $NK_\bullet$ of a usual Lie groupoid $K\rightrightarrows M$. Here we recall that $NK_0=M$ and for $i\geq 0$ define 
\[
NK_i := \underbrace{K\times_M K \times_M \dots \times_M K}_{i\;\text{many copies of $K$}}. 
\]
The faces and degeneracies are given by  \begin{equation*}
    \begin{array}{rlrl}
        d_0(k)=&s(k), & d_1(k)=&t(k),\\
        d_0(k_1, \dots, k_i)=&(k_2,\dots ,k_i),& d_j(k_1, \dots, k_i)=&(k_1,\dots,  k_jk_{j+1}, \dots, k_i),\\
        d_i(k_1, \dots, k_i)=&(k_1, \dots, k_{i-1}),
         &  s_j(k_1, \dots, k_i) =& (k_1, \dots, k_j, 1_{s(k_j)}, k_{j+1}, \dots, k_i).
    \end{array}
\end{equation*} 
\end{example}
\begin{example}[Lie 1-group]\label{NG}
A particular case of the previous example is when we consider a Lie group $G$. Its simplicial nerve $NG_\bullet$ is then given by $NG_i=G^{\times i}$ and the faces and generacies are
\begin{equation*}
    \begin{array}{ll}
        d_0(g_1, \dots, g_i)=(g_2,\dots ,g_i),& d_j(g_1, \dots, g_i)=(g_1,\dots,  g_jg_{j+1}, \dots, g_i),\\
        d_i(g_1, \dots, g_i)=(g_1, \dots, g_{i-1}),
         &  s_j(g_1, \dots, g_i) =(g_1, \dots, g_j, e, g_{j+1}, \dots, g_i).
    \end{array}
\end{equation*} 
\end{example}

\begin{example}[Lie $2$-groupoids]\label{ex:2groups}
Just like a Lie 1-groupoid is determined by the first three levels and the faces and degeneracy maps between them\footnote{We need $NG_2$ and $d_1: NG_2\to NG_1$ to obtain the multiplication, for example.}, a Lie 2-groupoid $K_\bullet$ is determined by the first four levels $K_0, K_1, K_2, K_3$, and face and degeneracy between them.  The rest of $K_\bullet$ is given by a coskeleton construction\footnote{See \cite[Section 2.3]{z:tgpd-2} for details of this coskeleton construction. }, that is $K_i$ is made up of those $i$-simplices whose 2-faces are elements of $K_2$ and such that each set of four 2-faces gluing together as a 3-simplex is an element of $K_3$ (see \cite[Sect.2.3]{z:tgpd-2}).   This coskeleton construction also guarantees that $K_i$ are manifolds for all $i\ge 0$ as long as $K_0,\cdots,  K_3$ are. 
\end{example}

\subsection{Tangent complex and de Rham double complex}\label{sec:tan}
We first recall some prelimenary knowledge about Dold-Kan correspondence, which can be found in standard text books (e.g. \cite{wei:hom}). Given a simplicial vector space $V_\bullet$, we recall that  the normalised (or Moore) complex $(NV_\bullet, \partial)$ is given by 
\[
NV_l := \cap_{i=0}^{l-1} \ker d^l_i \subset V_l, \quad \partial:=(-1)^l d^l_l: NV_l \to NV_{l-1}. 
\]
Then the horn projection $p_{l, j}: V_l \to \Hom(\Lambda[l, j], V_\bullet)$ (see \eqref{eq:horn-proj}) in this situation, clearly satisfies $\ker p_{l, l}=NV_l$. On the other hand, we have another natural chain complex $(\bN V_\bullet, \bp)$ associated to $V_\bullet$:
\begin{equation*}
    \bN V_l := V_l/(\im s^{l-1}_0 + \dots + \im s^{l-1}_{l-1} ), \quad \bp:= \sum_{i=0}^l (-1)^i d^l_i. 
\end{equation*}
The map $\bp$ descends to  the quotient $\bN V_l$ thanks to the simplicial identities \eqref{eq:face-degen}. We recall the classical Dold-Kan correspondence\footnote{The Dold-Kan correspondence works more generally for simplicial abelian groups. Here we adapt to the special case of vector spaces for convenience in this article.}. 
\begin{theorem}[Dold-Kan] \label{thm:doldkan} The functor sending $V_\bullet$ to $(NV_\bullet, \partial)$ is an equivalence of categories between simplicial vector spaces and chain complexes of vector spaces concentrated in positive degrees. Moreover, $(NV_\bullet, \partial) \cong (\bN V_\bullet, \bp)$ are isomorphic chain complexes. 
\end{theorem}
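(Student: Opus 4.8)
The engine behind both assertions is the \textbf{normalization (degeneracy) splitting}
\[
V_l \;=\; NV_l\,\oplus\,DV_l ,\qquad DV_l\;:=\;\sum_{i=0}^{l-1}\im s^{l-1}_i ,
\]
valid for every simplicial vector space $V_\bullet$ and every $l\ge 0$. I would establish this first. Since $DV_l$ is exactly the subspace quotiented out in $\bN V_l$, the isomorphism $(NV_\bullet,\partial)\cong(\bN V_\bullet,\bp)$ then falls out at once; and iterating the splitting along all degeneracy operators reconstructs the whole simplicial object $V_\bullet$ from the chain complex $(NV_\bullet,\partial)$, which is what ultimately makes $N$ an equivalence. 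So the plan is: (1) prove the splitting; (2) deduce the chain isomorphism; (3) build the inverse functor from the iterated splitting.

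For (1), introduce for $-1\le p\le l-1$ the subspaces $N_pV_l:=\bigcap_{i=0}^{p}\ker d_i$ and $D_pV_l:=\sum_{i=0}^{p}\im s_i$, so that $N_{-1}V_l=V_l$, $D_{-1}V_l=0$, $N_{l-1}V_l=NV_l$, and $D_{l-1}V_l=DV_l$. The key point is that $s_pd_p$ restricts to an \emph{idempotent} on $N_{p-1}V_l$: the identity $d_ps_p=\id$ gives $(s_pd_p)^2=s_pd_p$, and the identities $d_is_p=s_{p-1}d_i$, $d_id_p=d_{p-1}d_i$ (valid for $i<p$) show that $s_pd_p$ preserves $N_{p-1}V_l$, has kernel $N_{p-1}V_l\cap\ker d_p=N_pV_l$, and has image $s_p\bigl(N_{p-1}V_{l-1}\bigr)\subseteq D_pV_l$; hence $N_{p-1}V_l=N_pV_l\oplus s_p(N_{p-1}V_{l-1})$. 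Feeding this into a double induction (outer on $l$, inner on $p$ starting from $p=-1$), and using the one-degree-lower splitting of $V_{l-1}$ together with $s_ps_i=s_is_{p-1}$ for $i\le p-1$ to identify $s_p(N_{p-1}V_{l-1})\oplus D_{p-1}V_l$ with $D_pV_l$, one obtains $V_l=N_pV_l\oplus D_pV_l$ for all $p$, in particular the splitting. For (2): the composite $\iota\colon NV_l\hookrightarrow V_l\twoheadrightarrow V_l/DV_l=\bN V_l$ is a degreewise linear isomorphism by (1), and it is a chain map because for $x\in NV_l$ all $d_ix$ with $i<l$ vanish, so $\bp\,\iota(x)=\sum_{i=0}^{l}(-1)^id_ix+DV_{l-1}=(-1)^ld_lx+DV_{l-1}=\iota(\partial x)$ (with $\partial x\in NV_{l-1}$ since $d_id_l=d_{l-1}d_i$ for $i<l$); naturality in $V_\bullet$ is clear.

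For (3), iterating (1) over all surjective monotone maps gives a natural decomposition $V_l=\bigoplus_{\eta}s_\eta\bigl(NV_k\bigr)$, the sum ranging over $\eta\colon[l]\twoheadrightarrow[k]$ with $s_\eta$ the associated degeneracy operator. Any simplicial morphism commutes with every $s_\eta$ and with the inclusions $NV_\bullet\hookrightarrow V_\bullet$, hence is determined by its restriction $Nf\colon NV_\bullet\to NW_\bullet$, which is visibly a chain map; conversely every chain map of normalized complexes extends across the decomposition to a simplicial morphism, the chain-map condition being exactly what is needed to make the extension compatible with the faces $d_i$ that collapse the index $\eta$. So $N$ is fully faithful. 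For essential surjectivity, given a chain complex $(C_\bullet,\partial)$ in non-negative degrees set $K(C)_l:=\bigoplus_{\eta\colon[l]\twoheadrightarrow[k]}C_k$ with the simplicial structure forced by the decomposition: degeneracies reindex $\eta$ by precomposition, and each face $d_i$ acts on the $C_k$-summand via the epi--mono factorization of $\eta\circ\delta^i$ --- by reindexing when $\eta\circ\delta^i$ is still surjective, by the differential $\partial\colon C_k\to C_{k-1}$ in the single ``collapsing'' case, and by $0$ otherwise. One then checks the simplicial identities \eqref{eq:face-degen} for $K(C)_\bullet$ and that $NK(C)_\bullet\cong C_\bullet$ (only the $\id_{[l]}$-summand survives $\bigcap_{i<l}\ker d_i$, where $\partial$ agrees with $d_l$ up to sign), while $V_\bullet\cong KNV_\bullet$ is the decomposition read backwards.

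The only genuinely delicate step is (1), and inside it the \emph{directness} of the sums $N_pV_l\oplus D_pV_l$ along the induction: one must check that the images $s_p(N_{p-1}V_{l-1})$ meet the lower degenerate parts trivially and together with them exhaust $DV_l$, which is a careful bookkeeping with the face/degeneracy identities $d_is_j$ and $s_is_j$. Once that combinatorial core is in place, (2) and (3) are formal, using only the standard calculus of epi--mono factorizations in $\Delta$.
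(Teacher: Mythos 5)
The paper does not actually prove Theorem \ref{thm:doldkan}: it quotes the Dold--Kan correspondence as a classical result and points to standard references (e.g.\ \cite{wei:hom}), so there is no in-text argument to compare against. What you have written is, in outline, the standard textbook proof of that cited result, and it is correct: the normalization splitting $V_l=NV_l\oplus DV_l$ via the idempotents $s_pd_p$ acting on $N_{p-1}V_l$ (with kernel $N_pV_l$ and image $s_p(N_{p-1}V_{l-1})$, the directness handled by the double induction exactly as you indicate), the resulting degreewise isomorphism $NV_l\to \bN V_l$ which is a chain map because all $d_i$, $i<l$, kill $NV_l$, and the inverse functor $K(C)_l=\bigoplus_{\eta\colon[l]\twoheadrightarrow[k]}C_k$ built from epi--mono factorizations in $\Delta$. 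Two points deserve a bit more care if this were to be written out in full. First, in the definition of the faces of $K(C)$ the trichotomy must be stated precisely: writing $\eta\circ\delta^i=\epsilon'\circ\eta'$ with $\eta'$ epi and $\epsilon'$ mono, the map is a reindexing when $\epsilon'=\id$, is the differential (with the sign matching the paper's convention $\partial=(-1)^l d_l$) only when $\epsilon'$ is the \emph{last} coface $\delta^k$, and is zero for $\epsilon'=\delta^j$ with $j<k$; your phrase ``the single collapsing case'' should be pinned down to this, since declaring $\partial$ in every non-surjective case would break the simplicial identities. Second, the verifications you defer (the simplicial identities for $K(C)_\bullet$, the naturality of the decomposition $V_l=\bigoplus_\eta s_\eta(NV_k)$, and the extension of a chain map to a simplicial map across that decomposition) are genuinely routine but are where all the remaining content sits; as a sketch, though, your route is exactly the one taken in the literature the paper cites, and nothing in it fails.
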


For any simplicial manifold $X_\bullet$, we have a simplicial vector bundle over $X_0$ given by,  
\begin{equation*}\label{eq:tang-vs}
\dots T X_2|_{X_0} \aaar    T X_1|_{X_0} \rightrightarrows T X_0 ,
\end{equation*}
where $X_0$ is a submanifold in $X_l$ via iterated degeneracy $X_0\xrightarrow{s_0} X_1 \xrightarrow{s_0} X_2 \dots \xrightarrow{s_0} X_l$ and $TX_i|_{X_0}=s_0^*\circ {\dots}\circ s_0^*TX_i$ is the pullback vector bundle to $X_0$.  The face and degeneracy maps are $T d_i$ and $T s_i$. If $X_\bullet$  is a Lie $n$-groupoid, then the horn projection (2.3) is always a surjective submersion and so $\ker T p_{l,l}$ is a vector bundle. (Note that constant rank guarantees it is a vector bundle.)

\begin{definition}\label{defi:tangent-cx}
The {\bf tangent complex} $(\huaT_\bullet K, \partial)$ of a Lie $n$-groupoid $K_\bullet$ is the following chain complex of vector bundles  over $K_0$: \begin{equation*}\label{eq:tang-cx}
    \huaT_l K := \left\{\begin{array}{ll}
        \ker Tp_{l, l}|_{K_0} & \text{for}\; l>0, \\
         TK_0 & \text{for}\; l=0,\\
         0&\text{for}\; l<0,
    \end{array}\right. 
\end{equation*} with $ \partial_l:=(-1)^l Td^l_l$. We omit the lower index of $\partial_l$ when it is clear from the context. 
We denote by $H_\bullet(\huaT K)$ the {\bf homology groups} of the tangent complex $(\huaT_\bullet K, \partial)$.
\end{definition}
\begin{remark}\label{remark:tang-cx-me}
This notion of tangent complex is invariant up to weak equivalence under Morita equivalence of higher Lie groupoids (see Corollary \ref{cor:tang-cx-we}). Thus it provides a correct notion of ``tangent complex'' for a higher stack presented by a Lie $n$-groupoid. 
\end{remark}

\begin{proposition}\label{tcx-quot}
Let $K_\bullet$ be a Lie $n$-groupoid. For all $l\ge 0$ we have
\begin{equation*}
\huaT_l K\cong TK_l|_{K_0}/\oplus_{i=0}^{l-1}\im(Ts^{l-1}_i), \quad \text{and}\quad\partial_l= \sum_{i=0}^l (-1)^i Td^l_i
\end{equation*}under this isomorphism. Moreover $\huaT_{l}K=0$ if $l>n$ and the rank of $\huaT_l K$ is  \begin{equation*} \label{eq:rank-T}
    \dim K_l- \sum_{i=0}^{l-1}(-1)^{i}\binom{l}{i+1}\dim K_{l-1-i}.
\end{equation*} 
\end{proposition}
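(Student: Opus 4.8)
The plan is to prove the isomorphism $\huaT_l K \cong TK_l|_{K_0}/\oplus_{i=0}^{l-1}\im(Ts^{l-1}_i)$ by applying the Dold-Kan correspondence (Theorem \ref{thm:doldkan}) fiberwise to the simplicial vector space obtained by restricting the tangent bundles to a point of $K_0$. More precisely, fix $x\in K_0$ and consider the simplicial vector space $V_\bullet$ with $V_l = T_{\mathbf{s}(x)}K_l$, where $\mathbf{s}(x)$ denotes the iterated degeneracy image of $x$; the face and degeneracy maps are the differentials $Td_i$ and $Ts_i$. Dold-Kan tells us that the normalised complex $(NV_\bullet, \partial)$ with $NV_l = \cap_{i=0}^{l-1}\ker d_i^l$ and $\partial = (-1)^l d_l^l$ is isomorphic to the complex $(\bar N V_\bullet, \bar\partial)$ with $\bar N V_l = V_l/(\sum_i \im s_i^{l-1})$ and $\bar\partial = \sum_i (-1)^i d_i^l$. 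By definition of the tangent complex, $\huaT_l K|_x = \ker Tp_{l,l}|_x$, and as observed in the paragraph preceding Theorem \ref{thm:doldkan}, $\ker p_{l,l} = NV_l$ in the simplicial vector space setting; hence $\huaT_l K|_x = NV_l$. Combining these two facts gives $\huaT_l K|_x \cong \bar N V_l = T_{\mathbf s(x)}K_l/(\sum_i \im Ts_i^{l-1})$, compatibly with the differentials, which is the claimed isomorphism pointwise. One then checks that this is an isomorphism of vector bundles over $K_0$ (not merely a fiberwise one): the right-hand side is a quotient of a vector bundle by a subbundle — here one needs that $\oplus_i \im(Ts_i^{l-1})$ has constant rank, which follows because the $s_i$ are embeddings and the horn/coskeleton structure forces the ranks to be locally constant — and the Dold-Kan isomorphism is natural, hence bundle-theoretic.

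Next, the vanishing $\huaT_l K = 0$ for $l>n$: for a Lie $n$-groupoid, the horn projections $p_{l,j}$ are diffeomorphisms for $l>n$, so in particular $Tp_{l,l}$ is an isomorphism and $\ker Tp_{l,l} = 0$; by Definition \ref{defi:tangent-cx} this is exactly $\huaT_l K$.

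Finally, for the rank formula: from the isomorphism just established, $\rk \huaT_l K = \dim K_l - \rk\bigl(\oplus_{i=0}^{l-1}\im Ts_i^{l-1}\bigr)$. Since each $s_i^{l-1}\colon K_{l-1}\to K_l$ is an embedding, $\rk \im Ts_i^{l-1} = \dim K_{l-1}$ individually, but the images overlap, so one must compute the rank of the sum by inclusion-exclusion using the simplicial identities $s_i^l s_j^{l-1} = s_{j+1}^l s_i^{l-1}$ for $i\le j$. Equivalently, and more cleanly, I would argue by induction on $l$: write $W_l = \oplus_{i=0}^{l-1}\im Ts_i^{l-1}\subset TK_l|_{K_0}$ and note that the degeneracies assemble the "degenerate part" of $TK_l$, whose dimension is $\dim K_l - \rk\huaT_l K$; comparing the Euler characteristics / dimension count of the truncated simplicial vector space against the Dold-Kan decomposition $V_l \cong \oplus_{\text{surjections } [l]\twoheadrightarrow[k]} NV_k$ gives $\dim K_l = \sum_{k=0}^l \binom{l}{k}_{\!\!\text{surj}} \rk\huaT_k K$ where the coefficient is the number of surjective order-preserving maps $[l]\to[k]$. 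Inverting this triangular system yields $\rk\huaT_l K = \sum_{i} (-1)^i \binom{l}{i+1}\dim K_{l-1-i}$ with the sign and binomial coefficient as stated; the binomial $\binom{l}{i+1}$ arising because the Möbius inversion of "number of surjections" over the simplex category produces signed binomial coefficients.

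The main obstacle I anticipate is the bundle-theoretic upgrade of the pointwise Dold-Kan isomorphism — specifically, verifying that $\oplus_{i=0}^{l-1}\im(Ts_i^{l-1})$ is genuinely a subbundle of $TK_l|_{K_0}$ of locally constant rank, so that the quotient is a smooth vector bundle and the fiberwise isomorphisms glue. This is where the Kan/coskeleton hypotheses on $K_\bullet$ are really used (to control ranks), and it is the step that has no analogue in the purely algebraic Dold-Kan statement; everything else is either a direct invocation of Theorem \ref{thm:doldkan} or a combinatorial bookkeeping of simplicial identities.
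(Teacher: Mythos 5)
Your proof of the isomorphism and of the vanishing for $l>n$ is exactly the paper's argument: pointwise Dold--Kan (Theorem \ref{thm:doldkan}) applied to the simplicial vector space $T_{\mathbf{s}(x)}K_\bullet$, plus the fact that $p_{l,l}$ is a diffeomorphism for $l>n$; the paper does not even address the bundle-theoretic gluing you worry about, and your observation that the degenerate part $\sum_i \im(Ts^{l-1}_i)$ has locally constant rank (equal to $\dim K_l-\rk\huaT_l K$, hence constant because $\huaT_l K=\ker Tp_{l,l}|_{K_0}$ is already a bundle) is the right way to settle it --- note also that the ``$\oplus$'' in the statement is really the sum of the images, as you implicitly treat it. Where you genuinely diverge is the rank formula: the paper gives no proof at all, calling it ``lengthy combinatorics'' and citing \cite{stefano-thesis}, whereas you derive it from the Dold--Kan direct-sum decomposition $V_l\cong\bigoplus_{k}\bigoplus_{[l]\twoheadrightarrow[k]}NV_k$, giving $\dim K_l=\sum_{k=0}^{l}\binom{l}{k}\,\rk\huaT_k K$ (there are $\binom{l}{k}$ order-preserving surjections $[l]\to[k]$), and then inverting. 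This is a clean and self-contained route, and it does produce the stated formula: binomial inversion gives $\rk\huaT_l K=\sum_{k=0}^{l}(-1)^{l-k}\binom{l}{k}\dim K_k$, which after the substitution $k=l-1-i$ is precisely $\dim K_l-\sum_{i=0}^{l-1}(-1)^i\binom{l}{i+1}\dim K_{l-1-i}$. The only thing I would ask you to write out is that inversion step (your appeal to ``M\"obius inversion over the simplex category'' is vaguer than needed --- ordinary binomial inversion of the triangular system suffices); with that spelled out, your argument for the rank formula is arguably more informative than what the paper offers.
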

\begin{proof}
The isomorphism follows by using Dold-Kan (Theorem \ref{thm:doldkan}) point-wise. Since $p_{l, j}$ is an isomorphism for $l>n$, it is clear that   $\huaT_{l}K=0$ if $l>n$. The rank formula involves very nice but lengthy combinatorics presented in \cite{stefano-thesis}. 
\end{proof}

\begin{remark}\label{remark:tangent-cx}
The tangent complex in quotient form of Proposition \ref{tcx-quot} also appears in \cite{Lesdiablerets, MeTa18b}. Moreover, this complex should host the structure of a Lie $n$-algebroid.  A presheaf version of the statement is proven in \cite{severa:diff}, however whether the desired presheaf is represented by the tangent complex is not proven. Some effort to prove the representibility was done in \cite{li:thesis}, however, Lemma 8.34 therein still contains a flaw in the general case. 
An article fixing all these problems is in preparation \cite{lfrwz}. 

It is clear that the concept of tangent complexes works well for a local Lie $n$-groupoid, and the tangent complex of a local Lie $n$-groupoid is still a complex of vector bundles over the base $K_0$,  since we only have used the submersion condition in Definition \ref{defi:tangent-cx}. 
\end{remark}

\begin{example}\label{eq:tan-1}
Following Example \ref{ex:1group}, let $K\rightrightarrows M$ be a Lie groupoid. Its nerve $NK_\bullet$ is then a Lie $1$-groupoid. In this case the tangent complex is concentrated in degrees $0$ and $1$, and given by its Lie algebroid $A=\ker Ts_{|M}$ and the anchor
\begin{equation*}
  \huaT_i K=\left\{\begin{array}{ll}
      A &\text{for } i=1,   \\
      TM & \text{for } i=0,\\
      0& \text{otherwise}.
  \end{array}\right. \text{with }\partial=\rho\quad \text{and}\quad H_i(\huaT K)=\left\{\begin{array}{ll}
      \ker \rho &\text{for } i=1,   \\
      \coker\rho & \text{for } i=0,\\
      0& \text{otherwise}.
  \end{array} \right. 
\end{equation*}
\end{example}

Since our main motivation is to study symplectic structures, we recall that {\bf differential forms} on a simplicial manifold $X_\bullet$ live in the de Rham simplicial double complex $
(\Omega^\bullet(X_\bullet), d, \delta)$ (see \cite{bss:derham, dup:derham}).
Here $d$ is the usual de Rham differential $d: \Omega^q(X_p)\to \Omega^{q+1}(X_p)$, and the $\delta$ is the simplicial differential $$\delta: \Omega^q(X_{p-1})\to \Omega^q(X_{p}), \quad \delta=\sum_{i=0}^{p} (-1)^i d^*_i.$$
The associated differential on the total complex is $D=\delta+(-1)^p d$. A special important sub-complex is the so called {\bf normalised double complex} 
\[
\hat{\Omega}^\bullet(X_\bullet)=\{\alpha\in\Omega^\bullet(X_\bullet)\ |\ s^*_i\alpha=0, \quad \text{for all possible $i$} \}. 
\] In other words, a differential form $\alpha$ is called {\bf normalised} if it vanishes on degeneracies. An {\bf $m$-shifted $k$-form  } $\alpha_\bullet$ is 
\begin{equation} \label{eq:cut}
    \alpha_\bullet=\sum_{i=0}^m \alpha_i\ \text{ with } \ \alpha_i\in\widehat{\Omega}^{k+m-i}(X_i).
\end{equation}
We say that $\alpha_\bullet$ is {\bf closed} if $D\alpha_\bullet=0$. 


\subsection{Shifted symplectic structures}
Following \cite{Lesdiablerets}, we express the non-degeneracy of an  $m$-shifted $2$-form $\alpha_\bullet$ on a (local) Lie $n$-groupoid $K_\bullet$ in terms of the tangent complex $(\huaT_\bullet K, \partial)$. Just  as on a Lie algebroid there are infinitesimal multiplicative (IM) forms  \cite{BCWZ, hen:mul}, there is a higher version showing up on tangent complexes of Lie $n$-groupoids. Here we introduce part of the IM-form associated to a $m$-shifted $2$-form.

For a Lie $n$-groupoid $K_\bullet$ and a closed $m$-shifted $2$-form $\alpha_\bullet$ we introduce its \textbf{IM-form}\footnote{This nice explicit formula of IM form comes from \cite{Lesdiablerets}. Nevertheless, as also indicated in \cite{BCWZ, hen:mul} in lower cases,  $\lambda^{\alpha_\bullet}$ is probably just the leading term of the full IM-form and does not contain all the infinitesimal information.} $\lambda^{\alpha_\bullet}$ in the following way: for $l\in\Z$ and $v\in (\huaT_l K)_x\subset T_x K_l$, $w\in (\huaT_{m-l}K)_x\subset T_{x} K_{m-l}$ in the tangent complex at $x\in K_0$, we define
\begin{equation}\label{nondeg-pairing}
    \lambda^{\alpha_\bullet}_x( v, w ) := \sum_{\sigma \in \Sh_{l,m-l}} (-1)^\sigma \alpha_m(T(s_{\sigma(m-1)}\dots s_{\sigma(l)} )v, T(s_{\sigma(l-1)} \dots s_{\sigma(0)})w)
\end{equation}
where $\Sh_{l, m-l}$ is the set of $(l, m-l)$-shuffles, and $(-1)^\sigma$ is the permutation sign of $\sigma$.

It is clear from the definition that $\lambda^{\alpha_\bullet}$ is graded anti-symmetric.  Furthermore, $\lambda^{\alpha_\bullet}$ is {\bf infinitesimal multiplicative}\footnote{This together with several other useful statements is stated in \cite{Lesdiablerets} without a proof. We give a proof to them in Appendix \ref{Ap:IM} for the completion of this article. These tasks were given to Florian Dorsch for his master thesis under the guide of the second author.  }, that is  for $u\in \huaT_{l+1}K$, $w\in \huaT_{m-l}K$
\begin{equation}\label{eq:inf-mul}
    \lambda^{\alpha_\bullet}(\partial u, w) + (-1)^{l+1} \lambda^{\alpha_\bullet}( u, \partial w )=0.
\end{equation}
This implies that $\lambda^{\alpha_\bullet}(\cdot,\cdot)$ descends to the homology groups $H_\bullet(\huaT K)$.  Thus we have an induced pairing 
\begin{equation}\label{eq:homo-pai}
    \lambda^{\alpha_\bullet}(\cdot,\cdot):H_l(\huaT K)\times H_{m-l}(\huaT K)\to \R_{K_0}
\end{equation}
that we denote by the same symbol. The $m$-shifted $2$-form $\alpha_\bullet$ is called {\bf non-degenerate} if  \eqref{eq:homo-pai} defines a pointwise non-degenerate pairing.

\begin{remark}\label{remark:local-sslg}
We notice that the non-degeneracy condition may easily adapt to the case of local Lie $n$-groupoids (see Def. \ref{def:lie-n-gpd}) since the non-degeneracy condition only depends on the tangent complex which works well for local Lie $n$-groupoids (see Remark \ref{remark:tangent-cx}). 
\end{remark}

\begin{definition}\label{def:mSSLnG}
Following \cite{Lesdiablerets}, we say that the pair  $(K_\bullet, \alpha_\bullet)$ is an {\bf $m$-shifted symplectic (resp. local) Lie $n$-groupoid} if $K_\bullet$ is a (resp. local) Lie $n$-groupoid and $\alpha_\bullet$ is a closed, normalized and non-degenerate $m$-shifted $2$-form on $K_\bullet$. 
\end{definition}

Since this is one of the main characters of this work, we make a couple of comments that perhaps shed light in the previous definition. 

\begin{remark}[Clarification on \eqref{nondeg-pairing} and \eqref{eq:inf-mul}]\label{remark:extreme-case}
The pairing $\lambda^{\alpha_\bullet}$ was introduced in \cite{Lesdiablerets} and can be understood as part of a Van Est map for Lie $n$-groupoids (see Proposition \ref{VE} and Remark \ref{VE2} for this interpretation in our particular case). Since $K_\bullet$ is a Lie $n$-groupoid, it follows that $\huaT_\bullet K$ is concentrated in degrees $0$ to $n$. Hence, for $l<0$ or $l>m$ we have $v=0$ or $w=0$ respectively, and we define $\lambda^{\alpha_\bullet}_x(v,w):=0$ in these cases. If $l=0$, Equation \eqref{nondeg-pairing} has the simpler expression
$$\lambda_x^{\alpha_\bullet}(v,w)=\alpha_m(T(s_{m-1}\cdots s_0)v, w),$$
and the case of $l=m$ is analogous. The extreme case of $l=m=0$ in \eqref{nondeg-pairing} is given by  $\lambda^{\alpha_\bullet}_x=\alpha_0(v, w).$ Notice also that if $u\in \huaT_0 K$ then \eqref{eq:inf-mul} reads as $\lambda^{\alpha_\bullet}( u, \partial w )=0$, and analogous for $w\in \huaT_0 K.$
\end{remark}

\begin{remark}[Non-degeneracy condition and values of $m$ and $n$] \label{rmk:m-n}
For $K_\bullet$ a Lie $n$-groupoid,  we know  that the only possible non-zero homology groups are concentrated between $0$ and $n$. Suppose that $H_0(\huaT K)\neq 0\neq H_n(\huaT K)$.  Then, due to the non-degeneracy condition from \eqref{eq:homo-pai}, $K_\bullet$ only admits $n$-shifted symplectic structures. This will be the generic situation but other cases can happen.

The cases when $m>n$ are included in $m$-shifted symplectic $m$-groupoids because any Lie $n$-groupoid is also a Lie $m$-groupoid for $m\ge n$. The cases when $m<n$ can be understood as ``adding singularities" to the case of $m$-shifted symplectic $m$-groupoids (see Example \ref{ep:1-2}).
\end{remark}

\begin{remark}
The homology groups $H_\bullet(\huaT K)$ form a bundle of vector spaces over $K_0$, but it might not have a constant rank, and thus is not a vector bundle in general. But this does not prevent us from
describing the above concept of non-degeneracy. In \eqref{eq:homo-pai} the trivial bundle over $K_0$ is denoted by $\R_{K_0}$ and pointwise non-degenerate means  exactly that for each $x\in K_0$ the pairing restricted to the fibre of $x$ is non-degenerate.
\end{remark}

\begin{remark}[Cotangent complex]
A way to reformulate the non-degeneracy is by introducing the {\bf cotangent complex}
$(\huaT^*_\bullet K, \partial^*)$ with 
$$\huaT^*_i K=(\huaT_{-i}K)^*\qquad\text{and}\qquad \partial^*_i=(-1)^{-i+1}\partial^t_{-i+1}.$$
Then $\lambda^{\alpha_\bullet}$ being non-degenerate can be rephrased by saying that the morphism 
\begin{equation*}
    \begin{array}{rc}
         \lambda^{\alpha_\bullet,\sharp}:&(\huaT_\bullet K,\partial)\to (\huaT^*_\bullet K[-m],\partial^t), 
         \quad v \mapsto \lambda^{\alpha_\bullet,\sharp}(v)=\lambda^{\alpha_\bullet}(v,\cdot)
    \end{array}
\end{equation*}
is a quasi-isomorphism of complexes. 
\end{remark}

\subsection{Examples} \label{sec:example-m-n}

Although the focus of this article is not the general theory of $m$-shifted symplectic Lie $n$-groupoids, we provide a couple of examples that illustrate the definition for small $m$ and $n$. The examples will show that $m$-shifted symplectic Lie $n$-groupoids incorporate already well known objects.

\begin{example}[$m=0$, $n=0$]
Recall that a Lie $0$-groupoid is the same as a manifold $M$, with $M_\bullet$ given by $M_i=M$ with faces and degeneracies all equal to the identity. The tangent complex is concentrated in degree $0$ and given by $\huaT_0 M=TM$. Hence $M_\bullet$ only admits $0$-shifted symplectic structures. It is trivial to see that $\omega_\bullet=\omega\in\Omega^2(M)$ is a $0$-shifted symplectic structure if and only if it is symplectic. Thus we obtain the following correspondence
$$\{ \text{ Symplectic manifolds }\ (M,\omega) \}\rightleftharpoons\{ \text{ 0-Shifted symplectic Lie 0-groupoids }\ (M_\bullet,\omega_\bullet) \}.$$

\end{example}

\begin{example}[$m=0$, $n=1$]
By Example \ref{ex:1group} we have a Lie groupoid $K\rightrightarrows M$.
A $0$-shifted 2-form is simply a 2-form $\omega\in \Omega^2(M)$, and $D\omega=0$ if and only if $d\omega=0$ and $s^*\omega=t^*\omega$. Here the normalized condition is vacuous. In Example \ref{eq:tan-1} we computed the tangent complex, and the non-degeneracy is equivalent to 
$$\ker\rho=0 \quad\text{ and }\quad \omega_{|\coker\rho} \text{ non-degenerate},$$ according to Remark \ref{rmk:m-n} and \eqref{eq:homo-pai}.

Since $\ker\rho=0$, there is  a regular distribution on $M$ given by $A=D\subseteq TM$ that carries a non-degenerate $2$-form on the transversal directions. In other words, a $0$-shifted symplectic Lie $1$-groupoid gives a precise mathematical formulation for the sentence: ``The leaf space $F=M/D$ of a regular distribution $D$ on a manifold $M$ is symplectic'', and this is what we shall expect for a symplectic differentiable stack, because $0$-shifted symplectic Lie groupoid should provide an atlas for a symplectic differentiable stack. This concept is studied in \cite{hoffman-sjamaar} under the name of $0$-symplectic groupoids. 
\end{example}

\begin{example}[$m=1$, $n=1$] \label{ep:pre-symp-gpd}
Recall from \cite{BCWZ} that a {\bf twisted presymplectic Lie groupoid}  (a.k.a. quasi-symplectic Lie groupoid in \cite{Xu04}) is a triple $(K\rightrightarrows M, \omega, H)$ where $K\rightrightarrows M$ is a Lie groupoid, and $\omega\in\Omega^2(K)$ and $H\in\Omega^3(M)$ satisfy
\begin{equation} \label{eq:1-1}
    \delta\omega=0, \quad d\omega=\delta H,\quad dH=0\quad\text{and}\quad \ker\omega_x\cap \ker T_xs\cap \ker T_xt=0\quad \forall x\in M.
\end{equation}
As in the previous example we fix $s=d_0$, $t=d_1$, $A=\ker Ts_{|M}$ and $\rho=Tt_{|A}$.
The equation $\delta\omega=0$ implies that $\omega_x=0$ for $x\in M$. Hence $\omega_\bullet:=\omega+H$ is a normalized $1$-shifted $2$-form. The three equations from the left are equivalent to $D\omega_\bullet=0$. Finally, the last condition in \eqref{eq:1-1} can be reformulated as 
\begin{equation}\label{eq:non11}
    \ker\omega_x\cap  A_x\cap \ker \rho_x=0, \quad \forall x\in M.
\end{equation}
Since the tangent complex is also given in Example \ref{eq:tan-1}, we see that \eqref{eq:non11} is equivalent to the non-degeneracy condition for $\omega_\bullet$. So we obtain the following correspondence
$$\{ \text{ Twisted presymplectic Lie groupoids } \}\rightleftharpoons\{ \text{ 1-shifted symplectic Lie 1-groupoids } \}.$$
In particular, the classical notion of {\bf symplectic groupoid} is a particular case of a 1-shifted symplectic Lie 1-groupoid.    
\end{example}

\begin{remark}
In \cite{mat:vb} it was already noticed that the non-degeneracy condition \eqref{eq:non11} is equivalent to saying that  the tangent complex and the cotangent complex are quasi-isomorphic. Moreover they show that \eqref{eq:non11} is equivalent to $\omega^\sharp: TK\to T^*K$ being a Morita equivalence between the VB-groupoids $TK$ and $T^*K$. For more detail see \cite[Section 5.2]{mat:vb}.
\end{remark}

\begin{example}[$m=2$, $n=1$]
In this case, we have a Lie groupoid $K\rightrightarrows M$ endowed with $\omega_\bullet=\omega_2+\omega_1+\omega_0$, where $\omega_i\in\Omega^{4-i}(K_i)$ satisfies $D\omega_\bullet=0$ and $\omega_\bullet$ is normalized. Since the tangent complex is again \eqref{eq:tan-1}, the non-degeneracy condition implies that  $H_0(\huaT K)=0$ (which is equivalent to the fact that $\rho$ is surjective) and that $\lambda^\omega|_{\ker \rho}$ is non-degenerate. In particular, the Lie algebroid must be transitive. 

In the study of Heterotic Courant algebroids, see e.g. \cite{bar:tra}, a particular case of this already appeared. They consider the case when $\omega_0=0$ and the Lie algebroid is the Atiyah algebroid of a principal bundle. In Sections \ref{sec-findim} and \ref{sec:manin-model} we will consider another particular case, namely when $M=pt$. 
\end{example}

\begin{example}[m=1, n=2] \label{ep:1-2}
 Recall that a Poisson manifold $(P,\pi)$ may not always integrate to a symplectic groupoid, but it can always integrate to a Lie 2-groupoid $K_\bullet$ with a symplectic form $\omega\in\Omega^2(K_1)$ as demonstrated in \cite{tz2}. Now we show that this is in fact a 1-shifted symplectic Lie 2-groupoid. 

Recall from \cite{tz2} that for a Poisson manifold $(P, \pi)$, the so-called $A_0$-path space $P_0 T^*P$ is
\[
P_0 T^*P=\{ a\in C^1(I, T^*P) | \pi ( a)= \gamma', \text{with}\; \gamma=p(a) \; \text{the based path of}\; a, a|_{\partial I}=a'|_{\partial I}=0,  \}, 
\]where $p: T^*P\to P$ is the natural projection. Moreover, $P_0 T^*P$ it has a foliation $\huaF$ of codimension $2\dim P$.  Following \cite{tz2}, we construct a Lie  2-groupoid as follows.  Take complete local transversals $U_i$ with respect to $\huaF$, that is, $\sqcup U_i$ intersects with all the leaves. Let $Mon$ denote the monodromy groupoid of the foliation $\huaF$ on $P_0 T^*P$, and $Mon|_{P_0 T^*P \times K_1}$ denotes its source fibre over $K_1=\sqcup U_i$, and $\odot$ denotes the concatenation of two paths.  Then let $K_\bullet$ denote the Lie 2-groupoid, 
$$K_\bullet=\cdots K_1\times_{K_0} K_1 \times_{\odot, P_0 T^*P, \target} Mon|_{P_0 T^*P \times K_1}\aaar\sqcup U_i\rightrightarrows P.$$
We have $\dim K_1 = 2\dim P$, $\dim K_2 = 3 \dim P$.  

On the path space $P_0 T^*P$, we have the 2-form 
\[
\omega(X, Y ):= \int^1_0 \omega_c(X(t), Y(t)), \quad \forall X, Y \in T(P_0T^*P),
\]where $\omega_c$ is the canonical 2-form on $T^*P$. This 2-form restricted on $K_1$ becomes a symplectic 2-form, moreover $\delta \omega =0$ (\cite[Theorem 1.1]{tz2}). It is clear that $\omega$ is normalized. The fact that $\omega$ is also non-degenerate in the sense that its IM form is non-degenerate   follows from Example \ref{ep:pre-symp-gpd} as the IM form only depends on the behavior of $\omega$ in a small neighborhood $U$ of $K_0$ in $K_1$, and there $(U, \omega)$ is a symplectic local Lie groupoid. 
\end{example}

\begin{remark}
 We end this section by pointing that a similar object was introduced in \cite{sev:int}. A  strictly $m$-symplectic Lie $n$-groupoid is a pair $(K_\bullet, \omega)$ where $K_\bullet$ is a Lie $n$-groupoid and $\omega\in\Omega^2(K_m)$ satisfies $D\omega=0$ and $\omega^\sharp: TK_m\to T^*K_m$ isomorphism. Observe that $\omega$ is not  required to be normalized. Moreover, the non-degeneracy condition described by $\omega^\sharp: TK_m\to T^*K_m$ being an isomorphism is neither contained nor included by our non-degeneracy condition.  This can be clearly seen already in the case of $0$-shifted Lie 1-groupoids. 
\end{remark}

\subsection{Symplectic Morita equivalences} \label{sec:me}
In order to introduce  symplectic Morita equivalences among $m$-shifted symplectic Lie $n$-groupoids we need to recall hypercovers and Morita equivalences for Lie $n$-groupoids. We follow \cite[Sect.2]{z:tgpd-2}. 

The {\bf boundary space} or {\bf $i$-matching space} of a simplicial manifold $X_\bullet$ is \begin{equation} \label{eq:boundary}
    \partial_i(X_\bullet):=\Hom(\partial \Delta[i], X_\bullet)=\{(x_0,\cdots, x_i)\in X_{i-1}^{\times i+1}| d_a(x_b)=d_{b-1}(x_a)\ \forall a<b\}.
\end{equation}
 If $K_\bullet, J_\bullet$ are two Lie $n$-groupoids,  then a simplicial morphism $f_\bullet:K_\bullet\to J_\bullet$ is called a {\bf hypercover} if the maps 
\begin{equation} \label{eq:def-hypercover}
     q_i:=((d_0,\cdots, d_i), f_i):K_i\to \partial_i(K_\bullet)\times_{\partial_i(J_\bullet)}J_i
\end{equation}
are surjective submersions for $0\leq i<n$ and an isomorphism for $i=n$. Then $q_i$ is automatically an isomorphism for $i\ge n+1$ \cite[Lemma 2.5]{z:tgpd-2}. Two Lie $n$-groupoids $K_\bullet$ and $J_\bullet$ are called {\bf Morita equivalent} if and only if there is a third Lie $n$-groupoid $Z_\bullet$ with hypercovers $f_\bullet:Z_\bullet\to K_\bullet$ and $g_\bullet:Z_\bullet\to J_\bullet$.  

\begin{remark}
In general, $\partial_i(X_\bullet)$ might not be a manifold any more, even if $X_\bullet$ is a Lie $n$-groupoid, and the most-right-hand-side of \eqref{eq:boundary} is a set-theoretical description for it.  Nevertheless, the right-hand-side of \eqref{eq:def-hypercover} is always a manifold if $f_\bullet$ is a hypercover---even though it seems to be logically dependent, this is something one can prove level-wise inductively (see \cite[Sect.2.1]{z:tgpd-2}) just like the case of horn spaces mentioned in Remark \ref{remark:1-2}. Therefore hypercover is a well-defined concept for Lie $n$-groupoids. 
\end{remark}

Now we prove a statement on the relation between Morita equivalence of Lie n-groupoids and weak equivalence between their tangent complexes. Let $A_\bullet\to M$ and $B_\bullet\to N$ two chain complexes of vector bundles. A chain map $f_\bullet: A_\bullet\to B_\bullet$ consists of  degree-wise vector bundle morphisms $f_i: A_i \to B_i$ over the same base map $f: M\to N$, which in addition commute with the differentials of $A_\bullet$ and $B_\bullet$. A chain map $f_\bullet: A_\bullet\to B_\bullet$ covering a surjective submersion $f:M\to N$ is called a {\bf quasi-isomorphism} if for all points $x\in N$ and any of its preimages $y\in M$, $f_\bullet|_{y}:A_\bullet|_y \to B_\bullet|_x$ is a quasi-isomorphism. We say that $A_\bullet\to M$ and $B_\bullet\to N$ are {\bf weakly equivalent} if there exists another chain complex of vector bundles $C_\bullet\to M'$ and quasi-isomorphisms $f_\bullet:C_\bullet\to A_\bullet$ and $g_\bullet:C_\bullet\to B_\bullet$.

\begin{lemma}\label{lem:h-w}
Given a hypercover $f_\bullet:K_\bullet\to J_\bullet$ of Lie $n$-groupoids, the induced map on their tangent complexes $Tf_i:\huaT_i K\to\huaT_i J$ forms a quasi-isomorphism. In fact, each $Tf_i$ is surjective. 
\end{lemma}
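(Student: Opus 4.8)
The plan is to work level by level in the simplicial degree $i$ and reduce the statement to a fibered product computation coming directly from the hypercover condition \eqref{eq:def-hypercover}. Recall from Proposition \ref{tcx-quot} that $\huaT_i K \cong TK_i|_{K_0}/\oplus_{j=0}^{i-1}\im(Ts^{i-1}_j)$, and similarly for $J$; since $f_\bullet$ is simplicial it commutes with faces and degeneracies, so $Tf_i$ descends to a well-defined chain map $\huaT_i K \to \huaT_i J$, compatible with the differentials $\partial = \sum(-1)^j Td_j$. So the content is surjectivity of each $Tf_i$ on fibers over $K_0$ (the "in fact" clause), because a surjective chain map between bounded complexes of the \emph{same} length that is an iso in top degree is automatically a quasi-isomorphism --- indeed, once surjectivity is known, one checks the iso in top degree $n$ (where $q_n$ is an isomorphism, hence $Tf_n$ is an iso on the relevant quotients) and then a short diagram chase / mapping-cone argument, or simply the long exact sequence of the surjection with kernel complex $\huaT_\bullet(\ker)$, gives the quasi-isomorphism statement. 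Actually the cleanest route: show the kernel chain complex of $Tf_\bullet$ is acyclic, using that the fibers of $q_i$ assemble into an acyclic object.

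First I would set up the local picture over a point $x\in K_0$. The hypercover maps $q_i:K_i\to \partial_i(K_\bullet)\times_{\partial_i(J_\bullet)}J_i$ are surjective submersions for $i<n$ and isomorphisms for $i\ge n$. Differentiating at the iterated degeneracy $s_0^i(x)\in K_i$ and restricting, I get that $Tq_i$ is a surjective bundle map onto $T\big(\partial_i(K_\bullet)\times_{\partial_i(J_\bullet)}J_i\big)|_{K_0}$. The key algebraic observation is that in the Moore-complex normalization, $\ker Tp_{i,i}$ (which computes $\huaT_i$ via $\cap_{j<i}\ker Td_j$ on the nose, by the Dold--Kan picture used in the proof of Proposition \ref{tcx-quot}) interacts with $q_i$ as follows: an element of $\huaT_i K$ is a vector $v\in T_{s_0^i(x)}K_i$ killed by all $Td_j$, $j=0,\dots,i-1$; under $q_i$ such a $v$ maps to $(0,\dots,0, Tf_i(v))$ in the fibered product, because the $\partial_i(K_\bullet)$-component records exactly the $(Td_0 v,\dots,Td_i v)$ data of which the first $i$ vanish --- wait, one must be careful: $\partial_i$ involves all $i+1$ faces $d_0,\dots,d_i$, while $\huaT_i$ only kills $d_0,\dots,d_{i-1}$. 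So I would instead use the $\bar N$-description and the fact that the relevant "new directions" at level $i$ modulo degeneracies are precisely the fiber directions of $q_i$ over the matching space. Concretely: the fiber of $q_i$ over a point is controlled, after quotienting by degenerate directions, by the same data that controls $\huaT_i$, so $Tf_i$ restricted to $\huaT_i K$ surjects onto $\huaT_i J$ with kernel equal to $\ker Tq_i$ (the vertical tangent of the surjective submersion $q_i$), intersected appropriately with the normalized directions.

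The main obstacle I anticipate is precisely this bookkeeping: matching the "normalized tangent directions" $\huaT_i$ (a quotient of $TK_i|_{K_0}$ by images of $i$ degeneracies, equivalently the subbundle $\cap_{j<i}\ker Td_j$) against the vertical directions of $q_i$ (whose target involves the matching space built from all faces and from $J_i$). I would handle it by an inductive argument on $i$, exactly parallel to the inductive proofs in \cite[Sect.2.1]{z:tgpd-2} that $\partial_i(K_\bullet)$ is a manifold and $q_i$ is well-behaved: assuming $Tf_{i-1}$ is surjective (inductive hypothesis), the matching-space map $\partial_i(f_\bullet):\partial_i(K_\bullet)\to\partial_i(J_\bullet)$ is a surjective submersion built from copies of $f_{i-1}$, and then $q_i$ surjective submersion forces $Tf_i:TK_i\to TJ_i$ to be fiberwise surjective in a way compatible with the face maps; passing to the normalized quotient and using that degeneracies are sections of faces gives surjectivity of $Tf_i:\huaT_i K\to\huaT_i J$. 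For the top degree $i=n$, $q_n$ is an isomorphism, so the same analysis gives that $Tf_n$ is an isomorphism on $\huaT_n$, which combined with the surjectivity in all degrees and the standard fact that a surjective chain map of complexes concentrated in degrees $0,\dots,n$ which is an isomorphism in degree $n$ has acyclic kernel complex concentrated in degrees $0,\dots,n-1$ --- iterating downward --- yields that $Tf_\bullet$ is a quasi-isomorphism. I expect the surjectivity at each level to be the real work; the promotion from surjective to quasi-isomorphism is then purely homological algebra, likely relegated to the next lemma or corollary in the paper (indeed Remark \ref{remark:tang-cx-me} and Corollary \ref{cor:tang-cx-we} suggest this is how the authors proceed).
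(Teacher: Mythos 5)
There is a genuine gap, and it sits exactly where you declare the work to be over: the ``standard fact'' you invoke --- that a degreewise surjective chain map of complexes concentrated in degrees $0,\dots,n$ which is an isomorphism in degree $n$ automatically has acyclic kernel and hence is a quasi-isomorphism --- is false. Take $n=1$, $A_1=\R$, $A_0=\R^2$ with differential $x\mapsto(x,0)$, $B_1=\R$, $B_0=\R$ with the identity differential, $f_1=\id$ and $f_0$ the projection to the first coordinate: this is a chain map, surjective in every degree and an isomorphism in top degree, yet $H_0(A)=\R$ while $H_0(B)=0$. The kernel complex is indeed concentrated in degrees $0,\dots,n-1$, but nothing forces it to be acyclic, and ``iterating downward'' has no mechanism behind it. So your proposal, even granting the level-wise surjectivity of $Tf_i\colon\huaT_iK\to\huaT_iJ$ (which your inductive matching-space sketch could plausibly deliver, and which the paper obtains instead from the horn-space statement \cite[Lemma 2.5(3)]{z:tgpd-2} plus a fibered-product argument), never actually proves the quasi-isomorphism claim, which is the main content of Lemma \ref{lem:h-w}.

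The quasi-isomorphism genuinely uses the full hypercover condition at every level --- surjectivity of $Tq_i$ onto the fibered product $\partial_i\times_{\partial_i}(\cdot)$, not merely surjectivity of the induced map on normalized chains --- and this is where the paper's argument differs structurally from yours (you expect the homological step to be the trivial part and surjectivity the hard part; the paper does the reverse). The paper linearizes at a point $y_0\in K_0$ over $x_0\in J_0$, observes that $W_\bullet=T_{y_0}K_\bullet\to V_\bullet=T_{x_0}J_\bullet$ is then a hypercover of $n$-groupoid objects in vector spaces (all finite limits exist there, so the matching objects are again vector spaces), invokes Lemma \ref{lem:hypercover-weak-eq} to conclude it is a weak equivalence of simplicial vector spaces, and only then uses Dold--Kan to convert this into a quasi-isomorphism of Moore complexes, i.e.\ of $\huaT_\bullet K\to\huaT_\bullet J$. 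If you want to avoid the simplicial homotopy theory, the honest substitute is to show directly that the kernel complex is acyclic by using the trivial-fibration property (any cycle in the kernel, being a compatible boundary datum mapping to something hit from $J$, lifts through $Tq_i$ to a filler lying in the kernel); either way, you must use the matching-space surjectivity itself, not just its consequence that each $Tf_i$ is onto.
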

With this lemma (the first part only)\footnote{In the end, a weak equivalence of Lie $n$-groupoids should correspond to a quasi-isomorphism of their corresponding tangent complexes. But hypercovers are special weak equivalences, the second property proven in the above lemma might be the condition that hypercovers additionally correspond to.}, we immediately gain the following. 
\begin{corollary}\label{cor:tang-cx-we}
If two Lie $n$-groupoids $K_\bullet$ and $J_\bullet$ are Morita equivalent, then their tangent complexes $\huaT_\bullet K$ and $\huaT_\bullet J$ are weakly equivalent. 
\end{corollary}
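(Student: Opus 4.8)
The plan is to obtain Corollary~\ref{cor:tang-cx-we} directly from Lemma~\ref{lem:h-w}, since the genuine work lies in the lemma; I also indicate below how I would attack that.

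\emph{Deducing the corollary.} Suppose $K_\bullet$ and $J_\bullet$ are Morita equivalent, so that by definition there is a Lie $n$-groupoid $Z_\bullet$ together with hypercovers $f_\bullet\colon Z_\bullet\to K_\bullet$ and $g_\bullet\colon Z_\bullet\to J_\bullet$. Applying Lemma~\ref{lem:h-w} to each of these yields chain maps $Tf_i\colon\huaT_iZ\to\huaT_iK$ and $Tg_i\colon\huaT_iZ\to\huaT_iJ$ which are quasi-isomorphisms of complexes of vector bundles (and are moreover surjective in every degree). The only point left to verify is that these quasi-isomorphisms cover the surjective submersions on bases demanded by the definition of ``weakly equivalent''. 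But this is the level-$0$ instance of the hypercover condition~\eqref{eq:def-hypercover}: as $\partial\Delta[0]=\emptyset$ we have $\partial_0(X_\bullet)=\Hom(\emptyset,X_\bullet)=\mathrm{pt}$, so $q_0$ is simply $f_0\colon Z_0\to K_0$, a surjective submersion (an isomorphism if $n=0$), and likewise for $g_0$. Hence, taking $C_\bullet:=\huaT_\bullet Z$ over $M':=Z_0$, the pair $(Tf_\bullet,Tg_\bullet)$ exhibits $\huaT_\bullet K$ and $\huaT_\bullet J$ as weakly equivalent, which is the claim.

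\emph{On Lemma~\ref{lem:h-w}, the real obstacle.} I would prove it fiberwise over a point $x\in K_0$. By Proposition~\ref{tcx-quot} and Dold--Kan (Theorem~\ref{thm:doldkan}), the fiber $\huaT_\bullet K|_x$ is the normalized Moore complex of the simplicial vector space obtained by restricting to $x$ the simplicial vector bundle $TK_\bullet|_{K_0}$ (pulled back along iterated $s_0$). Since $f_\bullet$ is simplicial it carries the totally degenerate simplex $s_0^l x$ to $s_0^l f_0(x)$, so $Tf_\bullet$ induces a morphism of simplicial vector spaces $TK_\bullet|_x\to TJ_\bullet|_{f_0(x)}$ whose normalization is precisely $Tf_\bullet\colon\huaT_\bullet K|_x\to\huaT_\bullet J|_{f_0(x)}$. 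Because the tangent functor preserves the transverse fiber products and finite limits used to build the matching spaces $\partial_i(-)$ of~\eqref{eq:boundary}, differentiating the maps $q_i$ of~\eqref{eq:def-hypercover} and restricting to the fiber over the degenerate point identifies the matching maps of this morphism of simplicial vector spaces with the fiberwise linearizations of the $q_i$; thus they are surjective for all $i$ and isomorphisms for $i\ge n$. Under Dold--Kan this says exactly that the chain map $Tf_\bullet$ is a surjective quasi-isomorphism (the characterization of acyclic fibrations in the projective Reedy model structure on connective chain complexes), with the degreewise surjectivity being the extra assertion recorded in the lemma. The delicate part — and where the submersion hypotheses in Definition~\ref{def:lie-n-gpd} and in~\eqref{eq:def-hypercover} are genuinely used — is verifying that all of these fiber products remain transverse after applying $T$ and restricting to the base point, so that the matching spaces, horn spaces, and their tangent spaces are all manifolds and behave as expected.
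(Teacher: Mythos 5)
Your deduction of the corollary matches the paper's: Morita equivalence supplies a zig-zag of hypercovers $K_\bullet\xleftarrow{f_\bullet}Z_\bullet\xrightarrow{g_\bullet}J_\bullet$, Lemma \ref{lem:h-w} makes each leg a quasi-isomorphism of tangent complexes covering the surjective submersions $f_0$, $g_0$ (your remark that $q_0=f_0$ via $\partial_0(X_\bullet)=\mathrm{pt}$ correctly supplies the base condition in the definition of weak equivalence), so taking $C_\bullet=\huaT_\bullet Z$ over $Z_0$ gives exactly the paper's immediate argument. Your appended sketch of Lemma \ref{lem:h-w} itself (fiberwise linearization plus the Dold--Kan characterization of acyclic fibrations as surjective quasi-isomorphisms) is a reasonable variant of the paper's proof, which instead invokes the stalkwise weak-equivalence machinery of Lemma \ref{lem:hypercover-weak-eq} and a separate horn-space argument for degreewise surjectivity, but that lemma is not the statement under review.
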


\begin{proof}[Proof of Lemma \ref{lem:h-w}]
Since  $K_0\xrightarrow{f_0}J_0$ is a surjective submersion, we take a point $x_0\in J_0$ and one of its pre-image $y_0\in K_0$. Clearly, after taking tangent functor and fixing base points, $W_\bullet:=T_{y_0}K_\bullet$ and $V_\bullet:=T_{x_0}J_\bullet$ are still Lie $n$-groupoids. In fact, since all face and degeneracy maps come from taking tangent, they are in particular linear. Thus $W_\bullet$ and $V_\bullet$ are actually $n$-groupoid objects in the category of vector spaces with surjective maps as covers (see \cite[Definition 1.3]{z:tgpd-2}), therefore also simplicial vector spaces.   By taking tangent spaces of \eqref{eq:def-hypercover}, we see that
\[
Tq_i|_{y_0}: W_i \to \partial_i(W_\bullet)\times_{\partial_i(V_\bullet)} V_i,
\] is a surjective linear map between vector spaces. Notice that in the category of vector spaces, all finite limits exist, thus $\partial_i(W_\bullet)$ and  $\partial_i(V_\bullet)$ as finite limits are again vector spaces.  Therefore $T_{y_0}f_\bullet: W_\bullet\to V_\bullet$ is a hypercover. By Lemma \ref{lem:hypercover-weak-eq}, $T_{y_0} f_\bullet$ is further a weak equivalence of simplicial vector spaces. Hence, under Dold-Kan correspondence, $NW_\bullet \to NV_\bullet$ is a linear map preserving homology, thus a quasi-isomorphism. Therefore $\huaT_\bullet K\to \huaT_\bullet J$ is a quasi-isomorphism.

Moreover, by \cite[Lemma 2.5(3)]{z:tgpd-2}, $f_{l,l}: \Hom(\Lambda[l, l], K_\bullet) \to \Hom(\Lambda[l, l], J_\bullet) $ is a surjective submersion. Together with other surjective submersions in the condition of hypercover and Lie $n$-groupoids, we have on the tangent level, 
\begin{equation*}
\xymatrix{
    W_l \ar[r]^{Tp^K_{l,l}} \ar[d]_{T_{y_0}f_l}& W_{l,l} \ar[d]^{T_{y_0}f_{l,l}}\\
    V_l \ar[r]^{Tp^J_{l,l}} & V_{l,l}, }
\end{equation*}
all where four maps are surjective. Here $W_{l,l}:=T_{y_0} \Hom(\Lambda[l, l], K_\bullet)$, and $V_{l,l}:=T_{y_0} \Hom(\Lambda[l, l], J_\bullet)$. An easy linear algebra exercise shows that $T_{y_0} f_l: \ker Tp^K_{l,l}\to \ker Tp^J_{l,l}$ is also surjective. 
\end{proof}

Now we give the technical lemma which involves some machinery developed in \cite{Rogers-Zhu:2016} but outside of this article. As these techniques are only used here, we will not recall all the concepts but rather give precise references to \cite{Rogers-Zhu:2016}. 
\begin{lemma}\label{lem:hypercover-weak-eq}
If $f: W_\bullet\to V_\bullet$ is a hypercover of $n$-groupoid objects in vector spaces with surjective maps as covers, then $f$ is a weak equivalence of simplicial vector spaces. 
\end{lemma}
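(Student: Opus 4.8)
The plan is to prove that a hypercover $f: W_\bullet \to V_\bullet$ of $n$-groupoid objects in vector spaces (with surjective maps as covers) is a weak equivalence of simplicial vector spaces, where ``weak equivalence'' is the model-categorical notion from \cite{Rogers-Zhu:2016}. The key observation is that in the category of vector spaces, being a hypercover is a very rigid condition: the structure maps $q_i$ of \eqref{eq:def-hypercover} are surjective for $i < n$ and isomorphisms for $i \geq n$. The first step will be to recall from \cite{Rogers-Zhu:2016} that an acyclic fibration (trivial fibration) of simplicial vector spaces is precisely a map whose matching maps $W_i \to \partial_i(W_\bullet) \times_{\partial_i(V_\bullet)} V_i$ are surjective for all $i \geq 0$ — in other words, a map which is $\Acyc$ in the sense of their Definition. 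Thus a hypercover differs from an acyclic fibration only in that the matching maps are required to be surjective merely for $i<n$, while for $i \geq n$ they are isomorphisms (hence still surjective).

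The second step is to promote this to the statement that $f$ is itself an acyclic fibration, hence in particular a weak equivalence. Indeed, for $i < n$ the matching map is surjective by the hypercover hypothesis; for $i \geq n$ it is an isomorphism, so again surjective. Therefore all matching maps of $f$ are surjective, so $f \in \Acyc$, i.e. $f$ is an acyclic fibration of simplicial vector spaces. Since every acyclic fibration is a weak equivalence (in any model structure, by definition of the acyclic fibrations as the intersection of fibrations and weak equivalences), we conclude that $f$ is a weak equivalence. One should double-check here that the relevant model structure on simplicial vector spaces — the one used in \cite{Rogers-Zhu:2016} to define weak equivalence of higher groupoid objects — does indeed have acyclic fibrations characterized by surjectivity of all matching maps; this is the standard injective/projective-type model structure coming from the Dold–Kan correspondence, under which a map is a weak equivalence iff it induces isomorphisms on homology of the associated normalized complexes.

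Alternatively, and perhaps more transparently, one can argue directly via Dold–Kan: by Lemma \ref{lem:h-w}'s proof technique, each matching map being surjective forces $Nf_\bullet: NW_\bullet \to NV_\bullet$ to be a surjective quasi-isomorphism of chain complexes (a degreewise-surjective chain map whose matching conditions force the kernel complex to be acyclic), hence $f$ is a weak equivalence under Dold–Kan. Concretely, one filters by skeleta and observes that the fibers of the surjective matching maps assemble into an acyclic complex, so $f$ induces isomorphisms on homology.

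The main obstacle I anticipate is purely bookkeeping: matching the precise definitions and model structure conventions of \cite{Rogers-Zhu:2016} with the hypercover condition stated here, and making sure the $i \geq n$ isomorphism clause is correctly handled (it is the easy case, but the indexing must line up). There is no real geometric or analytic difficulty since we have already reduced to linear algebra over vector spaces; the content is the translation ``hypercover of $n$-groupoid objects in $\mathsf{Vect}$'' $\Rightarrow$ ``acyclic fibration of simplicial vector spaces'' $\Rightarrow$ ``weak equivalence,'' together with a citation to the relevant lemmas of \cite{Rogers-Zhu:2016}.
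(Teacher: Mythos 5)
Your proposal is correct, but it takes a different route from the paper. The paper disposes of the lemma by citing the machinery of \cite{Rogers-Zhu:2016}: a hypercover is a stalk-wise weak equivalence (their Def.~5.1 and Cor.~6.3), and a stalk-wise weak equivalence induces isomorphisms on the simplicial homotopy groups $\pi_k$ (their Prop.~5.3, adapted from the pointed to the based-point-fixed situation), which is exactly what is needed to feed into Dold--Kan in the proof of Lemma \ref{lem:h-w}. You instead exploit the rigidity of the linear setting directly: since $q_i$ is surjective for $i<n$ and an isomorphism for $i\ge n$, every matching map $W_i\to \partial_i(W_\bullet)\times_{\partial_i(V_\bullet)}V_i$ is surjective, which is precisely the right lifting property of the underlying simplicial set map against all boundary inclusions $\partial\Delta[i]\subset\Delta[i]$; hence $f$ is a trivial Kan fibration, so a weak homotopy equivalence, and therefore a weak equivalence of simplicial vector spaces (equivalently, a quasi-isomorphism of normalized complexes under Dold--Kan, as your second, chain-level argument also shows, using that the kernel of a trivial fibration over a point is contractible and that surjectivity of the matching maps forces level-wise surjectivity). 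The standard facts you flag for double-checking are indeed standard: in the model structure on simplicial vector spaces transferred from simplicial sets, weak equivalences and (acyclic) fibrations are detected on underlying simplicial sets, and acyclic fibrations of simplicial sets are exactly the maps with all matching maps surjective. What your route buys is self-containedness and elementarity (pure linear algebra plus classical simplicial homotopy theory, no stalks or descent-category formalism); what the paper's route buys is brevity and consistency with the \cite{Rogers-Zhu:2016} framework already used elsewhere, at the cost of having to remark that the pointed statements there adapt to fixed base points. Either argument suffices for the way the lemma is used.
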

\begin{proof}
Such a hypercover $f$ is a stalk-wise weak equivalence (see \cite[Def.5.1, Cor. 6.3]{Rogers-Zhu:2016}). Then $f$ being a stalk-wise weak equivalence implies that the induced map $\pi_k(f): \pi_k(W, w_0)\to \pi_k(V, v_0)$ is an isomorphism for $v_0\in V_0$, $w_0\in W_0$ (this is an easy implication of \cite[Prop.5.3]{Rogers-Zhu:2016}. There, the pointed case is discussed, however, if we fix base points $w_0$ and $v_0$ similar proofs go through). 
\end{proof}

With all this preliminaries we can return to the world of $m$-shifted symplectic Lie $n$-groupoids. We start with the following observation. 
\begin{lemma}\label{lem:hypercover-pullback}
Let $(K_\bullet, \alpha_\bullet)$ be an $m$-shifted symplectic Lie $n$-groupoid and $f_\bullet: J_\bullet\to K_\bullet$ a hypercover of Lie $n$-groupoids. Then $(J_\bullet, f_\bullet^*\alpha_\bullet)$ is also a $m$-shifted symplectic Lie $n$-groupoid.
\end{lemma}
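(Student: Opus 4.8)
The statement breaks into three things to check: that $f_\bullet^*\alpha_\bullet$ is (i) an $m$-shifted $2$-form, (ii) closed, and (iii) non-degenerate. Parts (i) and (ii) are essentially formal. Since $f_\bullet: J_\bullet \to K_\bullet$ is a simplicial map, pullback commutes with the de Rham differential $d$ and with the simplicial differential $\delta$ (the latter because $f_\bullet$ intertwines the face maps), hence with $D$; so $f_\bullet^*\alpha_\bullet = \sum_{i=0}^m f_i^*\alpha_i$ with $f_i^*\alpha_i \in \Omega^{2+m-i}(J_i)$, and $D(f_\bullet^*\alpha_\bullet) = f_\bullet^* D\alpha_\bullet = 0$. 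For the normalization, $f_\bullet$ also intertwines degeneracies, so $s_i^*(f^*\alpha) = f^*(s_i^*\alpha) = 0$; thus $f_\bullet^*\alpha_\bullet$ is a closed normalized $m$-shifted $2$-form. The only real content is non-degeneracy.

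The plan for non-degeneracy is to compare the IM-forms $\lambda^{\alpha_\bullet}$ on $\huaT_\bullet K$ and $\lambda^{f_\bullet^*\alpha_\bullet}$ on $\huaT_\bullet J$ through the chain map $Tf_\bullet: \huaT_\bullet J \to \huaT_\bullet K$. First I would establish the naturality identity
\begin{equation*}
\lambda^{f_\bullet^*\alpha_\bullet}_y(v, w) = \lambda^{\alpha_\bullet}_{f(y)}(Tf(v), Tf(w))
\end{equation*}
for $v \in (\huaT_l J)_y$, $w \in (\huaT_{m-l}J)_y$. This follows directly from the defining formula \eqref{nondeg-pairing}: each term is $(f_i^*\alpha_m)(T(s_{\sigma(m-1)}\cdots)v, T(s_{\sigma(l-1)}\cdots)w)$, and since $f_\bullet$ commutes with all degeneracy maps, $T(s_{\sigma}\cdots)\circ Tf = Tf_m \circ T(s_\sigma \cdots)$, so $(f_m^*\alpha_m)(\ldots) = \alpha_m(Tf_m(\ldots), Tf_m(\ldots))$; summing over shuffles with signs gives the claim. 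Passing to homology, $\lambda^{f_\bullet^*\alpha_\bullet}$ on $H_\bullet(\huaT J)$ is the pullback of $\lambda^{\alpha_\bullet}$ on $H_\bullet(\huaT K)$ along the map induced by $Tf_\bullet$.

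Now by Lemma \ref{lem:h-w}, $Tf_\bullet: \huaT_\bullet J \to \huaT_\bullet K$ is a quasi-isomorphism covering the surjective submersion $f_0: J_0 \to K_0$, so for each $y \in J_0$ with image $x = f_0(y)$, the induced map $H_\bullet(\huaT J)|_y \to H_\bullet(\huaT K)|_x$ is an isomorphism of graded vector spaces. Since $\lambda^{\alpha_\bullet}$ is a non-degenerate pairing $H_l(\huaT K)|_x \times H_{m-l}(\huaT K)|_x \to \R$ and the induced isomorphisms are compatible with the pairings by the naturality identity, the pulled-back pairing on $H_l(\huaT J)|_y \times H_{m-l}(\huaT J)|_y$ is again non-degenerate. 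Hence $f_\bullet^*\alpha_\bullet$ is non-degenerate, completing the proof. The main (though mild) obstacle is the bookkeeping in the naturality identity — making sure that the shuffle sum, the iterated degeneracies, and the base-point dependence all match up — but this is a direct consequence of $f_\bullet$ being a simplicial map and requires no new ideas; everything substantive has been pre-packaged in Lemma \ref{lem:h-w}.
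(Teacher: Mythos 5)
Your proposal is correct and follows essentially the same route as the paper: closedness and normalization are formal since pullback along a simplicial morphism commutes with $D$ and the degeneracies, the naturality identity $\lambda^{f_\bullet^*\alpha_\bullet}_{y}(v,w)=\lambda^{\alpha_\bullet}_{f(y)}(Tf(v),Tf(w))$ is exactly the paper's key step, and non-degeneracy is then transported through the fiberwise isomorphism on homology supplied by Lemma \ref{lem:h-w}. Your extra unwinding of the shuffle formula only makes explicit what the paper asserts from $f_\bullet$ commuting with all structure maps, so there is no substantive difference.
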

\begin{proof}
Since $f_\bullet$ is a simplicial morphism we have that  $Df^*_\bullet =f^*_\bullet D$ and $f_\bullet^*s^*_i=s_i^*f^*_\bullet$. Therefore $f^*_\bullet\alpha_\bullet$ is closed and normalized,  thus we only need to show that $f^*_\bullet \alpha_\bullet$ is non-degenerate. As $f_0: J_0\to K_0$ is a surjective submersion, we take again $x_0\in K_0$ and one of its preimage $y_0\in J_0$.  Since $f_\bullet$ is a simplicial morphism, it commutes with all face and degeneracy maps and we have
\begin{equation*}
    \lambda^{f^*_\bullet \alpha_\bullet}_{y_0} (v, w) = \lambda^{\alpha_\bullet }_{x_0} (T_{y_0} f (v), T_{y_0}f (w)).
\end{equation*}
By Lemma \ref{lem:h-w}, the induced map $f_*: H_\bullet(\huaT(K)) \to H_\bullet(\huaT(J))$ is an isomorphism, thus the induced pairing of $\lambda^{f^*_\bullet \alpha_\bullet}_{y_0} $ is the same as that of $\lambda^{\alpha_\bullet }_{x_0} $ under this isomorphism. Therefore $\alpha_\bullet$ is non-degenerate if and only if $f^*_\bullet \alpha_\bullet$ is non-degenerate. 
\end{proof}

\begin{definition}\label{def:sympmorita}
Let $(K_\bullet,\alpha_\bullet)$ and $(J_\bullet,\beta_\bullet)$ be two $m$-shifted symplectic Lie $n$-groupoids. We say that $(K_\bullet,\alpha_\bullet)$ and $(J_\bullet,\beta_\bullet)$ are {\bf symplectic Morita equivalent} if there exists  another Lie $n$-groupoid $(Z_\bullet,\phi_\bullet)$ with $\phi_\bullet$ an $(m-1)$-shifted $2$-form and hypercovers $f_\bullet$, $g_\bullet$  satisfying
\begin{equation}\label{eq:sympme}
    K_\bullet\xleftarrow{f_\bullet}Z_\bullet\xrightarrow{g_\bullet}J_\bullet, \quad \text{and} \quad f^*_\bullet\alpha_\bullet-g^*_\bullet\beta_\bullet=D\phi_\bullet.
\end{equation}
\end{definition}

\begin{remark} \label{rmk:lag}
Notice that Morita equivalence for Lie $n$-groupoids is defined as a zig-zag of hypercovers. Therefore our definition says that the underlying Lie $n$-groupoids are Morita equivalent and we add an extra condition for the match of the symplectic form. This extra condition means that the map $f_\bullet\times g_\bullet:(Z_\bullet, \phi_\bullet) \to (K_\bullet\times J_\bullet, (\alpha_\bullet, -\beta_\bullet) )$ is Lagrangian\footnote{Here we appreciate the private communication with Pavel Safronov.}, as defined in \cite{safro}. Therefore symplectic Morita equivalence provides us examples of {\em shifted Lagrangian} stacks. This aspect is further explored in \cite{ABC:lag}. 
\end{remark}

\begin{proposition}
Symplectic Morita equivalence is an equivalence relation.
\end{proposition}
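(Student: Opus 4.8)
The plan is to verify the three defining properties of an equivalence relation for the notion introduced in Definition~\ref{def:sympmorita}: reflexivity, symmetry, and transitivity.

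\textbf{Reflexivity.} Given an $m$-shifted symplectic Lie $n$-groupoid $(K_\bullet,\alpha_\bullet)$, I would take $Z_\bullet=K_\bullet$, both hypercovers equal to $\id_\bullet$, and $\phi_\bullet=0$ (the zero $(m-1)$-shifted $2$-form). Then $\id_\bullet$ is trivially a hypercover, and $\id^*_\bullet\alpha_\bullet-\id^*_\bullet\alpha_\bullet=0=D0$, so \eqref{eq:sympme} holds.

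\textbf{Symmetry.} If $(K_\bullet,\alpha_\bullet)$ and $(J_\bullet,\beta_\bullet)$ are symplectic Morita equivalent via $K_\bullet\xleftarrow{f_\bullet}Z_\bullet\xrightarrow{g_\bullet}J_\bullet$ with $f^*_\bullet\alpha_\bullet-g^*_\bullet\beta_\bullet=D\phi_\bullet$, then the same $Z_\bullet$ with the roles of $f_\bullet$ and $g_\bullet$ exchanged and with $\phi_\bullet$ replaced by $-\phi_\bullet$ witnesses that $(J_\bullet,\beta_\bullet)$ is symplectic Morita equivalent to $(K_\bullet,\alpha_\bullet)$, since $g^*_\bullet\beta_\bullet-f^*_\bullet\alpha_\bullet=D(-\phi_\bullet)$.

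\textbf{Transitivity.} This is the substantive step. Suppose $(K_\bullet,\alpha_\bullet)$ is symplectic Morita equivalent to $(J_\bullet,\beta_\bullet)$ via $K_\bullet\xleftarrow{f_\bullet}Z_\bullet\xrightarrow{g_\bullet}J_\bullet$ with $f^*_\bullet\alpha_\bullet-g^*_\bullet\beta_\bullet=D\phi_\bullet$, and $(J_\bullet,\beta_\bullet)$ is symplectic Morita equivalent to $(L_\bullet,\gamma_\bullet)$ via $J_\bullet\xleftarrow{h_\bullet}W_\bullet\xrightarrow{k_\bullet}L_\bullet$ with $h^*_\bullet\beta_\bullet-k^*_\bullet\gamma_\bullet=D\psi_\bullet$. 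The plan is to form the fibre product $Y_\bullet:=Z_\bullet\times_{J_\bullet}W_\bullet$ along $g_\bullet$ and $h_\bullet$. One needs that this fibre product is again a Lie $n$-groupoid and that the two projections $\mathrm{pr}_Z\colon Y_\bullet\to Z_\bullet$ and $\mathrm{pr}_W\colon Y_\bullet\to W_\bullet$ are hypercovers; this is the standard fact that hypercovers are stable under pullback (base change) in the category of Lie $n$-groupoids, which follows level-wise from the submersion/isomorphism conditions in \eqref{eq:def-hypercover} being preserved under base change, cf.~\cite[Sect.~2]{z:tgpd-2}. Then $Y_\bullet\xrightarrow{f_\bullet\circ\mathrm{pr}_Z}K_\bullet$ and $Y_\bullet\xrightarrow{k_\bullet\circ\mathrm{pr}_W}L_\bullet$ are composites of hypercovers, hence hypercovers (closure under composition is again in \cite[Sect.~2]{z:tgpd-2}). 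For the $2$-form, set
\[
\Phi_\bullet:=\mathrm{pr}_Z^*\phi_\bullet+\mathrm{pr}_W^*\psi_\bullet,
\]
an $(m-1)$-shifted $2$-form on $Y_\bullet$. Using that $g_\bullet\circ\mathrm{pr}_Z=h_\bullet\circ\mathrm{pr}_W$ by construction of the fibre product, and that pullback commutes with $D$, we compute
\[
(f_\bullet\circ\mathrm{pr}_Z)^*\alpha_\bullet-(k_\bullet\circ\mathrm{pr}_W)^*\gamma_\bullet
=\mathrm{pr}_Z^*(f^*_\bullet\alpha_\bullet)-\mathrm{pr}_W^*(k^*_\bullet\gamma_\bullet).
\]
Adding and subtracting $\mathrm{pr}_Z^*(g^*_\bullet\beta_\bullet)=\mathrm{pr}_W^*(h^*_\bullet\beta_\bullet)$ and inserting the two hypotheses gives $\mathrm{pr}_Z^*D\phi_\bullet+\mathrm{pr}_W^*D\psi_\bullet=D\Phi_\bullet$, as required. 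The main obstacle, and the only point requiring genuine care, is the claim that the fibre product $Y_\bullet$ is again a Lie $n$-groupoid and that the projections are hypercovers; everything else is a formal manipulation of pullbacks and of the differential $D$. Since the rest of the paper freely uses such stability properties (e.g.\ in the constructions of Section~\ref{sec:equ}), I would cite \cite[Sect.~2]{z:tgpd-2} for the stability of hypercovers under composition and base change rather than reprove it.
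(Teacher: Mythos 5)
Your proposal is correct and follows essentially the same route as the paper: reflexivity and symmetry are handled trivially, and transitivity is proved by forming the fibre product $Z_\bullet\times_{J_\bullet}W_\bullet$ (whose projections and composites are hypercovers by the results in \cite[Sect.\ 2.1]{z:tgpd-2}) and taking the sum of the pulled-back primitives as the connecting $(m-1)$-shifted $2$-form. The paper's proof is the same computation with the same cited stability facts, so there is nothing to add.
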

\begin{proof}
An $m$-shifted symplectic Lie $n$-groupoid $(K_\bullet, \alpha_\bullet)$ is symplectic Morita equivalent to itself via $K_\bullet\xleftarrow{\id} K_\bullet\xrightarrow{\id}K_\bullet$ with $\phi_\bullet=0$. Reflexivity is also clear, therefore transitivity is the only condition left to be checked. In \cite[Section 2.1]{z:tgpd-2}, it is proven that zig-zag of hypercovers is transitive. Hence using result therein, we have that if $(K^1_\bullet, \alpha^1_\bullet)\xleftarrow{f^1_\bullet} (Z^1_\bullet, \phi^1_\bullet)\xrightarrow{f_\bullet^2}(K^2_\bullet, \alpha^2_\bullet)$ and $(K^2_\bullet, \alpha^2_\bullet)\xleftarrow{g^1_\bullet} (Z^2_\bullet, \phi^2_\bullet)\xrightarrow{g^2_\bullet}(K^3_\bullet, \alpha_\bullet^3)$ are two symplectic Morita equivalences then we can form the following diagram, 
\begin{equation*}
    \xymatrix{&&Z^1_\bullet\times_{K^2_\bullet}Z^2_\bullet \ar[dl]_{\widetilde{g}^1_\bullet}\ar[dr]^{\widetilde{f}^2_\bullet}\\
    &Z^1_\bullet\ar[dr]^{f^2_\bullet}\ar[dl]_{f^1_\bullet}&&Z^2_\bullet\ar[dr]^{g^2_\bullet}\ar[dl]_{g^1_\bullet}\\
    K^1_\bullet&&K^2_\bullet&&K^3_\bullet}
\end{equation*}
with all maps being hypercovers. A direct computation shows that 
$$(K^1_\bullet, \alpha^1_\bullet)\xleftarrow{f^1_\bullet\circ \widetilde{g}^1_\bullet} (Z^1_\bullet\times_{K^2_\bullet}Z^2_\bullet, (\widetilde{g}_\bullet^1)^*\phi_\bullet^1+(\widetilde{f}_\bullet^2)^*\phi_\bullet^2)\xrightarrow{g^2_\bullet\circ \widetilde{f}^2_\bullet}(K^3_\bullet, \alpha_\bullet^3)$$
is a symplectic Morita equivalence.
\end{proof}

\begin{remark}
    In fact, more is true:  $m$-shifted sympelctic structures can be transported by Morita equivalences of Lie $n$-groupoids. For this, we need to show that Morita equivalences induce  quasi-isomorphisms between the simplicial graded de Rham complexs. See \cite{weiershausen2025}\footnote{This is based on a sketch of proof in  \cite{Lesdiablerets}.} for a complete proof. 
\end{remark}

\begin{example}[Strict morphisms] \label{ep:strict-morp}
 If $f_\bullet: J_\bullet \to K_\bullet$ is a hypercover of Lie $n$-groupoids, and $\alpha_\bullet$ is an $m$-shifted symplectic form on $K_\bullet$, then  by Lemma \ref{lem:hypercover-pullback} $(J_\bullet, f^*_\bullet \alpha_\bullet)$ is also an $m$-shifted symplectic Lie $n$-groupoid. Moreover  the two $m$-shifted symplectic Lie $n$-groupoids are symplectic Morita equivalent via $(J_\bullet, 0)$ with one leg $f_\bullet$ and the other leg the identity morphism
 \[
 (J_\bullet, f^* \alpha_\bullet) \xleftarrow{\id_\bullet} ( J_\bullet, 0)  \xrightarrow{f_\bullet} (K_\bullet, \alpha_\bullet).
 \]
\end{example}

\begin{proposition}\label{gauge-transformation}
Let $(K_\bullet,\alpha_\bullet)$ be an $m$-shifted symplectic Lie $n$-groupoid and $\phi_\bullet$ an $(m-1)$-shifted $2$-form. Then $(K_\bullet,\alpha_\bullet+D\phi_\bullet)$ is again an $m$-shifted symplectic Lie $n$-groupoid and 
 \[
 (K_\bullet, \alpha_\bullet+D\phi_\bullet) \xleftarrow{\id_\bullet} ( K_\bullet, \phi_\bullet)  \xrightarrow{\id_\bullet} (K_\bullet, \alpha_\bullet)
 \] is a symplectic Morita equivalence. We call this sort of symplectic Morita equivalence a {\bf gauge transformation}. 
\end{proposition}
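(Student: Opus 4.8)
The plan is to verify the three assertions in order: (i) $\alpha_\bullet + D\phi_\bullet$ is a closed, normalized, non-degenerate $m$-shifted $2$-form; (ii) the identity maps $\id_\bullet$ are hypercovers; and (iii) the matching condition \eqref{eq:sympme} holds with the stated form $\phi_\bullet$. For (i), the shape condition \eqref{eq:cut} is immediate: since $\phi_\bullet = \sum_{i=0}^{m-1}\phi_i$ with $\phi_i \in \widehat\Omega^{2+(m-1)-i}(K_i)$, applying $D = \delta + (-1)^p d$ raises both the simplicial and the de Rham total degree appropriately and preserves the cut, so $D\phi_\bullet$ is again an $m$-shifted $2$-form; hence so is $\alpha_\bullet + D\phi_\bullet$. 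Closedness follows from $D^2 = 0$, giving $D(\alpha_\bullet + D\phi_\bullet) = D\alpha_\bullet = 0$. Normalization follows because $s_i^*$ commutes with $d$ and with each $d_j^*$ (simplicial identities \eqref{eq:face-degen}), hence $s_i^* D\phi_\bullet = D(s_i^*\phi_\bullet) = 0$ whenever $\phi_\bullet$ is normalized.

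The only substantive point in (i) is non-degeneracy, and this is where I expect the main (though still short) work to lie: I must show $\lambda^{\alpha_\bullet + D\phi_\bullet} = \lambda^{\alpha_\bullet}$ as pairings on the tangent complex, so that the induced pairing on $H_\bullet(\huaT K)$ is literally unchanged. The IM-form $\lambda^{\beta_\bullet}$ of an $m$-shifted $2$-form $\beta_\bullet$ depends only on the top component $\beta_m \in \widehat\Omega^2(K_m)$, evaluated on iterated degeneracies of tangent vectors in the tangent complex (formula \eqref{nondeg-pairing}). The top component of $D\phi_\bullet$ is $\delta\phi_{m-1} = \sum_{i=0}^m (-1)^i d_i^* \phi_{m-1}$ (the de Rham part $(-1)^m d\phi_m$ is absent since $\phi_\bullet$ stops at degree $m-1$). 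So it suffices to check that $(\delta\phi_{m-1})_x$ pulled back along iterated degeneracies and evaluated on tangent-complex vectors $v \in (\huaT_l K)_x$, $w \in (\huaT_{m-l}K)_x$ vanishes after summing over $(l,m-l)$-shuffles. The key mechanism: each term $d_j^*\phi_{m-1}$, when restricted to a vector that lies in the image of some degeneracy $Ts_a$ with $a$ in an appropriate range, is killed either because $d_j \circ s_a$ is a degeneracy (so $\phi_{m-1}$, being normalized, vanishes) or because $d_j \circ s_a = \id$ collapses the argument into $\huaT_{m-1}K$ where the $(m-1)$-shifted form's pairing is paired against the wrong-degree partner. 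Carefully, this is exactly the computation underlying the statement in Appendix \ref{Ap:IM} that $\lambda$ is well-defined and that exact forms contribute trivially; I would invoke or adapt that appendix, concluding $\lambda^{D\phi_\bullet} = 0$ and hence $\lambda^{\alpha_\bullet+D\phi_\bullet} = \lambda^{\alpha_\bullet}$, so non-degeneracy is preserved verbatim.

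For (ii), the identity morphism $\id_\bullet: K_\bullet \to K_\bullet$ is trivially a hypercover: the maps $q_i$ of \eqref{eq:def-hypercover} become the canonical identifications $K_i \xrightarrow{\sim} \partial_i(K_\bullet)\times_{\partial_i(K_\bullet)} K_i$, which are isomorphisms (in particular surjective submersions) for all $i$. For (iii), with $Z_\bullet = K_\bullet$, $f_\bullet = g_\bullet = \id_\bullet$, and the chosen $(m-1)$-shifted $2$-form $\phi_\bullet$, the matching condition reads $\id_\bullet^*(\alpha_\bullet + D\phi_\bullet) - \id_\bullet^*\alpha_\bullet = D\phi_\bullet$, which is a tautology. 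By Lemma \ref{lem:hypercover-pullback} both $(K_\bullet, \alpha_\bullet + D\phi_\bullet)$ and $(K_\bullet, \alpha_\bullet)$ are genuine $m$-shifted symplectic Lie $n$-groupoids (or we use part (i) directly), so Definition \ref{def:sympmorita} is satisfied and the displayed zig-zag is a symplectic Morita equivalence. The single genuine obstacle, then, is the identity $\lambda^{D\phi_\bullet} = 0$ on the tangent complex; everything else is formal bookkeeping with $D$, $s_i^*$, and the simplicial identities.
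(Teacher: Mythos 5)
Your proposal is correct and follows essentially the same route as the paper: the shift, closedness and normalization checks are formal, the identity maps are trivially hypercovers, the matching condition is tautological, and the only substantive point, non-degeneracy, is reduced to $\lambda^{D\phi_\bullet}=0$, which is exactly the gauge-invariance statement (item (3) of Lemma \ref{lem:im-form}) in Appendix \ref{Ap:IM} that the paper's own proof also invokes. One minor remark: the appendix establishes $\lambda^{D\phi_\bullet}=0$ not by the degeneracy-collapse mechanism you sketch, but by first writing $\lambda^{D\phi_\bullet}(v,w)=\lambda^{\phi_\bullet}(\partial v,w)+(-1)^{p}\lambda^{\phi_\bullet}(v,\partial w)$ via the multiplicativity computation and then killing each summand using degenerate test vectors; since you cite the appendix rather than reprove it, this does not affect correctness.
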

\begin{proof}
We only need to show that  $\alpha_\bullet+ D\phi_\bullet$ is again $m$-shifted symplectic, and the rest follows similarly to Example \ref{ep:strict-morp}. Obviously $\alpha_\bullet+ D\phi_\bullet$ is still $m$-shifted, closed and normalised. The non-degeneracy condition follows from Lemma \ref{lem:im-form} item \eqref{itm:gauge}.  
\end{proof}

An immediate consequence of Lemma \ref{lem:hypercover-pullback}, Example \ref{ep:strict-morp} and Proposition \ref{gauge-transformation} is that a symplectic Morita equivalence always decomposes into three symplectic Morita equivalences of fixed special types: the first is given by a strict morphism, the second by a gauge transformation and the third by  another strict morphism in the opposite direction. That is, 
\begin{equation}\label{dec-morita}
   \begin{split}
     (K_\bullet, \alpha_\bullet) \xleftarrow{f_\bullet} ( Z_\bullet, \phi)  \xrightarrow{g_\bullet} (J_\bullet, \beta_\bullet)=&\quad (K_\bullet, \alpha_\bullet) \xleftarrow{f_\bullet} ( Z_\bullet, 0)  \xrightarrow{\id_\bullet} (Z_\bullet, f^*\alpha_\bullet)\\
     &\circ (Z_\bullet, f^*\alpha_\bullet) \xleftarrow{\id_\bullet} ( Z_\bullet, \phi)  \xrightarrow{\id_\bullet} (Z_\bullet, f^*\alpha_\bullet+D\phi=\beta_\bullet)\\
     &\circ (Z_\bullet, \beta_\bullet) \xleftarrow{\id_\bullet} ( Z_\bullet, 0)  \xrightarrow{g_\bullet} (J_\bullet, \beta_\bullet).
\end{split} 
\end{equation}

Notice that in \cite{Xu04}, Xu has given a version of Morita equivalence between 1-shifted symplectic Lie 1-groupoids (see Example \ref{ep:pre-symp-gpd}) via Hamiltonian bimodules. The definition is as follows.

\begin{definition}[See Def 3.1, 3.13 and 4.1 in \cite{Xu04}]\label{pingdef}
Two $1$-shifted symplectic Lie $1$-groupoids $(K_\bullet, \alpha_\bullet)$ and $(J_\bullet,\beta_\bullet)$ are symplectic Morita equivalent if there exists a principal bibundle $P$ and $\phi\in\Omega^2(P)$ that makes $P$ into a Hamiltonian space for $K_\bullet$ and $J_\bullet$.
\end{definition}

To closed this section, we prove that our Morita equivalence coincides with the one defined by Xu in \cite{Xu04} when $m=n=1$. 

\begin{theorem}\label{thm:morita-1-1}
The two possible definitions of symplectic Morita equivalence for $1$-shifted symplectic Lie $1$-groupoids define the same equivalence relation.
\end{theorem}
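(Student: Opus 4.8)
The plan is to establish the equivalence of the two notions by showing each implies the other, using the decomposition \eqref{dec-morita} of a symplectic Morita equivalence into a strict morphism, a gauge transformation, and another strict morphism as the main bridge between the zig-zag-of-hypercovers picture and Xu's Hamiltonian bimodule picture. Throughout, for a $1$-shifted symplectic Lie $1$-groupoid $(K_\bullet,\alpha_\bullet)$ I will use Example \ref{ep:pre-symp-gpd} to identify it with a twisted presymplectic groupoid $(K\rightrightarrows M,\omega,H)$, where $\alpha_\bullet = \omega + H$ with $\omega\in\Omega^2(K_1)$, $H\in\Omega^3(K_0)$.

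\textbf{Step 1: From Xu's definition to ours.} Suppose $P$ is a principal bibundle realizing a Hamiltonian $(K,J)$-bimodule with $\phi\in\Omega^2(P)$, in the sense of Definition \ref{pingdef}. First I would recall that a principal bibundle between Lie groupoids $K\rightrightarrows M$ and $J\rightrightarrows N$ gives a canonical Lie $1$-groupoid $Z_\bullet$ (the ``bibundle groupoid'' or its associated simplicial manifold, e.g. the action groupoid on $P$ together with the two moment maps) together with hypercovers $f_\bullet\colon Z_\bullet\to K_\bullet$ and $g_\bullet\colon Z_\bullet\to J_\bullet$ — this is the classical fact that a principal bibundle is the same as a zig-zag of weak equivalences, promoted to the simplicial language of \cite{z:tgpd-2}. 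Then I would take $\phi_\bullet = \phi + 0$ as a $0$-shifted $2$-form on $Z_\bullet$ and check that the Hamiltonian/moment-map conditions in Xu's definition (Def. 3.1, 3.13, 4.1 of \cite{Xu04}), which express compatibility of $\phi$ with $\omega_K$, $\omega_J$, $H_K$, $H_J$ via the two moment maps, are precisely the component-wise unwinding of the single equation $f_\bullet^*\alpha_\bullet - g_\bullet^*\beta_\bullet = D\phi_\bullet$. Since $f_\bullet,g_\bullet$ are hypercovers, Lemma \ref{lem:hypercover-pullback} guarantees that the pulled-back forms are again $1$-shifted symplectic, so nothing else needs checking for non-degeneracy.

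\textbf{Step 2: From our definition to Xu's.} Conversely, given a symplectic Morita equivalence $(K_\bullet,\alpha_\bullet)\xleftarrow{f_\bullet}(Z_\bullet,\phi_\bullet)\xrightarrow{g_\bullet}(J_\bullet,\beta_\bullet)$, I would first apply the decomposition \eqref{dec-morita} and the transitivity established in the previous proposition to reduce to the three special cases: two strict morphisms and a gauge transformation. For a strict morphism coming from a hypercover $f_\bullet\colon Z_\bullet\to K_\bullet$ of Lie $1$-groupoids, $Z_\bullet = NZ$ for an honest Lie groupoid $Z\rightrightarrows P$ and $f$ is a weak equivalence of Lie groupoids; the classical correspondence then produces a principal bibundle (the ``bi-bundle of $f$'', namely $P\times_{M}^{f,t}K$ with its two commuting actions), and the pullback form $f^*\omega$ on $P$ makes this into a Hamiltonian bimodule for $(Z,f^*\alpha_\bullet)$ and $(K,\alpha_\bullet)$ in Xu's sense — this is a direct check against Def. 3.1/3.13/4.1. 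For a gauge transformation $(K_\bullet,\alpha_\bullet+D\phi_\bullet)\xleftarrow{\id}(K_\bullet,\phi_\bullet)\xrightarrow{\id}(K_\bullet,\alpha_\bullet)$, one uses the trivial (unit) bibundle $K$ itself with the $2$-form $\phi\in\Omega^2(K_0)$, and verifies it is Hamiltonian. Finally, since Xu proves in \cite{Xu04} that his symplectic Morita equivalence is an equivalence relation (in particular transitive, via composition of Hamiltonian bimodules $P\times_J Q$), composing the three bibundles obtained from the three pieces yields a single Hamiltonian bibundle between $(K_\bullet,\alpha_\bullet)$ and $(J_\bullet,\beta_\bullet)$.

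\textbf{Main obstacle.} The technical heart is the bookkeeping in Step 1 and the strict-morphism part of Step 2: matching the \emph{global/simplicial} equation $f_\bullet^*\alpha_\bullet - g_\bullet^*\beta_\bullet = D\phi_\bullet$ (which lives in the normalized double complex and has components in $\widehat\Omega^2(Z_1)$ and $\widehat\Omega^3(Z_0)$ after one expands $D = \delta + d$ on a $0$-shifted $2$-form) term-by-term with the several separate moment-map conditions in \cite[Def. 3.1, 3.13, 4.1]{Xu04}. One must also be careful that the bibundle construction attached to a hypercover of Lie $1$-groupoids genuinely reproduces the \emph{same} equivalence relation as the zig-zag picture — this is standard (the three pictures of Morita equivalence agree for Lie $1$-groupoids, and the simplicial versions are compatible by \cite{z:tgpd-2, Rogers-Zhu:2016}) but needs to be invoked cleanly, together with the fact that $\phi$ may be taken $0$ on higher simplices without loss of generality since for a Lie $1$-groupoid the normalized $2$-form part in degrees $\geq 2$ is forced by the cocycle condition. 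Non-degeneracy is not an obstacle on either side: on our side it is handled by Lemma \ref{lem:hypercover-pullback} and Proposition \ref{gauge-transformation}, and on Xu's side it is part of the definition of a Hamiltonian bimodule and is known to be equivalent to \eqref{eq:non11} by \cite{Xu04, mat:vb}.
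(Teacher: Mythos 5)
Your proposal is correct and takes essentially the same route as the paper: the forward implication is exactly the content of \cite[Proposition 3.14]{Xu04} (which you propose to re-derive by hand via the zig-zag attached to the Hamiltonian bibundle), and the converse uses the same decomposition \eqref{dec-morita} into strict morphisms and gauge transformations, which the paper settles by citing \cite[Propositions 4.8 and 4.6]{Xu04} together with the transitivity of Xu's relation. The only details to make explicit in a write-up are that the two legs of the action groupoid $(K\times J)\ltimes P\rightrightarrows P$ are genuine hypercovers (not merely weak equivalences), which follows from biprincipality of $P$, and that the Hamiltonian $2$-forms in your strict-morphism and gauge-transformation cases are the appropriate pullbacks to the bibundle rather than forms living on $K_0$.
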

\begin{proof}
Suppose that $(K_\bullet, \alpha_\bullet)$ and $(J_\bullet,\beta_\bullet)$ are symplectic Morita equivalent according to Definition \ref{pingdef}. Then Proposition 3.14 in \cite{Xu04} shows that they are symplectic Morita equivalent according to Definition \ref{def:sympmorita}.

For the converse statement, we use the decomposition \eqref{dec-morita}. It is enough to show that the type of strict morphisms and that of gauge transformations both give rise to symplectic Morita equivalences defined in Definition \ref{pingdef}.  The type of strict morphisms follows from \cite[Proposition 4.8]{Xu04}. The type of gauge transformations follows from \cite[Proposition 4.6]{Xu04}, because our gauge transformations for $m=n=1$ coincide with  gauge transformations of the second type therein. 
\end{proof}

\begin{remark} Symplectic Morita equivalence in Definition \ref{def:sympmorita} provides a shifted Lagragian $n$-stack (see Remark \ref{rmk:lag}).  There is a zoo of natural examples of Hamiltonian principal bimodules in Poisson geometry. They all carry some important physical meaning (see \cite{Xu04, AM:22}).  The above theorem thus implies that each such Hamiltonian principal bimodule gives rise to a shifted Lagrangian stack (c.f. Remark \ref{rmk:lag}). 
\end{remark}

\section{Simplicial models of the classifying stack $\huaB G$ }\label{Sec:3}

Let $G$ be a Lie group with Lie algebra $\g$. We say that $(\g, \langle\cdot, \cdot\rangle)$ is a {\bf quadratic Lie algebra} if $\langle\cdot, \cdot\rangle:\g\times \g\to \R$ is a symmetric and non-degenerate pairing satisfying 
$$\langle[a,b],c\rangle+\langle b,[a,c]\rangle=0\qquad \forall a,b,c\in\g.$$
When $\g$ is quadratic, it was proved in \cite{PaToVaVe13} that the classifying stack $\huaB G$ carries a $2$-shifted symplectic structure.
In this section we present two different shifted symplectic Lie groupoids that encode $\huaB G$ and its shifted symplectic structure. The first one, which we denote by $(NG_\bullet, \Omega_\bullet),$ is a $2$-shifted symplectic Lie $1$-group and it is a finite dimensional model.  The second one, denoted here by $(\GG_\bullet, \omega_\bullet)$, is a $2$-shifted symplectic Lie $2$-group and it is infinite dimensional. In Theorem \ref{thm-morita} we construct an explicit symplectic Morita equivalence between them.

\subsection{Finite dimensional model}\label{sec-findim}
\emptycomment{
If $G$ is a Lie group we have that its simplicial nerve $NG_\bullet$ is a Lie $1$-group. Here we recall that $NG_k=G^{\times k}$ and the faces and degeneracy are 
\begin{equation*}
    \begin{array}{ll}
        d_0(g_1, \dots, g_k)=(g_2,\dots ,g_k),& d_i(g_1, \dots, g_k)=(g_1,\dots,  g_ig_{i+1}, \dots, g_k),\\
        d_k(g_1, \dots, g_k)=(g_1, \dots, g_{k-1}),
         &  s_i(g_1, \dots, g_k) = (g_1, \dots, g_i, e, g_{i+1}, \dots, g_k).
    \end{array}
\end{equation*} 
Moreover it is well known that if $G$ is good enough then the simplicial nerve $NG_\bullet$ appear as a truncation of the universal integration of the Lie algebra $\g$, see also Appendix \ref{ap:int-trun}. More concretely
\begin{proposition}[\cite{henriques} Example 7.2]\label{1-trun}
Let $G$ be the connected and simply connected Lie group integrating $\g$. Then the first truncation $\tau_1(\int \g_\bullet)=NG_\bullet$
\end{proposition}  }

Let $G$ be a Lie group, by Example \ref{NG} we have that $NG_\bullet$ is a Lie $1$-group. If $(\g, \langle\cdot,\cdot\rangle)$ is a quadratic Lie algebra then we can define the following differential forms on $NG_\bullet$:
\begin{equation}\label{Omegadot}
  \Omega=\langle d_2^*\theta^l, d_0^*\theta^r\rangle\in\Omega^2(NG_2)\quad\text{and}\quad    \Theta=\frac{1}{6}\langle\theta^l,[\theta^l,\theta^l]\rangle\in\Omega^3(NG_1)
\end{equation}
where $\theta^l,\theta^r\in\Omega^1(G; \g)$ are the left and right Maurer-Cartan $1$-forms on $G$.

The $2$-form $\Omega$ appeared in the works of Brylinski \cite{brylinski} and Weinstein \cite{Alan:moduli} and we therefore  call $\Omega$ the Brylinski-Weinstein $2$-form.  More precisely, for $(v_1,v_2),(w_1,w_2)\in T_{(g,h)}(G\times G)$ we can write $\Omega$ as
\begin{equation}\label{exOme}
    \Omega_{(g, h)}( (v_1, v_2), (w_1,w_2)) = \langle L_{g^{-1}} v_1, R_{h^{-1}} w_2 \rangle -  \langle L_{g^{-1}} w_1, R_{h^{-1}} v_2 \rangle.
\end{equation}
The $3$-form $\Theta$ is known as the Cartan $3$-form (see \cite{AlBuMe09, dw}). For $u,v,w\in T_g G$ we can express it as
\begin{equation*}
    \Theta_g(u,v,w)=\langle L_{g^{-1}}u, [L_{g^{-1}}v, L_{g^{-1}}w]\rangle.
\end{equation*}

Using the language of shifted symplectic groupoids, the results of \cite{brylinski, Alan:moduli} can be summarized in the following.

\begin{theorem}\label{Thm-fin-sym}
Let $G$ be a Lie group with a quadratic Lie algebra $(\g,\langle\cdot,\cdot\rangle)$. The pair $(NG_\bullet, \Omega_\bullet)$ is a $2$-shifted symplectic Lie $1$-group where $NG_\bullet=\cdots G\times G\aaar G\rightrightarrows pt$ is the nerve of the Lie group $G$ and $\Omega_\bullet=\Omega-\Theta+0$ is given by \eqref{Omegadot}.
\end{theorem}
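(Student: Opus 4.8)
The plan is to verify the three defining conditions of Definition \ref{def:mSSLnG} for $\Omega_\bullet = \Omega - \Theta + 0$ on $NG_\bullet$: that it is a well-defined normalized $2$-shifted $2$-form, that it is closed, and that it is non-degenerate in the sense of the induced pairing on the tangent complex.

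\textbf{Step 1: $\Omega_\bullet$ is a normalized $2$-shifted $2$-form.} By \eqref{eq:cut} with $m = k = 2$, a $2$-shifted $2$-form consists of $\alpha_0 \in \widehat\Omega^4(NG_0)$, $\alpha_1 \in \widehat\Omega^3(NG_1)$, $\alpha_2 \in \widehat\Omega^2(NG_2)$. Since $NG_0 = pt$ we have $\alpha_0 = 0$; we set $\alpha_1 = -\Theta$ and $\alpha_2 = \Omega$, so the degree count is correct. The normalization condition $s_i^*\Omega_\bullet = 0$ must be checked: for $\Theta$ this means $s_0^*\Theta = 0$ on $NG_1$, i.e. the Cartan $3$-form pulls back to zero under $G \to pt$, which is automatic since $NG_0 = pt$; wait — more carefully $s_0: NG_0 \to NG_1$, so $s_0^*\Theta \in \Omega^3(pt) = 0$ trivially. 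For $\Omega$ we need $s_0^*\Omega = s_1^*\Omega = 0$ on $NG_1 = G$, where $s_0(g) = (e,g)$ and $s_1(g) = (g,e)$; from the explicit formula \eqref{exOme} one sees $\Omega_{(e,g)}$ involves $L_{e^{-1}}v_1 = v_1$ but the relevant combination vanishes because $s_0$ forces the first factor to be constant, and similarly for $s_1$ — this is a short direct check using $\theta^l|_e$, $\theta^r|_e$.

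\textbf{Step 2: $\Omega_\bullet$ is closed, i.e. $D\Omega_\bullet = 0$.} With $D = \delta + (-1)^p d$ on the normalized double complex, closedness unwinds into: $d\Omega = 0$ would be wrong dimensionally — rather the conditions are $\delta\Theta = 0$ (wait, $\delta: \Omega^3(NG_1) \to \Omega^3(NG_2)$), $d\Theta = \pm\delta\Omega$ appropriately, and $d\Omega = 0$, plus $\delta\Omega = 0$ from $NG_3$. Concretely: (i) $d\Theta = 0$ on $G$ — this is the standard fact that the Cartan $3$-form is closed, following from the Maurer–Cartan equation $d\theta^l = -\tfrac12[\theta^l,\theta^l]$ and ad-invariance of $\langle\cdot,\cdot\rangle$; (ii) $d\Omega = \delta\Theta$ on $NG_2 = G\times G$ — this is a computation using $d\theta^l = -\tfrac12[\theta^l,\theta^l]$, $d\theta^r = \tfrac12[\theta^r,\theta^r]$, the relation between pullbacks of $\theta^l,\theta^r$ under the face maps $d_0,d_1,d_2$ of $NG_\bullet$, and invariance of the pairing; and (iii) $\delta\Omega = 0$ on $NG_3$, i.e. $\sum_{i=0}^3 (-1)^i d_i^*\Omega = 0$ — again a direct computation with Maurer–Cartan forms under the four face maps $G\times G\times G \to G\times G$. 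These are the standard identities underlying the Brylinski–Weinstein/Cartan picture; I would carry them out by fixing a point and bases of left-invariant vectors, or more slickly by working with $\g$-valued forms and the bracket/pairing algebraically.

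\textbf{Step 3: non-degeneracy.} By Example \ref{ex:1group} and Example \ref{eq:tan-1}, the tangent complex of $NG_\bullet$ with $NG_0 = pt$ is concentrated in degree $1$: it is $\huaT_1 K = \g$ (the Lie algebra, $= \ker Ts$ at the unit) with $\partial = \rho = 0$ since the anchor over a point is zero; so $H_1(\huaT K) = \g$ and all other homology vanishes. Thus $m = 2$ is forced (cf. Remark \ref{rmk:m-n}, with $H_1 \neq 0$ at the "top" in the relevant sense), and the induced pairing \eqref{eq:homo-pai} is $\lambda^{\Omega_\bullet}: H_1(\huaT K)\times H_1(\huaT K) \to \R$. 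I would compute $\lambda^{\Omega_\bullet}$ from \eqref{nondeg-pairing} with $l = m - l = 1$, $m = 2$: the only $(1,1)$-shuffles give $\lambda^{\Omega_\bullet}(v,w) = \Omega_2(T s_1 v, T s_0 w) - \Omega_2(T s_0 v, T s_1 w)$ (up to signs), where $s_0, s_1: G \to G\times G$ are the degeneracies $s_0(g) = (e,g)$, $s_1(g) = (g,e)$; evaluating $\Omega$ via \eqref{exOme} at the unit, $T_e s_1 v = (v, 0)$ and $T_e s_0 w = (0, w)$, so $\Omega_{(e,e)}((v,0),(0,w)) = \langle v, w\rangle$ and the cross term vanishes, giving $\lambda^{\Omega_\bullet}(v,w) = \pm\langle v, w\rangle$ (the overall sign matching the claim $VE(\Omega_\bullet) = -\langle\cdot,\cdot\rangle$ of Proposition \ref{VE}, but sign is immaterial for non-degeneracy). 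Since $\langle\cdot,\cdot\rangle$ is non-degenerate by hypothesis, $\lambda^{\Omega_\bullet}$ is a non-degenerate pairing on $H_1(\huaT K) = \g$, and all other homology groups vanish so there is nothing further to check. Hence $\Omega_\bullet$ is non-degenerate and $(NG_\bullet, \Omega_\bullet)$ is a $2$-shifted symplectic Lie $1$-group.

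\textbf{Main obstacle.} The conceptual content — tangent complex concentrated in $\g$ with $\lambda^{\Omega_\bullet} = \pm\langle\cdot,\cdot\rangle$ — is immediate. The genuinely laborious part is Step 2, specifically verifying $d\Omega = \delta\Theta$ and $\delta\Omega = 0$; these are the same Maurer–Cartan/ad-invariance computations that appear in \cite{brylinski, Alan:moduli, safro}, and since the theorem is explicitly attributed as a summary of those works, I would either cite them directly for the closedness and do the (short) normalization and non-degeneracy checks in detail, or present the closedness computation compactly using $\g$-valued forms to keep sign-bookkeeping manageable. The one place to be careful is getting the face-map pullback formulas for $\theta^l, \theta^r$ on $NG_2$ and $NG_3$ exactly right, since these drive both the $d\Omega = \delta\Theta$ and $\delta\Omega = 0$ identities.
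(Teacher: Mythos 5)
Your proposal is correct and follows essentially the same route as the paper: check normalization directly from \eqref{exOme}, obtain closedness from the Maurer--Cartan equations, ad-invariance and the simplicial identities (exactly the identities $d\Theta=0$, $d\Omega=\delta\Theta$, $\delta\Omega=0$ the paper verifies), and compute the IM-pairing on $H_1(\huaT NG)=\g$ via the two $(1,1)$-shuffles. The only slip is in Step 3: the cross term $\Omega_{(e,e)}((0,v),(w,0))$ does not vanish but equals $-\langle w,v\rangle$, so $\lambda^{\Omega_\bullet}(v,w)=2\langle v,w\rangle$ rather than $\pm\langle v,w\rangle$; this is harmless for non-degeneracy.
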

\begin{proof}
We need to check that $\Omega_\bullet$ is normalized, non-degenerate and closed.  Clearly,   $s^*_0\Theta=0$ and $$(s^*_0\Omega)_g(v,w)=\Omega_{(e,g)}((0_e,v),(0_e,w))=0, \quad \forall\ g\in G,\ v,w\in TG$$
where the zero was deduced by the explicit formula \eqref{exOme}.  The computation for $s_1$ is analogous.

The tangent complex of $NG_\bullet$ is concentrated in degree $1$ and given by $\huaT_1 NG=\g$. Therefore the only non trivial homology group is $H_1(\huaT NG)=\g$ itself. By equation \eqref{nondeg-pairing}, we have
$$\lambda^{\Omega_\bullet}( v,w)=\Omega_{(e,e)}((v,0_e),(0_e,w))-\Omega_{(e,e)}((0_e,v),(w,0_e))=2\langle v,w\rangle,$$ which is non-degenerate due to the non-degeneracy of $\langle\cdot,\cdot\rangle$.

In order to show that $\Omega_\bullet$ is closed we recall that the Maurer-Cartan forms satisfy $$\theta^r=\phi\theta^l,\quad d\theta^l=-\frac{1}{2}[\theta^l,\theta^l], \quad d\theta^r=\frac{1}{2}[\theta^r,\theta^r] \quad\text{and}\quad d_1^*\theta^l=d_0^*\phi^{-1}d_2^*\theta^l+d_0^*\theta^l, $$ 
where $\phi\in\Omega^0(G,\End(\g))$ is the adjoint representation. We  then have that
$$d\Theta=\frac{1}{12}\langle[\theta^l,\theta^l],[\theta^l,\theta^l]\rangle=0$$
by $ad$-invariance of the pairing and the graded version of Jacobi identity, and 
\begin{equation*}
        \delta\Theta=\frac{1}{6}\sum_{i=0}^2(-1)^i\langle d_i^*\theta^l,[d_i^*\theta^l,d_i^*\theta^l]\rangle
        =-\frac{1}{2}\langle d^*_2[\theta^l,\theta^l],d_0^*\theta^r\rangle-\frac{1}{2}\langle d^*_2\theta^l,d^*_0[\theta^r,\theta^r]\rangle=d\Omega, 
\end{equation*}
 where we have used that the pairing is ad-invariant and finally
\begin{equation*}
    \delta\Omega=\sum_{i=0}^3(-1)^i\langle d^*_id_2^*\theta^l,d_i^*d_0^*\theta^r\rangle=0
\end{equation*}
by the simplicial identities.
\end{proof}

It is well known that the Morita equivalence class of the Lie 1-groupoid $NG_\bullet$ gives rise to the differentiable stack $\huaB G$ which plays the role of classifying space for a Lie group $G$. In fact, the geometric realisation of $NG_\bullet$ is a topological classifying space $BG$ of $G$ (see e.g. \cite{bss:derham}), and the stack $\huaB G$ has analogous properties of $BG$ in the world of stacks. Therefore the previous Theorem \ref{Thm-fin-sym} gives a geometric model for a symplectic structure on the classifying stack $\huaB G$. We also point out that the de Rham simplicial double complex $(\Omega^\bullet(NG_\bullet),\delta, d)$  calculates the singular cohomology of the topological classifying space $BG$ (see e.g.  \cite{bss:derham, Segal:loop}), that is 
\[
H^k(Tot(\Omega^\bullet(NG_\bullet)),D)=H^k(BG, \mathbb{R}).
\]

\begin{proposition}
If $G$ is compact, connected, and simple, then $H^4(BG, \R)=\R$, and the $4$-cocycle $\Omega_\bullet$ represents a generator. 
\end{proposition}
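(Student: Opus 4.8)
The plan is to reduce the statement to two facts, both of which can be checked inside the simplicial de Rham double complex of Theorem \ref{Thm-fin-sym}, since by \cite{bss:derham} one has $H^k\big(\mathrm{Tot}(\Omega^\bullet(NG_\bullet)),D\big)\cong H^k(BG;\R)$: first, that $H^4(BG;\R)$ is one-dimensional; second, that $[\Omega_\bullet]$ is non-zero there. For the first fact I would invoke Chern--Weil theory (Borel's computation of $H^\bullet(BG;\R)$): for $G$ compact connected one has $H^{2k}(BG;\R)\cong(\sym^k\g^*)^G$, the space of degree-$k$ invariant polynomials on $\g$, and the odd cohomology vanishes; when $\g$ is simple, every invariant symmetric bilinear form on $\g$ is a scalar multiple of the Killing form, so $(\sym^2\g^*)^G$ is one-dimensional and hence $H^4(BG;\R)\cong\R$. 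The same conclusion follows from the Van Est isomorphism of \cite{ArCr11}, which for $G$ compact identifies $H^\bullet\big(\mathrm{Tot}(\Omega^\bullet(NG_\bullet)),D\big)$ with the cohomology $H^\bullet(W(\g))\cong(\sym^\bullet\g^*)^G$ of the Weil algebra, whose degree-$4$ part is exactly $(\sym^2\g^*)^G$.

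For the second fact I would use Proposition \ref{VE}: under the Van Est isomorphism in degree $4$, the class $[\Omega_\bullet]$ is sent to $-\langle\cdot,\cdot\rangle\in(\sym^2\g^*)^G$, which is non-zero because $\langle\cdot,\cdot\rangle$ is non-degenerate. Hence $-\langle\cdot,\cdot\rangle$ spans the one-dimensional space $(\sym^2\g^*)^G$, and therefore $[\Omega_\bullet]$ is a generator of $H^4(BG;\R)$.

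The only point that needs care is the input that the Van Est map of \cite{ArCr11} really induces a cohomology isomorphism on $H^4$ for compact $G$ with target $(\sym^2\g^*)^G$ — a matter of matching conventions and normalisations — and this is where I expect the sole genuine subtlety. As a cross-check, and as a self-contained alternative avoiding the Van Est machinery, I would run the spectral sequence of the double complex $\Omega^q(NG_p)$ filtered by simplicial degree, with $E_1^{p,q}=H^q_{dR}(G^p)$. The component $\Theta=\frac{1}{6}\langle\theta^l,[\theta^l,\theta^l]\rangle$ is a non-zero bi-invariant $3$-form on $G$ (non-zero since $\g=[\g,\g]$ and $\langle\cdot,\cdot\rangle$ is non-degenerate), hence for $G$ compact and simple it represents a generator of $H^3(G;\R)=H^3_{dR}(G)\cong\R$. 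Because $NG_0=pt$, the lowest-filtration component of the $D$-cocycle $\Omega_\bullet$ is $-\Theta\in E_1^{1,3}$; the leading term of a $D$-cocycle is automatically a permanent cycle, and no differential can reach $E_r^{1,3}$ (the only potential source $E_1^{0,3}=H^3_{dR}(pt)$ vanishes, and $E_r^{1-r,\ast}=0$ for $r\ge 2$), so $[-\Theta]$ survives to $E_\infty^{1,3}\cong\R$. Thus $[\Omega_\bullet]$ has non-zero image in the associated graded of $H^4(BG;\R)$, and together with $H^4(BG;\R)\cong\R$ this again shows that $\Omega_\bullet$ represents a generator.
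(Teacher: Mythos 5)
Your self-contained argument is correct, and it is in essence the paper's own proof in slightly heavier dress: the paper quotes the classical fact $H^4(BG,\R)=H^3(G,\R)=\R$ for $G$ compact, connected and simple (citing \cite{tanre}) and then observes that $\Omega_\bullet$ cannot be $D$-exact because, $NG_0$ being a point, the $\Omega^3(NG_1)$-component of any $D$-coboundary is an exact $3$-form on $G$, whereas the Cartan form $\Theta$ generates $H^3(G,\R)$. Your spectral-sequence formulation (leading term of a $D$-cocycle is a permanent cycle, no incoming differentials into $E_r^{1,3}$ since $E_1^{0,3}=H^3_{dR}(pt)=0$) is exactly this observation made systematic, and your derivation of $\dim H^4(BG;\R)=1$ from Borel/Chern--Weil, via irreducibility of the adjoint representation, replaces the paper's citation; both are fine.

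What is not fine is your preferred first route, and the problem is more than ``matching conventions and normalisations''. The Van Est map of \cite{ArCr11} used in Proposition \ref{VE} lands in the \emph{full} Weil algebra $W(\g)=\wedge^\bullet\g^*\otimes S^\bullet\g^*$, which is acyclic in positive degrees; in particular $H^4(W(\g))=0$, not $(\sym^2\g^*)^G$, and the cocycle $VE(\Omega_\bullet)=-\langle\cdot,\cdot\rangle$ is itself $d_W$-exact there (it is the Weil differential of the Chern--Simons element). So no isomorphism ``$H^4(\mathrm{Tot})\cong H^4(W(\g))$'' can hold, and non-vanishing of $VE(\Omega_\bullet)$ at the cochain level, which is all Proposition \ref{VE} gives, does not by itself show $[\Omega_\bullet]\neq 0$. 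Moreover, the Van Est isomorphism theorem of \cite{ArCr11} only applies in degrees bounded by the homological connectivity of the source fibre, here $G$ itself, which is at most $2$ when $H^3(G,\R)\neq 0$, so it never reaches total degree $4$. The statement you would actually need is Bott's theorem identifying $H^\bullet(BG;\R)$ with the \emph{basic} cohomology $(S^\bullet\g^*)^G\subset W(\g)$, a genuinely stronger input than anything established in the paper. Since you flagged this point and supplied the spectral-sequence argument as a complete substitute, the proposal as a whole stands, but the Van Est paragraph should either be dropped or rewritten in terms of basic cohomology.
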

\begin{proof}
It is a classical result that $H^4(BG, \R)=H^3(G, \R)=\R$ if $G$ is compact, connected, and simple (see e.g. \cite[Theorems 1.47, 1.81]{tanre}). The cocycle $\Omega_\bullet$ is not exact because the Cartan $3$-form $\Theta$  is not an exact 3-form on $G$. In fact, $\Theta$ represents a generator of $H^3(G, \R)=\R$ in this case. Therefore   $\Omega_\bullet$ represents a generator. 
\end{proof}
\begin{remark}
According to \cite{Lesdiablerets}, in his thesis \cite{shulman:phd} Shulman gives an approach to Chern-Weil
theory using the de Rham complex of $NG_\bullet$. These classes are explicitly calculated therein. 
\end{remark}

\emptycomment{
If we pick $G$ to be the connected and simply connected integrationg of $\g$ then $NG_\bullet$ appears as the first truncation of the universal integration of $\g$,  i.e. $\tau_1(\int \g_\bullet)=NG_\bullet$ (see Appendix \ref{ap:int-trun} for details). This property characterize $NG_\bullet$ among all the Morita equivalent groupoids that represent $BG$.  }

Another relevant property of the shifted symplectic structure $\Omega_\bullet$ is that it provides an integration of the quadratic structure on the Lie algebra. More concretely, in \cite{ArCr11} it was proved that the classical Van Est map relating Lie group cohomology and Lie algebra cohomology can be extended to a map of double complexes
\begin{equation}\label{VE-map}
    VE:\big(\widehat{\Omega}^q(NG_p), \delta, d\big)\to\big(W^{p,q}\g, d^h, d^v\big)
\end{equation}
where $W^{p,q}\g=\wedge^{q-p}\g^*\otimes S^q\g^*$ is a bi-graded version of the Weil algebra of $\g$ (see \cite{ArCr11} for more details).

\begin{remark}\label{graded1}
The Van Est map has a nice interpretation in terms of graded manifolds. Let us denote by $(\g[1], Q)$ the degree $1$ $Q$-manifold given by the Lie algebra $\g$, where the function ring $\huaO_{\g[1]}^i=\wedge^i\g^*$ and $Q=d_{CE}$ is the Chevalley-Eilenberg differential. Then the Weil algebra becomes the algebra of differential forms on $\g[1]$. In fact, we have $$W^{p,q}=\Omega^q(\g[1])_p,\quad d^h=\huaL_Q,\quad d^v=d$$ where $\huaL_Q$ is the Lie derivative with respect to the vector field $Q$ and $d$ is the de Rham differential on $\g[1]$ (see \cite{mad:mod, rajQalg} for more details).   
\end{remark}

\begin{proposition}\label{VE}
We use the same notation in Theorem \ref{Thm-fin-sym}.  The pairing $\langle \cdot,\cdot\rangle$ is an element in $S^2\g^* = W^{2, 2} \g$, and the Van Est map \eqref{VE-map} satisfies
$$VE(\Omega_\bullet)=-\langle \cdot,\cdot\rangle. $$
\end{proposition}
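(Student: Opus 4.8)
The plan is to compute the Van Est map $VE$ applied to the $2$-shifted $2$-form $\Omega_\bullet = \Omega - \Theta + 0$ term by term, and to identify the result inside $W^{2,2}\g = S^2\g^*$. Since $\Omega_\bullet$ lives in total degree $4$ with $\Omega\in\widehat\Omega^2(NG_2)$ and $\Theta\in\widehat\Omega^3(NG_1)$, and $W^{p,q}\g=\wedge^{q-p}\g^*\otimes S^q\g^*$, the only bidegree that can survive is $(p,q)=(2,2)$, coming from $\Omega$; the component $\Theta$ sits in bidegree $(1,3)$ which maps to $W^{1,3}\g=\wedge^2\g^*\otimes S^3\g^*$, a different slot, so it cannot contribute to the $S^2\g^*$-piece. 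Thus the computation reduces to evaluating the Van Est map on $\Omega$ alone.

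First I would recall the explicit formula for the Van Est map of \cite{ArCr11} in the bidegree relevant to us: for a normalized $2$-form $\eta\in\widehat\Omega^2(NG_2)$, $VE(\eta)$ is obtained by the standard alternating-sum-of-iterated-derivatives-along-right-invariant-vector-fields recipe, restricted to the relevant combinatorics of $\Delta[2]$. Concretely one evaluates the appropriate (anti)symmetrized iterated Lie derivatives of $\eta$ at the unit $(e,e)\in G\times G$, pairing against right-invariant vector fields $a^R, b^R$ generated by $a,b\in\g$. The key computational input is the explicit expression \eqref{exOme}:
\[
\Omega_{(g,h)}((v_1,v_2),(w_1,w_2)) = \langle L_{g^{-1}}v_1, R_{h^{-1}}w_2\rangle - \langle L_{g^{-1}}w_1, R_{h^{-1}}v_2\rangle,
\]
together with the fact that $d_2^*\theta^l$ and $d_0^*\theta^r$ are, respectively, the pullbacks of the left- and right-Maurer--Cartan forms under the two ``outer'' face maps of $NG_2$. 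The upshot should be that $VE(\Omega) = -\langle\cdot,\cdot\rangle$ as an element of $S^2\g^*$; the sign and the absence of a numerical factor of $2$ (despite $\lambda^{\Omega_\bullet}=2\langle\cdot,\cdot\rangle$ appearing in the proof of Theorem~\ref{Thm-fin-sym}) is precisely the content to be checked, and it comes from the symmetrization conventions built into both the Van Est formula and the identification $W^{2,2}\g = S^2\g^*$.

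The main obstacle I expect is bookkeeping: matching the sign and normalization conventions of \cite{ArCr11} for $VE$ (which involve a choice of ordering of face maps, a choice of left- versus right-invariant vector fields, and the normalization of the shuffle/antisymmetrization) against the conventions for $\Omega_\bullet$ and for the Weil algebra grading used here, so that the final answer comes out as exactly $-\langle\cdot,\cdot\rangle$ rather than $\pm\tfrac12\langle\cdot,\cdot\rangle$ or $\pm 2\langle\cdot,\cdot\rangle$. A clean way to organize this is to first verify that the $\Theta$-component maps into $W^{1,3}\g$ and hence is irrelevant for the claim (so the full form $\Omega_\bullet$ and its $\Omega$-piece have the same $S^2\g^*$-shadow), then reduce to a single local computation at the identity using only \eqref{exOme} and the values $\theta^l_e=\theta^r_e=\id_\g$, and finally invoke the compatibility of $VE$ with the graded-manifold picture of Remark~\ref{graded1} — where $\langle\cdot,\cdot\rangle\in S^2\g^*$ is literally the symplectic form on $\g[1]$ — to pin down the sign intrinsically rather than by brute force. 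I would also remark that this is consistent with \cite{royt}: the Van Est image recovers the quadratic pairing defining the degree-$2$ symplectic $NQ$-structure on $\g[1]$, which is the infinitesimal counterpart of $\Omega_\bullet$.
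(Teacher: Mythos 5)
You have the right skeleton---discard the $\Theta$-component and evaluate the Van Est map on $\Omega$ at the unit using \eqref{exOme}---which is exactly how the paper proceeds, but the step that carries the entire content of Proposition \ref{VE} is never actually performed. You yourself flag that ``the sign and the absence of a numerical factor of $2$'' is ``precisely the content to be checked'', and the mechanism you propose for settling it, namely invoking compatibility of $VE$ with the graded-manifold picture of Remark \ref{graded1} in which $\langle\cdot,\cdot\rangle$ is the symplectic form on $\g[1]$, is circular: that compatibility is precisely what the proposition asserts, so it cannot be used to decide between $-\langle\cdot,\cdot\rangle$, $+\langle\cdot,\cdot\rangle$ or $\pm2\langle\cdot,\cdot\rangle$. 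What is needed is to write down the explicit formula of \cite{ArCr11} in bidegree $(2,2)$---an iterated contraction followed by pullback along degeneracies, with \emph{left}-invariant vector fields in the paper's conventions, not the right-invariant ones you mention---and evaluate it on \eqref{exOme}; the paper does this in one line:
\begin{equation*}
VE(\Omega)(v,w)=-\,s_0^*\,i_{\widehat{v}}\,s_0^*\,i_{\widehat{w}}\,\Omega=-\,\Omega_{(e,e)}\bigl((w,0),(0,v)\bigr)=-\langle w,v\rangle,
\end{equation*}
the last equality being immediate from \eqref{exOme}. Without this (or an equivalent) computation, the specific sign and coefficient in $VE(\Omega_\bullet)=-\langle\cdot,\cdot\rangle$ remain unproved.

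A secondary gap concerns $\Theta$. You argue only that $VE(\Theta)$ lands in the bidegree $W^{1,3}\g$, ``a different slot'', and hence cannot affect the $S^2\g^*$-piece. But the proposition is an equality of elements of the Weil algebra, so you need $VE(\Theta)=0$, not merely that it lives elsewhere; if $VE(\Theta)$ were a nonzero element of $W^{1,3}\g$, the stated identity would fail. The paper disposes of it by observing that $VE(\Theta)$ vanishes for dimensional reasons: in simplicial degree one the Van Est recipe contracts once and restricts along the unit to $NG_0=pt$, where the resulting $2$-form is necessarily zero; equivalently, under the identification $W^{p,q}=\Omega^q(\g[1])_p$ of Remark \ref{graded1} a $q$-form on $\g[1]$ has weight at least $q$, so the receiving bidegree with $q=3$, $p=1$ is in fact the zero space (your reading $\wedge^2\g^*\otimes S^3\g^*$ is not the space that identification produces). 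The vanishing is easy, but it has to be stated, since the ``different slot'' argument alone does not yield the claimed equality.
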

\begin{proof}
Since $\Theta\in\Omega^3(NG_1)$ we have that $VE(\Theta)=0$ for dimensional reasons. Following  \cite{ArCr11} we see that if $v,w\in\g$ then $VE(\Omega)$ is explicitly given by
\begin{equation*}
        VE(\Omega)(v,w)=  - s^*_0i_{\widehat{v}} s_0^*i_{\widehat{w}}\Omega=-(s_0^*i_{\widehat{w}}\Omega)_e(v)=-\Omega_{(e,e)}((w,0),(0,v))=-\langle w,v\rangle, 
\end{equation*}
where $\widehat{v}$ and $\widehat{w}$ denote the associted left invariant vector fields on $G$ and $G^{\times 2}$ respectively.
\end{proof}

\begin{remark}
As indicated in the introduction, the pairing $\langle\cdot,\cdot\rangle$ on $\g$ can be interpreted as a degree $2$ symplectic 
form in the graded manifold $\g[1]$. Combining the previous Proposition \ref{VE}, and using the graded description of the Weil algebra (see Remark \ref{graded1}), we obtain that $(\g[1], Q=d_{CE}, \langle\cdot,\cdot\rangle)$ is a degree $2$ symplectic $Q$-manifold. Therefore, we have proved that $(NG_\bullet, \Omega_\bullet)$ is an integration for $(\g[1], Q=d_{CE}, \langle\cdot,\cdot\rangle)$. 
\end{remark}

We finish this section by answering the following question: Can we find a symplectic form $\Omega'\in\Omega^2(NG_2)$ satisfying $VE(\Omega')=\langle\cdot,\cdot\rangle$ and $\delta\Omega'=0$?

If $\g$ is semisimple and $G$ is a compact and connected integration, then it is impossible. Since $G$ is compact then $G\times G$ admits a symplectic structures if and only if $H^2(G\times G)\neq 0$. But by the compactness and connectedness, we have $H^i(G)=H^i_{CE}(\g)$. Moreover, since $\g$ is semisimple,  Whitehead's Lemma implies that $H^1_{CE}(\g)=H^2_{CE}(\g)=0$, therefore $H^2(G\times G)=0$. Thus there does not even exist a symplectic form $\Omega'$ on $G\times G=NG_2$. Nevertheless, on a natural infinite dimensional model of $\huaB G$, we do find a single 2-form to realise our desired 2-shifted symplectic 2-form. 

\subsection{Infinite dimensional model}\label{Sec:infdim}
There is a natural infinite dimensional Lie $2$-group $\GG_\bullet$ for a given (finite dimensional) Lie group $G$, which is the 2-truncation of the universal integration of the Lie algebra $\g$. We refer to Appendices \ref{ap-sob}, \ref{ap:int-trun} for the technical aspects of infinite dimensional spaces and the details of the truncation procedure. Here we simply describe $\GG_\bullet$ in an explicit way. Recall from Example \ref{ex:2groups} that, in order to define a Lie $2$-group, it is enough to specify the first four levels and the faces and degeneracy between them. 
Let $G$ be a Lie group. Then the spaces $\GG_k$ are given by
\begin{itemize}
    \item $\GG_0=pt,$
    \item $\GG_1=P_e G$,  so $\GG_1$ consists of paths of a fixed Sobolev class $r$ on $G$ starting at $e$,
    \item $\GG_2= \Omega G$, so elements of $\GG_2$ are based loops of Sobolev class $r$,
    \item $\GG_3=\{(\tau_0,\tau_1,\tau_2)\in \GG^{\times 3}_2 \ | \ \tau_0(t)=\tau_1(t),\ \tau_1(\frac{2}{3}+t)=\tau_2(\frac{2}{3}+t),\ \tau_0(1-t)=\tau_2(t)\quad t\in[0,\frac{1}{3}]\}$. Visually,  elements of $\GG_3$ are empty tetrahedrons on $G$ based at $e$. 
\end{itemize}
A more compact notation for those spaces is
\begin{equation*}
    \GG_\bullet=\cdots\ \Omega G\aaar P_eG\rightrightarrows pt.
\end{equation*}
The  face maps $d_i:\Omega G \to P_eG$ for $i=0,1,2$ are defined as
\begin{eqnarray}\label{2faces}
d_0(\tau)(t)=\tau(\frac{t}{3}), \quad d_1(\tau)(t)=\tau(1-\frac{t}{3}), \quad d_2(\tau)(t)=\tau(\frac{1}{3}+\frac{t}{3})\cdot \tau(\frac{1}{3})^{-1}, 
\end{eqnarray} for $t\in [0, 1]$ and  $d_i:\GG_3\to \Omega G$ for $i=0,1,2,3$ are given by
\begin{equation}\label{3faces}
    d_i(\tau_0,\tau_1,\tau_2)=\tau_i\quad\text{with}\quad \tau_3(t)=\left\{\begin{array}{rl}
         \tau_0(t+\frac{1}{3})\cdot\tau_0(\frac{1}{3})^{-1} & \text{for } t\in[0,\frac{1}{3}],\\
         \tau_2(t)\cdot\tau_0(\frac{1}{3})^{-1}&\text{for } t\in[\frac{1}{3},\frac{2}{3}],\\
         \tau_1(\frac{4}{3}-t)\cdot\tau_0(\frac{1}{3})^{-1}&\text{for } t\in[\frac{2}{3},1].
    \end{array}\right.
\end{equation}
The degeneracy maps $s_i: P_eG \to \Omega G$  for $i=0, 1$ are defined as follows:
\begin{equation}\label{1degeneraci}
    s_0(\gamma)(t)=\left\{
    \begin{array}{ll}
         \gamma(3t) & t\in[ 0,\frac{1}{3}]  \\
          \gamma(1) & t\in[\frac{1}{3},\frac{2}{3}]\\
          \gamma(3-3t)&t\in[\frac{2}{3},1], 
     \end{array}\right.
\quad 
    s_1(\gamma)(t)=\left\{\begin{array}{ll}
         e &t\in[ 0,\frac{1}{3}]  \\
          \gamma(3t-1)&  t\in[\frac{1}{3},\frac{2}{3}]\\
          \gamma(3-3t)& t\in[\frac{2}{3},1].
     \end{array}\right.
\end{equation}

\begin{remark}
Notice that even if we begin with smooth maps, our construction of face or degeneracy maps via concatenation does not end up always with a smooth map again, but rather a map in the Sobolev completion.
\end{remark}

\emptycomment{
In order to ensure that $\GG_\bullet$ is a Lie $2$-group we use the following result on truncations of the universal integration of $\g$, see Appendix \ref{ap:int-trun} for more details.
\begin{proposition}[\cite{henriques}]\label{2truncation-thm}
Let $G$ be the connected and simply connected Lie group integrating $\g$. Then the Lie 2-group $\tau_2(\int\g_\bullet)=\GG_\bullet$ where $\GG_k=\Hom_e( sk_1 \Delta^k, G)$, which is the space of maps of Sobolev class  $r+1$ sending $(0, \dots, 0, 1)\in sk_1 \Delta^k$ to $e$.  The face and degeneracy maps are the one induced from the simplices.
\end{proposition}
\begin{proof}
The calculation is more or less done in \cite[Sect.8]{henriques}, we summarise it here. Denote by $\Hom_e(\Delta^k, G)$ the space of maps of Sobolev class ${r+1}$ which send $(0, \dots, 0, 1)\in \Delta^k$ to $e\in G$. When $k=1$ we obtain $P_eG =\Hom_e(\Delta^1, G)$. According to \cite[Example 5.5]{henriques} or \cite[Remark 3.8]{brahic-zhu}, we have
$\Hom_{\algd}(T\Delta^k, \g) = \Hom_e(\Delta^k, G) $, thus $\int \g_1=P_e G$. Finally, the equivalence relation $\sim_k$ that we take in the truncation is exactly the homotopy relative to the boundary in $G$. Thus $\GG_k=\Hom_e( sk_1 \Delta^k, G)$. 
\end{proof}
}
If $G$ is a Lie group, then the tangent bundle $TG$ is also a Lie group. The same is true for Lie $n$-groupoids: if $X_\bullet$ is a Lie $n$-groupoid, then $TX_\bullet$ equipped with tangent face and degeneracy maps is again a Lie $n$-groupoid because  tangent functor keeps the property of being a surjective submersion and an isomorphism. Hence  $T\GG_\bullet$ is a Lie $2$-group
$$T\GG_\bullet=\cdots \Omega TG\ \aaar P_{(e,0_e)} TG\rightrightarrows pt$$
with faces and degeneracy maps $Td_i$ and $Ts_i$. In Appendix \ref{GGtanget} we give explicit formulas for the faces and degeneracies since they are important for computations. The tangent complex $(\huaT_\bullet \GG,\partial)$ is given by
\begin{equation}\label{TcomplexGG}
    \huaT_k \GG=\left\{
    \begin{array}{ll}
        P_0\g=\{u:I\to \g\ | \ u(0)=0\}& \text{for } k=1, \\
        \Omega \g=\{a: S^1\to \g \ | \ a(0)=0\} &\text{for } k=2, \\
        0&\text{otherwise}, 
    \end{array}\right.
\end{equation}
and $\partial:\Omega\g\to P_0\g$ the natural inclusion. To compute the homology groups $H_*(\huaT_\bullet\GG)$, we observe that  $0\to \Omega\g\xrightarrow{\partial} P_0\g\xrightarrow{ev_1}\g\to 0$ is an exact sequence, where $ev_1$ denotes the evaluation at time 1.   Thus
\begin{equation}\label{homgg}
H_1(\huaT_\bullet \GG)=\g \qquad\text{and}\qquad H_i(\huaT_\bullet \GG)=0 \ \text{for all $i\neq 1$.}    
\end{equation}

\begin{remark}
In this article we do not need the Lie $2$-algebra structure on the tangent complex but we included for completeness (see e.g. \cite{Baez:from}). It is given by $\mathbb{g}=(\Omega\g\xrightarrow{\partial} P_0\g, [\cdot,\cdot],[\cdot,\cdot,\cdot])$, where
$$[u,v](t)=[u(t),v(t)], \quad [u,a](t)=[u(t),a(t)],\quad \text{and} \quad [u,v,w](t)=0,$$
for $u,v,w\in P_0\g$ and $a\in \Omega\g.$
\end{remark}

In the work of Segal (see e.g. \cite{Segal:loop}), it is  shown that if in addition $(\g, \langle\cdot,\cdot\rangle)$ is a quadratic Lie algebra then the based loop group $\Omega G$ carries a $2$-form given by
\begin{equation}\label{segal-2form}
    \omega_\tau(a, b)=\int_{S^1}\langle \widehat{a}'(t),\widehat{b}(t)\rangle\ dt\ \in \Omega^2(\Omega G),
\end{equation}
where $\tau\in\Omega G,\ a, b\in T_\tau\Omega G,\ \widehat{a}(t)=L_{\tau(t)^{-1}}a(t), \widehat{b}(t)=L_{\tau(t)^{-1}}b(t)$ and the prime denotes time derivative as usual. So we have all the necessary ingredients to show the following result. 

\begin{theorem}\label{thm-inf-sym}
Let $G$ be a Lie group with quadratic Lie algebra $(\g,\langle\cdot,\cdot\rangle)$. The pair $(\GG_\bullet,\omega_\bullet)$ is a $2$-shifted symplectic Lie $2$-group where $\GG_\bullet=\cdots \Omega G\aaar P_eG\rightrightarrows pt$ is the $2$-group described by \eqref{2faces}, \eqref{3faces} and \eqref{1degeneraci} and $\omega_\bullet=\omega+0+0$ with $\omega$ given by \eqref{segal-2form}.
\end{theorem}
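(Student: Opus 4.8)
The plan is to verify the three requirements of Definition \ref{def:mSSLnG} for $\omega_\bullet=\omega+0+0$. That $\GG_\bullet$ is a Lie $2$-group is the content of Appendix \ref{ap:int-trun} (it is the $2$-truncation $\tau_2(\int\g_\bullet)$). That $\omega_\bullet$ is a $2$-shifted $2$-form is immediate: with $m=k=2$ the degree-$i$ component must lie in $\widehat\Omega^{4-i}(\GG_i)$, and $\widehat\Omega^4(\GG_0)=\widehat\Omega^4(pt)=0$ kills the degree-$0$ piece, while $\omega\in\Omega^2(\GG_2)$ has exactly the bidegree of the degree-$2$ piece. So the real content is: $\omega$ is normalized, $D\omega_\bullet=0$, and the induced pairing on homology is non-degenerate.

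The backbone of the calculation is a locality property of Segal's form. For a subinterval $[p,q]\subseteq[0,1]$, a path $\gamma\colon[p,q]\to G$ and tangent vectors $a,b$ along it, set $W^{[p,q]}_\gamma(a,b):=\int_p^q\langle (L_{\gamma(t)^{-1}}a(t))',L_{\gamma(t)^{-1}}b(t)\rangle\,dt$, so that $\omega_\tau=W^{[0,\frac13]}_\tau+W^{[\frac13,\frac23]}_\tau+W^{[\frac23,1]}_\tau$ for $\tau\in\Omega G$. Then $W^{[p,q]}$ (i) depends only on the restrictions of $\gamma,a,b$ to $[p,q]$; (ii) is invariant under orientation-preserving reparametrization and changes sign under orientation-reversing reparametrization; (iii) is invariant under right translation by a fixed $g\in G$, since $L_{(\gamma g)^{-1}}(R_ga)=\mathrm{Ad}_{g^{-1}}L_{\gamma^{-1}}a$ and $\langle\cdot,\cdot\rangle$ is $\mathrm{ad}$-invariant; and (iv) vanishes when $\gamma$ is constant. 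For the normalized condition one reads off from \eqref{1degeneraci} that $s_0(\gamma)$ consists, on the three thirds of $[0,1]$, of a reparametrized copy of $\gamma$, a constant piece, and an orientation-reversed reparametrized copy of $\gamma$, while $s_1(\gamma)$ consists of a constant piece, a reparametrized copy, and an orientation-reversed copy; properties (i)--(iv) then give $s_0^*\omega=s_1^*\omega=0$.

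For closedness, since $\omega_\bullet$ is concentrated in simplicial degree $2$, the equation $D\omega_\bullet=0$ splits into $d\omega=0$ and $\delta\omega=0$. The first holds because $\omega$ is left-invariant on the group $\Omega G$ and $\omega_e$ is (a multiple of) the Kac--Moody $2$-cocycle on $\Omega\g$, hence a cocycle in the Chevalley--Eilenberg complex; see \cite{Segal:loop}. For the second I would decompose each of $\omega_{\tau_0},\omega_{\tau_1},\omega_{\tau_2}$ into its three arc-functionals $W^{[\cdot,\cdot]}$ and, using \eqref{3faces} together with (ii) and (iii), rewrite $\omega_{\tau_3}=W^{[\frac13,\frac23]}_{\tau_0}+W^{[\frac13,\frac23]}_{\tau_2}-W^{[\frac13,\frac23]}_{\tau_1}$; in the alternating sum $\delta\omega=\omega_{\tau_0}-\omega_{\tau_1}+\omega_{\tau_2}-\omega_{\tau_3}$ all middle-arc terms cancel, and the six remaining outer-arc terms cancel in pairs using the gluing relations $\tau_0|_{[0,\frac13]}=\tau_1|_{[0,\frac13]}$, $\tau_1|_{[\frac23,1]}=\tau_2|_{[\frac23,1]}$ and $\tau_0(1-t)=\tau_2(t)$ (and their derivatives) that cut out $\GG_3$. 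This bookkeeping---tracking reparametrizations, orientation signs, and the differentiated gluing conditions---is the step I expect to be the main obstacle, although it becomes routine once the locality property is in place.

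Finally, non-degeneracy. By \eqref{homgg} the homology of $\huaT_\bullet\GG$ is concentrated in degree $1$ with $H_1(\huaT\GG)=\g$ via $ev_1$, so the pairing \eqref{eq:homo-pai} is just a pairing $\g\times\g\to\R$, and it suffices to show it is non-degenerate. Evaluating \eqref{nondeg-pairing} with $m=2$, $l=1$ gives $\lambda^{\omega_\bullet}(v,w)=\omega(Ts_1v,Ts_0w)-\omega(Ts_0v,Ts_1w)$ for $v,w\in\huaT_1\GG=P_0\g$; a direct computation with the explicit degeneracies (again via the arc-decomposition, noting that $Ts_iv$ is zero on one third and a (reversed) reparametrized copy of $v$ on the other two) yields $\lambda^{\omega_\bullet}(v,w)=\langle v(1),w(1)\rangle=\langle ev_1(v),ev_1(w)\rangle$, which is non-degenerate exactly because $(\g,\langle\cdot,\cdot\rangle)$ is quadratic. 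The infinitesimal multiplicativity proved in Appendix \ref{Ap:IM} guarantees a priori that this expression depends only on the classes $ev_1(v),ev_1(w)$, which serves as a consistency check.
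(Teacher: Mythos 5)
Your reduction of the statement to normalization, closedness and non-degeneracy, and your treatment of normalization and of the pairing $\lambda^{\omega_\bullet}$, are correct and essentially identical to the paper's computations (the paper likewise gets $\lambda^{\omega_\bullet}(u,v)=\langle u(1),v(1)\rangle$ and kills the degeneracies by the same reparametrization/orientation cancellation). The genuine gap is in your argument for $\delta\omega=0$. The identity $\omega_{\tau_3}(a_3,b_3)=W^{[\frac13,\frac23]}_{\tau_0}(a_0,b_0)+W^{[\frac13,\frac23]}_{\tau_2}(a_2,b_2)-W^{[\frac13,\frac23]}_{\tau_1}(a_1,b_1)$ is \emph{not} true as stated: the missing face \eqref{3faces} involves right translation by $\tau_0(\tfrac13)^{-1}$, and this element varies with the point $(\tau_0,\tau_1,\tau_2)\in\GG_3$. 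Hence the induced tangent vector $a_3=Td_3(a_0,a_1,a_2)$ is not merely the right-translate of the reparametrized middle arcs of $a_0,a_2,a_1$; by \eqref{V3} it carries the extra summand $-L_{\tau_3(t)}R_{\tau_0(\frac13)^{-1}}a_0(\tfrac13)$, equivalently $\widehat a_3(t)=\mathrm{Ad}_{\tau_0(\frac13)}\widehat a_i(\cdot)-R_{\tau_0(\frac13)^{-1}}a_0(\tfrac13)$ on each arc. Your locality property (iii) covers translation by a \emph{fixed} $g$ only, so it does not see this term, and the resulting corrections do not cancel pairwise: they assemble into a leftover term of the form $\int_0^1\langle \widehat a_3'(s),\,R_{\tau_0(\frac13)^{-1}}b_0(\tfrac13)\rangle\,ds$, which vanishes only by the fundamental theorem of calculus (Stokes) together with $\widehat a_3(0)=\widehat a_3(1)=0$. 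This is exactly the residual step in the paper's direct verification (Proposition \ref{P-Closed}); without it your ``cancel in pairs'' conclusion is unjustified, although the repair is short once the extra summand is tracked.

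It is also worth noting that the paper's official proof of closedness bypasses this computation entirely: it first proves the identities $\tfrac12 ev_1^*\Theta=-d\omega^P$ and $\omega-\tfrac12 ev_2^*\Omega=\delta\omega^P$ (Lemma \ref{lemma-brylinski} and Theorem \ref{thm-morita}, whose proofs do not use closedness of $\omega_\bullet$), so that $\omega_\bullet=\tfrac12 ev^*_\bullet\Omega_\bullet+D\omega^P_\bullet$ and $D\omega_\bullet=\tfrac12 ev^*_\bullet D\Omega_\bullet=0$ by Theorem \ref{Thm-fin-sym}; the direct route you sketch appears only as a remark plus the appendix proposition. Your appeal to Segal for $d\omega=0$, and your deferral of the Lie $2$-group property of $\GG_\bullet$ to the truncation result, are consistent with the paper.
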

\begin{proof}
We start by showing that $\omega_\bullet$ is normalized. Here we prove that $s_0^*\omega=0$; the proof for $s_1$ is analogous. Using the equations \eqref{Tdegeneracies},  for $\gamma\in P_eG,\ u,v \in T_\gamma P_eG$, and $\widehat{u}(t)=L_{\gamma(t)^{-1}}u(t), \ \widehat{v}(t)=L_{\gamma(t)^{-1}}v(t)$ we have
\begin{equation*}
    \begin{split}
        (s_0^*\omega)_\gamma(u,v)=& \ \omega_{s_0(\gamma)}(Ts_0(u),Ts_0(v)) \\
        =& \int_0^{\frac{1}{3}}\langle \widehat{u}'(3t),\widehat{v}(3t)\rangle dt
        +\int^{\frac{2}{3}}_{\frac{1}{3}}\langle \widehat{u}'(1),\widehat{v}(1)\rangle dt +\int_{\frac{2}{3}}^1\langle \widehat{u}'(3-3t), \widehat{v}(3-3t)\rangle dt \\
        =&\frac{1}{3}\int_0^1\langle \widehat{u}'(s),\widehat{v}(s)\rangle ds-\frac{1}{3}\int_0^1\langle \widehat{u}'(s),\widehat{v}(s)\rangle ds=0.\\
    \end{split}
\end{equation*}
In order to see that $\omega_\bullet$ is non-degenerate, we first observe that for $u,v\in P_0\g=T_{\underline{e}}P_eG$, with $\underline{e}$ the constant loop at the identity $e\in G$,  
\begin{equation*}
\begin{split}
    \omega_{\underline{e}}( Ts_0(u), Ts_1(v))=&\int_0^{\frac{1}{3}}\langle u'(3t), v(0)\rangle dt+\int_{\frac{1}{3}}^{\frac{2}{3}}\langle u'(1), v(3t-1)\rangle dt\\
    &+\int_{\frac{2}{3}}^1\langle u'(3-3t), v(3-3t)\rangle dt=\int_{\frac{2}{3}}^1\langle u'(3-3t), v(3-3t)\rangle dt.
\end{split}
\end{equation*}
Now we use the tangent complex \eqref{TcomplexGG} and its homology groups \eqref{homgg} to directly  apply \eqref{nondeg-pairing}. Then  for $u,v\in\huaT_1\GG= P_0\g$, the IM-form reads,
\begin{equation*}
    \begin{split}
        \lambda^{\omega_\bullet}( u,v)= & \omega_{\underline{e}}( Ts_0(u), Ts_1(v))- \omega_{\underline{e}}( Ts_1(u), Ts_0(v))\\
        =&\int_{\frac{2}{3}}^1\langle u'(3-3t), v(3-3t)\rangle dt+\int_{\frac{2}{3}}^1\langle v'(3-3t), u(3-3t)\rangle dt
        =\langle u(1),v(1)\rangle.
    \end{split}
\end{equation*}
Since the projection $P_0\g \xrightarrow[]{ev_1} H_1(\huaT\GG) = \g$ is evaluation at time 1,  we conclude that $\lambda^{\omega_\bullet}(-, -)$ is non-degenerate in the homology groups.

Finally the fact that $D\omega_\bullet=0$ will follow from Theorem \ref{thm-morita} which states $$\omega_\bullet=\frac{1}{2}ev^*_\bullet\Omega_\bullet+D\omega^P_\bullet,$$
and the fact that $D\Omega_\bullet=0$ by Theorem \ref{Thm-fin-sym}.
\end{proof}

\begin{remark}
One can  directly compute $D\omega_\bullet=0$ by showing that $d\omega=0$ and $\delta\omega=0$. The equation $d\omega=0$ it is well known and goes back to the works of Segal \cite{Segal:loop}. The second equation $\delta\omega=0$ was never computed (as far as we know) and can be directly verified by using the face maps \eqref{3faces} and \eqref{T3faces}. One divides each loop into $3$ pieces, and an explicit calculation shows that 
\begin{equation*}
    \delta\omega_{(\tau_0,\tau_1,\tau_2)}\big((a_0,a_1,a_2),(b_0,b_1,b_2)\big)=
     \int_0^1\langle  \widehat{a}_3'(s),R_{\tau_0(\frac{1}{3})^{-1}}b_0(\frac{1}{3})\rangle ds,
 \end{equation*}
which is in turn 0 due to  Stokes and the fact that $\widehat{a}_3(0)=\widehat{a}_3(1)=0.$ We include the explicit steps in  Prop. \ref{P-Closed} of the appendix. 
\end{remark}

Before ending this section we point out some properties of $(\GG_\bullet,\omega_\bullet)$. As far as we know, Theorem \ref{thm-inf-sym}  was not appearing in the literature before. In other words, the fact that Segal's $2$-form $\omega$ is multiplicative was not detected till now.

As we know classically,  $(\Omega G, \omega)$ is a symplectic manifold, in particular  following from  left-invariance of the $2$-form $\omega$. If we compare with the definition of a shifted symplectic structure, the fact that $\omega$ is directly symplectic is much stronger. Moreover if we further have an integral condition, that is $[\omega]\in H^2(\Omega G,\mathbb{Z})$, then the canonical prequantum $S^1$ bundle $\widetilde{\Omega G}$ also gives rise to a Lie $2$-group. Explicitly, the $S^1$ principal bundle over $\Omega G$ with curvature $\omega$ is the $S^1$-central extension $\widetilde{\Omega G}$ of $\Omega G$ classified by $[\omega]$. As proved in \cite{Baez:from, henriques}, there is a Lie 2-group structure on the simplicial manifold
\begin{equation} \label{eq:stringG}
\String(G)_\bullet=...\widetilde{\Omega G} \aaar P_eG\rightrightarrows  pt,
\end{equation}
and it gives a model of the string group $\String(G)$ associated to $G$. Thus in this way, $\String(G)_\bullet$ provides a prequantization of $(\GG_\bullet, \omega_\bullet)$. In Theorem \ref{thm-morita},  we will show that $(\GG_\bullet, \omega_\bullet)$ is a model for $\huaB G$ and its symplectic structure. Therefore $\String(G)_\bullet$ may be viewed as a prequantization of the symplectic structure on the classifying space of $G$. It is expect that the quantization of $\huaB G$ and the fully extendend $3d$ Chern-Simons theory are related \cite{freed:rmk, hen:what}. For the relation between $3d$ Chern-Simons theory and  $\String(G)$ see \cite{car:cs, dom:cs, stolz:ell, wal:mul, waldorf:string-CS}.

Another interesting property of $(\GG, \omega_\bullet)$ is the following analogy. Denote by $\Delta^i$ the geometric $i$-simplex and identify $\Delta^1\cong [0,1]$ and $\Delta^2\cong D$ the unit disk on $\R^2$. Let $(P,\pi)$ be an integrable Poisson manifold with symplectic groupoid $(K\rightrightarrows P, \omega_\pi)$. Using symplectic reduction techniques, the symplectic manifold $(K,\omega_\pi)$ (the first level of the 1-shifted Lie 1-groupoid) was obtained in \cite{CaFe01} as the reduced phase space of the $2d$ Poisson sigma model on the manifold $\Delta^1\times [0,1]$.   Now as shown in \cite[Section 2.1]{sei:cs} and \cite{Mein:flat}, the manifold $(\Omega G, \omega)$ (the second level of a 2-shifted Lie 2-groupoid) is obtained via symplectic reduction as the reduced phase space of the $3d$ Chern-Simons  sigma model on $\Delta^2\times [0,1]$. We shortly recall the $3d$ case here: For a $3$-dimensional manifold $N$, the Chern-Simons action is given by $$S(A)=\int_N \frac{1}{2}\langle A, dA\rangle+\frac{1}{6}\langle A, [A,A]\rangle\quad \text{with}\quad A\in\Omega^1(N,\g).$$
For $\Sigma$ a surface with boundary, the Chern-Simons theory on $N=\Sigma\times [0,1]$ gives a symplectic manifold $\huaA_{\Sigma}=\{$Connections on $G$-principal bundles with base $\Sigma\}$ with a Hamiltonian action of the gauge group $\huaG_\Sigma=\{g:\Sigma\to G\ |\ g(\partial\Sigma)=e\}$ and moment map $\mu:\huaA_\Sigma\to(\g_\Sigma)^*$
    \begin{equation*}
       \mu(A)(\alpha)=\int_\Sigma \langle\alpha, dA+\frac{1}{2}[A,A]\rangle, \qquad A\in\Omega^1(\Sigma;\g), \ \alpha\in\Omega^0(\Sigma,\g).
    \end{equation*}
The symplectic quotient $\huaA_\Sigma//\mu^{-1}(0)$ is the moduli space of flat connections on $\Sigma$. If  the surface is a disk, i.e. $\Sigma=D$ it was shown in \cite[Example 3.1]{Mein:flat} that the moduli space of flat connections is the based loop group with the Segal's $2$-form, more precisely $$\huaA_D//\mu^{-1}(0)=(\Omega G,\omega).$$


\subsection{The symplectic Morita equivalence}
For a Lie group $G$ with a quadratic Lie algebra $(\g,\langle\cdot,\cdot\rangle)$ we have constructed two different $2$-shifted symplectic Lie $n$-groups: the Lie $1$-group $(NG_\bullet, \Omega_\bullet)$ and the infinite dimensional Lie $2$-group $(\GG_\bullet,\omega_\bullet)$. In this section we will show that if $G$ is connected and simply connected then  $(NG_\bullet, \Omega_\bullet)$ and $(\GG_\bullet,\omega_\bullet)$ are symplectic Morita equivalent. 

\begin{lemma}\label{evaluation}
The evaluation map $ev_1: P_eG \to G$ defined by evaluating at time $1$, that is $ev_1(\gamma)=\gamma(1)$, extends to a simplicial morphism 
\begin{equation*} \label{eq:ev}
    ev_\bullet: \GG_\bullet \to NG_\bullet.
\end{equation*}
\end{lemma}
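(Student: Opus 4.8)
The plan is to define $ev_\bullet$ one level at a time and then to reduce the verification of the simplicial identities to a finite check, using that the target is low‑coskeletal. Recall from Example \ref{NG} that $NG_\bullet$, the nerve of a Lie group, is $2$‑coskeletal: $(\mathrm{cosk}_2 NG_\bullet)_k = G^{\times k} = NG_k$, and the $k$ spine‑edge maps $NG_k \to NG_1 = G$ are precisely the $k$ coordinate projections. Consequently a simplicial morphism into $NG_\bullet$ amounts to a morphism of $2$‑truncated simplicial manifolds, and any such morphism must have the form $ev_0 = \id_{pt}$ and, for $k \ge 1$,
$$ev_k(\sigma) = \bigl( ev_1(\sigma^{(1)}),\, \dots,\, ev_1(\sigma^{(k)}) \bigr) \in G^{\times k},$$
where $\sigma^{(i)} \in \GG_1 = P_eG$ is the restriction of $\sigma \in \GG_k$ to the $i$‑th edge $[i-1,i]$ of the spine of $\Delta^k$ (a fixed iterated composite of the face maps of $\GG_\bullet$), and $ev_1(\gamma) = \gamma(1)$ as given.

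First I would write out the data at levels $\le 2$. At level $1$ it is the given $ev_1$; at level $2$, comparing the faces \eqref{2faces} of $\GG_\bullet$ with $d_0(g_1,g_2)=g_2$, $d_1(g_1,g_2)=g_1g_2$, $d_2(g_1,g_2)=g_1$ of $NG_\bullet$ forces
$$ev_2(\tau) = \bigl( \tau(\tfrac{2}{3})\tau(\tfrac{1}{3})^{-1},\ \tau(\tfrac{1}{3}) \bigr), \qquad \tau \in \Omega G = \GG_2.$$
These maps are smooth, being composites of point‑evaluations on the relevant Sobolev path spaces (Appendix \ref{ap-sob}) with the group operations of $G$; note that, unlike the face maps of $\GG_\bullet$, they involve no concatenation, so no regularity is lost. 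Next I would check the finitely many simplicial identities among levels $0,1,2$: the face identities $ev_1 \circ d_i = d_i \circ ev_2$ for $i=0,1,2$, immediate from the displayed formula and \eqref{2faces}, and the degeneracy identities $ev_1 \circ s_0 = s_0 \circ ev_0$ and $ev_2 \circ s_i = s_i \circ ev_1$ for $i=0,1$, each a one‑line check against \eqref{1degeneraci} (for example $s_0(\gamma)(\tfrac{1}{3}) = s_0(\gamma)(\tfrac{2}{3}) = \gamma(1)$ yields $ev_2(s_0(\gamma)) = (e,\gamma(1)) = s_0(\gamma(1))$, as it should since $s_0 \colon G \to G^{\times 2}$ is $g \mapsto (e,g)$; the case of $s_1$ is analogous). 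By $2$‑coskeletality of $NG_\bullet$, this $2$‑truncated morphism extends uniquely to a simplicial morphism $ev_\bullet \colon \GG_\bullet \to NG_\bullet$ with the higher components above, and compatibility with all remaining faces and degeneracies---including those into $\GG_3$ coming from \eqref{3faces}---is then automatic. (Equivalently, one may simply take $ev_\bullet$ to be the canonical truncation morphism $\tau_2(\int \g_\bullet) \to \tau_1(\int \g_\bullet)$ of Appendix \ref{ap:int-trun} under the identifications $\GG_\bullet = \tau_2(\int \g_\bullet)$ and $NG_\bullet = \tau_1(\int \g_\bullet)$; its level‑$1$ component is $\gamma \mapsto \gamma(1)$ exactly because, $G$ being simply connected, homotopy classes rel endpoints of based paths in $G$ are classified by their endpoint.)

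The only genuine work here is combinatorial bookkeeping: getting the level‑$2$ formula right so that all three face‑compatibilities hold at once, and organizing the argument so that coskeletality (or, in the alternative, the truncation machinery of Appendix \ref{ap:int-trun}) spares us any level‑by‑level verification beyond level $2$. I expect no conceptual obstacle; the single point requiring care is not mismanaging the subdivisions at $\tfrac{1}{3}$ and $\tfrac{2}{3}$ built into \eqref{2faces}, \eqref{3faces} and \eqref{1degeneraci}.
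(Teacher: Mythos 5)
Your proposal is correct and takes essentially the same route as the paper: define $ev_2$ by the same explicit formula $ev_2(\tau)=(\tau(\tfrac{2}{3})\tau(\tfrac{1}{3})^{-1},\tau(\tfrac{1}{3}))$, verify the face and degeneracy identities at low levels by direct computation, and let coskeletality produce the higher components. The only (minor) difference is that the paper also writes down $ev_3$ explicitly and invokes 3-coskeletality of both $\GG_\bullet$ and $NG_\bullet$, whereas you extend from the 2-truncation using only the 2-coskeletality of the target $NG_\bullet$, so the level-3 component and all higher compatibilities come for free; your general spine formula indeed reproduces the paper's $ev_3$.
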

\begin{proof}
Define $ev_2: \Omega G \to G\times G$ and $ev_3: \GG_3 \to G^{\times 3}$ by
\begin{equation*}
    ev_2(\tau)=\big(\tau(\frac{2}{3})\tau(\frac{1}{3})^{-1}, \tau(\frac{1}{3}) \big), \quad ev_3(\tau_0, \tau_1, \tau_2)=(\tau_2(\frac{2}{3})\tau_2(\frac{1}{3})^{-1}, \tau_0(\frac{2}{3})\tau_0(\frac{1}{3})^{-1}, \tau_0(\frac{1}{3})).
\end{equation*}
It is a direct forward calculation to verify that $ev_1, ev_2$ and $ev_3$ commute with the face and degeneracy maps, for example $$ev_1\circ d_2(\tau)=ev_1(\tau(\frac{t}{3}+\frac{1}{3})\tau(\frac{1}{3})^{-1})= \tau(\frac{2}{3})\tau(\frac{1}{3})^{-1}=d_2\circ ev_2(\tau).$$ 
Since both $NG_\bullet$ and $\GG_\bullet$ are 3-coskeletal\footnote{$NG_\bullet$ is actually 2-coskeletal.} (see e.g. \cite[Sect.2.3]{z:tgpd-2})
, the levels higher than 3 are determined by the lower levels. Hence $ev_1$, $ev_2$ and $ev_3$ naturally extend to a simplicial map. 
\end{proof}

\begin{proposition}\label{Thm-ev-hyper}
Let $G$ be a connected and symply connected Lie group. Then the evaluation map $ev_\bullet: \GG_\bullet\to NG_\bullet$ is an hypercover.
\end{proposition}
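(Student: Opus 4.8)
The plan is to unwind the definition of a hypercover from \eqref{eq:def-hypercover}. Since $\GG_\bullet$ is a Lie $2$-group and $NG_\bullet$ a Lie $1$-group (hence a Lie $2$-groupoid), I take $n=2$, so it suffices to show that the comparison maps $q_i=\big((d_0,\dots,d_i),ev_i\big)\colon\GG_i\to\partial_i(\GG_\bullet)\times_{\partial_i(NG_\bullet)}NG_i$ are surjective submersions for $i=0,1$ and an isomorphism for $i=2$; by \cite[Lemma 2.5]{z:tgpd-2} they are then automatically isomorphisms for $i\ge 3$ (in particular consistently with the explicit description of $\GG_3$). As $\GG_0=NG_0=pt$, the space $\partial_0$ of both simplicial manifolds is a point and $q_0$ is the identity of a point.

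For $i=1$: again because $\GG_0=NG_0=pt$, the $1$-st matching spaces $\partial_1(\GG_\bullet)=\GG_0\times\GG_0$ and $\partial_1(NG_\bullet)=NG_0\times NG_0$ are points, and $q_1$ reduces to the evaluation $ev_1\colon P_eG\to G$. Its differential at a path $\gamma$ is the linear map $v\mapsto v(1)$ on $T_\gamma P_eG$, and this is a split surjection: a bounded right inverse is $X\mapsto\big(s\mapsto\beta(s)\,L_{\gamma(s)*}L_{\gamma(1)^{-1}*}X\big)$ for a fixed smooth $\beta\colon[0,1]\to[0,1]$ with $\beta(0)=0$, $\beta(1)=1$ (the condition $\beta(0)=0$ guaranteeing that the image tangent path starts at $0_e$, as required for an element of $T_\gamma P_eG$). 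Hence $ev_1$ is a submersion, and it is surjective precisely because $G$ is connected; so $q_1$ is a surjective submersion.

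For $i=2$: since the matching conditions in \eqref{eq:boundary} become vacuous whenever the space of $0$-simplices is a point, $\partial_2(\GG_\bullet)=(P_eG)^{\times 3}$ and $\partial_2(NG_\bullet)=G^{\times 3}$, while $NG_2=G\times G\to G^{\times 3}$, $(g_1,g_2)\mapsto\big(d_0(g_1,g_2),d_1(g_1,g_2),d_2(g_1,g_2)\big)=(g_2,g_1g_2,g_1)$, is a closed embedding. Therefore the fiber product $\partial_2(\GG_\bullet)\times_{\partial_2(NG_\bullet)}NG_2$ is canonically diffeomorphic to $\huaM:=\{(\sigma_0,\sigma_1,\sigma_2)\in(P_eG)^{\times 3}\mid\sigma_1(1)=\sigma_2(1)\sigma_0(1)\}$ (with $g_1=\sigma_2(1)$, $g_2=\sigma_0(1)$), and under this identification $q_2\colon\Omega G\to\huaM$ is $\tau\mapsto(d_0\tau,d_1\tau,d_2\tau)$ for the face maps \eqref{2faces}, the discarded $ev_2$-component $\big(\tau(\tfrac{2}{3})\tau(\tfrac{1}{3})^{-1},\tau(\tfrac{1}{3})\big)=(d_2\tau(1),d_0\tau(1))$ being recovered from it. I would then write down the inverse as the concatenation map
\[
\psi(\sigma_0,\sigma_1,\sigma_2)(s)=\begin{cases}\sigma_0(3s) & s\in[0,\tfrac{1}{3}],\\ \sigma_2(3s-1)\,\sigma_0(1) & s\in[\tfrac{1}{3},\tfrac{2}{3}],\\ \sigma_1(3-3s) & s\in[\tfrac{2}{3},1],\end{cases}
\]
and check the routine facts: the pieces match at $s=\tfrac{1}{3}$ since $\sigma_2(0)=e$, they match at $s=\tfrac{2}{3}$ exactly by the defining constraint of $\huaM$, and $\psi(\sigma)(0)=\sigma_0(0)=e=\sigma_1(0)=\psi(\sigma)(1)$, so $\psi(\sigma)\in\Omega G$; a short computation with \eqref{2faces} then gives $d_i\big(\psi(\sigma)\big)=\sigma_i$ and $\psi(d_0\tau,d_1\tau,d_2\tau)=\tau$. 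Thus $\psi$ is a two-sided inverse of $q_2$, and $q_2$ is an isomorphism.

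I expect the only genuinely delicate point to be functional-analytic rather than combinatorial: one must know that $\psi$ actually takes values in $\GG_2=\Omega G$ as built from the Sobolev completion and is smooth there, and that $ev_1$ is a split submersion of Banach manifolds. These I would take from the setup in Appendix \ref{ap-sob} (together with the truncation description in Appendix \ref{ap:int-trun}). It is also worth noting that of the standing hypotheses on $G$ only connectedness is used directly here, through the surjectivity of $ev_1$; simple connectedness is what additionally ensures that $\GG_\bullet$ presents $\huaB G$ rather than $\huaB\tG$.
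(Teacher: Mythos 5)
Your proposal is correct and takes essentially the same route as the paper: unwind the hypercover condition so that it reduces to $ev_1\colon P_eG\to G$ being a surjective submersion and $q_2$ an isomorphism, identify the fibre product $\partial_2(\GG_\bullet)\times_{\partial_2(NG_\bullet)}NG_2$ with triples of paths satisfying $\sigma_1(1)=\sigma_2(1)\sigma_0(1)$, and invert $q_2$ by exactly the same concatenation formula. The only (harmless) divergence is in the submersion step, where you exhibit an explicit global splitting of $Tev_1$ via a cutoff function and left translation, while the paper argues locally and component-wise by solving a perturbation problem; both yield the required split submersion of Banach manifolds.
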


\begin{proof}
By definition of hypercover (see \eqref{eq:def-hypercover}), we need to show that  $ev_1:P_e G\to G$ is a surjective submersion of Banach manifolds, and that 
\begin{equation} \label{eq:d-ev-2}
    ((d_0,d_1,d_2), ev_2):\Omega G\to (P_eG)^3\times_{G^{\times 3}}G^{\times 2} 
\end{equation}
is an isomorphism.

For the frist statement, it suffices to solve the problem locally and component-wise, that is, the same problem for $pr_\R \circ ev_1: H_r(\R^n) \to \R$.  Then for a small perturbation $(\epsilon^0, \epsilon^1)$, we may always solve differential equations $\epsilon(x)=\epsilon^0, \epsilon'(x)=\epsilon^1$ at least locally and add this perturbation to any function $f$ to obtain a section\footnote{In the case of Banach manifolds, it is not enough to show that $Tev_1$ is surjective locally, but one needs to show the existence of a splitting section.} of $Tev_1$ near $f$. 

For the second statement, we observe that
\begin{equation*}
\begin{split}
    (P_eG)^3\times_{G^{\times 3}}G^{\times 2}=&\{(\gamma_0,\gamma_1,\gamma_2, g_1, g_2)\in  (P_eG)^3\times G^{\times 2}\ | \ \gamma_0(1)=g_2,\ \gamma_2(1)=g_1,\\
    &\gamma_1(1)=g_1g_2\}
    =\{(\gamma_0,\gamma_1,\gamma_2)\in (P_eG)^3 |  \gamma_2(1)\gamma_0(1)=\gamma_1(1)\}=X.
\end{split}
\end{equation*}
Hence $\Phi:X\to \Omega G$ defined by
\begin{equation*}
     \Phi(\gamma_0,\gamma_1,\gamma_2)(t)=\left\{\begin{array}{ll}
         \gamma_0(3t) & t\in[0,\frac{1}{3}], \\
          \gamma_2(3t-1)\cdot \gamma_0(1)& t\in[\frac{1}{3},\frac{2}{3}],\\
          \gamma_1(3-3t)& t\in[\frac{2}{3}, 1],
     \end{array}\right.
\end{equation*}
provides the inverse of \eqref{eq:d-ev-2}. Therefore \eqref{eq:d-ev-2} is an isomorphism.
\end{proof}

\begin{remark}\label{inf}
For completeness, we notice here that the evaluation map $ev_\bullet:\GG_\bullet\to NG_\bullet$ induces a map between the Lie $2$-algebras $\mathbb{g}=(\Omega\g\xrightarrow{\partial}P_0\g)$ and  $\g=(0\to \g)$ given by $$Lie(ev)_\bullet:\mathbb{g}\to \g,  \quad Lie(ev)_1(u)=u(1)\quad \text{and}\quad Lie(ev)_2(a)=a(1)=0,$$ for $u\in \Omega\g$ and $a\in P_0\g$.   Moreover, if we define $F_\bullet:\g\to \mathbb{g}$ by $F(v)(t)=tv\in P_0\g$ then we can easily check that $$Lie(ev)_\bullet\circ F_\bullet=\id_\g,\quad F_\bullet\circ Lie(ev)_\bullet\sim_{H}\id_\mathbb{g}\quad \text{with}\quad H(u)=\left\{\begin{array}{ll}
    u(2t) & t\in[0,\frac{1}{2}], \\
    (2-2t)u(1) & t\in[\frac{1}{2},1].
\end{array}\right.$$ Therefore $F_\bullet$ is a quasi-inverse for $Lie(ev)_\bullet$ with $2$-morphism given by $H$.
\end{remark}

Before proving that our two models are Morita equivalent we include a computation that was stated in \cite[5.4.2.Prop., 5.4.8.Prop.]{brylinski}  without proofs. 

\begin{lemma}[See \cite{brylinski}]\label{lemma-brylinski}
The transgression\footnote{See Appendix \ref{Ap-trans}  for definition of the transgression map $\tr$.} $\tr:\Omega^\bullet(G)\to \Omega^\bullet(\GG_1)$ of the Cartan $3$-form $\Theta$ introduced in \eqref{Omegadot} satisfies
$$d\tr(\Theta)=ev^*_1\Theta\quad\text{and}\quad \tr(\Theta)=d\alpha^P-2\omega^P\quad\text{with}\quad $$
$$\alpha^P_\gamma(u)=\int_0^1\langle   \widehat{\gamma}(t) \widehat{u}(t)\rangle dt\in\Omega^1(\GG_1),$$ $$\omega^{P}_\gamma(u, v) = \frac{1}{2} \int_{0}^1 \langle \widehat{u}'(t) , \widehat{v}(t) \rangle -  \langle \widehat{u}(t) , \widehat{v}'(t) \rangle dt\in\Omega^2(\GG_1),$$
for $\gamma\in P_eG,\ u,v\in T_\gamma P_eG$ and $\widehat{\gamma}(t)=L_{\gamma^{-1}(t)}\gamma'(t),\ \widehat{u}(t)=L_{\gamma(t)^{-1}}u(t), \ \widehat{v}(t)=L_{\gamma(t)^{-1}}v(t).$
\end{lemma}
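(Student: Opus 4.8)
Throughout I use that the transgression $\tr:\Omega^\bullet(G)\to\Omega^{\bullet-1}(\GG_1)$ is fibre integration over the interval along the evaluation map $ev:P_eG\times[0,1]\to G$, $ev(\gamma,t)=\gamma(t)$; that is, $\tr(\eta)=\int_{[0,1]}ev^*\eta$. Restricting $ev$ to $t=1$ recovers the time-$1$ evaluation $ev_1$ of Lemma \ref{evaluation}, and restricting to $t=0$ gives the constant map $ev_0\equiv e$; in the left trivialization $Tev$ at $(\gamma,t)$ sends $(u,c\,\partial_t)$ to the tangent vector with $L_{\gamma(t)^{-1}}$-value $\widehat{u}(t)+c\,\widehat{\gamma}(t)$, where $\widehat{u}(t)=L_{\gamma(t)^{-1}}u(t)$ and $\widehat{\gamma}(t)=L_{\gamma(t)^{-1}}\gamma'(t)$. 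For the first identity the plan is to invoke the Stokes formula for fibre integration,
\[
d\,\tr(\eta)=(-1)^{\deg\eta-1}\bigl(\tr(d\eta)-ev_1^*\eta+ev_0^*\eta\bigr).
\]
Since $ev_0$ is constant we have $ev_0^*\Theta=0$, and $d\Theta=0$ was already verified in the proof of Theorem \ref{Thm-fin-sym}, so $d\,\tr(\Theta)=ev_1^*\Theta$ (the sign in the definition of $\tr$ being chosen so that the surviving sign is $+$).

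For the second identity I would first make $\tr(\Theta)$ explicit. Using that $\Theta(X,Y,Z)=\langle\theta^l(X),[\theta^l(Y),\theta^l(Z)]\rangle$ is totally antisymmetric — precisely by $\mathrm{ad}$-invariance of $\langle\cdot,\cdot\rangle$, as in the closedness computation of Theorem \ref{Thm-fin-sym} — together with the description of $Tev$ above, fibre integration gives
\[
\tr(\Theta)_\gamma(u,v)=\int_0^1\langle\widehat{\gamma}(t),[\widehat{u}(t),\widehat{v}(t)]\rangle\,dt .
\]
Then I would compute $d\alpha^P(u,v)$ by extending $u,v$ to commuting vector fields on $P_eG$, so that the bracket term drops and $d\alpha^P(u,v)=\delta_u(\alpha^P(v))-\delta_v(\alpha^P(u))$. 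The two infinitesimal inputs come from pulling back the Maurer--Cartan equation $d\theta^l=-\tfrac12[\theta^l,\theta^l]$ to the parameter rectangles,
\[
\delta_u\widehat{\gamma}=\widehat{u}'-[\widehat{u},\widehat{\gamma}],\qquad \delta_u\widehat{v}-\delta_v\widehat{u}=-[\widehat{u},\widehat{v}]\quad(\text{pointwise in }t).
\]
Substituting into $\int_0^1\bigl(\langle\delta_u\widehat{\gamma},\widehat{v}\rangle+\langle\widehat{\gamma},\delta_u\widehat{v}\rangle-\langle\delta_v\widehat{\gamma},\widehat{u}\rangle-\langle\widehat{\gamma},\delta_v\widehat{u}\rangle\bigr)\,dt$ and using $\mathrm{ad}$-invariance once more to rewrite the $[\widehat{u},\widehat{\gamma}]$ and $[\widehat{v},\widehat{\gamma}]$ terms, the integrand regroups as $\langle\widehat{u}',\widehat{v}\rangle-\langle\widehat{u},\widehat{v}'\rangle$, which integrates to $2\omega^P_\gamma(u,v)$, plus $\langle\widehat{\gamma},[\widehat{u},\widehat{v}]\rangle$, which integrates to $\tr(\Theta)_\gamma(u,v)$. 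This yields $d\alpha^P=\tr(\Theta)+2\omega^P$, that is $\tr(\Theta)=d\alpha^P-2\omega^P$; applying $d$ recovers the consistent relation $d\,\tr(\Theta)=-2\,d\omega^P$.

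The only delicate point — and what I expect to be the main obstacle — is keeping all sign and normalization conventions coherent: the orientation of the $[0,1]$-fibre (hence the overall sign of $\tr$), the sign in the Maurer--Cartan equation and the use of $\theta^l$ rather than $\theta^r$, and the second-order variation $\delta_u\widehat{v}-\delta_v\widehat{u}=-[\widehat{u},\widehat{v}]$. Once one fixes a single convention matching the normalizations of $\alpha^P$ and $\omega^P$ in the statement, the computation needs no integration by parts and uses only $\mathrm{ad}$-invariance of $\langle\cdot,\cdot\rangle$ together with the basepoint conditions $\widehat{u}(0)=\widehat{v}(0)=0$.
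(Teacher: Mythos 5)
Your proposal is correct and follows essentially the paper's own argument: the first identity is exactly the paper's Proposition \ref{Trans-DR} applied to the closed form $\Theta$ together with $ev_0^*\Theta=0$, and the second rests on the same ingredients — the explicit transgression formula (Proposition \ref{trans-expli}) giving $\tr(\Theta)_\gamma(u,v)=\int_0^1\langle\widehat{\gamma},[\widehat{u},\widehat{v}]\rangle\,dt$, the first-variation identity $\delta_u\widehat{\gamma}=\widehat{u}'-[\widehat{u},\widehat{\gamma}]$, and $\mathrm{ad}$-invariance — the only (cosmetic) difference being that the paper evaluates $d\alpha^P$ via the extensions $\gamma_\epsilon(t)=\gamma(t)\exp(\epsilon\widehat{u}(t))$ and keeps the $\alpha^P([u,v])$ term, whereas you choose commuting extensions and encode the same bracket contribution through $\delta_u\widehat{v}-\delta_v\widehat{u}=-[\widehat{u},\widehat{v}]$. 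The one detail to tidy is your sign in the fibre-integration Stokes formula: with the paper's convention $\tr(\eta)=\int_0^1 ev^*\eta$ one has $\tr(d\eta)=ev_1^*\eta-ev_0^*\eta-d\,\tr(\eta)$ with no factor $(-1)^{\deg\eta-1}$, which yields $d\,\tr(\Theta)=ev_1^*\Theta$ directly, without adjusting the sign of $\tr$.
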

\begin{proof}
The first equality follows from a general result on the transgression map that we include in Proposition \ref{Trans-DR}. For the second equality we embed $G$ into a matrix Lie group in order to simplify the calculation. Then for the integral curve $\gamma_{\epsilon}(t):= \gamma(t) \cdot \exp (\epsilon \widehat{u}(t))$ we have 
\begin{equation*}
    \begin{split}
       u(\alpha^P(v))=&\frac{d}{d\epsilon}\Big\rvert_{\epsilon=0}\Big(\int_0^1 \langle \gamma_\epsilon^{-1}(t)\gamma'_\epsilon(t),\widehat{v}(t)\rangle dt\Big)\\
        =&\int_0^1\langle-\widehat{u}(t)\gamma^{-1}(t)\gamma'(t)+\gamma^{-1}(t)\big(\gamma'(t)\widehat{u}(t)+\gamma(t)\widehat{u}'(t)\big),\widehat{v}(t)\rangle dt\\
        =&\int_0^1\langle[\widehat{\gamma}(t),\widehat{u}(t)],\widehat{v}(t)\rangle+ \langle \widehat{u}'(t),\widehat{v}(t)\rangle dt.
    \end{split}
\end{equation*}
Therefore we obtain
\begin{equation*}
\begin{split}
     d\alpha^P(u,v)=&u(\alpha^P(v))-v(\alpha^P(u))-\alpha^P([u,v])
     =\int_0^1\Big(\langle[\widehat{\gamma}(t),\widehat{u}(t)],\widehat{v}(t)\rangle+ \langle \widehat{u}'(t),\widehat{v}(t)\rangle\\ -&\langle[\widehat{\gamma}(t),\widehat{v}(t)],\widehat{u}(t)\rangle- \langle \widehat{v}'(t),\widehat{u}(t)\rangle -\langle \widehat{\gamma}(t),[\widehat{u}(t),\widehat{v}(t)]\rangle\Big) dt\\
     =&\int_0^1\Big( \langle \widehat{u}'(t),\widehat{v}(t)\rangle - \langle \widehat{v}'(t),\widehat{u}(t)\rangle \Big) dt+\int_0^1\langle \widehat{\gamma}(t),[\widehat{u}(t),\widehat{v}(t)]\rangle dt\\
     =&2\omega^P(u,v)+\tr(\Theta)(u,v),
\end{split}
\end{equation*}
where in the middle step we have used the ad-invariance of the pairing and in the last step the explicit formula for the transgression given in Proposition \ref{trans-expli}.
\end{proof}

\begin{theorem}\label{thm-morita}
Let $G$ be a connected and simply connected Lie group with quadratic Lie algebra $(\g,\langle\cdot,\cdot\rangle)$. The $2$-shifted symplectic Lie $2$-groups $(\GG_\bullet,\omega_\bullet)$ and $(NG_\bullet,\frac{1}{2}\Omega_\bullet)$ are symplectic Morita equivalent via
$$(\GG_\bullet,\omega_\bullet)\xleftarrow{\Id_\bullet}(\GG_\bullet,\omega^P_\bullet)\xrightarrow{ev_\bullet}(NG_\bullet, \frac{1}{2}\Omega_\bullet), $$
where $\omega_\bullet^P=\omega^P+0$ is the $1$-shifted $2$-form on $\GG_\bullet$ defined in Lemma \ref{lemma-brylinski}.
\end{theorem}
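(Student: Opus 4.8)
The plan is to reduce the whole statement to a single differential-form identity. By Definition~\ref{def:sympmorita} a symplectic Morita equivalence consists of a zig-zag of hypercovers together with a matching $1$-shifted $2$-form. Here the left leg $\Id_\bullet$ is trivially a hypercover, the right leg $ev_\bullet$ is one by Proposition~\ref{Thm-ev-hyper} (this is exactly where connectedness and simple connectedness of $G$ enter), and $\omega^P_\bullet=\omega^P+0$ is automatically normalised since $\GG_0=pt$. So the content of the theorem is the identity
\[
\omega_\bullet-\tfrac12\,ev^*_\bullet\Omega_\bullet=D\omega^P_\bullet .
\]
I would prove this independently of the closedness of $\omega_\bullet$: once it is in hand, applying $D$ and using $D\Omega_\bullet=0$ (Theorem~\ref{Thm-fin-sym}) together with $D^2=0$ yields $D\omega_\bullet=0$, which is precisely the claim deferred at the end of the proof of Theorem~\ref{thm-inf-sym}. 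Thus the displayed identity should be checked directly, level by level in the de Rham double complex.

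The degree-$0$ component is the tautology $0=0$ on $\GG_0=pt$. The degree-$1$ component is a relation between $3$-forms on $\GG_1=P_eG$: unwinding $\omega_\bullet=\omega+0+0$, $\Omega_\bullet=\Omega-\Theta+0$, and $D\omega^P_\bullet=\delta\omega^P-d\omega^P$, it reads $\tfrac12\,ev_1^*\Theta=-d\omega^P$, which is immediate from Lemma~\ref{lemma-brylinski}: there $ev_1^*\Theta=d\,\tr(\Theta)$ and $\tr(\Theta)=d\alpha^P-2\omega^P$, whence $ev_1^*\Theta=d(d\alpha^P-2\omega^P)=-2\,d\omega^P$.

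The degree-$2$ component is the core of the argument and, I expect, the main obstacle: it is the identity
\[
\omega-\tfrac12\,ev_2^*\Omega=\delta\omega^P=d_0^*\omega^P-d_1^*\omega^P+d_2^*\omega^P
\]
of $2$-forms on $\GG_2=\Omega G$, where the $d_i$ are the face maps \eqref{2faces} and $ev_2(\tau)=\bigl(\tau(\tfrac23)\tau(\tfrac13)^{-1},\,\tau(\tfrac13)\bigr)$. I would evaluate both sides on $\tau\in\Omega G$ and $a,b\in T_\tau\Omega G$, passing to the left-translated $\g$-valued functions $\widehat a(t)=L_{\tau(t)^{-1}}a(t)$ and $\widehat b$. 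Each pullback $d_i^*\omega^P$ then becomes an integral over one of the three arcs $[0,\tfrac13]$, $[\tfrac13,\tfrac23]$, $[\tfrac23,1]$ of $S^1$; one must track the reparametrisation Jacobians and, for $d_2$, the extra right-translation by $\tau(\tfrac13)^{-1}$, which produces $\ad$-type terms that drop out by invariance of $\langle\cdot,\cdot\rangle$. After integrating by parts on each arc — the same manipulation as in the proof of Lemma~\ref{lemma-brylinski}, which disposes of the $\alpha^P$-type contributions — the bulk integrals reassemble into Segal's $\omega_\tau(a,b)=\int_{S^1}\langle\widehat a',\widehat b\rangle\,dt$, while the boundary terms at $t=\tfrac13$ and $t=\tfrac23$ are built from the values $a(\tfrac13),b(\tfrac13),a(\tfrac23),b(\tfrac23)$ and, once rewritten in the frames at $\tau(\tfrac13)$ and $\tau(\tfrac23)$, match $-\tfrac12\,ev_2^*\Omega$ computed from \eqref{exOme}. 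Making these boundary terms line up with $ev_2$ — correctly threading the left/right translations and the signs — is where the real work lies; everything else is bookkeeping.
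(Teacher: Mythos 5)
Your proposal is correct and takes essentially the same route as the paper: the proof there also reduces everything to the identity $\omega_\bullet-\tfrac12\,ev^*_\bullet\Omega_\bullet=D\omega^P_\bullet$ (proved without using closedness of $\omega_\bullet$, which is then deduced exactly as you say), gets the degree-$1$ component from Lemma \ref{lemma-brylinski}, and verifies the degree-$2$ identity $\omega-\tfrac12\,ev_2^*\Omega=\delta\omega^P$ by the very computation you outline. The "real work" you defer is carried out in the paper just as you describe: $d_0^*\omega^P$ and $d_1^*\omega^P$ restrict $\omega^P$ to the arcs $[0,\tfrac13]$ and $[\tfrac23,1]$, $d_2^*\omega^P$ produces the middle-arc integral plus boundary terms at $t=\tfrac13,\tfrac23$ via Stokes, these boundary terms match $-\tfrac12\,ev_2^*\Omega$ computed from \eqref{exOme}, and the bulk terms reassemble into $\omega$ since $\omega^P|_{\Omega G}=\omega$ by another integration by parts.
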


\begin{proof}
To prove this, we need to show 
\begin{equation*}\label{eq:Omega-omega}
    \frac{1}{2}ev^*_1 \Theta =- d\omega^{P}\qquad\text{ and}\qquad  \omega-\frac{1}{2}ev^*_2 \Omega= \delta \omega^{P}.
\end{equation*}
The first equality follows directly from the previous Lemma \ref{lemma-brylinski},  since we have
$$ \frac{1}{2}ev^*_1\Theta=\frac{1}{2}d\tr(\Theta)=-d\omega^P , $$
as we want. For the second equality,  we need to compute 
\[
\delta \omega^{P}  = d^*_0 \omega^{P}  - d^*_1 \omega^{P}  + d^*_2 \omega^{P},\quad\text{ and }\quad ev_2^*\Omega . 
\]
Take  $\tau\in\Omega G,\ a, b\in T_\tau\Omega G$, and $\widehat{a}(t)=L_{\tau(t)^{-1}}a(t), \widehat{b}(t)=L_{\tau(t)^{-1}}b(t)$ and recall from Lemma \ref{evaluation} that the evaluation map is
$ev_2(\tau)=(\tau(\frac{2}{3})\tau(\frac{1}{3})^{-1},\tau(\frac{1}{3}))$,  hence its tangent reads    $$Tev_2(a)=(R_{\tau(\frac{1}{3})^{-1}}a(\frac{2}{3}) -L_{\tau(\frac{2}{3} )\tau(\frac{1}{3})^{-1}} R_{\tau(\frac{1}{3})^{-1}}  a(\frac{1}{3}) , a(\frac{1}{3}) ).$$  With this and equation \eqref{exOme} we compute the pullback of $\Omega$ and obtain
\begin{equation}\label{eq:ev2-Omega}
    \begin{split}
        (ev^*_2\Omega)_{\tau}(a,b)= &\Omega_{ev_2(\tau)}(Tev_2(a),Tev_2(b))  \\
        = &\langle L_{\tau(\frac{1}{3})\tau(\frac{2}{3} )^{-1}}R_{\tau(\frac{1}{3})^{-1}} a(\frac{2}{3}) - R_{\tau(\frac{1}{3})^{-1}}  a(\frac{1}{3}) , R_{\tau(\frac{1}{3})^{-1}}b(\frac{1}{3})\rangle\\
        -&\langle L_{\tau(\frac{1}{3})\tau(\frac{2}{3} )^{-1}}R_{\tau(\frac{1}{3})^{-1}} b(\frac{2}{3}) - R_{\tau(\frac{1}{3})^{-1}}  b(\frac{1}{3}) , R_{\tau(\frac{1}{3})^{-1}}a(\frac{1}{3})\rangle\\
        =&\langle \widehat{a}(\frac{2}{3}), \widehat{b}(\frac{1}{3})\rangle-\langle \widehat{b}(\frac{2}{3}), \widehat{a}(\frac{1}{3})\rangle. 
    \end{split}
\end{equation}

For $\delta\omega^P$ we use the face maps and their tangents as given in equations \eqref{2faces} and \eqref{T2faces}. We have
\begin{equation}\label{eq:d2-omegaP}
    \begin{split}
        &(d^*_2\omega^P)_{\tau}(a,b)=\omega^P_{d_2(\tau)}(Td_2(a),Td_2(b))\\
        =&\frac{1}{2}\int_0^1\langle\big(L_{\tau(\frac{1}{3})\tau(\frac{1+t}{3} )^{-1}}R_{\tau(\frac{1}{3})^{-1}} a(\frac{1+t}{3})\big)',L_{\tau(\frac{1}{3})\tau(\frac{1+t}{3} )^{-1}}R_{\tau(\frac{1}{3})^{-1}} b(\frac{1+t}{3})-R_{\tau(\frac{1}{3})^{-1}}b(\frac{1}{3})\rangle\\
        -&\frac{1}{2}\int_0^1\langle L_{\tau(\frac{1}{3})\tau(\frac{1+t}{3} )^{-1}}R_{\tau(\frac{1}{3})^{-1}} a(\frac{1+t}{3})-R_{\tau(\frac{1}{3})^{-1}}a(\frac{1}{3}), \big(L_{\tau(\frac{1}{3})\tau(\frac{1+t}{3} )^{-1}}R_{\tau(\frac{1}{3})^{-1}} b(\frac{1+t}{3})\big)'\rangle\\
        =&\frac{1}{2}\int_{\frac{1}{3}}^\frac{2}{3}\langle \widehat{a}'(s), \widehat{b}(s)\rangle-\frac{1}{2}\int_{\frac{1}{3}}^\frac{2}{3}\langle \widehat{a}(s), \widehat{b}'(s)\rangle
        -\frac{1}{2}\int_{\frac{1}{3}}^\frac{2}{3}\langle \widehat{a}'(s), \widehat{b}(\frac{1}{3})\rangle+\frac{1}{2}\int_{\frac{1}{3}}^\frac{2}{3}\langle \widehat{a}(\frac{1}{3}), \widehat{b}'(s)\rangle\\
        =&\frac{1}{2}\int_{\frac{1}{3}}^\frac{2}{3}\langle \widehat{a}'(s), \widehat{b}(s)\rangle-\frac{1}{2}\int_{\frac{1}{3}}^\frac{2}{3}\langle \widehat{a}(s), \widehat{b}'(s)\rangle
        -\frac{1}{2}\langle \widehat{a}(\frac{2}{3}), \widehat{b}(\frac{1}{3})\rangle+\frac{1}{2}\langle \widehat{b}(\frac{2}{3}), \widehat{a}(\frac{1}{3})\rangle, 
    \end{split}
\end{equation}
where in the last step we are using Stokes' Theorem. Similarly, through a much easier calculation, we have 
$\tau_0(t)^{-1} Td_0 (a)(t)=\tau(\frac{t}{3})^{-1} a(\frac{t}{3})=\widehat{a}(\frac{t}{3})$, 
$\tau_1(t)^{-1} Td_1 (a)(t) =\widehat{a}(1-\frac{t}{3})$, and similar for $b(t)$. Thus $d^*_0 \omega^{P} = \omega^P|_{[0, \frac{1}{3}]}$ and $d^*_1 \omega^{P} = - \omega^P|_{[\frac{2}{3}, 1]}$. Combining these and \eqref{eq:ev2-Omega}, \eqref{eq:d2-omegaP}, we see that
$$(\delta\omega^P)_\tau(a,b)=\omega_\tau^P(a,b)-\frac{1}{2}(ev^*_2\Omega)_\tau(a,b)=\omega_\tau(a,b)-\frac{1}{2}(ev^*_2\Omega)_\tau(a,b)$$
where  $\omega^P|_{\Omega G} = \omega$ by  Stokes' theorem.
\end{proof}

\begin{remark}
A relation between the $2$-forms $\omega$ and $\Omega$ was also noticed in \cite[Section 5.5]{Ale-dirac}  using Courant algebroids and forwards maps of Dirac structures. They also use some of the multiplicative properties of this forms. 
\end{remark}

\begin{remark}\label{VE2}
In the work in progress \cite{cam-jo} we introduce a Van Est map for Lie $2$-groups as a map of double complexes analogous to \eqref{VE-map}. In special case of $\GG_\bullet$,  we obtain a map $VE:\Omega^q(\GG_p)\to \Omega^q(\mathbb{g}[1])_p$. Using the map $Lie(ev)_\bullet$ introduced in Remark \ref{inf} it is not difficult to show that
$$VE(\omega_\bullet)-\frac{1}{2}Lie(ev)^*_\bullet\big( VE(\Omega_\bullet)\big)=D \big(VE(\omega^P_\bullet)\big).$$
This may be viewed as an infinitesimal version of Theorem \ref{thm-morita}. 
\end{remark}

\section{Manin triples and $\huaB G$}\label{Sec:4}

Recall that a {\bf Manin triple } $(\g, \h_+,\h_- )$ consists of a quadratic Lie algebra $(\g,\langle\cdot,\cdot\rangle)$  together with two Lie subalgebras $\h_+, \h_-\subset \g$ satisfying
$$\langle \h_{+},\h_{+}\rangle=\langle \h_{-},\h_{-}\rangle=0\quad\text{and}\quad \g=\h_+\oplus \h_-.$$
The decomposition $\g=\h_+\oplus\h_-$ makes $\h_+$, $\h_-$ and $\g$ into {\bf Lie bialgebras} and it is well known \cite{kos:bia} that there is a correspondence between Lie bialgebras and Manin triples. Lie bialgebras also plays an important role here because they are the infinitesimal counterpart of Poisson-Lie groups, for more details see e.g. \cite{EtSc02, kos:bia, lu-weinstein}.

\begin{example}
Some examples of Manin triples are the following. Given a Lie algebra $\h$,  $(\g=\h\ltimes\h^*, \h_+=\h, \h_-=\h^*)$ is a Manin triple,  where the semidirect product is with respect to the coadjoint representation. If we start with a Lie bialgebra $(\h, [\cdot,\cdot], d^*)$,  there is also a Manin triple, $(\g=\h\bowtie\h^*, \h_+=\h, \h_-=\h^*)$,  given by the Drinfeld's double.   A particular interesting case is when the coalgebra is given by an $r$-matrix $r\in \h\otimes \h$. For complex reductive Lie algebras, a classification of Manin triples is given in \cite{del:cla}.
\end{example}

Given a Manin triple $(\g, \h_+,\h_- )$, a symplectic double Lie group $(\Gamma^\h_{\bullet,\bullet}, \omega^\h)$ is constructed in \cite{Lu-we1}. 
Using $(\Gamma^\h_{\bullet,\bullet}, \omega^\h)$ and the Artin-Mazur codiagonal procedure \cite{artin-mazur}, we obtain a (local) Lie $2$-group $\bar{\Gamma}^\h_\bullet$ and a $2$-form $\bar{\omega}^\h_{\bullet}\in\Omega^2(\bar{\Gamma}^\h_2)$ as in \cite{MeTa11}.  In this section, we further show that  $(\bar{\Gamma}^\h_\bullet, \bar{\omega}^\h)$ is a $2$-shifted symplectic (local) Lie $2$-group  and more importantly, we give in Theorem \ref{Thm-manin} an explicit symplectic Morita equivalence between $(\bar{\Gamma}^\h_\bullet, \bar{\omega}^\h_{\bullet})$ and $(NG_\bullet, \Omega_\bullet).$

\subsection{Double Lie groups and Artin-Mazur codiagnoal construction}\label{se:dlg}
Here we recall some preliminaries on double Lie groups and the Artin-Mazur codiagonal construction. For more details see \cite{artin-mazur, MK2, MeTa11}.
\begin{equation}\label{dlg}
   \xymatrix{\huaG\ar@<-0.5 ex>[r]_{s^h}\ar@<0.5 ex>[r]^{t^h}\ar@<-0.5 ex>[d]_{t^v}\ar@<0.5 ex>[d]^{s^v}& B\ar@<-0.5 ex>[d]\ar@<0.5 ex>[d]\\
     A\ar@<-0.5 ex>[r]\ar@<0.5 ex>[r]&pt}
\end{equation}
The diagram \eqref{dlg} is a {\bf double Lie group} if the two Lie groupoid structures of $\huaG$ satisfy that $s^h,t^h, s^v,t^v$ are groupoid morphisms and the multiplications commute, i.e.
$$m^v\big(m^h(g_{11}, g_{12}), m^h(g_{21}, g_{22})\big)=m^h\big(m^v(g_{11}, g_{21}), m^v(g_{12}, g_{22})\big)$$
for all $g_{ij}\in \huaG$ such that $s^h(g_{i1})=t^h(g_{i2})$ and $s^v(g_{1j})=t^h(g_{2j}).$

In order to avoid confusion, we denote a double Lie group by $\huaG_{\bullet,\bullet}$. This notation is motivated by the fact that for any double Lie group \eqref{dlg}, there is an associated {\bf bisimplicial manifold $\huaG_{\bullet,\bullet}$} (see \cite[Prop 3.10]{MeTa11}) given by 
$$      \huaG_{j,i}=\{ (\dots, g_{qp}, \dots)_{ 1\leq q\leq j, 1\leq p\leq i}| g_{qp} \in \huaG, \ s^h(g_{qp})=t^h(g_{q(p+1)}),\ s^v(g_{qp})=t^v(g_{(q+1)p})\},  $$
for $i, j \ge 1$, and $\huaG_{j,0}=B^{\times j}$, $\huaG_{0,i}=A^{\times i}$.  Faces and degeneracies are induced by the multiplications and the units,  see \cite{MeTa11} for explicit formulas. 


In the 60s it was shown in \cite{artin-mazur} how to take a codiagonal construction on a bisimplicial manifold $X_{\bullet,\bullet}$ to obtain a simplicial manifold $\bar{X}_\bullet$. This method was used in \cite{MeTa11} to prove that if the bisimplicial manifold is given by a double Lie group\footnote{The theorem holds in general for double Lie groupoids.} $\huaG_{\bullet,\bullet}$,  then $\bar{\huaG}_\bullet$ is a local Lie $2$-group (see \cite[Theorem 4.5]{MeTa11}). Moreover, \cite{MeTa11} shows that if the double source map $(s^v,s^h):\huaG\to A\times B$ is surjective then $\bar{\huaG}_\bullet$ is a Lie $2$-group.

\subsection{The Manin triple model}\label{sec:manin-model}

Given a Manin triple $(\g, \h_+, \h_-)$,  denote by $H_\pm$ and $G$ the connected and simply connected Lie groups integrating $\h_\pm$ and $\g$ respectively. By the Lie II theorem, we can also integrate the inclusions and obtain two group morphisms $\varphi_{\pm}:H_\pm\to G$. Denote by $\Gamma^\h$ the following set
$$\Gamma^\h=\{(h_2,a_2,a_1,h_1)\in H_+\times H_-\times H_-\times H_+\ |\ \varphi_+(h_2)\varphi_-(a_1)=\varphi_-(a_2)\varphi_+(h_1)\}.$$
The main result of \cite[Theorem 3]{Lu-we1} is the following:
\begin{proposition}[See \cite{Lu-we1}]\label{propwe}
Given a Manin triple $(\h_+, \h_-,\g)$, there is a symplectic double Lie group $(\Gamma^\h_{\bullet,\bullet}, \omega^\h)$ given by the square
\begin{equation*}
     \xymatrix{\Gamma^\h\ar@<-0.5 ex>[r]_{s^h}\ar@<0.5 ex>[r]^{t^h}\ar@<-0.5 ex>[d]_{t^v}\ar@<0.5 ex>[d]^{s^v}& H_+\ar@<-0.5 ex>[d]\ar@<0.5 ex>[d]\\
     H_-\ar@<-0.5 ex>[r]\ar@<0.5 ex>[r]&pt}
 \end{equation*}
 with horizontal source, target, and multiplication given by
  \begin{equation*}
     s^h(\xi)=h_1, \ t^h(\xi)=h_2, \ m^h(\xi, \xi')=(h_2, a_2a_2', a_1a_1', h_1'), 
 \end{equation*}
 and vertical source, target, and multiplication given by
  \begin{equation*}
     s^v(\xi)=a_1, \ t^v(\xi)=a_2, \ m^v(\xi, \xi')=(h_2h_2', a_2, a_1', h_1h_1'),  
 \end{equation*}
 for $\xi=(h_2,a_2,a_1,h_1)$ and  $\xi'=(h_2',a_2',a_1',h_1')\in\Gamma^\h$. The symplectic form $\omega^\h$ is given by
 \begin{equation*}
     \omega^\h:=\langle (t^h)^*\theta^l_{H_+},(s^v)^*\theta_{H_-}^r\rangle-\langle(t^v)^*\theta^l_{H_-},(s^h)^*\theta^r_{H_+}\rangle\in\Omega^2(\Gamma^\h).
 \end{equation*}
\end{proposition}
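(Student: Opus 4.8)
The statement is \cite[Theorem 3]{Lu-we1}, so the plan is to recall the main steps, which naturally split into the verification of the double Lie group axioms for $\Gamma^\h_{\bullet,\bullet}$ and the verification that $\omega^\h$ is symplectic and doubly multiplicative. For the first part, I would begin by showing $\Gamma^\h$ is a smooth manifold: writing $\mu\colon H_+\times H_-\times H_-\times H_+\to G$, $\mu(h_2,a_2,a_1,h_1)=\varphi_-(a_2)^{-1}\varphi_+(h_2)\varphi_-(a_1)\varphi_+(h_1)^{-1}$, one has $\Gamma^\h=\mu^{-1}(e)$, and since $\g=\h_+\oplus\h_-$ the differential of $\mu$ is surjective along $\mu^{-1}(e)$, so $\Gamma^\h$ is a closed submanifold of dimension $\dim\g$. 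Then I would check that $s^h,t^h$ (resp. $s^v,t^v$) are surjective submersions onto $H_+$ (resp. $H_-$), that the proposed multiplications $m^h,m^v$ again land in $\Gamma^\h$ — a short manipulation of the defining identity $\varphi_+(h_2)\varphi_-(a_1)=\varphi_-(a_2)\varphi_+(h_1)$, using that $\varphi_\pm$ are group morphisms — and that units, inverses, and associativity hold. The compatibility axioms (that $s^h,t^h$ are morphisms for the vertical structure and conversely, and the interchange law $m^v(m^h(\cdot,\cdot),m^h(\cdot,\cdot))=m^h(m^v(\cdot,\cdot),m^v(\cdot,\cdot))$) then follow directly from associativity in $H_\pm$; all of this is bookkeeping in the four group coordinates.

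The geometric content is that $\omega^\h$ is closed, nondegenerate, and multiplicative for both $m^h$ and $m^v$. Closedness is a Maurer--Cartan computation: applying $d$ together with $d\theta^l=-\tfrac12[\theta^l,\theta^l]$ and $d\theta^r=\tfrac12[\theta^r,\theta^r]$ to the two terms of $\omega^\h$, the cubic terms must cancel, which uses the $\ad$-invariance of $\langle\cdot,\cdot\rangle$ on $\g$, the isotropy $\langle\h_+,\h_+\rangle=\langle\h_-,\h_-\rangle=0$, and the relations among the pulled-back Maurer--Cartan forms on $\Gamma^\h$ imposed by the constraint $\varphi_+(h_2)\varphi_-(a_1)=\varphi_-(a_2)\varphi_+(h_1)$. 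Multiplicativity with respect to each of $m^h$ and $m^v$ is checked by an entirely analogous expansion of the pulled-back forms, again relying on the isotropy of $\h_\pm$. For nondegeneracy I would first compute at the identity $\xi_0=(e,e,e,e)$: the linearized constraint there reads $\xi_2+\eta_1=\eta_2+\xi_1$ in $\g=\h_+\oplus\h_-$, which forces $\xi_2=\xi_1\in\h_+$ and $\eta_1=\eta_2\in\h_-$, so $T_{\xi_0}\Gamma^\h\cong\h_+\oplus\h_-$, and on it $\omega^\h_{\xi_0}$ is $\big((\xi,\eta),(\xi',\eta')\big)\mapsto 2\big(\langle\xi,\eta'\rangle-\langle\xi',\eta\rangle\big)$, nondegenerate because $\h_+\times\h_-\to\R$ is a perfect pairing. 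To propagate nondegeneracy to an arbitrary point I would invoke the standard fact that a multiplicative $2$-form on a Lie groupoid with $\dim\Gamma^\h=2\dim H_+$ is nondegenerate everywhere as soon as it is nondegenerate along the identity section, using left and right translations of the horizontal (say) groupoid structure.

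The main obstacle is the simultaneous bookkeeping in the last step: one has to keep track of the $\h_+$- and $\h_-$-valued Maurer--Cartan forms pulled back along all four of $s^h,t^h,s^v,t^v$, together with the relations forced by the defining constraint, and confirm that closedness and \emph{both} multiplicativity identities hold. It is precisely here that the three defining features of a Manin triple — isotropy of each $\h_\pm$, directness of the sum $\g=\h_+\oplus\h_-$, and $\ad$-invariance of $\langle\cdot,\cdot\rangle$ — all enter. Since this is exactly the computation carried out in \cite{Lu-we1}, I would either reproduce it in the trivialization above or simply refer to \cite{Lu-we1} for the details.
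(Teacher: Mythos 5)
The paper does not actually prove this proposition: it is recalled verbatim from \cite[Theorem 3]{Lu-we1}, with the explicit formula for $\omega^\h$ quoted from \cite[Theorem 3.12]{Henrique-lu} (see the remark immediately following the statement), so your ultimate deferral to \cite{Lu-we1} coincides with the paper's treatment, and what you add is a reconstruction of the verification. That reconstruction is essentially sound: the submersion argument for $\Gamma^\h=\mu^{-1}(e)$ (using $\g=\h_+\oplus\h_-$ and that $\mathrm{Ad}_{\varphi_+(h)}$ preserves $\h_+$), the check that $m^h,m^v$ preserve the defining constraint, the interchange law as coordinate bookkeeping, and the identity-section computation $T_{\xi_0}\Gamma^\h\cong\h_+\oplus\h_-$ with $\omega^\h_{\xi_0}((\xi,\eta),(\xi',\eta'))=2(\langle\xi,\eta'\rangle-\langle\xi',\eta\rangle)$ are all correct — the last one is in effect the same computation the paper carries out later for the codiagonal model in Proposition \ref{prop-manin-sym}. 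The one step you state too casually is the propagation of nondegeneracy from the identity section ``using left and right translations'': multiplicative $2$-forms are not invariant under groupoid translations, the genuinely standard unit-section criteria (as in \cite{BCWZ}, cf. Example \ref{ep:pre-symp-gpd}) concern only the weaker condition $\ker\omega\cap\ker Ts\cap\ker Tt=0$ rather than full nondegeneracy, and the source fibres of $\Gamma^\h\rightrightarrows H_+$ need not be connected (this is precisely the completeness/locality issue of Remark \ref{rmk:obstruction}), so this step would require either the IM-form/Dirac-structure machinery or, as in \cite{Lu-we1}, the identification of $(\Gamma^\h,\omega^\h)$ with the Semenov-Tian-Shansky (Heisenberg double) symplectic structure on $G$ via the local diffeomorphism underlying $\Phi_1$ of Proposition \ref{prop:phi}. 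Since your fallback is the citation, which is all the paper itself provides, this is a caveat about your sketch rather than a gap relative to the paper.
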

\begin{remark}
The explicit form of the symplectic structure $\omega^\h$ is taken  from \cite[Theorem 3.12]{Henrique-lu} and  also appears in \cite{libland-severa1}.
\end{remark}

With all these preliminaries,  we are now ready to equip the (local) Lie $2$-group $\bar{\Gamma}^\h_\bullet$ and the $2$-form $\bar{\omega}^\h$ constructed in \cite{MeTa11} with a 2-shifted symplectic local Lie 2-group structure.  We notice that the non-degeneracy condition \eqref{eq:homo-pai} for the $2$-form $\bar{\omega}^\h$ is therefore different from the one demanded in \cite{MeTa11}. A modification of their result leads to the following.

\begin{proposition}\label{prop-manin-sym}
Let $(\g,\h_+,\h_-)$ be a Manin triple and $(\Gamma^\h_{\bullet,\bullet}, \omega^\h)$ the symplectic double group of Proposition \ref{propwe}. Then $(\bar{\Gamma}^\h_\bullet, \bar{\omega}^\h)$  is a  $2$-shifted symplectic local Lie $2$-group given by 
$$\bar{\Gamma}^\h_\bullet=\cdots H_-\times\Gamma^\h\times H_+\aaar H_-\times H_+\rightrightarrows pt, \quad\text{and}\quad \bar{\omega}^\h=\pr_{\Gamma^\h}^*\omega^\h\in\Omega^2(\bar{\Gamma}^\h_2), $$
with faces$$ d_0(\lambda)=(a_1, h_1), \quad d_1(\lambda)=(a_3a_2, h_2h_1),\quad \text{and}\quad d_2(\lambda)=(a_3, h_3),$$ 
for $\lambda=(a_3,(h_3, a_2, a_1, h_2),h_1)\in\bar{\Gamma}^\h_2=H_-\times\Gamma^\h\times H_+$, and degeneracies, $$s_0(a,h)=(e,(e,a,a,e),h) \quad\text{and}\quad s_1(a,h)=(a,(h,e,e,h),e), \quad \text{for}\; (a,h)\in H_-\times H_+.$$
\end{proposition}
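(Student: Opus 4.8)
The plan is to verify, in turn, the three requirements of Definition~\ref{def:mSSLnG} for the $2$-shifted $2$-form $\bar{\omega}^\h_\bullet=\bar{\omega}^\h+0+0$ (supported in simplicial degree $2$) on $\bar{\Gamma}^\h_\bullet$. That $\bar{\Gamma}^\h_\bullet$ is a local Lie $2$-group, with the face and degeneracy maps listed in the statement, is the output of the Artin--Mazur codiagonal applied to the double Lie group $\Gamma^\h_{\bullet,\bullet}$, that is \cite[Theorem~4.5]{MeTa11}. For closedness, note that since $\bar{\omega}^\h_\bullet$ lives entirely in simplicial degree $2$ and $m=2$, the condition $D\bar{\omega}^\h_\bullet=0$ is equivalent to $d\bar{\omega}^\h=0$ on $\bar{\Gamma}^\h_2$ together with $\delta\bar{\omega}^\h=0$ on $\bar{\Gamma}^\h_3$. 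The first holds because $\bar{\omega}^\h=\pr_{\Gamma^\h}^*\omega^\h$ and $\omega^\h$ is symplectic, so $d\bar{\omega}^\h=\pr_{\Gamma^\h}^*d\omega^\h=0$; the second is the multiplicativity of $\bar{\omega}^\h$ proved in \cite{MeTa11}, which comes from pushing the bi-multiplicativity of $\omega^\h$ on $\Gamma^\h_{\bullet,\bullet}$ through the codiagonal.

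For normalization I would show $s_0^*\bar{\omega}^\h=s_1^*\bar{\omega}^\h=0$ directly. By the explicit degeneracies, $\pr_{\Gamma^\h}\circ s_0$ takes values in $\{(e,a,a,e)\in\Gamma^\h\}$, where both $H_+$-legs $h_1,h_2$ are constant; since both summands of $\omega^\h$ pair a pullback of a Maurer--Cartan form on $H_+$ (via $t^h$ or $s^h$) with a one-form, $\omega^\h$ vanishes on directions tangent to that subset, so $s_0^*\bar{\omega}^\h=0$, and $s_1$ is symmetric with the two $H_-$-legs constant.

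The new content is non-degeneracy, and the key simplification is that the tangent complex collapses. Since $\bar{\Gamma}^\h_0=pt$ we have $\huaT_0\bar{\Gamma}^\h=0$ and $\huaT_1\bar{\Gamma}^\h=T_{(e,e)}(H_-\times H_+)=\h_-\oplus\h_+=\g$; as $\Gamma^\h$ is cut out of a $2\dim\g$-dimensional manifold by $\dim\g$ equations, $\dim\bar{\Gamma}^\h_2=2\dim\g$, and the rank formula of Proposition~\ref{tcx-quot} gives $\rk\huaT_2\bar{\Gamma}^\h=\dim\bar{\Gamma}^\h_2-2\dim\bar{\Gamma}^\h_1=0$, with $\huaT_l\bar{\Gamma}^\h=0$ for $l>2$. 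Hence $\huaT_\bullet\bar{\Gamma}^\h$ is $\g$ in degree $1$, $H_1(\huaT\bar{\Gamma}^\h)=\g$ and all other homology zero, so the only pairing to test is $\lambda^{\bar{\omega}^\h_\bullet}\colon\g\times\g\to\R$. By \eqref{nondeg-pairing} with $m=2$, $l=1$, $\lambda^{\bar{\omega}^\h_\bullet}(v,w)=\bar{\omega}^\h(Ts_1v,Ts_0w)-\bar{\omega}^\h(Ts_0v,Ts_1w)$. Writing $v=v_++v_-$, $w=w_++w_-$ along $\g=\h_+\oplus\h_-$ and differentiating the degeneracies at $(e,e)$ gives $\pr_{\Gamma^\h}Ts_0v=(0,v_-,v_-,0)$ and $\pr_{\Gamma^\h}Ts_1w=(w_+,0,0,w_+)$ in $T_e\Gamma^\h$; evaluating $\omega^\h$ at the identity, where it reads $\omega^\h_e\big((h_2,a_2,a_1,h_1),(h_2',a_2',a_1',h_1')\big)=\langle h_2,a_1'\rangle-\langle a_2,h_1'\rangle$, yields $\bar{\omega}^\h(Ts_1v,Ts_0w)=\langle v_+,w_-\rangle$ and $\bar{\omega}^\h(Ts_0v,Ts_1w)=-\langle v_-,w_+\rangle$. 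Hence $\lambda^{\bar{\omega}^\h_\bullet}(v,w)=\langle v_+,w_-\rangle+\langle v_-,w_+\rangle=\langle v,w\rangle$ by the isotropy $\langle\h_\pm,\h_\pm\rangle=0$, which is non-degenerate since $\langle\cdot,\cdot\rangle$ is.

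I expect the main difficulty to be organizational rather than substantial: confirming $\delta\bar{\omega}^\h=0$ means chasing the codiagonal faces $\bar{\Gamma}^\h_3\to\bar{\Gamma}^\h_2$, but this is already available from \cite{MeTa11}, so the genuinely new step --- the non-degeneracy --- reduces to the short computation above once one sees $\huaT_\bullet\bar{\Gamma}^\h\simeq\g[1]$. The one point requiring care throughout is that $\bar{\Gamma}^\h_\bullet$ is in general only a \emph{local} Lie $2$-group (the double source map $(s^v,s^h)\colon\Gamma^\h\to H_-\times H_+$ need not be surjective), so one must use the local versions of the tangent complex, the IM-pairing, and the non-degeneracy condition, all provided by Remarks~\ref{remark:tangent-cx} and~\ref{remark:local-sslg}.
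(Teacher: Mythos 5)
Your proposal is correct and follows essentially the same route as the paper: local Lie $2$-group structure and $\delta\bar{\omega}^\h=0$ quoted from \cite{MeTa11}, normalization by noting the degeneracies kill the $H_+$- (resp.\ $H_-$-) legs, and non-degeneracy reduced to the pairing on $H_1(\huaT\bar{\Gamma}^\h)=\g$ after showing $\huaT_2\bar{\Gamma}^\h=0$. The only cosmetic differences are that the paper obtains $\huaT_2\bar{\Gamma}^\h=0$ by directly computing $\ker T_ed_0\cap\ker T_ed_1$ from the defining constraint of $\Gamma^\h$ instead of your dimension count via Proposition \ref{tcx-quot}, and its IM-pairing comes out as $-2\langle\widetilde{v},w\rangle-2\langle\widetilde{w},v\rangle$, which agrees with your $\langle v,w\rangle$ up to overall sign and the factor of $2$ coming from the wedge convention for $\langle\theta,\theta'\rangle$ --- neither discrepancy affects non-degeneracy.
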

\begin{proof}
The fact that $\bar{\Gamma}^\h$ is a local Lie $2$-group follows from \cite[Theorem 4.5]{MeTa11}. The $2$-form being closed, that is $D(\bar{\omega}^\h)=0$, is equivalent to $$d\bar{\omega}^\h=0 \qquad \text{and}\qquad\delta\bar{\omega}^\h=0.$$
The first equality follows directly from $d\omega^\h=0$, while the second was shown in \cite[Proposition 6.2]{MeTa11}. Therefore it remains to see that $\bar{\omega}^\h$ is normalized and non-degenerate. That $\omega$ is normalized is trivial since
$$s_0^*\bar{\omega}^\h_{(a,h)}((\widetilde{v},v),(\widetilde{w},w))=\omega^\h_{(e,a,a,e)}((0,\widetilde{v},\widetilde{v},0),(0,\widetilde{w},\widetilde{w},0))=0, $$
and the other degeneracy is similar. 

For the non-degeneracy condition,  we need the tangent complex of $\bar{\Gamma}^\h_\bullet$. By Section \ref{sec:tan} this is given by $$0\to\huaT_2(\bar{\Gamma}^\h)\xrightarrow{\partial} \huaT_1(\bar{\Gamma}^\h)\to 0\quad \text{with}\quad \huaT_i(\bar{\Gamma}^\h)=\cap_{j=0}^{i-1} \ker T_ed^i_j\quad\text{and}\quad \partial=T_ed^2_2.$$
Clearly $\huaT_1(\bar{\Gamma}^\h)=\h_-\oplus\h_+=\g$ and we claim that $\huaT_2(\bar{\Gamma}^\h)=0$. This follows from the fact that if $V=(\widetilde{v}_3,(v_3,\widetilde{v}_2,\widetilde{v}_1,v_2),v_1)\in T_e\bar{\Gamma}^\h_2$ then
$$T_ed_0(V)=(\widetilde{v}_1, v_1),\quad T_ed_1(V)=(\widetilde{v}_3+\widetilde{v}_2, v_1+v_2),  \quad\text{and}\quad v_3+\widetilde{v}_1=\widetilde{v}_2+v_2.$$
Therefore for $(\widetilde{v}, v),(\widetilde{w}, w)\in\h_-\oplus\h_+=T_e\bar{\Gamma}^\h_1$,  we have,
\begin{equation*}
    \begin{split}
        \lambda^{\bar{\omega}^\h} \big((\widetilde{v}, v),(\widetilde{w}, w)\big)=&\bar{\omega}^\h_e(Ts_0(\widetilde{v}, v), Ts_1(\widetilde{w}, w))-\bar{\omega}^\h_e(Ts_1(\widetilde{v}, v), Ts_0(\widetilde{w}, w))\\
        =&\bar{\omega}^\h_e((0,\widetilde{v},\widetilde{v},0),(w,0,0, w))+\bar{\omega}^\h_e((0,\widetilde{w},\widetilde{w},0),(v,0,0, v))\\
        =&-2\langle\widetilde{v}, w\rangle-2\langle\widetilde{w}, v\rangle, 
    \end{split}
\end{equation*}
which is non-degenerate since $(\g, \h_+,\h_-)$ is a Manin triple.
\end{proof}

\subsection{The equivalence}
We end by comparing the $2$-shifted symplectic local Lie $2$-group $(\bar{\Gamma}^\h_\bullet, \bar{\omega}^\h_\bullet)$ with the $2$-shifted symplectic Lie $1$-group\footnote{Notice that a $2$-shifted symplectic Lie $1$-group is in particular a $2$-shifted symplectic local Lie $2$-group.} $(NG_\bullet, \Omega_\bullet)$ constructed in Section \ref{sec-findim}.

We first observe that the concept of hypercover may easily adapt to the more general case of local Lie $n$-groupoids. That is, a simplicial morphism $f_\bullet: K_\bullet \to J_\bullet$ between two local Lie $n$-groupoids is called a {\bf hypercover} if the map $q_i$ in \eqref{eq:def-hypercover} is a submersion when $0\le i \le n-1$ and injective \'etale when $i=n$. Then similar to Remark \ref{remark:local-sslg}, we may easily adapt what we have developed in Section \ref{sec:me} on symplectic Morita equivalence to the contex of $m$-shifted local Lie $n$-groupoids, by simply adding the adjective ``local'' in the appropriate places. For example, similar to Definition \ref{def:sympmorita}, two local $m$-shifted Lie $n$-groupoid $(K_\bullet, \alpha_\bullet)$ and $(J_\bullet, \beta)$ are {\bf symplectic Morita equivalent}, if there is another local Lie $n$-groupoid $Z_\bullet$ together with an $(m-1)$-shifted 2-form $\phi_\bullet$ and hypercovers $f_\bullet$, $g_\bullet$ satisfying \eqref{eq:sympme}.


\begin{proposition}\label{prop:phi}
Let $(\g,\h_+,\h_-)$ be a Manin triple. There is a hypercover of local Lie 2-groups  $\Phi_\bullet:\bar{\Gamma}^\h_\bullet\to NG_\bullet$ given by 
\begin{equation*}
     \Phi_1: \bar{\Gamma}_1^\h\to NG_1, \quad  \Phi_1(a,h)=\varphi_-(a)\varphi_+(h), \quad \text{ and }
\end{equation*}
\begin{equation*}
   \Phi_2: \bar{\Gamma}_2^\h\to NG_2, \quad   \Phi_2(\lambda)=\big(\varphi_-(a_3)\varphi_+(h_3), \ \varphi_-(a_1)\varphi_+(h_1)\big),
\end{equation*}
with $\lambda=(a_3,(h_3,a_2,a_1,h_2),h_1)\in\bar{\Gamma}_2^\h=H_-\times\Gamma^\h\times H_+$.
\end{proposition}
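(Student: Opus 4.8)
The plan is to verify that $\Phi_\bullet$ is a well-defined simplicial morphism and that it satisfies the hypercover conditions \eqref{eq:def-hypercover}. First I would check that $\Phi_1$ and $\Phi_2$ actually land in the claimed target spaces and commute with all face and degeneracy maps. For $\Phi_2$, the crucial point is that the defining relation of $\Gamma^\h$, namely $\varphi_+(h_3)\varphi_-(a_1)=\varphi_-(a_2)\varphi_+(h_2)$ together with the middle-factor matching, should make the two components of $\Phi_2(\lambda)$ plus the diagonal $d_1$ relation consistent with the nerve relation in $NG_2$. Concretely, one computes $d_0\Phi_2(\lambda)=\varphi_-(a_1)\varphi_+(h_1)=\Phi_1(d_0\lambda)$, $d_2\Phi_2(\lambda)=\varphi_-(a_3)\varphi_+(h_3)=\Phi_1(d_2\lambda)$, and $d_1\Phi_2(\lambda)$ must equal $\Phi_1(d_1\lambda)=\varphi_-(a_3a_2)\varphi_+(h_2h_1)$; verifying this last identity is where the Manin-triple relations get used. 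Compatibility with degeneracies $s_0,s_1$ is a direct substitution using that $\varphi_\pm$ are group homomorphisms sending $e$ to $e$. Since both $\bar\Gamma^\h_\bullet$ and $NG_\bullet$ are determined by their low-degree truncations (the former is a local Lie $2$-group, $NG_\bullet$ is $2$-coskeletal), specifying $\Phi_1,\Phi_2$ compatibly extends canonically to all levels.

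Next I would check the hypercover conditions. Since $n=2$ here (and $NG_\bullet$ is $2$-coskeletal so $\bar\Gamma^\h_\bullet$ is treated as a local Lie $2$-group), I need $q_0:\bar\Gamma^\h_0=pt\to pt$ to be a submersion (trivial), $q_1:\bar\Gamma^\h_1\to \partial_1(\bar\Gamma^\h_\bullet)\times_{\partial_1(NG_\bullet)}NG_1$ a submersion, and $q_2$ an injective étale map. Because $\bar\Gamma^\h_0=pt=NG_0$, the boundary spaces $\partial_1$ are points, so $q_1$ is just $\Phi_1:H_-\times H_+\to G$, $(a,h)\mapsto\varphi_-(a)\varphi_+(h)$. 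This is a submersion: its differential at $(e,e)$ is $(\widetilde v,v)\mapsto \widetilde v+v$ under the identification $T_eG=\h_-\oplus\h_+=\g$, which is an isomorphism, and by left/right translation (equivariance of $\Phi_1$ for the evident group actions, or the open-ness of multiplication $H_-\times H_+\to G$ near the identity) it is a submersion everywhere it is defined; note that $\Phi_1$ need only be a \emph{local} diffeomorphism onto an open set, which is exactly why we land in the category of \emph{local} Lie $2$-groups. For $q_2$, one identifies $\partial_2(\bar\Gamma^\h_\bullet)\times_{\partial_2(NG_\bullet)}NG_2$ explicitly using \eqref{eq:boundary}: an element is a compatible triple of edges $(x_0,x_1,x_2)\in(\bar\Gamma^\h_1)^{\times 3}$ together with a point of $NG_2=G\times G$ matching under $\Phi_1$ on the three faces. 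Then one writes down a candidate inverse (or local inverse) to $q_2$ and checks it is étale and injective; I expect this to reduce, via the explicit formulas for faces on $\bar\Gamma^\h_2$, to the statement that $\Gamma^\h$ is cut out inside $H_+\times H_-\times H_-\times H_+$ exactly by the one equation $\varphi_+(h_2)\varphi_-(a_1)=\varphi_-(a_2)\varphi_+(h_1)$ and that this, combined with the boundary data, pins down $\lambda$ locally and étale-ly.

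The main obstacle I anticipate is the $q_2$ computation: unwinding $\partial_2(\bar\Gamma^\h_\bullet)$ and $\partial_2(NG_\bullet)$ and showing $q_2$ is injective étale. The subtlety is that $\Phi_1$ is only a local diffeomorphism, not a global one (the global multiplication map $H_-\times H_+\to G$ need not be bijective), so the fiber product on the right of \eqref{eq:def-hypercover} must be analyzed locally near the identity section and one must be careful that $q_2$ is injective étale rather than a diffeomorphism — this is consistent with $\bar\Gamma^\h_\bullet$ being a local rather than genuine Lie $2$-group, and matches the framework set up in \cite{MeTa11, zhu:kan}. Concretely, given boundary data and a point of $NG_2$ one reconstructs the four group elements $h_3,a_2,a_1,h_2$ (and the outer $a_3,h_1$) by solving $\varphi_-(a_3)\varphi_+(h_3)$ and $\varphi_-(a_1)\varphi_+(h_1)$ against prescribed elements of $G$ — solvable locally and uniquely by the local-diffeomorphism property of $\Phi_1$ — while the diagonal compatibility forces the $\Gamma^\h$-relation; counting dimensions ($\dim\bar\Gamma^\h_2=\dim\Gamma^\h+\dim\h_-+\dim\h_+=\dim\g+\dim\g=2\dim\g=\dim(G\times G)$ after imposing the boundary constraints) confirms $q_2$ is a local diffeomorphism, hence injective étale. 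Once this is done, applying Lemma \ref{lem:h-w} gives that $T\Phi_i$ is a quasi-isomorphism of tangent complexes, consistent with the earlier computations $\huaT_1(\bar\Gamma^\h)=\g=\huaT_1 NG$ and $\huaT_2=0$ on both sides, which is a good sanity check.
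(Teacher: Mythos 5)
Your outline matches the paper's proof for the easy parts: the simplicial-morphism check (where $d_1\Phi_2=\Phi_1 d_1$ is exactly the $\Gamma^\h$-relation) and the claim that $q_1=\Phi_1$ is a submersion because it is a local diffeomorphism (the paper simply cites \cite{Lu-we1}; your translation-plus-differential-at-the-identity argument is a fine self-contained substitute). The genuine gap is in the $q_2$ step, which is the heart of the proposition and which you leave both vague and logically flawed. First, you misread the fibre product in \eqref{eq:def-hypercover}: since $\bar\Gamma^\h_0=pt$, the matching space is $\partial_2(\bar\Gamma^\h_\bullet)=(\bar\Gamma^\h_1)^{\times 3}=(H_-\times H_+)^{\times 3}$, so the boundary data already hands you the $H_\pm$-components of all three edges; there is nothing to ``solve against prescribed elements of $G$'' and no use of the local invertibility of $\Phi_1$ is needed. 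Second, the inference ``dimension count shows $q_2$ is a local diffeomorphism, hence injective \'etale'' is a non sequitur: a local diffeomorphism need not be injective, and injectivity of $q_2$ is part of what must be proved.

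The paper closes this step by writing $q_2(\lambda)=\big((a_1,h_1),(a_3a_2,h_2h_1),(a_3,h_3),\varphi_-(a_3)\varphi_+(h_3),\varphi_-(a_1)\varphi_+(h_1)\big)$ and exhibiting an explicit \emph{global} inverse $S\big((a_1,h_1),(a_2,h_2),(a_3,h_3),g_1,g_2\big)=\big(a_3,(h_3,a_3^{-1}a_2,a_1,h_2h_1^{-1}),h_1\big)$: the only thing to check is that the middle quadruple lies in $\Gamma^\h$, and this is forced by the fibre-product constraint over $G^{\times 3}$, namely $\varphi_-(a_2)\varphi_+(h_2)=g_1g_2=\varphi_-(a_3)\varphi_+(h_3)\varphi_-(a_1)\varphi_+(h_1)$, which rearranges to $\varphi_-(a_3^{-1}a_2)\varphi_+(h_2h_1^{-1})=\varphi_+(h_3)\varphi_-(a_1)$. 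Hence $q_2$ is in fact a diffeomorphism, in particular injective \'etale; your expectation that one ``must be careful that $q_2$ is injective \'etale rather than a diffeomorphism'' is misplaced — the locality of $\bar\Gamma^\h_\bullet$ is entirely absorbed by $q_1=\Phi_1$ being only a local diffeomorphism, not by $q_2$. To repair your argument, replace the local reconstruction and dimension count with this explicit inverse (or at least prove global injectivity of $q_2$ directly from the boundary data, which amounts to the same computation).
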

\begin{proof}
By definition $\Phi_\bullet$ is an hypercover of local Lie $2$-groups if $q_1$ is a submersion and $q_2$ is an injective \'{e}tale map. In our case, $q_1=\Phi_1:\bar{\Gamma}_1^\h\to NG_1$  is a local diffeomorphism as proved in \cite{Lu-we1}, and therefore a submersion.  The map $q_2:\bar{\Gamma}_1^\h\to  \bar{\Gamma}_1^\h\times \bar{\Gamma}_1^\h \times \bar{\Gamma}_2^\h \times_{G\times G\times G} NG_2$ is given by
$$q_2(\lambda)=((a_1,h_1),(a_3a_2,h_2h_1),(a_3,h_3), \varphi_-(a_3)\varphi_+(h_3), \varphi_-(a_1)\varphi_+(h_1)).$$
A short computation shows that $S: \bar{\Gamma}_1^\h\times \bar{\Gamma}_1^\h \times \bar{\Gamma}_2^\h \times_{G\times G\times G} NG_2\to \bar{\Gamma}_1^\h$ defined as
$$S((a_1, h_1),(a_2,h_2),(a_3,h_3), g_1, g_2)=(a_3,(h_3, a_3^{-1}a_2, a_1, h_2h_1^{-1}), h_1)$$
is well defined and gives an inverse for $q_2$. Therefore $q_2$ is an diffeomorphism and in particular it is injective \'{e}tale.
\end{proof}

\begin{remark}\label{rmk:obstruction}
Recall that since $(\g,\h_+,\h_-)$ is a Manin triple, then $\h_-$ acts on $\h_+$ and therefore also acts on $H_+$. This action  is known as the infinitesimal dressing action \cite{lu-weinstein}. We say that $H_-$  is {\bf complete} if the vector fields that generate the action $\h_-\curvearrowright H_+$ are complete. In \cite{lu-weinstein} it is proved that the following are equivalent:
\begin{enumerate}
    \item $H_-$ is complete,
    \item There is an action of $H_-$ on $H_+$,
    \item $H_+$ is complete,
    \item $\Phi_1:H_-\times H_+\to G$ is a diffeomorphism.
\end{enumerate}
\end{remark}
An immediate consequence of the preceding remark is the following.
\begin{corollary}
Let $(\g,\h_+,\h_-)$ be a Manin triple and assume that $H_-$ is complete. Then $\Phi_\bullet:\bar{\Gamma}^\h_\bullet\to NG_\bullet$ is a diffeomorphism and therefore $\bar{\Gamma}^\h_\bullet$ is a Lie $1$-group.
\end{corollary}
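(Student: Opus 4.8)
The plan is to combine Proposition \ref{prop:phi} with the equivalence (4) $\Leftrightarrow$ completeness from Remark \ref{rmk:obstruction}. First I would observe that the simplicial morphism $\Phi_\bullet \colon \bar{\Gamma}^\h_\bullet \to NG_\bullet$ constructed in Proposition \ref{prop:phi} is automatically determined in every degree $\ge 1$ by $\Phi_1$ and $\Phi_2$, because both $\bar{\Gamma}^\h_\bullet$ and $NG_\bullet$ are coskeletal ($NG_\bullet$ is $2$-coskeletal, and $\bar{\Gamma}^\h_\bullet$ is $3$-coskeletal by the Artin--Mazur construction, cf.\ Section \ref{se:dlg}). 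Hence to prove $\Phi_\bullet$ is an isomorphism of simplicial manifolds it suffices to prove that $\Phi_1$ and $\Phi_2$ are diffeomorphisms.

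Next I would handle $\Phi_1$: under the hypothesis that $H_-$ is complete, the equivalence of (1) and (4) in Remark \ref{rmk:obstruction} gives that $\Phi_1 \colon H_- \times H_+ \to G$, $(a,h)\mapsto \varphi_-(a)\varphi_+(h)$, is a diffeomorphism. For $\Phi_2$ I would argue that it is a diffeomorphism onto $NG_2 = G\times G$ by exhibiting an explicit inverse. Given $(g_1,g_2)\in G\times G$, completeness lets us factor uniquely $g_1 = \varphi_-(a_1)\varphi_+(h_1)$ and $g_2 = \varphi_-(a_3)\varphi_+(h_3)$; the remaining data $(a_2,h_2)$ of a point $\lambda=(a_3,(h_3,a_2,a_1,h_2),h_1)\in\bar{\Gamma}_2^\h$ is forced by the defining relation of $\Gamma^\h$, namely $\varphi_+(h_3)\varphi_-(a_1) = \varphi_-(a_2)\varphi_+(h_2)$ together with the groupoid composability constraints, and this factorization is again unique and smooth by completeness. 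So $\Phi_2$ is bijective with smooth inverse, hence a diffeomorphism. One could alternatively invoke that $\Phi_\bullet$ is already known to be a hypercover (Proposition \ref{prop:phi}), so $q_1 = \Phi_1$ and $q_2$ are local diffeomorphisms / injective étale; since $\Phi_1$ is now a global diffeomorphism, a short coskeletal bookkeeping argument promotes $q_2$, and hence $\Phi_2$, to a global diffeomorphism as well.

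Finally, once $\Phi_\bullet$ is a levelwise diffeomorphism commuting with all faces and degeneracies, it is an isomorphism of simplicial manifolds, so $\bar{\Gamma}^\h_\bullet \cong NG_\bullet$. Since $NG_\bullet$ is the nerve of the Lie group $G$, it is a Lie $1$-group, and this property transfers along the isomorphism; thus $\bar{\Gamma}^\h_\bullet$ is a Lie $1$-group.

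The main obstacle I anticipate is not conceptual but bookkeeping: one must check carefully that the inverse map built on $\Phi_2$ actually lands in $\bar{\Gamma}_2^\h$ (i.e.\ that the reconstructed $(a_2,h_2)$ satisfy the Manin-triple compatibility and the horizontal/vertical composability equations) and that it is genuinely inverse to $\Phi_2$ on the nose, not merely up to the identifications. This is where the explicit formulas for the faces $d_0,d_1,d_2$ in Proposition \ref{prop-manin-sym} and the defining equation of $\Gamma^\h$ from Proposition \ref{propwe} are used, and it is the step most prone to sign/ordering errors. Everything else --- coskeletality, transfer of the Lie $1$-group property --- is formal.
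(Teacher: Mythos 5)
Your proposal is correct and follows essentially the route the paper intends: the corollary is stated as an immediate consequence of Remark \ref{rmk:obstruction} (completeness $\Leftrightarrow$ $\Phi_1$ a diffeomorphism) combined with Proposition \ref{prop:phi} (where $q_2$ is already shown to be a diffeomorphism), plus the fact that everything in degrees $\ge 3$ is determined by the lower levels. The only slip is the bookkeeping one you yourself flagged: to invert $\Phi_2$ as defined, $g_1$ must be factored as $\varphi_-(a_3)\varphi_+(h_3)$ and $g_2$ as $\varphi_-(a_1)\varphi_+(h_1)$ (not the other way around), after which $(a_2,h_2)$ is indeed uniquely and smoothly recovered from $\varphi_+(h_3)\varphi_-(a_1)=\varphi_-(a_2)\varphi_+(h_2)$ by completeness.
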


\begin{theorem}\label{Thm-manin}
Let $(\g,\h_+,\h_-)$ be a Manin triple. Then the $2$-shifted symplectic local Lie $2$-groups $(\bar{\Gamma}^\h_\bullet, \bar{\omega}^\h_\bullet)$ and $(NG_\bullet, \Omega_\bullet)$ are symplectic Morita equivalent via
$$(\bar{\Gamma}^\h_\bullet, \bar{\omega}_\bullet^\h)\xleftarrow{\id_\bullet}(\bar{\Gamma}^\h_\bullet, \beta_\bullet)\xrightarrow{\Phi_\bullet}(NG_\bullet, \Omega_\bullet),$$
where $\beta_\bullet=\beta+0$ is the $1$-shifted $2$-form on  $\bar{\Gamma}^\h_\bullet$ defined by
$$\beta=\langle \theta^l_{H_-},\theta^r_{H_+}\rangle\in\Omega^2(\bar{\Gamma}^\h_1=H_-\times H_+).$$
\end{theorem}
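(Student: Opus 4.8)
The statement asserts a symplectic Morita equivalence, so by Definition \ref{def:sympmorita} (adapted to the local setting, as explained just before Proposition \ref{prop:phi}) I need to verify three things: that $\id_\bullet$ and $\Phi_\bullet$ are hypercovers of local Lie $2$-groups, that $\beta_\bullet = \beta + 0$ is a genuine $1$-shifted $2$-form (i.e.\ that $\beta$ is normalized), and that the cocycle identity
\[
\id_\bullet^* \bar\omega^\h_\bullet - \Phi_\bullet^* \Omega_\bullet = D\beta_\bullet
\]
holds in the total de Rham complex of $\bar\Gamma^\h_\bullet$. The hypercover conditions are already in hand: $\id_\bullet$ is trivially a hypercover, and $\Phi_\bullet$ is one by Proposition \ref{prop:phi}. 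For the normalization of $\beta$, I would compute $s_0^*\beta$ and $s_1^*\beta$ directly using the degeneracies $s_0(a,h) = (e,(e,a,a,e),h)$ and $s_1(a,h) = (a,(h,e,e,h),e)$: since $\theta^l_{H_-}$ pulls back along the $H_-$-component and $\theta^r_{H_+}$ along the $H_+$-component, and these components become trivial (equal to $e$) under the respective degeneracies, both pullbacks vanish.

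The heart of the matter is the cocycle identity, which unwinds into two de Rham equations on the relevant pieces of $\bar\Gamma^\h_\bullet$. Writing $\bar\omega^\h_\bullet = \bar\omega^\h + 0 + 0$ (a $2$-shifted $2$-form living in degree $2$), $\Phi_\bullet^*\Omega_\bullet = \Phi_2^*\Omega + \Phi_1^*(-\Theta) + 0$, and $D\beta_\bullet = \delta\beta + d\beta$, matching degrees gives
\[
-\Phi_1^*\Theta = -d\beta \qquad\text{and}\qquad \bar\omega^\h - \Phi_2^*\Omega = \delta\beta = d_0^*\beta - d_1^*\beta + d_2^*\beta.
\]
For the first, I would compute $d\beta$ where $\beta = \langle\theta^l_{H_-},\theta^r_{H_+}\rangle$ on $H_-\times H_+$, using the Maurer--Cartan structure equations $d\theta^l_{H_-} = -\tfrac12[\theta^l_{H_-},\theta^l_{H_-}]$ and $d\theta^r_{H_+} = \tfrac12[\theta^r_{H_+},\theta^r_{H_+}]$, together with the relation between left and right Maurer--Cartan forms and the adjoint action; then $\Phi_1^*\Theta$ is the pullback of the Cartan $3$-form along $\Phi_1(a,h) = \varphi_-(a)\varphi_+(h)$, which one expands using the fact that for a product $g = g_1 g_2$ of group elements the left Maurer--Cartan form decomposes as $\theta^l_g = \mathrm{Ad}_{g_2^{-1}}\theta^l_{g_1} + \theta^l_{g_2}$. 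The isotropy conditions $\langle\h_+,\h_+\rangle = \langle\h_-,\h_-\rangle = 0$ of the Manin triple are exactly what make the cross-terms collapse to the right answer. For the second equation, I would use the explicit faces $d_0,d_1,d_2$ of $\bar\Gamma^\h_\bullet$ given in Proposition \ref{prop-manin-sym} and the explicit form of $\bar\omega^\h = \pr_{\Gamma^\h}^*\omega^\h$ with $\omega^\h$ as in Proposition \ref{propwe}, pull back $\beta$ along each face, and check the alternating sum equals $\bar\omega^\h - \Phi_2^*\Omega$; here again the isotropy conditions and the defining relation $\varphi_+(h_2)\varphi_-(a_1) = \varphi_-(a_2)\varphi_+(h_1)$ of $\Gamma^\h$ are the key algebraic inputs.

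Finally, the non-degeneracy of $\bar\omega^\h_\bullet$ and of $\Omega_\bullet$ is already established (Proposition \ref{prop-manin-sym} and Theorem \ref{Thm-fin-sym}), and since $\Phi_\bullet$ is a hypercover, Lemma \ref{lem:hypercover-pullback} (in its local version) guarantees $\Phi_\bullet^*\Omega_\bullet$ is non-degenerate; so the middle object $(\bar\Gamma^\h_\bullet, \beta_\bullet)$ sits in a valid symplectic Morita equivalence without further work. \textbf{The main obstacle} I anticipate is the bookkeeping in the second de Rham equation $\bar\omega^\h - \Phi_2^*\Omega = \delta\beta$: both $\omega^\h$ and $\Omega$ are built from Maurer--Cartan forms on different groups ($H_\pm$ versus $G$) related through $\varphi_\pm$, so I will need to carefully track how left/right Maurer--Cartan forms transform under these morphisms and under the group multiplications hidden in the face maps, and then verify that the mixed $\langle\h_+,\h_-\rangle$ terms assemble correctly while the $\langle\h_+,\h_+\rangle$ and $\langle\h_-,\h_-\rangle$ terms vanish by isotropy. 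This is the step where an error is most likely to creep in, and it is essentially the computational core of the theorem; it is closely parallel to (but more involved than) the calculation in Lemma \ref{lemma-brylinski} and Theorem \ref{thm-morita} for the second model.
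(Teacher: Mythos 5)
Your plan is essentially the paper's proof: both reduce the symplectic Morita equivalence to the two de Rham identities $\bar\omega^\h-\Phi_2^*\Omega=\delta\beta$ and $\Phi_1^*\Theta=-d\beta$, verified by Maurer--Cartan calculus, the decomposition $\Phi_1^*\theta^l=\theta^l_{H_+}+\mathrm{Ad}_{h^{-1}}(\theta^l_{H_-})$, ad-invariance of the pairing, isotropy of $\h_\pm$, and (for the $\delta\beta$ equation) the defining relation of $\Gamma^\h$; the hypercover input is Proposition \ref{prop:phi}, exactly as in the paper.

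Two points to fix before carrying it out. First, your degree-$(1,3)$ equation has the wrong sign: with the total differential $D=\delta+(-1)^p d$ and $\Omega_1=-\Theta$, the component of $\bar\omega^\h_\bullet-\Phi^*_\bullet\Omega_\bullet=D\beta_\bullet$ lying in $\Omega^3(\bar\Gamma^\h_1)$ reads $\Phi_1^*\Theta=-d\beta$, not $-\Phi_1^*\Theta=-d\beta$; the slip comes from writing $D\beta_\bullet=\delta\beta+d\beta$ and thereby dropping the factor $(-1)^p$ at $p=1$. The Maurer--Cartan computation you outline genuinely yields $\Phi_1^*\Theta=-d\beta$ (using $d\langle\alpha,\beta\rangle=\langle d\alpha,\beta\rangle-\langle\alpha,d\beta\rangle$, the structure equations and isotropy), so if you keep your stated target you will find a sign mismatch and risk introducing a compensating error. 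Second, normalization of $\beta$ is not tested by pulling back along $s_0,s_1:\bar\Gamma^\h_1\to\bar\Gamma^\h_2$ (those pullbacks concern forms on $\bar\Gamma^\h_2$); for $\beta\in\widehat{\Omega}^2(\bar\Gamma^\h_1)$ the only relevant degeneracy comes from $\bar\Gamma^\h_0=pt$, so the condition is automatic. Similarly, the closing remark on non-degeneracy of $\Phi^*_\bullet\Omega_\bullet$ is not needed: Definition \ref{def:sympmorita} imposes nothing on the middle form $\beta_\bullet$ beyond being an $(m-1)$-shifted $2$-form, and the non-degeneracy of the two endpoints is already supplied by Proposition \ref{prop-manin-sym} and Theorem \ref{Thm-fin-sym}. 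With these corrections your outline coincides with the paper's argument; the remaining content is the explicit verification of $\bar\omega^\h-\Phi_2^*\Omega=\delta\beta$, which the paper performs by differentiating the constraint $\varphi_+(h_3)\varphi_-(a_1)=\varphi_-(a_2)\varphi_+(h_2)$ and comparing the resulting Maurer--Cartan terms.
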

\begin{proof}
The result follows by showing 
$$\bar{\omega}^\h-\Phi_2^*\Omega=\delta\beta  \quad \text{and}\quad \Phi_1^*\Theta=-d\beta.$$
For the second equation,  observe that $(\Phi_1^*\theta^l)_{(a,h)}=\theta^l_{H_+}+Ad_{h^{-1}}(\theta^l_{H_-})$, for $(a, h) \in H_-\times H_+$. Therefore by using the explicit form of the Cartan 3-form given in \eqref{Omegadot}, we can see that
\begin{equation*}
    \begin{split}
        (\Phi^*_1\Theta)=&
        \frac{1}{6}\langle \theta^l_{H_+}+Ad_{h^{-1}}(\theta^l_{H_-}),[\theta^l_{H_+}+Ad_{h^{-1}}(\theta^l_{H_-}),\theta^l_{H_+}+Ad_{h^{-1}}(\theta^l_{H_-})]
        \rangle\\
        =&\frac{1}{2}\langle \theta^r_{H_+},[\theta^l_{H_-},\theta^l_{H_-}]
        \rangle+\frac{1}{2}\langle \theta^l_{H_-},[\theta^r_{H_+},\theta^r_{H_+}]
        \rangle=-d\langle\theta^l_{H_-},\theta^r_{H_+}\rangle.
    \end{split}
\end{equation*}
Here we use the adjoint invariance of the pairing and the fact that $\h_+$ and $\h_-$ are isotropic Lie algebras. 
The second equation $\bar{\omega}^\h-\Phi_2^*\Omega=\delta\beta$ is more complicated. In order to make the equations shorter, we use the notation $$\bar{h}:=\varphi_+(h), \ \bar{a}:=\varphi_-(a)\quad\text{and}\quad \bar{v}:=T\varphi_+(v),\ \bar{\widetilde{v}}:=T\varphi_-(\widetilde{v})$$
for $h\in H_+, \ a\in H_-, \ v\in T_hH_+, \widetilde{v}\in T_aH_-$. In this notation, we have the identities
$$ L_{\bar{h}^{-1}}\bar{v}=i_+(L_{h^{-1}}v)\quad \text{and}\quad L_{\bar{a}^{-1}}\bar{\widetilde{v}}=i_-(L_{a^{-1}}\widetilde{v}), $$ where $i_\pm:\h_\pm\to \g$ denotes the inclusion.  Pick $\lambda=(a_3,(h_3,a_2,a_1,h_2),h_1)\in\bar{\Gamma}^\h_2$ and $V=(\widetilde{v}_3,(v_3,\widetilde{v}_2,\widetilde{v}_1,v_2),v_1),$ $W=(\widetilde{w}_3,(w_3,\widetilde{w}_2,\widetilde{w}_1,w_2),w_1)\in T_\lambda\Gamma^\h_2$. From the definition of $\Gamma^\h$, we have
\begin{equation*}
    \bar{h}_3\bar{a}_1=\bar{a}_2\bar{h}_2,\quad L_{\bar{h}_3}\bar{\widetilde{v}}_1+R_{\bar{a}_1}\bar{v}_3=L_{\bar{a}_2}\bar{v}_2+R_{\bar{h}_2}\bar{\widetilde{v}}_2,\quad\text{and}\quad (v\leftrightarrow w),
\end{equation*}
where $(v\leftrightarrow w)$ is a shorthand notation which denotes the same equation but with  $v$ and $w$ exchanged. Hence it follows that the two terms of the left hand side are
\begin{equation*}
    \begin{split}
        \bar{\omega}^\h_\lambda(V,W)=&\langle L_{h_3^{-1}}v_3, R_{a_1^{-1}}\widetilde{w}_1\rangle-\langle L_{a_2^{-1}}\widetilde{v}_2, R_{h_2^{-1}}w_2\rangle- (v\leftrightarrow w)\\
        (\Phi_2^*\Omega)_\lambda(V,W)=&\Omega_{\Phi_2(\lambda)}(T\Phi_2(V),T\Phi_2(W)) \\
        =&\langle L_{\bar{h}_3^{-1}\bar{a}_3^{-1}}R_{\bar{h}_3}\bar{\widetilde{v}}_3+L_{\bar{h}_3^{-1}}\bar{v}_3, R_{\bar{a}_1^{-1}}\bar{\widetilde{w}}_1+L_{\bar{a}_1}R_{\bar{h}_1^{-1}\bar{a}_1^{-1}}\bar{w}_1\rangle- (v\leftrightarrow w)\\
        =&\langle L_{a_2^{-1}a_3^{-1}}R_{a_2}\widetilde{v}_3, R_{h_2^{-1}}w_2\rangle-\langle L_{a_3^{-1}}\widetilde{v}_3, R_{h_3^{-1}}w_3\rangle+\langle L_{a_2^{-1}a_3^{-1}}R_{a_2}\widetilde{v}_3,L_{h_2}R_{h_1^{-1}h_2^{-1}}w_1\rangle\\
        &+\langle L_{h_3^{-1}}v_3, R_{a_1^{-1}}\widetilde{w}_1\rangle+\langle L_{a_2^{-1}}\widetilde{v}_2,L_{h_2} R_{h_1^{-1}h_2^{-1}}w_1\rangle -\langle L_{a_1^{-1}}\widetilde{v}_1, R_{h_1^{-1}}w_1\rangle- (v\leftrightarrow w)
    \end{split}
\end{equation*}
while the term on the right hand side is
\begin{equation*}
    \begin{split}
        \delta\beta_\lambda(V,W)=&\beta_{d_0(\lambda)}(Td_0(V), Td_0(W)+\beta_{d_2(\lambda)}(Td_2(V), Td_2(W)-\beta_{d_1(\lambda)}(Td_1(V), Td_1(W)\\
        =&\langle L_{a_1^{-1}}\widetilde{v}_1, R_{h_1^{-1}}w_1\rangle+\langle L_{a_3^{-1}}\widetilde{v}_3, R_{h_3^{-1}}w_3\rangle\\
        &-\langle L_{a_2^{-1}a_3^{-1}}R_{a_2}\widetilde{v}_3+L_{a_2^{-1}}\widetilde{v}_2, R_{h_2^{-1}}w_2+L_{h_2}R_{h_1^{-1}h_2^{-1}}w_1\rangle   - (v\leftrightarrow w).
    \end{split}
\end{equation*}
Therefore, by comparing them we see $\bar{\omega}^\h-\Phi_2^*\Omega=\delta\beta$ as desired.
\end{proof}

\begin{remark}
It was proved in \cite{saf:poi}  that for any Manin triple $(\g,\h_+. \h_-)$ there is a $2$-shifted Lagrangian correspondence $$\xymatrix{& pt\ar[rd]\ar[ld]\\ \huaB H_+\ar[rd]&&\huaB H_-\ar[ld]\\& \huaB G}$$ This is closely related to our result in this section. 
\end{remark}

\section{Double Lie group models of $\huaB G$}\label{Sec:double}
The simplicial picture introduced in the previous sections is not the only available approach to describe $\huaB G$. In this section we will use the language of double Lie groups to define other models for $\huaB G$ and its symplectic structure.  

\subsection{Strict Lie 2-groups} \label{sec:str-lie-2}
A {\bf strict Lie $2$-group} \cite{baez:2gp} is a group object in the category of Lie groupoids and (strict) Lie groupoid morphisms, that is, it is a Lie groupoid $G_1\Rightarrow G_0$ which equipped with a multiplication functor, an inverse functor, and an identity functor, satisfying (strictly) associativity, and other expected axioms for groups.  It is well known (and not hard to see) that a strict Lie $2$-group $G_1\Rightarrow G_0$ gives rise to a double Lie group (defined in \eqref{dlg})
\[
\xymatrix{G_1\ar@<-0.5 ex>[r]\ar@<0.5 ex>[r]\ar@<-0.5 ex>[d]\ar@<0.5 ex>[d]& pt\ar@<-0.5 ex>[d]\ar@<0.5 ex>[d]\\
     G_0\ar@<-0.5 ex>[r]\ar@<0.5 ex>[r]&pt.}
\]  Such a strict Lie 2-group is also a special case of  a Lie 2-group which is defined previously in Definition \ref{def:lie-n-gpd} using simplicial manifolds. A strict Lie 2-group $G_1\Rightarrow G_0$ gives rise to a Lie 2-group 
\[
 \cdots G_0 \times G_0 \times_{m_0, G_0, \target} G_1 \aaar G_0\rightrightarrows pt,
\] where $m_0: G_0\times G_0\to G_0$ is the 0-th level of the multiplication functor. We refer to \cite{z:tgpd-2} for more details. 

\subsection{The de Rham triple complex of a double Lie group}
Recall from Section \ref{se:dlg} that a double Lie group has an associated bisimplicial manifold $\huaG_{\bullet,\bullet}$. Therefore {\bf differential forms} on $\huaG_{\bullet,\bullet}$ live in the {\bf de Rham triple complex} $(\Omega^\bullet(\huaG_{\bullet,\bullet}), d, \delta^v, \delta^h)$ where $d:\Omega^k(\huaG_{j,i})\to\Omega^{k+1}(\huaG_{j,i}) $ is the usual de Rham differential and    $\delta^v:\Omega^k(\huaG_{j,i})\to\Omega^k(\huaG_{j+1,i}),\ \delta^h:\Omega^k(\huaG_{j,i})\to\Omega^{k}(\huaG_{j,i+1}) $ are the simplicial differentials
$$\delta^h=\sum_{l=0}^{i+1} d^{h*}_l,\quad \delta^v=\sum_{l=0}^{j+1} d^{v*}_l, \quad \text{ and }\quad \widetilde{D}=\delta^h+(-1)^i\delta^v+(-1)^{i+j}d$$
is the differential on the total complex.

\begin{definition}
A {\bf $(q,p)$-shifted $k$-form} on a double Lie group $\huaG_{\bullet, \bullet}$ is
$$\alpha_{\bullet,\bullet}=\sum_{j=0}^{q}\sum_{i=0}^p \alpha_{j,i} \quad\text{with}\quad \alpha_{j,i}\in\Omega^{k+p+q-i-j}(\huaG_{j,i}).$$
We say that $\alpha_{\bullet,\bullet}$ is {\bf closed} if $\tilde{D}\alpha_{\bullet, \bullet}=0$. 
\end{definition}

\begin{remark}
Following \cite{Lu-we1} (see also \cite{Mackenzie99}), we call the pair $(\huaG_{\bullet,\bullet}, \omega_{\bullet,\bullet})$ a {\bf symplectic double Lie group} if $\omega_{\bullet,\bullet}$ is a $(1,1)$-shifted $2$-form satisfying  $$\omega_{\bullet,\bullet}=\omega_{1,1}\in\Omega^2(\huaG_{1,1}), \quad \widetilde{D}\omega_{\bullet,\bullet}=0\quad \text{and}\quad \omega^\sharp_{1,1}:T^*\huaG_{1,1}\xrightarrow{\sim}T\huaG_{1,1}.$$  As we will see, the models presented in this section are not symplectic double Lie groups. Therefore a more general definition (that we will not introduce) seems to be missing. 
\end{remark}

\subsection{The models}
If $G$ is a Lie group then the unit groupoid of $G$ as a strict Lie 2-group gives rise to a double Lie group as described in Section \ref{sec:str-lie-2}. More concretely, we have that
\begin{equation} \label{eq:G-double}
   \xymatrix{G\ar@<-0.5 ex>[r]\ar@<0.5 ex>[r]\ar@<-0.5 ex>[d]\ar@<0.5 ex>[d]& pt\ar@<-0.5 ex>[d]\ar@<0.5 ex>[d]\\
     G\ar@<-0.5 ex>[r]\ar@<0.5 ex>[r]&pt}
\end{equation}
is a double Lie group, which we denote by $G_{\bullet,\bullet}$. The vertical structure is given by the unit groupoid of $G$ and horizontal multiplication is given by the multiplication on the Lie group $G$.

If $(\g, \langle\cdot,\cdot\rangle)$ is a quadratic Lie algebra, Theorem \ref{Thm-fin-sym} states that $(NG_\bullet, \Omega_\bullet)$ is a $2$-shifted symplectic Lie $1$-group. Clearly we can define a $(0,2)$-shifted $2$-form $\Omega_{\bullet,\bullet}$ on $G_{\bullet,\bullet}$ by $$\Omega_{\bullet,\bullet}=\begin{pmatrix}0&0&0\\ \Omega&-\Theta&0 \end{pmatrix}, \quad\text{with}\quad \Omega_{0,2}=\Omega\in\Omega^2(G^{\times 2})\text{ and } \Omega_{0,1}=-\Theta\in\Omega^3(G).$$ 

\begin{proposition}\label{2sclosed}
The $(0,2)$-shifted $2$-form $\Omega_{\bullet,\bullet}$ is closed.
\end{proposition}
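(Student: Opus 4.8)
The plan is to unwind the definition of the total differential $\widetilde{D}$ on the de Rham triple complex of the double Lie group $G_{\bullet,\bullet}$ and check that it annihilates $\Omega_{\bullet,\bullet}=\Omega+(-\Theta)$. Since $G_{\bullet,\bullet}$ has vertical direction the \emph{unit} groupoid of $G$, its vertical simplicial structure is ``trivial'' in a strong sense: $\huaG_{j,i}$ does not actually depend on $j$ (all vertical faces and degeneracies between $\huaG_{0,i}$ and $\huaG_{1,i}$ are identities), so $\delta^v$ acts on forms supported in vertical degree $0$ as an alternating sum of identity maps, hence as $0$ on a $(0,p)$-shifted form. Therefore $\widetilde{D}\Omega_{\bullet,\bullet}=0$ reduces to the horizontal and de Rham parts, i.e. to exactly the same three identities $d\Theta=0$, $\delta\Theta=d\Omega$ (up to sign), and $\delta\Omega=0$ that were already verified in the proof of Theorem~\ref{Thm-fin-sym}, now with $\delta$ read as the \emph{horizontal} simplicial differential $\delta^h=\sum_l d_l^{h*}$. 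The horizontal nerve of $G_{\bullet,\bullet}$ is precisely the nerve $NG_\bullet$ of the Lie group $G$ (the horizontal multiplication is group multiplication), so $\delta^h$ on $\Omega^\bullet(\huaG_{0,\bullet})$ is literally the map $\delta$ of Theorem~\ref{Thm-fin-sym}.

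Concretely I would proceed as follows. First, spell out that a $(0,2)$-shifted $2$-form has two nonzero components $\Omega_{0,2}=\Omega\in\Omega^2(\huaG_{0,2})$ and $\Omega_{0,1}=-\Theta\in\Omega^3(\huaG_{0,1})$, and that $\widetilde{D}=\delta^h+(-1)^i\delta^v+(-1)^{i+j}d$ evaluated on these components produces terms in $\Omega^\bullet(\huaG_{0,1})$, $\Omega^\bullet(\huaG_{0,2})$, $\Omega^\bullet(\huaG_{0,3})$ and $\Omega^\bullet(\huaG_{1,1})$, $\Omega^\bullet(\huaG_{1,2})$. Second, observe that every vertical face map $d^v_l\colon \huaG_{1,i}\to\huaG_{0,i}$ and vertical degeneracy is the identity on the underlying manifold $G^{\times i}$ (because the vertical groupoid is the unit groupoid), so $\delta^v\Omega_{0,i} = \sum_{l=0}^{1}(-1)^l (d^{v*}_l \Omega_{0,i}) = \Omega_{0,i}-\Omega_{0,i}=0$; hence all components landing in vertical degree $1$ vanish automatically. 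Third, collect the remaining terms, which live in vertical degree $0$: the $\Omega^3(\huaG_{0,1})$-component is $\pm d\Theta$, which vanishes by $\operatorname{ad}$-invariance of the pairing and graded Jacobi (as in Theorem~\ref{Thm-fin-sym}); the $\Omega^3(\huaG_{0,2})$-component is a combination of $\delta^h\Theta$ and $d\Omega$, which cancel by the computation $\delta\Theta=d\Omega$ already done there; and the $\Omega^2(\huaG_{0,3})$-component is $\pm\delta^h\Omega$, which vanishes by the simplicial identities, again exactly as in Theorem~\ref{Thm-fin-sym}. Keeping careful track of the signs $(-1)^i$ and $(-1)^{i+j}$ is the only bookkeeping required.

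The main obstacle is not conceptual but bookkeeping: making sure the sign conventions in $\widetilde{D}=\delta^h+(-1)^i\delta^v+(-1)^{i+j}d$ are matched with the signs in the ordinary total differential $D=\delta+(-1)^p d$ used in Theorem~\ref{Thm-fin-sym}, so that the three identities proved there can be cited verbatim rather than re-derived. I would handle this by noting that, restricted to forms supported in vertical degree $0$ (where $\delta^v$ contributes nothing), the triple complex $(\Omega^\bullet(\huaG_{0,\bullet}), d, \delta^h)$ is canonically identified with the double complex $(\Omega^\bullet(NG_\bullet), d, \delta)$ of Section~\ref{sec:tan}, and under this identification the relevant part of $\widetilde{D}$ agrees with $D$ up to an overall sign that does not affect whether $\widetilde{D}\Omega_{\bullet,\bullet}$ is zero. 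Hence closedness of $\Omega_{\bullet,\bullet}$ is equivalent to $D\Omega_\bullet=0$, which is Theorem~\ref{Thm-fin-sym}, and the proof is complete.
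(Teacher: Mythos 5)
Your proposal is correct and follows essentially the same route as the paper: the vertical structure of $G_{\bullet,\bullet}$ is the unit groupoid, so $\delta^v$ kills the components of $\Omega_{\bullet,\bullet}$ (the two vertical face pullbacks are identities and cancel in the alternating sum), and the remaining horizontal-plus-de Rham part is exactly the total differential on $\Omega^\bullet(NG_\bullet)$, so closedness reduces to $D\Omega_\bullet=0$ from Theorem \ref{Thm-fin-sym}. The paper's proof is just a more compressed statement of this same reduction, without re-deriving the three identities.
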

\begin{proof}
This follows directly form the fact that $\Omega_\bullet$ is closed in $NG_\bullet$, $\delta^h|_{j=0}$ is the simplicial differential of $NG_\bullet$,  and $\delta^v|_{i=0}=0$ since $s^v=t^v=\id$. Hence
$$\widetilde{D}(\Omega_{\bullet,\bullet})=(-1)^i \delta^v(\Omega_{0,\bullet})+D(\Omega_\bullet)=0.$$
\end{proof}


For a given Lie group $G$ we define the Lie groupoid $\Omega G\rightrightarrows P_eG$  with structure maps
\begin{eqnarray}
&s(\tau)(t)=\tau(\frac{t}{2}), \quad t(\tau)(t)=\tau(1-\frac{t}{2}),\quad i(\tau)(t)=\tau(1-t), \label{dou-st}  \\
   & m(\tau_1, \tau_2)(t)=\left\{\begin{array}{ll}
        \tau_2(t) & t\in [0,\frac{1}{2}], \\
         \tau_1(t) & t\in [\frac{1}{2},1], 
    \end{array}\right.\quad \text{and}\quad u(\gamma)(t)=\left\{\begin{array}{ll}
        \gamma(2t) & t\in [0,\frac{1}{2}],  \\
         \gamma(1-2t) & t\in [\frac{1}{2},1].
    \end{array}\right.\label{dou-mul}
\end{eqnarray}
As in the previous section $\Omega G$ and $P_eG$ are completed in an appropriate Sobolev norm so  that the structure maps are smooth. Moreover, it is not hard to verify that $\Omega G$ and $P_eG$ are infinite dimensional Lie groups under the point-wise multiplication, and this makes $\Omega G\rightrightarrows P_eG$ into a strict Lie 2-group. 
\begin{proposition}
For a Lie group $G$ we have the double Lie group $\GG_{\bullet,\bullet}$ given by the square  
\begin{equation} \label{eq:omega-g-double}
   \xymatrix{\Omega G\ar@<-0.5 ex>[r]\ar@<0.5 ex>[r]\ar@<-0.5 ex>[d]\ar@<0.5 ex>[d]& pt\ar@<-0.5 ex>[d]\ar@<0.5 ex>[d]\\
     P_eG\ar@<-0.5 ex>[r]\ar@<0.5 ex>[r]&pt}
\end{equation}
with vertical structure maps defined by \eqref{dou-st} and \eqref{dou-mul} and horizontal multiplication given by
$$m^h(\tau_1, \tau_2)(t)=\tau_1(t)\tau_2(t), \quad \forall \tau_1, \tau_2 \in \Omega G \; \text{or} \; P_e G.$$
\end{proposition}
\begin{proof}
The fact that $\GG_{\bullet,\bullet}$ is a double Lie group, i.e. that the vertical source and target are group morphisms and that the multiplications commute, follows by inspection.
\end{proof}

\begin{remark} \label{remark:equivalence-db}
When $G$ is connected and simply connected, the quotient stack $[P_e G/\Omega G]\cong G$ is representable. Thus, the strict Lie 2-group  $\Omega G\rightrightarrows P_eG$ is Morita equivalent to the Lie 1-group $NG_\bullet$. We may hence view both \eqref{eq:G-double} and \eqref{eq:omega-g-double} as double Lie group models for $\huaB G$.  More explicitly, the Morita equivalence is given through 
\begin{equation}\label{eq:ev-db}
    \begin{array}{ccc}
        ev_{1,1}:\Omega G\to G, &\quad & ev_{0,1}:P_eG\to G,  \\
        ev_{1,1}(\tau)=\tau(\frac{1}{2}), && ev_{0,1}(\gamma)=\gamma(1), 
    \end{array}
\end{equation} which may be further extended to a double Lie group morphism $ev_{\bullet,\bullet}:\GG_{\bullet,\bullet}\to G_{\bullet,\bullet}$. 
\end{remark}

\emptycomment{
With this second model we can also compute $NG$ and hence gives also another model for $BG$ in the following way.
\begin{proposition}
Let $G$ be a connected and simply connected Lie group. Then the differentiable stack associated to the groupoid  $\Omega G\rightrightarrows P_eG$ is the manifold $G$, the map $ev_1:P_eG\to G$ given by $ev_1(\gamma)=\gamma(1)$ is an atlas for the stack and the multiplication $m^h:\Omega G\times \Omega G\to \Omega G$ descends to the multiplication $m:G\times G\to G$.
\end{proposition}
} 
When $(\g, \langle\cdot,\cdot\rangle)$ is a quadratic Lie algebra, the double Lie group $\GG_{\bullet,\bullet}$ is endowed with the Segal's $2$-form  $\omega\in\Omega^2(\Omega G)=\Omega^2(\GG_{1,1})$ defined in \eqref{segal-2form}. But $\omega$ can not be multiplicative with respect to the group structure (otherwise it will be an example of a symplectic group). Therefore,  
in order to obtain a closed form on $\GG_{\bullet,\bullet}$ we need to introduce a new term. Define the $1$-form
\begin{equation}\label{eq:eta}
    \eta_{(\tau_1,\tau_2)}((a_1,a_2))=\int_0^1\langle R_{\tau_2(t)^{-1}}\tau_2'(t),\widehat{a}_1(t)\rangle dt\ \in\Omega^1(\Omega G^{\times 2})=\Omega^1(\GG_{1,2}), 
\end{equation}
where $\tau_i\in\Omega G, \ a_i\in T_{\tau_i}\Omega G$,  and $\widehat{a}_1(t)=L_{\tau_1(t)^{-1}}a_1(t).$

\begin{proposition}
The double Lie group $\GG_{\bullet,\bullet}$ has a closed $(1,2)$-shifted $1$-form $\omega_{\bullet,\bullet}$  defined by 
$$\omega_{\bullet,\bullet}=\begin{pmatrix}-\eta&\omega&0\\ 0&0&0 \end{pmatrix} \quad \text{with}\quad \omega_{1,2}=-\eta\in\Omega^1(\GG_{1,2})\ \text{ and }\ \omega_{1,1}=\omega\in\Omega^2(\GG_{1,1}).$$
\end{proposition}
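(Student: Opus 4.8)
The plan is to verify the single equation $\widetilde{D}\omega_{\bullet,\bullet}=0$ by unwinding the definition of $\widetilde{D}$ on the triple complex and checking each graded component separately. Since $\omega_{\bullet,\bullet}$ sits in bidegrees $(1,1)$ and $(1,2)$, the closedness condition breaks into the following pieces, which I would organize according to the bidegree $(j,i)$ in which they land: first, the purely de Rham pieces $d\omega$ at $(1,1)$ and $d\eta$ at $(1,2)$; second, the horizontal simplicial pieces $\delta^h\omega$ (landing in $(1,2)$) and $\delta^h\eta$ (landing in $(1,3)$); third, the vertical simplicial pieces $\delta^v\omega$ (landing in $(2,1)$) and $\delta^v\eta$ (landing in $(2,2)$). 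Collecting by target bidegree, I expect the nontrivial relations to be $d\omega=0$ in $(1,1)$; $\delta^h\omega \pm d\eta=0$ in $(1,2)$; $\delta^v\omega=0$ in $(2,1)$; $\delta^h\eta=0$ in $(1,3)$; and $\delta^v\eta=0$ in $(2,2)$, with signs dictated by the formula $\widetilde{D}=\delta^h+(-1)^i\delta^v+(-1)^{i+j}d$.

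First I would dispatch the cheap vertical identities. Because the vertical groupoid structure on both \eqref{eq:G-double}-analogue and $\GG_{\bullet,\bullet}$ is the unit groupoid — i.e. $s^v=t^v=\id$ at the relevant level — the vertical face maps $d^v_l: \GG_{1,i}\to\GG_{2,i}$ (resp. the ones out of $\GG_{1,i}$) all coincide and the alternating sum $\delta^v$ vanishes identically, exactly as in the proof of Proposition \ref{2sclosed}. This kills $\delta^v\omega$ and $\delta^v\eta$ at once. Next, $d\omega=0$ is classical and goes back to Segal \cite{Segal:loop}, already invoked in the proof of Theorem \ref{thm-inf-sym}. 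So the genuine content is concentrated in the horizontal direction: the two identities $\delta^h\omega=\pm d\eta$ (in bidegree $(1,2)$) and $\delta^h\eta=0$ (in bidegree $(1,3)$).

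For $\delta^h\eta=0$ I would write out the three horizontal face maps $d^h_0,d^h_1,d^h_2:\GG_{1,2}\to\GG_{1,3}$ coming from the pointwise product $m^h(\tau_1,\tau_2)(t)=\tau_1(t)\tau_2(t)$ and the unit; these are literally the nerve faces of the infinite-dimensional Lie group $\Omega G$ under pointwise multiplication, so $\delta^h$ is the simplicial differential computing the group cohomology of $\Omega G$, and $\eta$ is, up to the integral $\int_{S^1}$, built from the right Maurer–Cartan form paired against a left-translated tangent vector. The vanishing $\delta^h\eta=0$ should then reduce, under the integral sign, to the standard cocycle identity $\delta\langle\theta^r,\cdot\rangle$-type computation for a pointwise-product group, using $R_{(\tau_1\tau_2)^{-1}}(\tau_1\tau_2)' = R_{\tau_1^{-1}}\tau_1' + \mathrm{Ad}_{\tau_1}\!\big(R_{\tau_2^{-1}}\tau_2'\big)$ and the isotropy/ad-invariance bookkeeping; this is routine once the faces are written down. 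The identity $\delta^h\omega + (-1)^? d\eta=0$ is the crux: here $\delta^h\omega = d^{h*}_0\omega - d^{h*}_1\omega + d^{h*}_2\omega$ with $d^h_1(\tau_1,\tau_2)=\tau_1\tau_2$ (pointwise) and $d^h_0,d^h_2$ the projections, so $\delta^h\omega$ measures precisely the failure of Segal's form to be multiplicative; one computes $\big((\tau_1\tau_2)^{*}$-pullback of $\omega\big)$ via the same $\mathrm{Ad}$-twist formula, integrates by parts in $t$ over $S^1$, and the boundary/cross terms should reassemble exactly into $d\eta$ (with $\eta$ as in \eqref{eq:eta}). The main obstacle I anticipate is organizing this last integration-by-parts cleanly: keeping track of the $\mathrm{Ad}_{\tau_1}$ conjugations, the left-vs-right Maurer–Cartan conventions, and the sign $(-1)^{i+j}$ in $\widetilde{D}$, so that the cross-terms produced by $\delta^h\omega$ match $d\eta$ on the nose rather than up to an unaccounted sign or an exact remainder — essentially the same bookkeeping that made $\delta\omega$ tractable in the remark after Theorem \ref{thm-inf-sym}, now carried out in the group (horizontal) direction instead of the nerve (simplicial) direction.
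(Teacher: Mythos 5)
Your decomposition of $\widetilde{D}\omega_{\bullet,\bullet}=0$ into the five components $d\omega=0$, $\delta^h\omega\pm d\eta=0$, $\delta^v\omega=0$, $\delta^h\eta=0$, $\delta^v\eta=0$ is the right bookkeeping, and a direct verification along these lines is a legitimately different route from the paper. But there is a genuine error in how you dispose of the vertical pieces. You claim $\delta^v$ vanishes identically on $\GG_{\bullet,\bullet}$ because ``the vertical groupoid structure is the unit groupoid, $s^v=t^v=\id$, exactly as in the proof of Proposition \ref{2sclosed}.'' That argument is only valid for the finite-dimensional model \eqref{eq:G-double}, where the vertical direction really is the unit groupoid of $G$. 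For $\GG_{\bullet,\bullet}$ in \eqref{eq:omega-g-double} the roles are reversed: the \emph{horizontal} direction is the group (pointwise multiplication), while the \emph{vertical} direction is the nontrivial groupoid $\Omega G\rightrightarrows P_eG$ with source, target and multiplication given by restriction to half-loops and concatenation, \eqref{dou-st}--\eqref{dou-mul}. So $\delta^v$ does not vanish identically, and $\delta^v\omega\in\Omega^2(\GG_{2,1})$, $\delta^v\eta\in\Omega^1(\GG_{2,2})$ must be computed on vertically composable pairs. They do vanish, but for a different reason: writing the concatenated loop as the two halves and using the composability identification of the overlapping half (a substitution $t\mapsto 1-t$ reverses the sign of the derivative in Segal's integrand), the leftover integrals cancel — exactly the mechanism the paper uses to show $\delta^v\alpha=0$ inside the proof of Theorem \ref{Thm-dou-ev}. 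As written, your step would fail, and if $\delta^v$ really were identically zero the distinction between the two double models would collapse.

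For comparison, the paper does not verify the components at all: it observes that Theorem \ref{Thm-dou-ev} exhibits $\omega_{\bullet,\bullet}=-\tfrac12\bigl(\widetilde{D}\alpha_{\bullet,\bullet}+ev^*_{\bullet,\bullet}\Omega_{\bullet,\bullet}\bigr)$, so closedness follows from $\widetilde{D}^2=0$ together with Proposition \ref{2sclosed} ($\widetilde{D}\Omega_{\bullet,\bullet}=0$). If you want to keep your direct route, you must (i) replace the unit-groupoid argument by the concatenation-cancellation computation for $\delta^v\omega$ and $\delta^v\eta$, and (ii) actually carry out the crux identity $\delta^h\omega=\mp\,d\eta$ on $\GG_{1,2}=\Omega G^{\times 2}$, which you only sketch; in the paper this identity is never computed head-on but is encoded in the transgression identities (Lemma \ref{lemma-brylinski} and the computations \eqref{H1}, \eqref{H2}) that go into Theorem \ref{Thm-dou-ev}. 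Your $\delta^h\eta=0$ step, by contrast, is fine: it is the $\mathrm{Ad}$-invariance cancellation you describe for the nerve of the pointwise-multiplication group.
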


\begin{proof}
The result will follow from Theorem \ref{Thm-dou-ev} which states that
$$\omega_{\bullet,\bullet}=-\frac{1}{2}\big(\widetilde{D}(\alpha_{\bullet,\bullet})+ev^*_{\bullet,\bullet}\Omega_{\bullet,\bullet}\big), $$
and the fact that $\Omega_{\bullet,\bullet}$ is closed by Proposition \ref{2sclosed}.
\end{proof}

It is quite surprising that in the double picture,  we obtain a $1$-form \eqref{eq:eta} instead of a $2$-form as in Theorem \ref{thm-morita} of the simplicial picture, to bridge the finite and infinite models. We believe that the term $\eta$ should be related to the descent equations computed in \cite{ANXZ}.

\subsection{The equivalence}
Given a Lie group with a quadratic Lie algebra we create two different double Lie groups endowed with differential forms $(G_{\bullet,\bullet}, \Omega_{\bullet,\bullet})$ and $(\GG_{\bullet,\bullet}, \omega_{\bullet,\bullet})$. Here we will show that they are equivalent. 

As stated in Remark \ref{remark:equivalence-db}, there is a double Lie group morphism $ev_{\bullet,\bullet}:\GG_{\bullet,\bullet}\to G_{\bullet,\bullet}$ given by \eqref{eq:ev-db}. As in the simplicial case, the forms $\omega_{\bullet,\bullet}$ and $ev^*_{\bullet,\bullet}\Omega_{\bullet,\bullet}$ do not agree.  So we need to introduce another form on $\GG_{\bullet,\bullet}$, which we denote $\alpha_{\bullet,\bullet}$. The $(1,2)$-shifted $0$-form  $\alpha_{\bullet,\bullet}$ is defined by 
$$\alpha_{\bullet,\bullet}=\begin{pmatrix}0&-\alpha&0\\-\tr(\Omega)&-\tr(\Theta)&0\end{pmatrix} \quad \text{where}\quad \alpha=\int_0^1\langle L_{\tau(t)^{-1}}\tau'(t),L_{\tau(t)^{-1}}v(t)\rangle\in\Omega^1(\Omega G)$$ 
and $\tr(\Omega)\in\Omega^1(P_eG^{\times 2})=\Omega^1((P_eG)^{\times 2})=\Omega^1(\GG_{0,2})$, and $\tr(\Theta)\in\Omega^2(P_eG)=\Omega^2(\GG_{0,1})$ are the transgressions of the forms on $NG_\bullet$ introduced in \eqref{Omegadot}.
\begin{remark}
The $1$-form $\alpha\in\Omega^1(\Omega G)$ also has a nice interpretation in terms of the $S^1$-action on $\Omega G$. The based loop group $\Omega G$ carries an $S^1$-action given by rotation of loops and we denote its infinitesimal generator by $X_{S^1}(\tau)=\tau'.$ Then the fixed points of the action correspond to the critical points of the function $\alpha(X_{S^1})$.  
\end{remark}

\begin{theorem}\label{Thm-dou-ev}
The evaluation map $ev_{\bullet,\bullet}:\GG_{\bullet,\bullet}\to G_{\bullet,\bullet}$ satisfy
$$-\omega_{\bullet,\bullet}-\frac{1}{2}ev^*_{\bullet,\bullet}\Omega_{\bullet,\bullet}=\widetilde{D}(\frac{1}{2}\alpha_{\bullet,\bullet}).$$
\end{theorem}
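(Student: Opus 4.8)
The plan is to verify the identity componentwise in the de Rham triple complex $(\Omega^\bullet(\GG_{\bullet,\bullet}), d, \delta^v, \delta^h)$, where we recall that for the double Lie group $\GG_{\bullet,\bullet}$ the vertical direction is the unit groupoid (so $s^v=t^v=\mathrm{id}$ at the relevant levels, making $\delta^v$ trivial on forms pulled from the $i=0$ row) and the horizontal direction carries the point-wise multiplication. Writing out $\widetilde D(\frac{1}{2}\alpha_{\bullet,\bullet})$, $-\omega_{\bullet,\bullet}$ and $-\frac12 ev^*_{\bullet,\bullet}\Omega_{\bullet,\bullet}$ as matrices of forms indexed by bidegree $(j,i)$, the claimed equality splits into one scalar equation per nonzero entry. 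The entries that appear are: the $(1,1)$-entry (a $2$-form on $\GG_{1,1}=\Omega G$), the $(1,2)$-entry (a $1$-form on $\GG_{1,2}=\Omega G^{\times 2}$), the $(0,1)$-entry (a $2$-form on $\GG_{0,1}=P_eG$), and the $(0,2)$-entry (a $1$-form on $\GG_{0,2}=P_eG^{\times 2}$). So there are essentially four identities to check.

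First I would dispatch the two entries living in the $j=0$ row, i.e.\ those pulled back from $P_eG$. These only involve the transgression terms $\tr(\Theta)\in\Omega^2(\GG_{0,1})$ and $\tr(\Omega)\in\Omega^1(\GG_{0,2})$ together with $ev_{0,1}^*\Theta$ and $ev_{0,2}^*\Omega$, and the horizontal differential $\delta^h$ restricted to this row is exactly the simplicial differential of $NG_\bullet$ pulled back along $ev_\bullet: \GG_\bullet\to NG_\bullet$ (using the identification of the $j=0$ row of $\GG_{\bullet,\bullet}$ with the simplicial model and Lemma~\ref{evaluation}). Hence the $(0,1)$-entry is precisely the first relation $\frac12 ev_1^*\Theta = -d\omega^P$ of Theorem~\ref{thm-morita} after replacing $\omega^P$ by $\tfrac12\tr(\Theta)$ — but this is just Lemma~\ref{lemma-brylinski}, which gives $\tr(\Theta)=d\alpha^P - 2\omega^P$ and $d\tr(\Theta)=ev_1^*\Theta$; I would re-read those formulas to match signs. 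The $(0,2)$-entry is the analogous statement involving $\delta^h\tr(\Theta)$, $d\tr(\Omega)$, $\tr(\delta\Omega)=0$ and $ev_2^*\Omega$, which follows from the compatibility of transgression with $d$ and $\delta$ (the general transgression properties, e.g.\ Proposition~\ref{Trans-DR}).

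Next I would handle the $(1,1)$-entry, which is the genuinely new content: it should read $-\omega - \tfrac12 ev_{1,1}^*\Omega = \tfrac12(d\alpha + \delta^h(\ldots) \pm \delta^v(\ldots))$. Since $\delta^v$ on the $i=1$ column is nontrivial but $ev_{1,1}^*\Omega$ pulls $\Omega\in\Omega^2(NG_2)=\Omega^2(G^{\times 2})$ back along $ev_{1,1}:\Omega G\to G$, $\tau\mapsto\tau(1/2)$, I would compute $ev_{1,1}^*\Omega$ by the explicit formula \eqref{exOme}, exactly as in \eqref{eq:ev2-Omega} of Theorem~\ref{thm-morita} but with the halving point $t=1/2$ in place of $1/3, 2/3$; then $d\alpha$ is computed on $\Omega G$ by the same matrix-group trick used in the proof of Lemma~\ref{lemma-brylinski} (write $\tau_\epsilon(t)=\tau(t)\exp(\epsilon\widehat u(t))$), yielding $d\alpha = 2\omega + \tr(\Theta)|_{\Omega G}$ up to the boundary correction, and finally $\delta^h\alpha$ and $\delta^v\alpha$ are evaluated using \eqref{dou-st}, \eqref{dou-mul} and the horizontal multiplication $m^h(\tau_1,\tau_2)=\tau_1\tau_2$; the term $-\eta$ in $\omega_{\bullet,\bullet}$ is precisely $\delta^v\alpha$ (or $\delta^h\alpha$) restricted appropriately, since $\eta$ in \eqref{eq:eta} has exactly the shape of a difference of pullbacks of $\alpha$ under the multiplication maps. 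The $(1,2)$-entry is then the remaining face of the same cube: an identity among $\delta^h\eta$, $\delta^v\alpha$, $d(\cdot)$ on $\GG_{1,2}$, and $ev_{1,2}^*(\text{something in bidegree }(0,2)\text{ or }(1,1))$; by the total-complex identity $\widetilde D^2=0$ applied to $\tfrac12\alpha_{\bullet,\bullet}$, once the $(1,1)$-, $(0,1)$- and $(0,2)$-entries are verified together with closedness of $\Omega_{\bullet,\bullet}$ (Proposition~\ref{2sclosed}) and of $\omega_{\bullet,\bullet}$, the $(1,2)$-entry is forced, so I would only spot-check it rather than grind it out.

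The main obstacle I anticipate is bookkeeping the signs and the $\delta^v$ versus $\delta^h$ contributions in the total differential $\widetilde D=\delta^h+(-1)^i\delta^v+(-1)^{i+j}d$: because the nonzero entries of $\alpha_{\bullet,\bullet}$ sit in two different columns ($i=1$ and $i=2$) and rows ($j=0$ and $j=1$), each application of $\widetilde D$ mixes a de Rham differential in one bidegree with two simplicial differentials landing in adjacent bidegrees, and getting the global sign conventions consistent with the matrix presentation of $\Omega_{\bullet,\bullet}$ and $\omega_{\bullet,\bullet}$ requires care. A secondary technical point is that, just as in Theorem~\ref{thm-inf-sym} and Theorem~\ref{thm-morita}, several of the $\delta^h$-terms on the loop level produce boundary contributions that vanish by Stokes' theorem only because loops in $\Omega G$ are based (so $\widehat a(0)=\widehat a(1)=0$); I would make sure each integration-by-parts step is justified by this basepoint condition. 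Apart from these sign/boundary issues the computation is entirely parallel to the simplicial proof of Theorem~\ref{thm-morita}, with $1/2$ replacing $1/3$, so no new ideas should be needed.
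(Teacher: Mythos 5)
Your overall strategy---expanding both sides of the identity in the triple complex and checking it bidegree by bidegree, with Lemma \ref{lemma-brylinski} supplying the loop-level relation and the transgression compatibilities handling the $j=0$ row---is the paper's strategy, but your setup has errors that leave genuine gaps. You describe the vertical structure of $\GG_{\bullet,\bullet}$ as the unit groupoid; that is the other model $G_{\bullet,\bullet}$. The vertical groupoid of $\GG_{\bullet,\bullet}$ is $\Omega G\rightrightarrows P_eG$ with the concatenation-type structure maps \eqref{dou-st}--\eqref{dou-mul}, so $\delta^v$ is nowhere trivial. As a consequence you count only four components to check, whereas $\widetilde{D}(\tfrac12\alpha_{\bullet,\bullet})$ has six: you omit the $(2,1)$-component $\tfrac12\delta^v\alpha\in\Omega^1(\GG_{2,1})$, whose vanishing is not formal and is the first explicit computation in the paper's proof (it uses the vertical multiplication on $\Omega G\times_{P_eG}\Omega G$), and the $(0,3)$-component $\delta^h\tr(\Omega)=0$. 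Moreover the nonzero vertical terms $\delta^v\tr(\Omega)=\tr(\Omega)|_{\Omega G}$ and $\delta^v\tr(\Theta)=\tr(\Theta)|_{\Omega G}$ are exactly what produce $\eta$ and $\omega$ in bidegrees $(1,2)$ and $(1,1)$; your guess that $-\eta$ is ``precisely $\delta^v\alpha$ (or $\delta^h\alpha$)'' is not correct: the actual identity is $2\eta=-\delta^h\alpha-\delta^v\tr(\Omega)$, and each summand differs from $\eta$ by the cross term $\int_0^1\langle L_{\tau_1(t)^{-1}}\tau_1'(t),R_{\tau_2(t)^{-1}}a_2(t)\rangle\,dt$ (compare \eqref{H1} and \eqref{H2}).

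Two further steps would fail as written. First, your $(1,1)$-entry contains a term $ev_{1,1}^*\Omega$ that does not occur: $\Omega_{\bullet,\bullet}$ is concentrated in the row $j=0$ and the pullback by the bisimplicial map $ev_{\bullet,\bullet}$ preserves bidegree, so $ev_{\bullet,\bullet}^*\Omega_{\bullet,\bullet}$ contributes only $ev_{0,1}^*\Theta$ and $ev_{0,2}^*\Omega$; indeed $\Omega$ lives on $G^{\times 2}=G_{0,2}$ while $ev_{1,1}$ lands in $G_{1,1}=G$, so the pullback you propose does not typecheck. The correct $(1,1)$-identity is $-\omega=\tfrac12\big(\delta^v\tr(\Theta)-d\alpha\big)$, which does follow from Lemma \ref{lemma-brylinski} restricted to loops, as you anticipate. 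Second, your plan to skip the $(1,2)$-entry because it is ``forced by $\widetilde{D}^2=0$'' is invalid twice over: knowing that the single remaining component of the difference is annihilated by $d$, $\delta^h$ and $\delta^v$ does not make it zero, and the closedness of $\omega_{\bullet,\bullet}$ you would invoke is itself deduced in the paper from this very theorem, so using it here is circular. The $(1,2)$-computation is the main new content of the proof and has to be carried out explicitly.
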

\begin{proof}
In order to prove this result, we need to show the following equality between matrices, 
\begin{equation*}
    \begin{pmatrix}
    0& 0& 0& 0\\
    0&\eta &-\omega&0\\
    0&-\frac{1}{2}ev_{0,2}^*\Omega&\frac{1}{2}ev^*_{0,1}\Theta&0
    \end{pmatrix}
    =\frac{1}{2}\begin{pmatrix}
    0& 0& \delta^v\alpha& 0\\
    0&-\delta^h\alpha-\delta^v\tr(\Omega) &\delta^v\tr(\Theta)-d\alpha&0\\
    -\delta^h\tr(\Omega)&-\delta^h\tr(\Theta)-d\tr(\Omega)&d\tr(\Theta)&0
    \end{pmatrix}.
\end{equation*}
For the equality in the first row we need to compute $\delta^v\alpha\in\Omega^1(\GG_{2,1})$ and show that it is zero. Recall that $\GG_{2,1}=\Omega G\times_{P_eG}\Omega G$, hence we pick $\tau_1,\tau_2\in\Omega G$ with $\tau_1(\frac{t}{2})=\tau_2(1-\frac{t}{2})$ and $a_i\in T_{\tau_i}\Omega G$ also with $a_1(\frac{t}{2})=a_2(1-\frac{t}{2})$. Then 
\begin{equation*}
    \begin{split}
        &(\delta^v\alpha)_{(\tau_1,\tau_2)}(a_1,a_2)=\alpha_{\tau_2}(a_2)-\alpha_{m^v(\tau_1,\tau_2)}(Tm^v(a_1,a_2))+\alpha_{\tau_1}(a_1)\\
        =&\alpha_{\tau_2}(a_2)+\alpha_{\tau_1}(a_1)
        -\int_0^\frac{1}{2}\langle L_{\tau_2(t)^{-1}}\tau_2'(t), L_{\tau_2(t)^{-1}}a_2(t)\rangle-\int_\frac{1}{2}^1\langle L_{\tau_1(t)^{-1}}\tau_1'(t), L_{\tau_1(t)^{-1}}a_1(t)\rangle\\
        =&\int_\frac{1}{2}^1\langle L_{\tau_2(t)^{-1}}\tau_2'(t), L_{\tau_2(t)^{-1}}a_2(t)\rangle+\int_0^\frac{1}{2}\langle L_{\tau_1(t)^{-1}}\tau_1'(t), L_{\tau_1(t)^{-1}}a_1(t)\rangle=0.
    \end{split}
\end{equation*}
In order to verify the first equality in the middle row, we need to make explicit computations of the differentials. We start by computing $\delta^h\alpha\in \Omega^1(\GG_{1,2})$. Since $\GG_{1,2}=\Omega G^{\times 2}$, we pick $\tau_1,\tau_2\in\Omega G$ and $a_i\in T_{\tau_i}\Omega G$. Then
\begin{equation}\label{H1}
    \begin{split}
        (\delta^h\alpha)_{(\tau_1,\tau_2)}((a_1,a_2))=&\ \alpha_{\tau_2}(a_2)-\alpha_{\tau_1\tau_2}(R_{\tau_2}a_1+L_{\tau_1} a_2)+\alpha_{\tau_1}(a_1)\\
        =&-\int_0^1\langle L_{\tau_1^{-1}(t)}\tau_1'(t), R_{\tau_2^{-1}(t)}a_2(t)\rangle -\eta_{(\tau_1,\tau_2)}((a_1,a_2)).
    \end{split}
\end{equation}
An easy computation shows that $\delta^v\tr(\Omega)=\tr(\Omega)_{|\Omega G}$ and by Proposition \ref{trans-expli} and the formula \eqref{exOme} we get that 
\begin{equation}\label{H2}
    \begin{split}
        \delta^v\tr(\Omega)_{(\tau_1,\tau_2)}((a_1,a_2))=&\tr(\Omega)_{(\tau_1,\tau_2)}((a_1,a_2))=\int_0^1 \Omega_{(\tau_1,\tau_2)}((\tau_1',\tau_2'),(a_1,a_2))\\
        =&\int_0^1\langle L_{\tau_1^{-1}(t)}\tau_1'(t), R_{\tau_2^{-1}(t)}a_2(t)\rangle -\eta_{(\tau_1,\tau_2)}((a_1,a_2)).
    \end{split}
\end{equation}
Hence combining \eqref{H1} and \eqref{H2}, we obtain 
$$\frac{1}{2}(-\delta^v\alpha-\delta^h\tr(\Omega))=\eta.$$
For the second term in the middle row, we  use again the fact that $\delta^v\tr(\nu)=\tr(\nu)_{|\Omega G}$, and by Lemma \ref{lemma-brylinski} we obtain that
$$-\omega=-\omega^P|_{\Omega G}=\frac{1}{2}(\tr(\Theta)-d\alpha^P)|_{\Omega G}=\frac{1}{2}(\delta^v\tr(\Theta)-d\alpha).$$

Finally, the equality on the last row follows directly from the properties of the transgression given in Propositions \ref{Trans-DR} and \ref{trans-delta}, the fact that $ev_{0,1}(\gamma)=\gamma(1)$ and $ev_{0,2}(\gamma_1,\gamma_2)=(\gamma_1(1),\gamma_2(1))$ and that $D\Omega_\bullet=0$. Explicitly
\begin{eqnarray*}
\delta^h\tr(\Omega)&=&-\tr(\delta\Omega)=0,\\
-\frac{1}{2} \delta^h\tr(\Theta)-\frac{1}{2}d\tr(\Omega)&=&-\frac{1}{2}\tr(d\Omega)-\frac{1}{2}ev_1^*\Omega+\frac{1}{2}\tr(d\Omega)=-\frac{1}{2}ev_{0,2}^*\Omega,\\
\frac{1}{2}d\tr(\Theta)&=&\frac{1}{2}ev^*_1\Theta-\frac{1}{2}\tr(d\Theta)=\frac{1}{2}ev^*_{0,1}\Theta.
\end{eqnarray*}
Therefore the two matrices coincides in all the entries, and we have proved the statement.
\end{proof}

\appendix
\renewcommand{\theequation}{\thesection.\arabic{equation}}

\section{Sobolev spaces}\label{ap-sob}
Here we recall some analytic facts about Sobolev spaces used in this article (see also \cite[Sect.4]{Ale-dirac}, \cite[Sect.14]{ Atiyah-Bott:YM}). For a finite-dimensional compact manifold $M$, possibly with boundary and corners, let $H_r(M)$ denote the order $r$ Sobolev space of functions. These functions and their weak derivatives are $L^2$-functions. The $C^\infty$ functions are dense in $H_r(M)$. The point-wise multiplication makes $H_r(M)$ a Banach algebra when $r-\frac{1}{2}\dim M >0$ \cite[Theorem 4.39]{sob}. If $Z \subset M$ is a submanifold, and $r-\frac{1}{2}\codim(Z)> 0$, then the restriction of continuous functions from $M$ to $Z$ extends to a continuous linear map $H_r(M)\to H_{r-\frac{1}{2} \codim(Z)}(Z)$,    with a continuous right inverse.

If $N$ is another finite-dimensional manifold and $r-\frac{1}{2} \dim(M) > 0$, one defines spaces $\Hom_r(M,N)$ of maps from $M$ to $N$ of Sobolev class $r$ by choosing local charts
for $N$. In particular, if $G$ is a finite-dimensional Lie group and $r-\frac{1}{2} \dim M > 0$, then $\Hom_r(M, G)$ is a Banach (even Hilbert) Lie group under point-wise multiplication \cite[Sect.4]{Ale-dirac}. 

We are particularly interested in the loop group $L G := \Hom_r(S^1, G)$ and the based loop group $\Omega G:=\{\tau \in LG| \tau(0)=e\}$ for a fixed $r \in \Z^{\ge 1}$. In addition to the advantage of having a Banach manifold, using the version of based loops with Sobolev completion is also helpful as  sometimes our construction of face or degeneracy maps via concatenation does not result in a smooth map, but instead a map in the Sobolev completion.

Given a domain $U$ in $\R^n$, the map sending $f\in C^\infty(U)$ to the evaluation of its $m$-th derivative  $f^{(m)}(x)$ at point $x\in U$ can be extended to a bounded linear (thus smooth) map  on $H_r(U)$ if $r>m$.  Therefore, the de Rham differentiation operator is  bounded linear (thus smooth) $H_{r+1}(U) \to H_{r}(U)$. 

\section{Universal integration $\int \g_\bullet$ and truncations}\label{ap:int-trun}
Using the idea of Sullivan's space realisation and a suitable truncation, Henriques shows in \cite{henriques} a procedure to integrate a Lie $n$-algebra, i.e. an $n$-term $L_\infty$-algebra, to a Lie $n$-group\footnote{Notice that the truncation procedure creates a possible obstruction to this integration procedure. That is, although a finite dimensional Lie algebra can be always integrated to a Lie group, not every Lie $n$-algebra can be integrated to a Lie $n$-group.}. In general, the Lie $n$-groups obtained by this method are infinite dimensional. Here we recall his construction in the special case of a Lie algebra.

Let $\g=(\g, [\cdot,\cdot])$ be a Lie algebra and $\CE(\g)$ its Chavelley-Eilenberg differential complex, that is $\CE(\g)=\wedge^\bullet \g^*$ with differential
\[
d_{CE}\xi (x_1, \dots, x_k) =\sum_{i<j} (-1)^{i+j-1} \xi ([x_i, x_j], x_1, \dots, \hat{x}_i, \dots, \hat{x}_j, \dots, x_k).
\]

The universal object integrating it, $\int \g_\bullet$, was constructed in \cite{Getzler04, henriques} in the following way (we also follow the treatment in \cite{henriques, Ale-dirac} of the differential structure, which is also similar to that in \cite{brahic-zhu}). For any $k\in\Z^{\ge 0}$ consider the standard $k$-simplex 
\begin{equation*}
    \Delta^k=\{(t_0, \dots, t_k)\in \R^{k+1}|\sum_{i=0}^k t_i=1\}.
\end{equation*}
Denote by $\Omega^\bullet(\Delta^k)$  subspaces of  de Rham forms on $\Delta^k$ with Sobolev class $r$ (to be a Banach algebra we need $r>\frac{1}{2}k$ as in Appendix \ref{ap-sob}) defined by
\[
\Omega^\bullet(\Delta^k):=\{\alpha|\ \alpha = \sum_{I=i_1<\dots <i_k}\alpha^I dt_I  \text{ where } \alpha^I  \; \text{is of Sobolev class}\; r, \text{ and } d\alpha \; \text{is also in this form}\footnote{This means that either $\alpha$ has Sobolev class $r+1$ or $d\alpha=0$.} \}. 
\]
This makes $(\Omega^\bullet(\Delta^k), d)$ into a differential graded algebra, which we abbreviate to d.g.a..
Denote by $\Hom_{d.g.a.}$  the set of morphisms between  d.g.a.s.   Then $\Hom_{d.g.a.}(\CE(\g), \Omega^\bullet(\Delta^k))$ carries a natural Banach manifold structure \cite[Theorem 5.10]{henriques}. Notice that $C^\infty$ functions do not form a Banach space, but its completion under a Sobolev norm to a Sobolev space is a Banach space.

It can be helpful to view elements in $\Hom_{d.g.a.}\big(\CE(\g), \Omega^\bullet (\Delta^k) \big)$ as Lie algebroid morphisms from $T\Delta^k$ to $\g$, and we define
\[\Hom_{\algd}(T\Delta^k, \g):=\Hom_{d.g.a.}\big(\CE(\g), \Omega^\bullet (\Delta^k) \big).\]
More precisely, a vector bundle morphism $\psi:T\Delta^k \to \g$ can be written explicitly as $\psi=\sum_{i=0}^k \psi_i dt_i$ with $\psi_i \in C^{r+1}(\Delta^k, \g)$. Moreover, $\psi$ defines an element in $\Hom_{\algd}(T\Delta^k, \g)$ if it is further a Lie algebroid morphism, that is it satisfies the Maurer-Cartan equation
\[
\frac{d\psi_i}{dt_j}- \frac{d\psi_j}{dt_i}=[\psi_i, \psi_j], \quad \forall i, j \in \{0, \dots, k\}. 
\]

The Banach manifolds  $\Hom_{\algd} (T\Delta^\bullet, \g)$ form a simplicial manifold, denoted by $\int \g_\bullet$ in \cite{henriques}, with face and degeneracy maps induced by the natural ones between $\Delta^k$ and $\Delta^{k-1}$. The simplicial manifold $\int \g_\bullet$ is conjectured \cite[Section 7]{henriques-v1} to be a universal integration object of $\g$, i.e. the $L_\infty$-group integrating $\g$, partially because its 1-truncation is the universal\footnote{which is only universal among Lie (1-)groups integrating $\g$} (connected and simply connected) Lie group $G$ integrating $\g$. Moreover, $\int$ is an exact functor with respect to a class of distinguished fibrations---``quasi-split fibrations'' \cite[Section 9]{Rogers-Zhu:2016}. Such fibrations include acyclic fibrations as well as fibrations that arise in string-like extensions. In particular, $\int$ sends $L_\infty$ quasi-isomorphisms to weak equivalences, quasi-split fibrations to Kan fibrations, and preserves acyclic fibrations as well as pullbacks of acyclic/quasi-split fibrations.

Now let us give some details on truncations.


\begin{definition}\label{def:truncation} [See e.g. \cite{may} for the case of simplicial sets]
Given a simplicial manifold $X_\bullet$ the {\bf $n$-truncation} $\tau_n(X)_\bullet$ is the simplicial set defined as
\[
\tau_n(X)_k= S_k, \quad \text{if }\; k\le n-1, \quad \tau_n(X)_k=X_k/\sim^n_{k+1} \text{ if }\; k\ge n, 
\]
where $x_k\sim^n_{k+1} x'_k$ if and only if they are simplicial homotopic relative to $(n-1)$-skeleton. In other words, $sk_{n-1}(x_k)=sk_{n-1}(x'_k)$ and   $\exists\ x_{k+1}\in \hom(\Delta[k]\times \Delta[1], X_\bullet)$  such that
\begin{equation*}
     x_{k+1}|_{sk_{n-1}(\Delta[k])}=sk_{n-1}(x_k)=sk_{n-1}(x'_k), \quad  x_{k+1}|_{\Delta[k]\times 0}=x_k\quad\text{and}\quad x_{k+1}|_{\Delta[k]\times 1}=x'_k.
\end{equation*}
\end{definition}

We pay special attention to the $n$-truncations for the simplicial manifold $\int \g_\bullet$ when $n=1, 2$. In this case, the homotopies that we mod out can be understood in the following way: $\psi^i\in \int\g_k$ with $\psi^0|_{T\partial\Delta^k}=\psi^1|_{T\partial\Delta^k}$ for $i=0,1$ are homotopic if there exists $\Psi\in \Hom_{\algd}(T(\Delta^{k}\times I),\g)$ such that
$$\Psi(x, 0)=\psi^0, \quad \Psi(x, 1)=\psi^1, \quad \Psi(y, t)=\psi^0(y)=\psi^1(y) \text{ for } y\in sk_{n-1} \Delta^k, k\ge n.  $$
Notice in particular that when $k=n$, we have $sk_{n-1} \Delta^k=\partial \Delta^k$.
 

The $n$-truncation of an arbitrary simplicial manifold needs not be a simplicial manifold as there are quotients involved. In our concrete case,  \cite[Theorem 7.5]{henriques} implies that $\tau_1(\int \g)_\bullet$ and $\tau_2(\int \g)_\bullet$ are simplicial manifolds because $\pi_2(G)=0$ for a finite dimensional Lie group. More explicitly we have the following.
\begin{proposition}(\cite[Example 7.2]{henriques})\label{1-trun}
Let $G$ be the connected and simply connected Lie group integrating $\g$. Then $\tau_1(\int \g)_\bullet=NG_\bullet$.
\end{proposition}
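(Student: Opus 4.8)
\textbf{Proof proposal for Proposition \ref{1-trun}.}

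The plan is to unwind the definitions on both sides and exhibit an explicit isomorphism of simplicial manifolds. On the right-hand side, recall from Example \ref{NG} that $NG_k = G^{\times k}$ with the standard simplicial structure. On the left-hand side, $\int\g_k = \Hom_{\algd}(T\Delta^k,\g)$, and $\tau_1(\int\g)_k$ is the quotient of this by the relation $\sim^1_{k+1}$, i.e. simplicial homotopy relative to the $0$-skeleton. So the first step is to understand $\Hom_{\algd}(T\Delta^k,\g)$ concretely: an element is a Lie algebroid morphism $T\Delta^k\to\g$, which by the standard dictionary (used already in \cite{henriques, brahic-zhu}) is the same as a map $f:\Delta^k\to G$ of the appropriate Sobolev class, considered up to left translation — or, after fixing the normalization $f(v_k)=e$ on the last vertex as in Proposition \ref{2truncation-thm}, literally such a map $\Hom_e(\Delta^k,G)$. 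This identification is the analogue, on the $k$-simplex, of the classical fact $\int\g_1 = \Hom_{\algd}(T\Delta^1,\g) = P_eG$.

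The second step is to identify the homotopy relation. Two such morphisms $\psi^0,\psi^1$ are $\sim^1_{k+1}$-equivalent iff there is $\Psi\in\Hom_{\algd}(T(\Delta^k\times I),\g)$ restricting to $\psi^0,\psi^1$ at the two ends and agreeing with them on $sk_0\Delta^k$ for all $k\ge 1$ (as spelled out after Definition \ref{def:truncation}). Under the dictionary above this says exactly: the corresponding maps $\Delta^k\to G$ are homotopic relative to the vertices. Since $G$ is connected and simply connected — and crucially $\pi_2(G)=0$ for any finite-dimensional Lie group — the homotopy type of a map $\Delta^k\to G$ relative to the vertices is determined by the homotopy classes (rel endpoints) of its restrictions to the edges $sk_1\Delta^k$, and each such class is in turn trivial data beyond the edge itself since $G$ is $1$-connected; hence a representative is determined, up to this equivalence, by where it sends the $\binom{k+1}{2}$ edges, and a path in $G$ rel endpoints is contractible. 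Concretely, the map
\[
\tau_1(\int\g)_k \longrightarrow G^{\times k}, \qquad [\psi]\longmapsto \big(g_1,\dots,g_k\big),
\]
where $g_i$ is the holonomy of $\psi$ along the edge $(i-1,i)$ of $\Delta^k$ (equivalently $f(v_{i-1})f(v_i)^{-1}$ for a normalized representative $f$), is well defined, and an inverse is built by taking a tuple $(g_1,\dots,g_k)$ to the (homotopy class of the) piecewise-geodesic, or any chosen smooth, map $\Delta^k\to G$ sending vertex $v_j$ to $g_{j+1}\cdots g_k$; this uses $1$-connectedness of $G$ to fill the higher skeleta. One checks these two constructions are mutually inverse, which reduces to the statement that any two fillings of the $1$-skeleton into $G$ are homotopic rel $1$-skeleton, again by $\pi_2(G)=0$ together with higher vanishing of the relevant obstruction groups — but in fact only $\pi_1$ and $\pi_2$ are needed because $\Delta^k$ is a $k$-simplex whose cells can be filled inductively.

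The third step is to verify that this bijection is simplicial and a diffeomorphism at each level. Compatibility with faces and degeneracies is a direct check: restricting $\psi$ along the inclusion $\Delta^{k-1}\hookrightarrow\Delta^k$ as the $j$-th face multiplies consecutive holonomies exactly as the face map $d_j$ of $NG_\bullet$ does (this is the same bookkeeping that appears when one verifies $ev_\bullet$ is simplicial in Lemma \ref{evaluation}), and degeneracies insert an identity edge, matching $s_j$. That the maps are diffeomorphisms of Banach manifolds follows because the holonomy-along-edges map $\Hom_e(\Delta^k,G)\to G^{\times k}$ is a smooth submersion with smooth local sections (evaluate-and-interpolate), and passing to the quotient by the contractible homotopy relation yields a manifold structure on $\tau_1(\int\g)_k$ making the induced map a diffeomorphism; that the quotient is a manifold at all is precisely where \cite[Theorem 7.5]{henriques} and $\pi_2(G)=0$ enter. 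I expect the main obstacle to be this last point — making the identification of the homotopy quotient with $G^{\times k}$ rigorous at the level of Banach-manifold structures, rather than just as sets — but since this is exactly the content invoked from \cite[Theorem 7.5, Example 7.2]{henriques}, the proof can legitimately cite it, and the remaining work is the explicit (routine) holonomy bookkeeping sketched above.
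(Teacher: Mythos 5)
Your proposal is correct in substance, and it is worth pointing out that the paper offers no argument of its own for Proposition \ref{1-trun}: the statement is proved purely by the citation to \cite[Example 7.2]{henriques}. What you wrote is the natural unwinding of that citation, and it runs exactly parallel to the paper's sketch for the $2$-truncation (Proposition \ref{2truncation-thm}): identify $\int\g_k=\Hom_{\algd}(T\Delta^k,\g)$ with $\Hom_e(\Delta^k,G)$, interpret $\sim^1_{k+1}$ as homotopy rel the vertex set $V$, and observe that the class is recorded by the vertex values, equivalently the edge holonomies, which reproduces $NG_k=G^{\times k}$ together with the faces and degeneracies of Example \ref{NG}; the analytic points (Sobolev versus continuous homotopies, the Banach manifold structure on the quotient) are deferred to \cite[Theorem 7.5]{henriques}, just as the paper does. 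The one place where your justification is loose is the homotopy-theoretic step: taken literally, ``the cells can be filled inductively'' to compare two maps rel the $1$-skeleton would require $\pi_j(G)=0$ for all $j\le k$, which fails for $j\ge 3$, and likewise $1$-connectedness is not what produces a filler of prescribed vertex data (for any connected $G$ one can simply take $f(t_0,\dots,t_k)=c_0(t_0)\cdots c_{k-1}(t_{k-1})$ with $c_j$ a path from $e$ to the prescribed value at $v_j$). The clean argument is a collapse argument: for two maps $f_0,f_1$ agreeing on $V$, set $h=f_0f_1^{-1}$, so that $f_0\simeq f_1$ rel $V$ iff $h$ is null-homotopic rel $V$; since $\Delta^k$ is contractible, $\Delta^k/V\simeq\Sigma V$ is a wedge of circles, so the only obstruction group is $\pi_1(G)=0$ (no appeal to $\pi_2$ is needed at this stage), while $\Delta^k/\mathrm{sk}_1\Delta^k\simeq\Sigma(\mathrm{sk}_1\Delta^k)$ is a wedge of $2$-spheres, which shows that your rel-$1$-skeleton claim needs exactly $\pi_2(G)=0$ and nothing higher, as you asserted. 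So your conclusions and the groups you name are right; just replace the ``inductive filling'' phrasing by this collapse (or by the fibration $\mathrm{Map}(\Delta^k,G)\to G^{V}$), and the proof is complete at the same level of rigor as the paper's treatment of the $2$-truncation.
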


\begin{proposition}[\cite{henriques}]\label{2truncation-thm}
Let $G$ be the connected and simply connected Lie group integrating $\g$. Then the Lie 2-group $\tau_2(\int\g)_\bullet$ is equal to $\GG_\bullet$ where $\GG_k=\Hom_e( sk_1 \Delta^k, G)$, which is the space of maps of Sobolev class  $r+1$ sending $(0, \dots, 0, 1)\in sk_1 \Delta^k$ to $e$.  The face and degeneracy maps are those induced from the simplices.
\end{proposition}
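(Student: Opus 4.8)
The plan is to identify, level by level, both $\tau_2(\int\g)_\bullet$ and $\GG_\bullet$ with the space of Sobolev-class-$r{+}1$ maps out of appropriate skeleta of $\Delta^\bullet$ landing in $G$ and sending the vertex $(0,\dots,0,1)$ to $e$, and to realise the comparison as restriction to the $1$-skeleton. First I would recall the integration of simplices: a Lie algebroid morphism $\psi\in\Hom_{\algd}(T\Delta^k,\g)=\int\g_k$ is a flat $\g$-valued connection on the contractible manifold $\Delta^k$, so integrating it with value $e$ at the basepoint vertex gives a canonical identification $\int\g_k\cong\Hom_e(\Delta^k,G)$ (cf.\ \cite[Ex.~5.5]{henriques}, \cite[Rem.~3.8]{brahic-zhu}), and the same argument identifies $\Hom_{\algd}(T(\Delta^k\times I),\g)\cong\Hom_e(\Delta^k\times I,G)$; under these identifications the face and degeneracy maps of $\int\g_\bullet$ become precomposition with the vertex-preserving affine maps between the relevant (products of) simplices. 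For $k\le 1$ we have $\sk_1\Delta^k=\Delta^k$, so $\tau_2(\int\g)_k=\int\g_k=\Hom_e(\sk_1\Delta^k,G)$ and there is nothing to prove (this recovers $\GG_0=pt$, $\GG_1=P_eG$). For $k\ge 2$, the explicit description of the relation $\sim^{2}_{k+1}$ given above (in terms of morphisms $T(\Delta^k\times I)\to\g$) translates, via the integration just recalled, into precisely \emph{topological homotopy relative to $\sk_1\Delta^k$} between maps $\Delta^k\to G$; hence $\tau_2(\int\g)_k=\Hom_e(\Delta^k,G)/(\text{homotopy rel }\sk_1\Delta^k)$.

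\textbf{The core step.} It then remains to show that restriction to the $1$-skeleton $\rho_k\colon\Hom_e(\Delta^k,G)\to\Hom_e(\sk_1\Delta^k,G)$ induces a bijection on this quotient. Surjectivity is the assertion that every map $\sk_1\Delta^k\to G$ extends over $\Delta^k$: running obstruction theory, the obstruction to extending over the $j$-cells lies in $H^j(\Delta^k,\sk_1\Delta^k;\pi_{j-1}(G))$, which vanishes for $j=2$ because $G$ is simply connected, and for $j\ge 3$ because $\sk_1\Delta^k$ is $1$-dimensional and $\Delta^k$ contractible, so $H^j(\Delta^k,\sk_1\Delta^k)\cong H^{j-1}(\sk_1\Delta^k)=0$. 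For the fibres, after translating a pair $(\psi^0,\psi^1)$ with $\psi^0|_{\sk_1\Delta^k}=\psi^1|_{\sk_1\Delta^k}$ to $(\psi^0(\psi^1)^{-1},e)$ via the group structure, one must check that a map $\Delta^k\to G$ which is constant $e$ on $\sk_1\Delta^k$ is null-homotopic rel $\sk_1\Delta^k$; since $(\Delta^k,\sk_1\Delta^k)$ is a CW pair and $\Delta^k/\sk_1\Delta^k$ is homotopy equivalent to the suspension of the graph $\sk_1\Delta^k$, hence to a wedge of $2$-spheres, the relevant set of pointed homotopy classes is a product of copies of $\pi_2(G)=0$, which vanishes for any Lie group.

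\textbf{Assembling.} Finally I would verify that $\rho_\bullet$ is a simplicial map: the affine vertex-preserving maps between simplices carry $1$-skeleta into $1$-skeleta, so $\rho_\bullet$ commutes with all faces and degeneracies, and $\GG_\bullet$ is by definition equipped with exactly these induced structure maps. Together with the previous step this gives a level-wise bijection $\tau_2(\int\g)_\bullet\cong\GG_\bullet$ of simplicial sets.

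\textbf{Main obstacle.} The homotopy-theoretic content above is short and relies only on $\pi_1(G)=0$ (hypothesis) and $\pi_2(G)=0$ (automatic); the genuine difficulty is making everything compatible with the infinite-dimensional Banach/Sobolev structure. Concretely, one must know that the extensions and homotopies produced above can be chosen within the correct Sobolev classes (using the trace and extension statements of Appendix~\ref{ap-sob}), that $\rho_k$ is a surjective submersion of Banach manifolds, and that the quotient $\tau_2(\int\g)_k$ inherits a smooth manifold structure for which the resulting bijection is a diffeomorphism. This last point is exactly where \cite[Thm.~7.5]{henriques} is invoked — and the reason the hypothesis $\pi_2(G)=0$ reappears there — so in the write-up I would cite that theorem for the manifold structure and concentrate the new work on the set-level identification and the simpliciality of $\rho_\bullet$.
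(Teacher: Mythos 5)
Your proposal is correct and takes essentially the same route as the paper: identify $\int\g_k$ with $\Hom_e(\Delta^k,G)$ via the cited integration of simplices, read the $2$-truncation relation as homotopy relative to $sk_1\Delta^k$, and identify the quotient with $\Hom_e(sk_1\Delta^k,G)$ using $\pi_1(G)=0$ and $\pi_2(G)=0$. The paper's own proof simply defers these homotopy-theoretic details (which you spell out via obstruction theory) and the Sobolev/manifold-structure issues to \cite[Sect.8, Thm.~7.5]{henriques}, exactly as you flag in your final paragraph.
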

\begin{proof}
The calculation is more or less done in \cite[Sect.8]{henriques}. We summarise here: let us denote by $\Hom_e(\Delta^k, G)$ the space of maps of Sobolev class ${r+1}$ which send $(0, \dots, 0, 1)\in \Delta^k$ to $e\in G$. When $k=1$, we also write $P_eG :=\Hom_e(\Delta^1, G)$. Then according to \cite[Example 5.5]{henriques} or \cite[Remark 3.8]{brahic-zhu}, 
$\Hom_{\algd}(T\Delta^k, \g) = \Hom_e(\Delta^k, G) $. Thus $\int \g_1=P_e G$.

The equivalence relation $\sim_k$ we take in truncation is exactly the homotopy relative to the boundary in $G$. Thus $\GG_k=\Hom_e( sk_1 \Delta^k, G)$. 
\end{proof}

\section{Explicit formulas for the Lie $2$-group $\GG_\bullet$}\label{GGtanget}

 The tangent of the Lie $2$-group $\GG_\bullet$ is also a Lie $2$-group\footnote{In fact the group structure is compatible with the vector bundle structure and hence it is more than just a Lie $2$-group.} and is given by 
$$T\GG_\bullet=\cdots \Omega TG\ \aaar P_{(e,0_e)} TG\rightrightarrows pt.$$
The 3rd  level  is also important and is 
\begin{equation}\label{TG3}
     \begin{split}
     T\GG_3=\{( a_0, a_1, a_2)\in(\Omega TG)^{\times 3}\ |\ \text{ for } t\in[0,\frac{1}{3}], \ a_0(t)=a_1(t),  \\ 
     a_0(t+\frac{2}{3})=a_2(\frac{1}{3}-t),\ a_1(t+\frac{2}{3})=a_2(t+\frac{2}{3}) \}.
     \end{split}
\end{equation}
The face and degeneracies are obtained by taking variations of equations \eqref{2faces}, \eqref{3faces}, and \eqref{1degeneraci}. More explicitly, the faces $Td_i:\Omega TG\to P_{(e,0_e)}TG$ are given by 
\begin{equation}\label{T2faces}
    \begin{split}
       &Td_0(a)=a(\frac{t}{3}), \quad Td_1(a) = a(1-\frac{t}{3}), \\
       &Td_2(a)=R_{\tau(\frac{1}{3})^{-1}}a(\frac{1+t}{3})-L_{\tau(\frac{t+1}{3})\tau(\frac{1}{3})^{-1}} R_{\tau(\frac{1}{3})^{-1}} a(\frac{1}{3}). 
    \end{split}
\end{equation}
The faces $Td_i:T\GG_3\to \Omega TG$ are given by
\begin{equation}\label{T3faces}
     Td_i(a_0,a_1, a_2)=a_i\quad \text{ for } 0\leq i\leq 3,
\end{equation}
where 
\begin{equation}\label{V3}
    a_3(t)=\left\{\begin{array}{ll}
        R_{\tau_0(\frac{1}{3})^{-1}}a_0(t+\frac{1}{3})-L_{\tau_3(t)}R_{\tau_0(\frac{1}{3})^{-1}}a_0(\frac{1}{3}) & t\in[0,\frac{1}{3}],  \\
         R_{\tau_0(\frac{1}{3})^{-1}}a_2(t)-L_{\tau_3(t)}R_{\tau_0(\frac{1}{3})^{-1}}a_0(\frac{1}{3})& t\in[\frac{1}{3},\frac{2}{3}], \\
         R_{\tau_0(\frac{1}{3})^{-1}}a_1(\frac{4}{3}-t)-L_{\tau_3(t)}R_{\tau_0(\frac{1}{3})^{-1}}a_0(\frac{1}{3})& t\in[\frac{2}{3},1]. 
    \end{array}\right. 
\end{equation}

The degeneracies $Ts_i:P_{(e,0_e)}TG\to \Omega TG$ are given by
\begin{equation}\label{Tdegeneracies}
    Ts_0(u)(t)=\left\{
    \begin{array}{ll}
         u(3t) & t\in[ 0,\frac{1}{3}],  \\
         u(1) & t\in[\frac{1}{3},\frac{2}{3}],\\
         u(3-3t)&t\in[\frac{2}{3},1], 
     \end{array}\right.
\quad 
    Ts_1(u)(t)=\left\{\begin{array}{ll}
        u(0) &t\in[ 0,\frac{1}{3}],  \\
        u(3t-1)&  t\in[\frac{1}{3},\frac{2}{3}],\\
        u(3-3t)& t\in[\frac{2}{3},1].
     \end{array}\right.
\end{equation}

Here we also include a direct proof that Segal's 2-form is multiplicative.  
\begin{proposition}\label{P-Closed}
The $2$-shifted $2$-form $\omega_\bullet$ on $\GG_\bullet$ is closed, that is,
$$d\omega=0\qquad \text{and}\qquad \delta\omega=0.$$
\end{proposition}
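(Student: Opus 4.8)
The plan is to verify the two identities $d\omega = 0$ and $\delta\omega = 0$ separately, since both together give $D\omega_\bullet = 0$ (recall $\omega_\bullet = \omega + 0 + 0$, so the only nontrivial components of $D\omega_\bullet$ are $d\omega \in \Omega^3(\GG_2)$ and $\delta\omega \in \Omega^2(\GG_3)$). The first identity $d\omega = 0$ is classical and goes back to Segal: working in left-trivialised coordinates $\widehat{a}(t) = L_{\tau(t)^{-1}}a(t)$, the $2$-form $\omega$ is essentially the pullback of a constant-coefficient $2$-form under the derivative map on loop space, and one computes $d\omega$ either directly or by recognising $\omega$ as the curvature of the Segal--Mickelsson line bundle. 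I would just cite \cite{Segal:loop} (or \cite{Baez:from, henriques}) for this, perhaps recording the one-line formula in left-invariant coordinates for completeness.

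The substantive part is $\delta\omega = 0$, which I would prove by the direct computation sketched in the remark following Theorem \ref{thm-inf-sym}. First I would fix $(\tau_0,\tau_1,\tau_2) \in \GG_3$ and tangent vectors $(a_0,a_1,a_2), (b_0,b_1,b_2) \in T\GG_3$, and use the explicit third-level face maps \eqref{3faces} together with their tangent versions \eqref{T3faces}–\eqref{V3} to compute each of the four pullbacks $d_i^*\omega$ for $i = 0,1,2,3$. Because each face $d_i(\tau_0,\tau_1,\tau_2) = \tau_i$ for $i=0,1,2$ simply records one of the three loops (and $\tau_3$ is the glued loop built from the other three), and because $\omega$ is given by an integral over $S^1 = [0,1]/\!\!\sim$, the strategy is to split the integral defining $\delta\omega$ into the three sub-intervals $[0,\tfrac13], [\tfrac13,\tfrac23], [\tfrac23,1]$ on which the glued loop $\tau_3$ is piecewise defined. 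On the first two faces the contributions from the "inner" pieces cancel in pairs against the corresponding pieces of $d_3^*\omega$, using the compatibility relations defining $\GG_3$ (namely $\tau_0(t) = \tau_1(t)$, $\tau_1(\tfrac23+t)=\tau_2(\tfrac23+t)$, $\tau_0(1-t)=\tau_2(t)$ for $t\in[0,\tfrac13]$), and similarly for the tangent vectors. What survives is a single boundary term, and the computation should collapse to
\begin{equation*}
    \delta\omega_{(\tau_0,\tau_1,\tau_2)}\big((a_0,a_1,a_2),(b_0,b_1,b_2)\big) = \int_0^1 \langle \widehat{a}_3'(s), R_{\tau_0(\frac{1}{3})^{-1}} b_0(\tfrac13)\rangle\, ds,
\end{equation*}
where $\widehat{a}_3(t) = L_{\tau_3(t)^{-1}}a_3(t)$ with $a_3$ given by \eqref{V3}. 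Since $R_{\tau_0(1/3)^{-1}}b_0(\tfrac13)$ is constant in $s$, this integral is $\langle \widehat{a}_3(1) - \widehat{a}_3(0),\, R_{\tau_0(1/3)^{-1}}b_0(\tfrac13)\rangle$ by the fundamental theorem of calculus, and $\widehat{a}_3(0) = \widehat{a}_3(1) = 0$ because $a_3 \in \Omega TG$ is a based loop at the identity with zero velocity at the endpoints (equivalently, $\tau_3(0)=\tau_3(1)=e$ forces $\widehat{a}_3$ to vanish there). Hence $\delta\omega = 0$.

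The main obstacle I anticipate is purely organisational bookkeeping: keeping track of the left- versus right-translations appearing in the tangent face map $Td_3$ in \eqref{V3} — each piece of $a_3$ involves a factor $L_{\tau_3(t)}R_{\tau_0(1/3)^{-1}}$ applied to $a_0(\tfrac13)$, and one must be careful that after left-trivialising $\omega$ (i.e. hitting everything with $L_{\tau_3(t)^{-1}}$) these terms reassemble correctly and that the cross-terms between the "$a_0(\tfrac13)$-correction" and the "main" piece of $a_3$ integrate to the claimed boundary term rather than contributing spuriously. I would set this up by computing $\widehat{a}_3'(t)$ piecewise first, noting the $L_{\tau_3(t)}$-correction term has $\widehat{\,\cdot\,}$-derivative equal to $[\widehat{\tau_3}'(t), \cdot]$ type expressions that pair to zero against $b_0(\tfrac13)$ by skew-adjointness of $\mathrm{ad}$, leaving only the genuine boundary contribution. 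Since the referenced Prop.~\ref{P-Closed} in the appendix already contains "the explicit steps", in the main text I would only state the result, indicate the splitting-into-three-pieces strategy, invoke Stokes and the vanishing $\widehat{a}_3(0)=\widehat{a}_3(1)=0$, and defer the full computation there.
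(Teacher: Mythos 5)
Your proposal follows essentially the same route as the paper's proof of Proposition \ref{P-Closed}: $d\omega=0$ is quoted from Segal, and $\delta\omega=0$ is obtained by splitting each of the four face pullbacks over the thirds $[0,\tfrac13]$, $[\tfrac13,\tfrac23]$, $[\tfrac23,1]$, cancelling the pieces pairwise via the gluing relations defining $\GG_3$ (equivalently \eqref{v3inv}), and reducing to the single term $\int_0^1\langle \widehat{a}_3'(s), R_{\tau_0(1/3)^{-1}}b_0(\tfrac13)\rangle\,ds$, which vanishes by Stokes because $\widehat{a}_3(0)=\widehat{a}_3(1)=0$. The only cosmetic difference is in how the cross-terms are handled: in the paper the left-trivialised correction term is a constant and the conjugation factors are removed by $\mathrm{Ad}$-invariance of the pairing (so no bracket terms of the type you anticipate ever appear), rather than by skew-adjointness of $\mathrm{ad}$.
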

\begin{proof}
Since $\omega$ is a symplectic form on $\Omega G$ we know that $d\omega=0$,  so it remains to see that $\delta\omega=0$.  We need to compute the pullback of $\omega$ by the face maps \eqref{3faces}. Pick $\tau\equiv(\tau_0,\tau_1, \tau_2)\in\GG_3$ and $a\equiv(a_0,a_1,a_2), b\equiv(b_0,b_1,b_2)\in T_\tau\GG_3$ write $\widehat{a}_i(t)=L_{\tau_i(t)^{-1}}a_i(t)$ and similarly for $b$. Then using \eqref{3faces} and \eqref{T3faces} we obtain 
$$(d_i^*\omega)_\tau(a,b)=\omega_{\tau_i}(a_i,b_i)=A_i+B_i+C_i\quad \text{for } i=0,\cdots,3,$$
where $A_i$ correspond to the first third of the loop, $B_i$ to the second third and $C_i$ to the last third. Since $C^\infty$ maps are dense in Sobolev space, we only need to verify our result at smooth loops. In order to compare the terms $A_i, B_i$, and $C_i$ for different values of $i$, we use \eqref{TG3} and the face maps given by \eqref{T3faces}. Moreover, equation \eqref{V3} implies 
\begin{equation}\label{v3inv}
\left\{
    \begin{array}{ll}
        a_0(t)=R_{\tau_0(\frac{1}{3})}a_3(t-\frac{1}{3})+L_{\tau_3(t-\frac{1}{3})} a_0(\frac{1}{3}) & t\in[\frac{1}{3},\frac{2}{3}],  \\
         a_2(t)=R_{\tau_0(\frac{1}{3})}a_3(t)+L_{\tau_3(t)}a_0(\frac{1}{3}) & t\in[\frac{1}{3},\frac{2}{3}],  \\
         a_1(t)=R_{\tau_0(\frac{1}{3})}a_3(\frac{4}{3}-t)+L_{\tau_3(\frac{4}{3}-t)}a_0(\frac{1}{3}) & t\in[\frac{1}{3},\frac{2}{3}],  \\
    \end{array}\right.
\end{equation}
and the same identities holds  for $b$. 

For $i=0$ we see that the first term is given by
\begin{equation*}
    A_0= \int_0^{\frac{1}{3}}\langle \widehat{a}'_0(t),\widehat{b}_0(t)\rangle dt 
    =  \int_0^{\frac{1}{3}}\langle\widehat{a}'_1(t),\widehat{b}_1(t)\rangle dt = A_1, 
    \end{equation*}
 where the middle equality follows from the definition of $T\GG_3$. The second term can be rewritten using \eqref{v3inv} as
 \begin{equation*}
         B_0= \int_{\frac{1}{3}}^{\frac{2}{3}}\langle\widehat{a}'_0(t),\widehat{b}_0(t)\rangle dt 
         =  \int_{\frac{1}{3}}^{\frac{2}{3}}\langle\alpha,\beta\rangle
 \end{equation*}
 with 
 \begin{equation*}
        \alpha=  \Big(L_{\tau_0(\frac{1}{3})^{-1}\tau_3(t-\frac{1}{3})^{-1}}\big(R_{\tau_0(\frac{1}{3})}a_3(t-\frac{1}{3})+L_{\tau_3(t-\frac{1}{3})}a_0(\frac{1}{3})\big)\Big)'
         =Ad_{\tau_0(\frac{1}{3})^{-1}}\widehat{a}'_3(t-\frac{1}{3}) 
 \end{equation*}
 and
 \begin{equation*}
        \beta=    L_{\tau_0(\frac{1}{3})^{-1} \tau_3(t-\frac{1}{3})^{-1}}\Big(R_{\tau_0(\frac{1}{3})}b_3(t-\frac{1}{3})+L_{\tau_3(t-\frac{1}{3})}b_0(\frac{1}{3})\Big)
         =  Ad_{\tau_0(\frac{1}{3})^{-1}}\widehat{b}_3(t-\frac{1}{3})+\widehat{b}_0(\frac{1}{3}).
 \end{equation*}
 Setting $s=t-\frac{1}{3}$, and noticing that $\langle -, -\rangle$ is adjoint invariant,  we conclude that
 \begin{equation*}
    B_0=  \int_0^\frac{1}{3}\langle\widehat{a}'_3(s),\widehat{b}_3(s)\rangle +\langle Ad_{\tau_0(\frac{1}{3})^{-1}}\widehat{a}'_3(s),\widehat{b}_0(\frac{1}{3})\rangle ds 
     = A_3+ \int_0^\frac{1}{3}\langle \widehat{a}'_3(s),R_{\tau_0(\frac{1}{3})^{-1}}b_0(\frac{1}{3})\rangle ds.
 \end{equation*}
 Finally the third term
 \begin{equation*}
        C_0=   \int_{\frac{2}{3}}^1\langle\widehat{a}'_0(t),\widehat{b}_0(t)\rangle dt 
          =\int_{\frac{2}{3}}^1\langle\widehat{a}'_2(1-t),\widehat{b}_2(1-t)\rangle dt 
          =-\int_0^{\frac{1}{3}}\langle\widehat{a}'_2(s),\widehat{b}_2(s)\rangle ds=-A_2,
 \end{equation*}
 where in the middle equation we use the definition of $T\GG_3$ and in the last we make the change of variable $s=1-t$. By similar computations one shows that
 $$ \begin{array}{rl}
     B_1= & -C_3-\int_\frac{2}{3}^1\langle \widehat{a}'_3(s),R_{\tau_0(\frac{1}{3})^{-1}}b_0(\frac{1}{3})\rangle ds, \\
      C_1=&C_2, \\
      B_2=&B_3+\int_\frac{1}{3}^\frac{2}{3}\langle  \widehat{a}'_3(s), R_{\tau_0(\frac{1}{3})^{-1}}b_0(\frac{1}{3})\rangle ds. 
 \end{array}$$
 Thus we may conclude that 
 \begin{equation*}
    \delta\omega_\tau(a,b)= \sum_{i=0}^3(-1)^i (d_i^*\omega)_\tau(a,b)
     =\sum_{i=0}^3(-1)^i( A_i+B_i+C_i) 
     =\int_0^1\langle \widehat{a}'_3(s),R_{\tau_0(\frac{1}{3})^{-1}}w_0(\frac{1}{3})\rangle ds =0,
 \end{equation*}
 where in the last step we are using Stokes' Theorem and the fact that $\widehat{a}_3(0)=\widehat{a}_3(1)=0_e.$
\end{proof}

\section{Transgression map}\label{Ap-trans}
In this appendix we recollect some useful formulas for the transgression map going from  $k$-forms on a manifold to $(k-1)$-forms on its path space, which should be well known to experts.

Let $M$ be a manifold and denote by $PM$ its path space, i.e. 
$PM=\{\gamma:[0,1]\to M \ |\ \gamma \text{ of class } H_r \}$, 
and consider $PM$ as a Banach manifold. The evaluation map $ev: [0,1]\times PM\to M,\ ev(t,\gamma)=\gamma(t)$ at the level of forms produces the map $$ev^*:\Omega^k(M)\to \Omega^k([0,1]\times PM),$$
and since the interval has dimension $1$ we can give the more explicit description
\begin{equation}\label{formsPM}
   \Omega^k([0,1]\times PM) =\Omega^0([0,1])\otimes\Omega^k(PM)\oplus \Omega^1([0,1])\otimes\Omega^{k-1}(PM).
\end{equation}
The {\bf transgression map} is defined as the composition of evaluation with integration:
\begin{equation}
    \begin{array}{cccl}
        \tr: & \Omega^k(M)&\to&\Omega^{k-1}(PM) \\
         & \omega& \mapsto & \tr(\omega)=\int_0^1 ev^*\omega.
    \end{array}
\end{equation}

Now we give short proofs for the properties of the transgression map that we use in the article.

\begin{proposition}\label{Trans-DR} 
The transgression map and de Rham differential satisfy the following relation:
$$ \tr(d\omega)=ev_1^*\omega-ev_0^*\omega - d_{PM} \tr(\omega),$$
where $ev_t(\gamma)=\gamma(t)$ for $t\in[0,1].$
\end{proposition}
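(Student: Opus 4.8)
The statement is the Stokes-type formula for the transgression map, so the natural approach is to pull everything back to the product $[0,1]\times PM$ and apply the ordinary Stokes theorem on the interval factor. First I would fix $\omega\in\Omega^k(M)$ and consider $ev^*\omega\in\Omega^k([0,1]\times PM)$. Using the decomposition \eqref{formsPM}, write
\[
ev^*\omega = dt\wedge \beta_t + \gamma_t,
\]
where $\beta_t\in\Omega^{k-1}(PM)$ and $\gamma_t\in\Omega^k(PM)$ depend smoothly on $t\in[0,1]$, and $\gamma_t$ has no $dt$-component. By definition $\tr(\omega)=\int_0^1\beta_t\,dt$. The plan is then to compute $ev^*(d\omega)=d_{[0,1]\times PM}(ev^*\omega)$ in the same decomposition and read off its $dt$-component.

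The key step is the bookkeeping of the total differential $d_{[0,1]\times PM} = dt\wedge\partial_t + d_{PM}$ acting on $dt\wedge\beta_t+\gamma_t$. One gets
\[
d(ev^*\omega) = dt\wedge\bigl(\partial_t\gamma_t - d_{PM}\beta_t\bigr) + d_{PM}\gamma_t,
\]
so the $dt$-component of $ev^*(d\omega)$ is $\partial_t\gamma_t - d_{PM}\beta_t$. Integrating over $t\in[0,1]$ and using that $d_{PM}$ commutes with $\int_0^1(-)\,dt$ gives
\[
\tr(d\omega) = \int_0^1\partial_t\gamma_t\,dt - d_{PM}\int_0^1\beta_t\,dt = \gamma_1 - \gamma_0 - d_{PM}\tr(\omega).
\]
Finally I would identify $\gamma_t$ with $ev_t^*\omega$: since $\gamma_t$ is the part of $ev^*\omega$ with no $dt$, it is exactly the restriction of $ev^*\omega$ to the slice $\{t\}\times PM$, which is $(ev\circ\iota_t)^*\omega = ev_t^*\omega$ where $\iota_t:PM\hookrightarrow [0,1]\times PM$ is the inclusion at time $t$. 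This yields $\tr(d\omega)=ev_1^*\omega - ev_0^*\omega - d_{PM}\tr(\omega)$, as claimed.

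The only genuine subtlety — the ``hard part'' — is analytic rather than formal: one must make sure the interchange of $d_{PM}$ with fibre integration $\int_0^1(-)\,dt$ is valid in the Banach-manifold setting of path spaces of a given Sobolev class, and that $ev$ is smooth enough for $ev^*\omega$ to be a genuine smooth form on $[0,1]\times PM$ so that fibre integration along the compact interval behaves as in finite dimensions. These points are standard once one knows (as recalled in Appendix \ref{ap-sob}) that evaluation and the de Rham operator are bounded linear, hence smooth, between the appropriate Sobolev completions; since the interval is one-dimensional and compact, integration over $t$ is a bounded operation and commutes with the (continuous) exterior derivative on $PM$. I would simply remark on this and otherwise present the computation above, which is purely the Cartan calculus plus Stokes on $[0,1]$.
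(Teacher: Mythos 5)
Your proof is correct and follows essentially the same route as the paper's: decompose $ev^*\omega$ via \eqref{formsPM}, split the total differential on $[0,1]\times PM$ into its interval and $PM$ parts, apply Stokes on $[0,1]$ for the former, and commute fibre integration with $d_{PM}$ for the latter. Your version merely spells out the $dt\wedge\beta_t+\gamma_t$ bookkeeping and the identification $\gamma_t=ev_t^*\omega$ that the paper leaves implicit, so nothing further is needed.
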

\begin{proof}
\begin{equation*}
    \begin{split}
        \tr(d\omega)=&\int_0^1 ev^*d\omega=\int_0^1 d_{[0,1]\times PM}(ev^*\omega)
        =\int_0^1 d_{[0,1]}ev^*\omega-\int_0^1 d_{PM}(ev^*\omega)\\
        =& ev^*_1\omega- ev^*_0\omega-d_{PM}\tr(\omega),
    \end{split}
\end{equation*}
where we use the decomposition \eqref{formsPM} and Stokes' theorem.
\end{proof}

\begin{proposition}\label{trans-expli}
In terms of vectors the transgression map has the following explicit formula:
$$\tr(\omega)_\gamma(v_1,\cdots, v_{k-1})=\int_0^1\omega_{\gamma(t)}\big(\gamma'(t), v_1(t),\cdots, v_{k-1}(t)\big) dt,$$
where $\omega\in \Omega^k(M)$ and $v_i\in T_\gamma PM.$
\end{proposition}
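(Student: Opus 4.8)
The plan is to derive the explicit formula directly from the definition of the transgression map as the composition of pullback along the evaluation map with fibre integration over the interval. First I would recall the decomposition \eqref{formsPM} of forms on $[0,1]\times PM$: any form splits into a piece with no $dt$ factor and a piece of the shape $dt\wedge(\text{form on }PM)$. Since $\tr(\omega)=\int_0^1 ev^*\omega$ extracts precisely the $dt$-component and integrates it, the task reduces to identifying the $dt$-component of $ev^*\omega$ when evaluated on tangent vectors $v_1,\dots,v_{k-1}\in T_\gamma PM$.

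The key computation is to unwind $T\,ev$ at a point $(t,\gamma)$. A tangent vector to $[0,1]\times PM$ at $(t,\gamma)$ is a pair $(\partial_t, v)$ with $v\in T_\gamma PM$, and $T_{(t,\gamma)}ev(\partial_t,0)=\gamma'(t)$, while $T_{(t,\gamma)}ev(0,v)=v(t)$. Hence for vectors $(0,v_1),\dots,(0,v_{k-1})$ and the coordinate vector field $\partial_t$ on the interval,
\[
(ev^*\omega)_{(t,\gamma)}\big(\partial_t, (0,v_1),\dots,(0,v_{k-1})\big)=\omega_{\gamma(t)}\big(\gamma'(t),v_1(t),\dots,v_{k-1}(t)\big).
\]
This identifies the coefficient function of $dt$ in the second summand of \eqref{formsPM} as $t\mapsto \omega_{\gamma(t)}(\gamma'(t),v_1(t),\dots,v_{k-1}(t))$. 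Integrating this coefficient over $[0,1]$ — which is exactly what fibre integration $\int_0^1$ does — yields the claimed formula. I would also note that the first summand in \eqref{formsPM} (the part with no $dt$) does not contribute: contracting with $\partial_t$ annihilates it after integration, or equivalently it has no $dt$-component to integrate.

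I expect the only mild subtlety, rather than a genuine obstacle, to be the careful bookkeeping of signs and of the ordering of the arguments: one must check that inserting $\gamma'(t)$ into the first slot of $\omega$ (as opposed to some other slot, with a sign) is consistent with the standard convention for fibre integration over $[0,1]$, i.e. $\int_0^1(dt\wedge\eta)=\int_0^1\iota_{\partial_t}(dt\wedge\eta)\,dt=\int_0^1\eta(\partial_t,\cdot)\,dt$ under the identification \eqref{formsPM}. Once that convention is pinned down the formula follows immediately, with no analytic input beyond the fact (recalled in Appendix \ref{ap-sob}) that evaluation and differentiation are smooth maps on the relevant Sobolev path spaces, so that $ev^*\omega$ is a genuine smooth form and the fibre integral is well defined.
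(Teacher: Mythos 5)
Your argument is correct and follows essentially the same route as the paper's proof: both use the decomposition \eqref{formsPM}, compute $T_{(t,\gamma)}ev(w\,\partial_t,v)=w\gamma'(t)+v(t)$, observe that only the $dt$-component survives the fibre integration, and identify that component as $\iota_{\gamma'}\omega$ evaluated along the path. No gaps.
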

\begin{proof}
With decomposition \eqref{formsPM}, we  write
\begin{equation}
    \label{eq:ev*omega} ev^*\omega = \omega_0\otimes \omega_k + \omega_1 \otimes \omega_{k-1}, 
\end{equation} 
where $\omega_k\in \Omega^k(PM) $, $\omega_{k-1}\in \Omega^{k-1}(PM) $, $\omega_1 \in \Omega^1([0,1])$ and $\omega_0\in \Omega^0([0,1])$.  Then contracting with $[0,1]$ via $\int_0^1$, only the second term survives. Thus for tangent vectors $x_i\in T_\gamma PM$, $i=1, \dots, k-1$, at $\gamma\in PM$, we have
\begin{equation} \label{eq:intIev}
    \int_0^1 ev^* \omega_\gamma (v_1, \dots, v_{k-1}) = \int_0^1 \omega_1 \otimes \omega_{k-1} (v_1, \dots, v_{k-1}). 
\end{equation}
At the same time, for a tangent vector $(w, v) \in T_t[0,1]\times T_\gamma PM$, we have $T_\gamma ev(w,v)=w\gamma'+v$. This can be seen by taking a variation (a small path) $(t+w\epsilon,\gamma^\epsilon)$ representing $(w,v)$ (thus $\gamma^0=\gamma$). Then
$$
T_\gamma ev(w, v) = \frac{d}{d\epsilon}\bigg|_{\epsilon=0} (\gamma^\epsilon(t+w\epsilon)) = \frac{d}{d\epsilon}\bigg|_{\epsilon=0} \gamma^0(t+w\epsilon) +\frac{d}{d\epsilon}\bigg|_{\epsilon=0} \gamma^\epsilon(t) = w\gamma'(t) + v(t).
$$
Thus
\[
\begin{split}
   &ev^*\omega|_{(t,\gamma)}((w_1,v_1),…,(w_k,v_k))=\omega_{\gamma(t)}(T_\gamma ev(w_1,v_1),…,T_\gamma ev(w_k,v_k)) \\
= &\omega_{\gamma(t)}(v_1(t),…,v_k(t))+\omega_{\gamma(t)}(w_1\gamma'(t),v_2(t),…,v_k(t))+c.p.. 
\end{split}
\]
Thus comparing with \eqref{eq:ev*omega}, we have $\omega_k=\omega$, $\omega_0=1$, $\omega_{k-1}=\iota(\gamma')\omega$, and $\omega_1=dt$. Combining with \eqref{eq:intIev} obtains the desired formula.
\end{proof}

Suppose that we have a simplicial manifold $X_\bullet$. Then the path space $PX_\bullet$ is again a simplicial manifold given by
$$ (PX)_n=P(X_n),\quad (Pd)_i(\gamma)(t)=(d_i\circ\gamma)(t),\quad \text{and}\quad (Ps)_i(\gamma)(t)=(s_i\circ\gamma)(t).$$

\begin{proposition}\label{trans-delta}
Let $X_\bullet$ be a simplicial manifold. Then the transgression commutes with the simplicial differentials, i.e. 
$$ \tr(\delta^X\omega)=\delta^{PX}\tr(\omega).$$
\end{proposition}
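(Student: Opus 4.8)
The statement to prove is Proposition~\ref{trans-delta}: for a simplicial manifold $X_\bullet$, the transgression map commutes with the simplicial differentials, $\tr(\delta^X\omega)=\delta^{PX}\tr(\omega)$.

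The plan is to reduce everything to naturality of transgression under smooth maps. First I would establish the following general fact: if $F\colon M\to N$ is a smooth map of (Banach) manifolds, it induces $PF\colon PM\to PN$ by $PF(\gamma)=F\circ\gamma$, and then $\tr\circ F^* = (PF)^*\circ\tr$ as maps $\Omega^k(N)\to\Omega^{k-1}(PM)$. This is immediate from the explicit formula in Proposition~\ref{trans-expli}: using that $(PF)(\gamma)'(t)=TF(\gamma'(t))$ and $T(PF)(v)(t)=TF(v(t))$ for $v\in T_\gamma PM$, one has
\[
(PF)^*\tr(\omega)_\gamma(v_1,\dots,v_{k-1})=\int_0^1\omega_{F(\gamma(t))}\big(TF(\gamma'(t)),TF(v_1(t)),\dots\big)\,dt=\tr(F^*\omega)_\gamma(v_1,\dots),
\]
so the two agree. (Alternatively one checks it at the level of $ev^*$: the square relating $ev$ for $M$ and $ev$ for $N$ through $F$ and $PF$ commutes, and integration over $[0,1]$ is natural.)

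The second step is purely formal. By definition $\delta^X=\sum_{i=0}^{n+1}(-1)^i (d_i^X)^*$ on $\Omega^q(X_n)$, and $\delta^{PX}=\sum_{i=0}^{n+1}(-1)^i (d_i^{PX})^*$, where $d_i^{PX}=P(d_i^X)$ is precisely the induced path-space map of the face map $d_i^X\colon X_{n+1}\to X_n$. Applying the naturality fact from the first step with $F=d_i^X$ gives $\tr\circ (d_i^X)^* = (P d_i^X)^*\circ \tr = (d_i^{PX})^*\circ\tr$ for each $i$. Taking the alternating sum over $i$ yields $\tr\circ\delta^X=\delta^{PX}\circ\tr$, which is exactly the claim. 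One should note the sign conventions match because the same index $i$ and sign $(-1)^i$ appear on both sides, and $\tr$ is $\mathbb{R}$-linear so it passes through the finite sum.

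I do not expect a serious obstacle here; the only point requiring a little care is the interchange of the derivative/variation argument with the face maps when verifying the naturality formula in the Banach-manifold setting --- i.e. that $T(P d_i)(v)(t)=Td_i(v(t))$ holds, which follows from the variation computation already used in the proof of Proposition~\ref{trans-expli} (take a small path $\gamma^\epsilon$ representing $v$, apply $d_i$ pointwise, differentiate in $\epsilon$). Once that is in place, the proof is a two-line consequence of naturality. I would therefore present it as: (1) state and prove naturality of $\tr$ under smooth maps via Proposition~\ref{trans-expli}; (2) apply it termwise to the alternating sum defining $\delta$.

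\begin{proof}
Recall that any smooth map $F\colon M\to N$ of Banach manifolds induces a smooth map $PF\colon PM\to PN$, $PF(\gamma)=F\circ\gamma$, and that for $v\in T_\gamma PM$ one has $T(PF)(v)(t)=TF(v(t))$ and $(PF(\gamma))'(t)=TF(\gamma'(t))$; the latter identities follow from taking a variation $\gamma^\epsilon$ of $\gamma$ representing $v$ and differentiating $F\circ\gamma^\epsilon$ in $\epsilon$, exactly as in the proof of Proposition \ref{trans-expli}. Using the explicit formula of Proposition \ref{trans-expli}, for $\omega\in\Omega^k(N)$ and $v_1,\dots,v_{k-1}\in T_\gamma PM$ we compute
\begin{equation*}
\begin{split}
\big((PF)^*\tr(\omega)\big)_\gamma(v_1,\dots,v_{k-1}) &= \tr(\omega)_{PF(\gamma)}\big(T(PF)v_1,\dots,T(PF)v_{k-1}\big)\\
&= \int_0^1 \omega_{F(\gamma(t))}\big(TF(\gamma'(t)),TF(v_1(t)),\dots,TF(v_{k-1}(t))\big)\,dt\\
&= \int_0^1 (F^*\omega)_{\gamma(t)}\big(\gamma'(t),v_1(t),\dots,v_{k-1}(t)\big)\,dt\\
&= \tr(F^*\omega)_\gamma(v_1,\dots,v_{k-1}).
\end{split}
\end{equation*}
Hence $\tr\circ F^* = (PF)^*\circ\tr$.

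Now let $X_\bullet$ be a simplicial manifold and $\omega\in\widehat\Omega^q(X_{n})$. By definition the face maps of the simplicial path space $PX_\bullet$ are $d_i^{PX}=P(d_i^X)$, so applying the naturality identity above with $F=d_i^X\colon X_{n+1}\to X_n$ gives
\begin{equation*}
\tr\big((d_i^X)^*\omega\big) = \big(P(d_i^X)\big)^*\tr(\omega) = (d_i^{PX})^*\tr(\omega), \qquad i=0,\dots,n+1.
\end{equation*}
Since $\tr$ is $\mathbb{R}$-linear, taking the alternating sum over $i$ yields
\begin{equation*}
\tr(\delta^X\omega) = \sum_{i=0}^{n+1}(-1)^i\,\tr\big((d_i^X)^*\omega\big) = \sum_{i=0}^{n+1}(-1)^i\,(d_i^{PX})^*\tr(\omega) = \delta^{PX}\tr(\omega),
\end{equation*}
as claimed.
\end{proof}
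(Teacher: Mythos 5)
Your proof is correct and follows essentially the same route as the paper: the paper likewise uses the explicit formula of Proposition \ref{trans-expli} to check compatibility of $\tr$ with each face map (via the identification of $T(PX)_\bullet$ with $P(TX)_\bullet$, i.e.\ the same tangent-of-path-map identity you invoke) and then concludes by the alternating-sum structure of $\delta$. Your phrasing as a general naturality statement $\tr\circ F^*=(PF)^*\circ\tr$ is just a mild repackaging of the same computation.
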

\begin{proof}
In order to prove this, we first observe that if $X_\bullet$ is a simplicial manifold then $TX_\bullet$ is also a simplicial manifold with faces and degeneracies given by the corresponding tangent maps. It then follows that there is a canonical identification between $P(TX)_\bullet$ and $T(PX)_\bullet$ as simplicial manifolds. Using this canonical identification and the explicit formula of the transgression given by Proposition \ref{trans-expli}, for $\omega\in\Omega^k(X_{n-1}),$ $\gamma\in PX_{n},$ and $v_j\in T_{\gamma}PX_n$ we have
\begin{equation*}
    \begin{split}
       &\big((Pd_i)^*\tr(\omega)\big)_\gamma(v_1,\cdots, v_{k-1})=\tr(\omega)_{Pd_i(\gamma)}\Big(TPd_i(v_1),\cdots, TPd_i(v_{k-1})\Big)dt\\
       &=\int_0^1\omega_{Pd_i(\gamma)(t)}\Big(Pd_i(\gamma)'(t),TPd_i(v_1)(t),\cdots, TPd_i(v_{k-1})(t)\Big)dt\\
       &=\int_0^1\omega_{d_i(\gamma(t))}\Big(Td_i(\gamma(t))',PTd_i(v_1)(t),\cdots, PTd_i(v_{k-1})(t)\Big)dt\\
       &=\int_0^1\omega_{d_i(\gamma(t))}\Big(Td_i(\gamma'(t)),Td_i(v_1(t)),\cdots, Td_i(v_{k-1}(t))\Big)\\
       &=\int_0^1(d^{*}_i\omega)_{\gamma(t)}\Big(\gamma'(t),v_1(t),\cdots, v_{k-1}(t)\Big)dt = \tr(d^{*}_i\omega)_\gamma(v_1,\cdots, v_{k-1}).
    \end{split}
\end{equation*}
Since the simplicial differentials are alternating sums of face maps, this prove the statement.
\end{proof}

\section{IM-form (by Florian Dorsch)} \label{Ap:IM}
For completeness of this article, we now give proofs for some very useful statements in the unpublished lecture note \cite{Lesdiablerets}, which are stated on page 82-83 without proof.

\begin{lemma}\label{lem:im-form}
Let $\alpha_\bullet$ be a $m$-shifted 2-form on a Lie $n$-groupoid $X_\bullet$. Then
\begin{enumerate}
\item \label{itm:0-s} the IM-form $\lambda^{\alpha_\bullet}$ vanishes on degenerate vectors, that is, for $m\ge 1$, for any point $x\in X_0$ we have 
\begin{equation*}
    \lambda^{\alpha_\bullet}_x(Ts_i u, w) =0, \quad \forall u \in T_{x} X_{p-1},\ w\in T_x X_q, \quad  0\le i \le p-1.
\end{equation*} For $m=0$, this is an empty condition. 
    \item \label{itm:mul} 
    When $m\ge 1$,  for $v\in \huaT_{p+1}(X_\bullet)$, $w\in \huaT_{q}(X_\bullet)$ with $0\le p \le m-1$ and $p+q=m$, we have
\begin{equation}\label{eq:inf-mul-app}
    \lambda^{\alpha_\bullet}(\partial v, w) + (-1)^{p+1} \lambda^{\alpha_\bullet}( v, \partial w )=\lambda^{D\alpha_\bullet}(v, w); 
\end{equation}and when $m=0$, for $v\in \huaT_1(X_\bullet)$, $w\in \huaT_0(X_\bullet)=T_0X_0$ we have 
\begin{equation}  \label{eq:inf-mul-0-app}
    \lambda^{\alpha_\bullet}(\partial v, w)=\lambda^{D\alpha_{\bullet}}(v,w). 
\end{equation} If $\alpha_{\bullet}$ is closed, these are equivalent to \eqref{eq:inf-mul} with the correct interpretation in extreme cases of indices explained in Remark \ref{remark:extreme-case};
    \item \label{itm:gauge} The IM-pairing $\lambda_{x}^{\alpha_{\bullet},\#}:\huaT_{\bullet}|_{x}X\rightarrow \huaT_\bullet^{\ast}|_{x}X[-m]$ is invariant under gauge transformations up to chain homotopy. That is, there exists a chain homotopy 
        \begin{gather*}
\lambda_{x}^{\alpha_{\bullet}+D\phi_{\bullet},\#}\simeq\lambda_{x}^{\alpha_{\bullet},\#}::\huaT_{\bullet}|_{x}X\rightarrow \huaT_\bullet^{\ast}|_{x}X[-m] 
       \end{gather*}
       for any $(m-1)$-shifted 2-form $\phi_{\bullet}$. 
\end{enumerate}
\end{lemma}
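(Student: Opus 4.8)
The three claims in Lemma~\ref{lem:im-form} are all statements about the explicit shuffle formula \eqref{nondeg-pairing}, so the plan is to reduce each to a combinatorial manipulation of shuffles together with the simplicial identities \eqref{eq:face-degen}. For \eqref{itm:0-s}, I would plug $Ts_i u$ into \eqref{nondeg-pairing} and use the simplicial identities for $d_k s_i$ to show that in each shuffle summand the form $\alpha_m$ is being evaluated on a tuple of iterated degeneracies applied to $Ts_i u$ and to $w$; since $\alpha_m$ is normalised, every such summand vanishes. The only subtlety is bookkeeping: one must check that for each shuffle $\sigma\in\Sh_{p,q}$ the composite $s_{\sigma(m-1)}\dots s_{\sigma(l)}$ (or the other block) hits the degeneracy $s_i$ appearing in $Ts_i u$ after commuting it past the $s_{\sigma(\cdot)}$'s using $s_j s_k = s_{k+1} s_j$ for $j\le k$; this always leaves a degeneracy in front, so normalisation kills the term.

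For \eqref{itm:mul}, the key input is $\delta\alpha_m=0$, i.e. $\sum_{k=0}^{m+1}(-1)^k d_k^*\alpha_m=0$ on $X_{m+1}$. I would evaluate this relation on tuples of the form $\big(T(s_{\tau(m)}\dots)v, T(s_{\tau'(\cdot)}\dots)w\big)$ for $v\in\huaT_{p+1}K$, $w\in\huaT_q K$, exploiting that $v$ lies in $\ker Tp_{p+1,p+1} = \bigcap_{k<p+1}\ker Td_k$ so that almost all the $d_k^*\alpha_m$ terms drop out, leaving only the $d_{p+1}$-term, which reproduces (up to sign) $\lambda^{\alpha_\bullet}(\partial v, w)$, plus a symmetric contribution giving $(-1)^{p+1}\lambda^{\alpha_\bullet}(v,\partial w)$. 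This matching of signs and of the shuffle sets $\Sh_{p+1,q}$ and $\Sh_{p,q+1}$ to $\Sh_{p,q}$ under inserting/removing the extra degeneracy is where the real work lies; the combinatorial identity needed is essentially that shuffles of $(p+1,q)$ and $(p,q+1)$ assemble, with signs, into the face differential of $\alpha_m$. The extreme cases $p=0$ or $q=0$ (giving \eqref{eq:inf-mul-0-app} and its mirror) follow by specialising and using the simplified formulas of Remark~\ref{remark:extreme-case}; here one uses that $\huaT_{-1}K=0$ so the boundary term on that side is absent.

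For \eqref{itm:gauge}, write $\alpha_\bullet + D\phi_\bullet$ and observe that its top component is $\alpha_m + \delta\phi_{m-1} + (-1)^{m-1} d\phi_m$, but $\phi_\bullet$ is an $(m-1)$-shifted $2$-form so it has no component in degree $m$, hence the top piece is $\alpha_m + \delta\phi_{m-1}$. Thus it suffices to show $\lambda^{\delta\phi_{m-1}}=0$ on the tangent complex. I would expand $\lambda^{\delta\phi_{m-1}}$ via \eqref{nondeg-pairing}, substitute $\delta\phi_{m-1}=\sum_k (-1)^k d_k^*\phi_{m-1}$, and then argue termwise: for $v\in\huaT_l K$, $w\in\huaT_{m-l}K$ each $d_k$ either collapses one of the iterated degeneracy strings (turning an $s_j$ into an identity via $d_{j}s_j = d_{j+1}s_j = \id$) or commutes past it to produce a surviving degeneracy, so $\phi_{m-1}$, being normalised, is evaluated on a degenerate tuple and the summand vanishes; the terms that are \emph{not} immediately killed by normalisation must cancel in pairs, which again reduces to a shuffle sign identity analogous to the one in \eqref{itm:mul}. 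The main obstacle throughout is the last kind of cancellation: organising the shuffle sums so that the non-normalised contributions pair off with opposite signs. I expect to handle this by a single lemma---an identity expressing $\sum_{\sigma\in\Sh_{l,m-l}}(-1)^\sigma\, (s_{\sigma(m-1)}\cdots s_{\sigma(l)}, s_{\sigma(l-1)}\cdots s_{\sigma(0)})$ applied to the face operator, which I would prove once by induction on $m$ and then invoke in all three parts.
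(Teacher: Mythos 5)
Your plan for part \eqref{itm:mul} is essentially the paper's argument: one evaluates $\delta\alpha_m=0$ on sign-weighted shuffles of degenerate vectors built from $v$ and $w$, uses that tangent-complex vectors are annihilated by all faces except the last, and organises the remaining terms by a sign-reversing pairing of shuffles; up to a small imprecision (on $X_{m+1}$ it is the last face $d_{m+1}$ whose contribution survives, and it then splits over shuffles with $\pi(p+q)=m$ versus $\pi(p)=m$ into the two boundary terms, as in \eqref{eq:last-step}), that part of your sketch would go through. The genuine problems are in parts \eqref{itm:0-s} and \eqref{itm:gauge}. For \eqref{itm:0-s}, your stated reason for vanishing --- ``this always leaves a degeneracy in front, so normalisation kills the term'' --- is not valid: normalisation says $s_\lambda^*\alpha_m=0$, which kills $\alpha_m(Ts_\lambda X,Ts_\lambda Y)$ only when the \emph{same} degeneracy stands in front of \emph{both} arguments; a degeneracy in front of one slot proves nothing, and indeed every summand of \eqref{nondeg-pairing} already has both slots fully degenerate while $\lambda^{\alpha_\bullet}$ is generally nonzero. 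The actual content of part \eqref{itm:0-s} is the case analysis showing that after commuting the extra $s_i$ through the block of shuffle degeneracies, the resulting index always coincides with an index occurring in the \emph{other} block, so a common $s_\lambda$ can be pulled to the front of both arguments and the term becomes $s_\lambda^*\alpha_m(\cdots)=0$ (this is \eqref{eq:observation} in the paper). Your sketch does not identify this mechanism, so the step would fail as written.

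For \eqref{itm:gauge}, the reduction to showing $\lambda^{\delta\phi_{m-1}}=0$ is correct, but your claim that the terms not killed by normalisation ``cancel in pairs by a shuffle sign identity'' is false: running the part-\eqref{itm:mul} computation with $\phi_{m-1}$ in place of $\alpha_m$, and without assuming $\delta$-closedness, does not give zero --- it gives exactly $\lambda^{D\phi_\bullet}(v,w)=\lambda^{\phi_\bullet}(\partial v,w)+(-1)^p\lambda^{\phi_\bullet}(v,\partial w)$, and these boundary terms are not manifestly zero. The paper disposes of them by a separate trick: substitute $(-1)^p Ts_{p-1}\partial v$ for $v$ in this identity, and use part \eqref{itm:0-s} (applied both to $\lambda^{D\phi_\bullet}$ and to one of the resulting terms) to conclude $\lambda^{\phi_\bullet}(\partial v,w)=0$, then argue symmetrically for the other term; the extreme cases $p,q\in\{0,m\}$ require a further short direct computation, since that substitution is unavailable there. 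Your proposal contains no counterpart of these steps, so even granting the shuffle identity you intend to prove, part \eqref{itm:gauge} has a genuine gap.
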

\begin{proof}
\eqref{itm:0-s} To show that $\lambda^{\alpha_\bullet}$ vanishes on degeneracies, it is enough to verify that 
\begin{equation} \label{blabla}
    \alpha_m(Ts_{\pi(p+q-1)}\dots Ts_{\pi(p)}Ts_iu,Ts_{\pi(p-1)}\dots Ts_{\pi(0)}w)=0
\end{equation}
for all $u \in T_{x}X_{p-1}$, $w\in T_{x}X_q$ and $s_i:X_{p-1}\rightarrow X_p,\,\,0\leq i\leq p-1$. We begin by making the following observation. Let
\begin{gather*}
    i_{p-1}>\dots> i_j=\lambda>\dots>i_0\:\:\:\text{and}\\
    i_{p+q-1}>\dots>i_{p+l}>\lambda>i_{p+l-1}>\dots>i_p
\end{gather*} be indices such that
\begin{gather*}
    Ts_{i_{p-1}}\dots Ts_{\lambda}\dots Ts_{i_0}w\:\:\:\text{and}\:\:\:
    Ts_{i_{p+q-1}}\dots Ts_{i_{p+l}}Ts_{\lambda}Ts_{i_{p+l-1}}\dots Ts_{i_p}u
\end{gather*}
are well-defined tangent vectors in $T_{x}X_m$. Then by simplicial identities we have 
\begin{equation}\label{eq:observation}
\begin{split}
    &\OO_m(Ts_{i_{p+q-1}}\dots Ts_{i_{p+m}}Ts_{\lambda}Ts_{i_{p+m-1}}\dots Ts_{i_p}u,Ts_{i_{p-1}}\dots Ts_{\lambda}\dots Ts_{i_0}w)\\
     =&\OO_m(Ts_{\lambda}Ts_{i_{p+q-1}-1}\dots Ts_{i_{p+m}-1}Ts_{i_{p+m-1}}\dots Ts_{i_p}u,Ts_{\lambda} Ts_{i_{p-1}-1}\dots Ts_{i_{j+1}-1} \dots Ts_{i_0}w)\\
    =&s_{\lambda}^{\ast}\OO_m(Ts_{i_{p+q-1}-1}\dots Ts_{i_{p+m}-1}Ts_{i_{p+m-1}}\dots Ts_{i_p}u,Ts_{i_{p-1}-1}\dots Ts_{i_{j+1}-1} \dots Ts_{i_0}w) \\=& 0
\end{split}
\end{equation}
as $\OO_m$ is normalized. The rest of argument is essentially to reduce the situations to \eqref{eq:observation}.\\
First suppose that $i=\pi(p+j)\in \{\pi(p+q-1),\dots,\pi(p)\}$ for $0\leq j\leq q-1$. Since $i=\pi(p+j)>\dots>\pi(p)$, it follows from the simplicial identities that 
\begin{gather*}
    Ts_{\pi(p+q-1)}\dots Ts_{\pi(p)}Ts_iu= Ts_{\pi(p+q-1)}\dots Ts_{\pi(p+j+1)}Ts_{i+j+1}Ts_{\pi(p+j)}\dots Ts_{\pi(p)}u.
\end{gather*}
If $\pi(p+j+1)>i+j+1$, the index $i+j+1$ is not contained in $\{\pi(p+q-1),\dots,\pi(p)\}$, so $i+j+1\in \{\pi(p-1),\dots,\pi(0)\}$ and \eqref{blabla} follows from \eqref{eq:observation}.\\
Otherwise, we distinguish between the following two cases:
\begin{enumerate}
    \item[a)] There exists a minimal $l\in \{j+2,\dots,q-1\}$ such that $\pi(p+l)>i+l$. Then $i+l\in$ $\{\pi(p-1),\dots,\pi(0)\}$, so \eqref{eq:observation} applies.
    \item[b)] For all $l \in \{q-1,\dots,j+1\}: \pi(p+l)=i+l$. Then $p+q-1\geq i+q>\pi(p+q-1)$ and $i+q\in \{\pi(p-1),\dots,\pi(0)\}$, so \eqref{eq:observation} applies.
\end{enumerate}
Thus $\OO_m(Ts_{\pi(p+q-1)}\dots Ts_{\pi(p)}Ts_iu,Ts_{\pi(p-1)}\dots Ts_{\pi(0)}w)$ vanishes for $i \in \{\pi(p+q-1),\dots,\pi(p)\}$. The case when $i \in \{\pi(p-1),\dots,\pi(0)\}$ works similarly.\\
\\
\eqref{itm:mul}\: We first look at the case when $m=0$, and we want to prove Eq. \eqref{eq:inf-mul-0-app}. 
Notice that 
\[\delta \alpha_0(v, Ts_0 w) = \alpha_0 (Td_0 v, Td_0 Ts_0 w) - \alpha_0 (Td_1 v, Td_1 Ts_0 w). 
\]Since $Td_0v =0$,  $d_1s_0=\id$ and $D\alpha=\delta \alpha_0$, the desired equation \eqref{eq:inf-mul-0-app} is proven.

When $m\ge 1$,  since the de Rham degree 2 component of $D\alpha$ is given by $ \delta \alpha_m$, we have
\begin{equation*}
\begin{split}
   \lambda^{D\alpha_\bullet}(v, w)  
   = &\sum_{\pi \in\Sh(p+1,q)}\text{sgn}(\pi)\,\delta\OO_m(Ts_{\pi(p+q)}\dots Ts_{\pi(p+1)}v,Ts_{\pi(p)}\dots Ts_{\pi(0)}w)\\
   = &\sum_{i=0}^{m+1}(-1)^i\sum_{\pi \in\Sh(p+1,q)}\text{sgn}(\pi)\, \OO_m(Td_{i}Ts_{\pi(p+q)}\dots Ts_{\pi(p+1)}v, Td_{i}Ts_{\pi(p)}\dots Ts_{\pi(0)}w).
\end{split}
\end{equation*}
We begin by showing that 
\begin{gather*}
      \sum_{\pi \in\Sh(p+1,q)}\text{sgn}(\pi)\, \OO_m(Td_{i}Ts_{\pi(p+q)}\dots Ts_{\pi(p+1)}v, Td_{i}Ts_{\pi(p)}\dots Ts_{\pi(0)}w)=0
\end{gather*}
for $i \in \{0,\dots,m\}$.
For a fixed $i \in \{0,\dots,m\}$ and a shuffle $\pi \in \Sh(p+1,q)$, the index $i$ either lies in $\{\pi(p+q),\dots,\pi(p+1)\}$ or in $\{\pi(p),\dots,\pi(0)\}$. First, consider the case $i=\pi(p+1+j)\in  \{\pi(p+q),\dots,\pi(p+1)\}$. From the simplicial identities,  it follows that 
\begin{gather*}
    Td_{i}Ts_{\pi(p+q)}\dots Ts_{\pi(p+1)}v=Ts_{\pi(p+q)-1}\dots Ts_{\pi(p+1+j+1)-1}Ts_{\pi(p+j)}\dots Ts_{\pi(p+1)}v.
\end{gather*}Similarly, we have
\begin{gather}\label{eq:2star}
    \underbrace{Td_{i}Ts_{\pi(p)}\dots Ts_{\pi(0)}w}_{(\ast\ast)}=\begin{cases}Ts_{\pi(p)}\dots Ts_{\pi(0)}Td_{i-(p+1)}w,\:\:\:\:\:\:\text{if}\:\:i>1+\pi(p),\\
    \\
    Ts_{\pi(p)-1}\dots Ts_{\pi(0)-1}Td_{i}w, \:\:\:\:\:\:\text{if}\:\:i<\pi(0),\\
    \\
    Ts_{\pi(p)-1}\dots Ts_{\pi(l+1)-1}Ts_{\pi(l)}\dots Ts_{\pi(0)}Td_{i-(l+1)}w, \\
   \text{if}  \:\:\pi(l+1)>i>\pi(l)+1\:\:\text{for}\:\:l\in \{0,\dots,p-1\},\\
   \\
   Ts_{\pi(p)-1}\dots Ts_{\pi(l+1)-1}Ts_{\pi(l-1)}\dots Ts_{\pi(0)}w, \\\text{if}\:\:\:
   i=1+\pi(l)\:\:\text{for}\:\:l\in \{0,\dots,p\}.
   \end{cases}
\end{gather}We consider in each different situation:
\begin{enumerate}
    \item[a)] if $i>1+\pi(p)$ then $i-(p+1)\leq p+q-(p+1)=q-1$, so $Td_{i-(p+1)}w=0$ and $(\ast\ast)$ vanishes,
    \item[b)] if $i<\pi(0)$ then $i\leq p+q-(p+1)=q-1$, so $Td_{i}w=0$ and $(\ast\ast)$ vanishes,
    \item[c)] if $\pi(l+1)>i>\pi(l)+1$ for $l \in \{0,\dots,p-1\}$, then $i-(l+1)\leq q+l-(l+1)=q-1$, so $Td_{i-(l+1)}w=0$ and $(\ast\ast)$ vanishes.
\end{enumerate}
Thus we are left with the last situation in \eqref{eq:2star}. In this case, $i=1+\pi(l),\:l\in \{0,\dots,p\}$, thus we have $\pi(l)=\pi(p+1+j)-1$. We define a new $(p+1,q)$-shuffle $\Tilde{\pi}$ by 
\begin{gather*}
    \tilde{\pi}(k)=\begin{cases}i-1=\pi(l), \:\:\:\:\:\:\text{if}\:\:k=p+1+j,\\
    i=\pi(p+1+j), \:\:\:\:\:\:\text{if}\:\:k=l,\\
    \pi(k), \:\:\:\:\:\:\text{otherwise,}\end{cases}
\end{gather*} which can be illustrated as 
\begin{equation*}
\scriptsize{
    \begin{pmatrix}
0 & \dots & \mathbf{l} & \dots & p, & p+1 & \dots & \mathbf{p+1+j} & \dots & p+q\\
\pi(0)< &\dots & <\mathbf{\pi(p+1+j)-1}< & \dots & <\pi(p), & \pi(p+1)< & \dots & <\mathbf{\pi(l)}< & \dots & <\pi(p+q)
\end{pmatrix}}.
\end{equation*}
Then clearly $\text{sgn}(\pi)=-\text{sgn}(\tilde{\pi})$ and 
\begin{gather*}
Td_iTs_{\pi(p)}\dots Ts_{\pi(0)}w=Td_iTs_{\tilde{\pi}(p)}\dots Ts_{\tilde{\pi}(0)}w,\\
    Td_iTs_{\pi(p+q)}\dots Ts_{\pi(p+1)}v=Td_iTs_{\tilde{\pi}(p+q)}\dots Ts_{\tilde{\pi}(p+1)}v.
\end{gather*}
Notice that the terms
\begin{gather*}
    \text{sgn}(\pi)\,\OO_m(Td_iTs_{\pi(p+q)}\dots Ts_{\pi(p+1)}v,Td_iTs_{\pi(p)}\dots Ts_{\pi(0)}w),\\
    \text{sgn}(\tilde{\pi})\,\OO_m(Td_iTs_{\tilde{\pi}(p+q)}\dots Ts_{\tilde{\pi}(p+1)}v,Td_iTs_{\tilde{\pi}(p)}\dots Ts_{\tilde{\pi}(0)}w)
\end{gather*}
cancel with each other. The case $i\in \{\pi(p),\dots,\pi(0)\}$ can be treated similarly. Thus
\begin{gather*}
    \sum_{\pi \in\Sh(p+1,q)}\text{sgn}(\pi)\, \OO_m(Td_{m+1}Ts_{\pi(p+q)}\dots Ts_{\pi(p+1)}v, Td_{m+1}Ts_{\pi(p)}\dots Ts_{\pi(0)}w)=(-1)^{m+1} \lambda^{D\alpha_\bullet}(v, w).
\end{gather*}
We now distinguish between two types of $(p+1,q)$-shuffles: shuffles satisfying $\pi(p+q)=m$ and shuffles satisfying $\pi(p)=m$. There exists a 1-1 correspondence between $(p+1,q)$-shuffles $\pi$ with $\pi(p+q)=m$ and $(p+1,q-1)$-shuffles $\tau$ via $\tau(k)=\pi(k)$ for $k \in \{0,\dots,p+q-1\}$. Likewise there exists a 1-1 correspondence between $(p+1,q)$-shuffles $\pi$ with $\pi(p)=m$ and $(p,q)$-shuffles $\chi$ via
\begin{gather*}
    \chi(k)=\begin{cases}\pi(k)\:\:\:\:\:\text{if}\:\:k \in \{0,\dots,p-1\},\\ \pi(k+1)\:\:\:\:\:\text{if}\:\:k \in \{p,\dots,p+q-1\}.    \end{cases}
\end{gather*}
\noindent With these correspondences it follows that
\begin{equation} \label{eq:last-step}
    \begin{split}
    & (-1)^{m+1} \lambda^{D\alpha_\bullet}(v, w)  \\
    =&\sum_{\pi \in\Sh(p+1,q)}\text{sgn}(\pi)\, \OO_m(Td_{m+1}Ts_{\pi(p+q)}\dots Ts_{\pi(p+1)}v, Td_{m+1}Ts_{\pi(p)}\dots Ts_{\pi(0)}w)\\
    =&\sum_{\pi \in\Sh(p+1,q), \pi(p+q)=m}\text{sgn}(\pi)\, \OO_m(Ts_{\pi(p+q-1)}\dots Ts_{\pi(p+1)}v, Ts_{\pi(p)}\dots Ts_{\pi(0)}Td_qw)\\
    +&\sum_{\pi \in\Sh(p+1,q), \pi(p)=m}\text{sgn}(\pi)\, \OO_m(Ts_{\pi(p+q)}\dots Ts_{\pi(p+1)}Td_{p+1}v,Ts_{\pi(p-1)}\dots Ts_{\pi(0)}w)\\
    =&(-1)^q\sum_{\tau \in \Sh(p+1,q-1)}\text{sgn}(\tau)\,\OO_m(Ts_{\tau(p+q-1)}\dots Ts_{\tau(p+1)}v,Ts_{\tau(p)}\dots Ts_{\tau(0)}\partial w)\\
    +&(-1)^{p+1}\sum_{\chi \in \Sh(p,q)}\text{sgn}(\chi)(-1)^q\,\OO_m(Ts_{\chi(p+q-1)}\dots Ts_{\chi(p)}\partial v,Ts_{\chi(p-1)}\dots Ts_{\chi(0)} w)\\
    =&(-1)^q \lambda^{\alpha_\bullet}(v,\partial w)+(-1)^q(-1)^{p+1} \lambda^{\alpha_\bullet} (\partial v,w).
    \end{split}
\end{equation}
Thus \eqref{eq:inf-mul-app} is proven.\\
\\
\eqref{itm:gauge}\: item \eqref{itm:mul} tells us that $\lambda^{D\phi_\bullet, \#} =
\partial \lambda^{\phi_\bullet, \#} + \lambda^{\phi_\bullet, \#} \partial^*$. Thus $\lambda^{\phi_\bullet, \#}$ provides a homotopy between $\lambda_{x}^{\alpha_{\bullet}+D\phi_{\bullet},\#}$ and $\lambda_{x}^{\alpha_{\bullet},\#}$.

\emptycomment{It is enough to show that  $\lambda^{D \phi_\bullet} $ vanishes for any $(m-1)$-shifted form $\phi_\bullet$. From \eqref{eq:last-step} and the fact that $(D\phi)_m = \delta \phi_{m-1}$,  it follows that 
\begin{gather*}
    \lambda^{D \phi_\bullet}(v, w)=\lambda^{\phi_\bullet} (\partial v,w) +(-1)^p\lambda^{\phi_\bullet} (v,\partial w), 
\end{gather*}
for tangent vectors $v \in T_{x_0}X_p,\:w\in T_{x_0}X_q,\:p+q=m,\:\,p,q\geq 1$.
The two summands on the right hand side turn out to be equal to zero: we note that
\begin{gather*}
    \underbrace{\lambda^{D \phi_\bullet}\big((-1)^pTs_{p-1}\partial v,w \big)}_{=0}=\lambda^{\phi_\bullet} (\partial\,(-1)^p Ts_{p-1}\partial v,w)+(-1)^p\underbrace{\lambda^{\phi_\bullet} ((-1)^pTs_{p-1}\partial v,\partial w)}_{=0}\\
    =\lambda^{\phi_\bullet} \big( (-1)^pTd_p(-1)^pTs_{p-1}\partial v,w\big) =\lambda^{\phi_\bullet} (\partial v,w).
\end{gather*}
The terms $\lambda^{D\phi_\bullet} \big((-1)^pTs_{p-1}\partial v,w \big)$ and $\lambda^{\phi_\bullet} \big((-1)^pTs_{p-1}\partial v,\partial w \big)$ are zero thanks to item \eqref{itm:0-s}. Thus $\lambda^{\phi_\bullet} (\partial v,w)=0$. Analogously  $\lambda^{\phi_\bullet} (v,\partial w)=0$.
It remains to be shown that $\lambda^{D \phi_\bullet}(v, w)=0$ if $v \in \huaT_p(X_\bullet),\:w\in \huaT_q (X_\bullet)$,  $p,q\in \{0,m\}$. We first consider the case $p=0,\:q=m$. Then 
\begin{equation}\label{eq:0m}
    \begin{split}
        \lambda^{D \phi_\bullet}(v,w)
   =&\delta\phi_{m-1}(Ts_{m-1}\dots Ts_0v,w)\\
   =&\sum_{i=0}^m (-1)^i \phi(Td_iTs_{m-1}\dots Ts_0v, Td_iw)\\
   =&(-1)^m\phi_{m-1}(Td_mTs_{n-1}\dots Ts_0v,Td_mw)\\
   =&\phi_{m-1}(Ts_{m-2}\dots Ts_0v,(-1)^m Td_m w)= \lambda^{\phi_\bullet} (v,\partial w),
    \end{split}
\end{equation}
which equals zero since 
\begin{equation*}
\begin{split}
     0=\lambda^{D \phi_\bullet}\big(v,(-1)^m Ts_{m-1}\partial w\big)=&\lambda^{\phi_\bullet} ( v,\partial\,(-1)^m Ts_{m-1}\partial w) \\=&\lambda^{\phi_\bullet} (v,(-1)^mTd_m(-1)^mTs_{m-1}\partial w)=\lambda^{\phi_\bullet} (v,\partial w), 
\end{split}    
\end{equation*}
where we have used \eqref{eq:0m} for the first equal sign and the fact that $ \lambda^{D \phi_\bullet} $  vanishes on degeneracies. 
Analogously it can be shown that $\lambda^{D\phi_\bullet} =0$ if $p=m$ and $q=0$. }
\end{proof}

\emptycomment{
\begin{lemma}
Let $\alpha_\bullet$ be a $2$-shifted 2-form on a Lie $n$-groupoid $K_\bullet$. Then \eqref{eq:inf-mul} holds. 
\end{lemma}
\begin{proof}
For degree reason, we only need to verify when $v\in \huaT_2$, $w\in \huaT_1$, or $w\in \huaT_2$, $v \in \huaT_1$. By symmetry, we only need to verify the first case. Then by definition, the left hand side is
\begin{equation} \label{eq:alpha-IM}
    \begin{split}
    &\lambda^{\alpha_\bullet}(\partial v, w) - \lambda^{\alpha_\bullet}(v, \partial w) \\
= &\alpha_2(Ts^1_1 Td^2_2 v, Ts^1_0 w) - \alpha_2(Ts^1_0 Td^2_2 v, Ts^1_1 w)+ \alpha_2(v, Ts^1_1 Ts^0_0 Td^1_1 w) \\
=& \alpha_2(Td^3_3 Ts^2_1 v, Ts^1_0 w) - \alpha_2(Td^3_3 Ts^2_0 v, Ts^1_1 w) + \alpha_2(v, Td^3_3 Ts^2_1Ts^1_0 w), 
\end{split}
\end{equation} by applying simplicial identities, such as, $s^1_1 d^2_2= d^3_3 s^2_1$, $s^1_1 s^0_0 d_1^1=s^1_1 d^2_2 s^1_0=d_3^3s^2_1 s^1_0$. The fact that $D\alpha_\bullet=0$ implies especially
\begin{equation}\label{eq:alpha2}
    \delta \alpha_2=0, \quad \text{that is}, \quad\sum_{i=0}^3 (-1)^i d^*_i \alpha_2=0.
\end{equation} We plug $Ts^2_1 v$, $Ts^2_2 Ts^1_0 w$, which are tangent vectors at certain point $x\in K_0$,  into  \eqref{eq:alpha2}. Then since $d_i(x)=x$ still, with a short-hand notation $\alpha:=\alpha_2(x)$, and we have
\begin{equation}\label{eq:s1s2s0}
    \begin{split}
    0=-&\alpha(Td^3_3 Ts^2_1 v, Td^3_3 Ts^2_2 Ts^1_0 w) + \alpha (Td^3_2 Ts^2_1 v, Td^3_2 Ts^2_2 Ts^1_0 w) \\- &\alpha (Td^3_1 Ts^2_1 v, Td^3_1 Ts^2_2 Ts^1_0 w) + \alpha (Td^3_0 Ts^2_1 v, Td^3_0 Ts^2_2 Ts^1_0 w)\\
    = - & \alpha(Td^3_3 Ts^2_1 v, Ts^1_0 w) + \alpha(v, Ts^1_0 w) - \alpha(v, Ts^1_1 w) + 0,
\end{split}
\end{equation}where we have used various simplicial identities and the fact that $v$ is in the tangent complex therefore $v\in \ker Td^2_i $ for $i\neq 2$. 
Similarly, we plug $Ts^2_0 v$, $Ts^2_2Ts^1_1 w$ into \eqref{eq:alpha2}, and we have
\begin{equation}\label{eq:s0s2s1}
    \begin{split}
    0=-&\alpha(Td^3_3 Ts^2_0 v, Td^3_3 Ts^2_2 Ts^1_1 w) + \alpha (Td^3_2 Ts^2_0 v, Td^3_2 Ts^2_2 Ts^1_1 w) \\- &\alpha (Td^3_1 Ts^2_0 v, Td^3_1 Ts^2_2 Ts^1_1 w) + \alpha (Td^3_0 Ts^2_0 v, Td^3_0 Ts^2_2 Ts^1_1 w)\\
    = - & \alpha(Td^3_3 Ts^2_0 v, Ts^1_1 w) + 0  - \alpha(v, Ts^1_1 w) + 0.
\end{split}
\end{equation}
Finally, we plug $Ts^2_2 v$, $Ts^2_1 Ts^1_0 w$ into \eqref{eq:alpha2}, and we have
\begin{equation}\label{eq:s2s1s0}
    \begin{split}
    0=-&\alpha(Td^3_3 Ts^2_2 v, Td^3_3 Ts^2_1 Ts^1_0 w) + \alpha (Td^3_2 Ts^2_2 v, Td^3_2 Ts^2_1 Ts^1_0 w) \\- &\alpha (Td^3_1 Ts^2_2 v, Td^3_1 Ts^2_1 Ts^1_0 w) + \alpha (Td^3_0 Ts^2_2 v, Td^3_0 Ts^2_1 Ts^1_0 w)\\
    = - & \alpha(v, Td^3_3 Ts^2_1 Ts^1_0 w) + \alpha(v, Ts^1_0 w) - 0 + 0,
\end{split}
\end{equation}
Then a linear combination -\eqref{eq:s1s2s0}+ \eqref{eq:s0s2s1}- \eqref{eq:s2s1s0} implies the desired result \eqref{eq:alpha-IM}.  
Conjecture: the way I did here is that I'm applying all unshuffles $\tau$: +(0, 21); -(1, 20); -(2, 10), and add them up with these signs which are -$(-1)^\tau$ to obtain the desired equation. Maybe it's going to work for general case.
\end{proof}}

\bibliographystyle{plain}
\bibliography{bibz, morerefs}

\end{document}